\documentclass[11pt, oneside, reqno]{amsart}

\usepackage{amsmath, amsthm, amssymb}
\usepackage[usenames,dvipsnames]{color}
\usepackage[all, cmtip]{xy}
\usepackage[pdftex, bookmarks=true, linkbordercolor={0 0 1}]{hyperref}
\usepackage[margin=1in]{geometry}
\usepackage[titletoc,title]{appendix}

\setlength{\parindent}{0pt}
\setlength{\parskip}{11pt}

\theoremstyle{definition} \newtheorem{definition}{Definition}[section]
\newtheorem{lemma}[definition]{Lemma}
\newtheorem{theorem}[definition]{Theorem}
\newtheorem{prop}[definition]{Proposition}
\newtheorem{conjecture}[definition]{Conjecture}
\newtheorem{corollary}[definition]{Corollary}
\newtheorem{construction}[definition]{Construction}

\newtheorem{claimnum}[definition]{Claim}

\newtheorem*{lemma*}{Lemma}
\newtheorem*{claim}{Claim}

\theoremstyle{definition} \newtheorem{remark}[definition]{Remark}
\theoremstyle{definition} 
\theoremstyle{definition} 
\theoremstyle{definition} 
\theoremstyle{definition} \newtheorem{example}[definition]{Example}
\theoremstyle{definition} \newtheorem{examples}[definition]{Examples}

\newtheorem{pseudoconj}[definition]{Pseudo-Conjecture}

\renewcommand{\gg}{\mathfrak{g}}
\newcommand{\hh}{\mathfrak{h}}
\newcommand{\qq}{\mathfrak{q}}

\newcommand{\bb}[1]{\mathbb{#1}}
\newcommand{\mr}[1]{\mathrm{#1}}
\newcommand{\mc}[1]{\mathcal{#1}}
\newcommand{\mf}[1]{\mathfrak{#1}}
\newcommand{\wt}[1]{\widetilde{#1}}
\newcommand{\bo}[1]{\boldsymbol{#1}}

\newcommand{\inj}{\hookrightarrow}
\newcommand{\bs}{\ \backslash \ }
\newcommand{\dd}{\partial}
\newcommand{\del}{\partial}

\newcommand{\ul}[1]{\underline{#1}}
\newcommand{\ol}[1]{\overline{#1}}

\newcommand{\CC}{\mathbb{C}}
\newcommand{\RR}{\mathbb{R}}
\newcommand{\OO}{\mathcal{O}}
\newcommand{\ZZ}{\mathbb{Z}}

\newcommand{\eps}{\varepsilon}

\newcommand{\SO}{\mathrm{SO}}
\newcommand{\SL}{\mathrm{SL}}
\newcommand{\GL}{\mathrm{GL}}
\newcommand{\SU}{\mathrm{SU}}

\newcommand{\Spin}{\mathrm{Spin}}
\newcommand{\so}{\mathfrak{so}}
\renewcommand{\sl}{\mathfrak{sl}}
\renewcommand{\sp}{\mathfrak{sp}}
\newcommand{\gl}{\mathfrak{gl}}

\newcommand{\sub}{\subseteq}
\newcommand{\iso}{\cong}

\DeclareMathOperator{\tr}{tr}
\DeclareMathOperator{\rank}{rank}
\DeclareMathOperator{\coh}{Coh}
\DeclareMathOperator{\higgs}{Higgs}
\DeclareMathOperator{\bun}{Bun}
\DeclareMathOperator{\Gr}{Gr}
\DeclareMathOperator{\spec}{Spec}
\DeclareMathOperator{\res}{res}
\DeclareMathOperator{\EOM}{EOM}
\DeclareMathOperator{\id}{id}
\DeclareMathOperator{\dvol}{dvol}

\DeclareMathOperator{\sym}{Sym}
\DeclareMathOperator{\Flat}{Flat}
\DeclareMathOperator{\mhiggs}{mHiggs}
\DeclareMathOperator{\mon}{Mon}
\DeclareMathOperator{\diff}{Diff}
\DeclareMathOperator{\Hol}{Hol}
\DeclareMathOperator{\mhitch}{mHitch}
\DeclareMathOperator{\colim}{colim}

\newcommand{\map}{\ul{\mr{Map}}}
\newcommand{\qconn}{q\text{-Conn}}
\newcommand{\conn}{\text{-Conn}}
\newcommand{\epsconn}{\varepsilon\text{-Conn}}
\renewcommand{\d}{\mathrm{d}}
\newcommand{\fr}{\mathrm{fr}}
\newcommand{\red}{\mathrm{red}}
\newcommand{\ad}{\mr{ad}}
\newcommand{\Ad}{\mr{Ad}}
\newcommand{\HT}{\mr{HT}}

\title{Multiplicative Hitchin Systems and Supersymmetric Gauge Theory}
\author{Chris Elliott \and Vasily Pestun}
\date{\today}

\begin{document}

\maketitle 
\begin{abstract}
Multiplicative Hitchin systems are analogues of Hitchin's integrable system based on moduli spaces of $G$-Higgs bundles on a curve $C$ where the Higgs field is group-valued, rather than Lie algebra valued.  We discuss the relationship between several occurences of these moduli spaces in geometry and supersymmetric gauge theory, with a particular focus on the case where $C = \bb{CP}^1$ with a fixed framing at infinity.  In this case we prove that the identification between multiplicative Higgs bundles and periodic monopoles proved by Charbonneau and Hurtubise can be promoted to an equivalence of hyperk\"ahler spaces, and analyze the twistor rotation for the multiplicative Hitchin system.  We also discuss quantization of these moduli spaces, yielding the modules for the Yangian $Y(\gg)$ discovered by Gerasimov, Kharchev, Lebedev and Oblezin.
\end{abstract}

\section{Introduction}
In this paper, we will compare five different perspectives on a single moduli space: three coming from geometry and representation theory, and two coming from supersymmetric gauge theory.  By leveraging these multiple perspectives we'll equip our common moduli space with the structure of a completely integrable system which we call the \emph{multiplicative Hitchin system}.  Many of the structures associated with the ordinary Hitchin system, such as the brane of opers, have parallels in the multiplicative setting.  

Let us begin by presenting these five perspectives, before explaining their inter-relationships.  Let $C$ be a Riemann surface, and let $G$ be a reductive complex Lie group.

\subsection{Multiplicative Higgs Bundles}
We'll first describe the moduli space that motivates the ``multiplicative Hitchin system'' terminology.  Say that a \emph{multiplicative Higgs bundle} on $C$ is a principal $G$-bundle $P$ on $C$ with a meromorphic automorphism $g \colon P \to P$, or
equivalently, a meromorphic section of the group-valued adjoint bundle $\mr{Ad}_P$.  We refer to these by analogy with Higgs bundles, which consist of a meromorphic section of the Lie algebra (co)adjoint
bundle, rather than its Lie group valued analogue \footnote{Usually
  Higgs bundles on a curve are defined to be sections of the coadjoint
  bundle twisted by the canonical bundle.  There isn't an obvious
  replacement for this twist in the multiplicative context, but we'll
  mostly be interested in the case where $C$ is Calabi-Yau, and
  therefore this twist is trivial.}.  Just like for ordinary Higgs
bundles, the space of multiplicative meromorphic Higgs bundles with
arbitrary poles is infinite-dimensional, but we can cut it down to a
finite-dimensional space by fixing the locations of the singularities,
and their local behaviour at each singular point.  For example, for $G = \GL_1$ this local behaviour is given by a degree at each puncture: we restrict to multiplicative Higgs fields $g$ such that, near each singular point $z_i$, $g$ is given by the product of a local holomorphic function and $(z-z_i)^{n_i}$ for some integer $n_i$.  More generally the local ``degrees'' are described by orbits in the
affine Grassmannian $\Gr_G$: restrict the multiplicative Higgs field
to a formal neighborhood of the puncture, to obtain an element of the
algebraic loop group $LG = G(\!(z)\!)$.  This element is well-defined
up to the action of $L^+G = G[[z]]$ on the left and right, and so
determines a well-defined double coset in
$L^+G \!\!\bs\!\! LG/L^+G = L^+G \!\!\bs\!\! \Gr_G$.  Orbits in the
affine Grassmannian are in canonical bijection with dominant coweights
of $\gg$, so fixing a ``degree'' at the punctures means fixing a dominant
coweight $\omega^\vee_{z_i}$ at each puncture $z_i$.
 
 Fix a curve $C$ and consider the \emph{moduli space of multiplicative Higgs bundles} on $C$ with prescribed singularities lying at the points $D = \{z_1, \ldots, z_k\}$, with degrees $\omega_{z_1}^\vee, \ldots, \omega_{z_k}^\vee$ respectively.  We denote this moduli space by $\mhiggs_G(C,D,\omega^\vee)$.
 
 Moduli spaces of multiplicative Higgs bundles have been studied in
 the literature previously, though they've more often been referred to
 simply as ``moduli spaces of $G$-pairs''.  The earliest mathematical  descriptions of these moduli spaces that we're aware of is in the work of Arutyunov, Frolov, and Medvedev \cite{Arutyunov:1996vy,Arutyunov:1996uw} in the integrable systems literature, for $G = \GL(n)$-pairs of zero degree on an elliptic curve, extended by Braden, Chernyakov, Dolgushev, Levin, Olshanetsky
 and Zotov \cite{Chernyakov:2007,Braden:2003gf} to $\GL(n)$-pairs of non-zero degree.  For a general reductive group $G$, moduli spaces of $G$-pairs were studied in the work of Hurtubise and Markman \cite{HurtubiseMarkman, HurtubiseMarkman1}, from a more geometric perspective.  Their work was motivated by the desire to understand the elliptic Skylanin Poisson brackets on
 loop groups, and to that end they too mainly studied the case where the
 underlying curve $C$ is elliptic -- we'll refer back to their work
 throughout the paper.  

 From a different perspective, the moduli space of multiplicative Higgs bundles was also studied by Frenkel and Ng\^o \cite[Section 4]{FrenkelNgo} as part of their geometrization of the trace formula.  Further work was done following their definition by Bouthier \cite{Bouthier2, Bouthier1}, who gave an alternative construction of the moduli space as the space of sections of a vector bundle associated to a fixed singuarity datum defined using the Vinberg semigroup (see Remark \ref{Bouthier_remark}).  
 
 The moduli space of multiplicative Higgs bundles admits a version of the Hitchin fibration, just like the ordinary moduli space of Higgs bundles.  Intuitively, this map is defined using the Chevalley map $\chi \colon G \to T/W$ (for instance if $G = \GL_n$, the map sending a matrix to its characteristic polynomial): the multiplicative Higgs field is, in particular, a meromorphic $G$-valued function, and the multiplicative Hitchin fibration post-composes this function with $\chi$.  See Section \ref{Hitchin_system_section} for a more detailed definition.
 
 While the structure of the multiplicative fibration makes sense for any curve, it only defines a completely integrable system -- in particular there is only a natural symplectic structure on the total space -- in the very special situation where the curve $C$ is Calabi-Yau.  That is, $C$ must be either $\CC$, $\CC^\times$ or an elliptic curve.  In this paper we'll be most interested in the ``rational'' case, where $C = \CC$.  We use a specific boundary condition at infinity: we consider multiplicative Higgs bundles on $\bb{CP}^1$ with a fixed framing at infinity.  In other words we study the moduli space $\mhiggs^{\text{fr}}_G(\bb{CP}^1,D,\omega^\vee)$ of multiplicative Higgs bundles on $\bb{CP}^1$ with prescribed singularities lying at the points $D = \{z_1, \ldots, z_k\}$, with local data at the singuarities encoded by $\omega_{z_1}^\vee, \ldots, \omega_{z_k}^\vee$ respectively, and with a fixed framing at $\infty$.  This moduli space is a smooth algebraic variety with dimension depending on the coweights we chose.  In order to obtain a genuine integrable system we will need to take the Hamiltonian reduction of our moduli space with respect to the adjoint action by the maximal torus $T$ of $G$.  Until we do this the moduli space has the structure of a Lagrangian fibration where the generic fibers look like a product of a compact torus and an affine torus -- a copy of $T$ itself.  We call this Hamiltonian reduction the ``reduced'' moduli space of multiplicative Higgs bundles.
 
 In the case where $C$ is Calabi-Yau the (reduced) moduli space of multiplicative Higgs bundles admits a one-parameter deformation; from the above point of view, the moduli space is hyperk\"ahler so we can rotate the complex structure within the twistor sphere.  The result of this deformation also has a natural interpretation in terms of \emph{$\eps$-connections}.  That is, $G$-bundles $P$ on $C$ equipped with an isomorphism $P \to \eps^*P$ between $P$ and its translate by $\eps \in \CC$.  We can summarize this identification as follows.
 
 \begin{theorem} [See Theorem \ref{HK_rotation_thm}]
 The moduli space $\mhiggs^{\text{red}}_G(\bb{CP}^1,D,\omega^\vee)$, given the complex structure at $\eps \in \CC \sub \bb{CP}^1$ in the twistor sphere can be identified (in the limit where an auxiliary parameter is taken to $\infty$) with the moduli space $\epsconn^{\text{red}}_G(\bb{CP}^1,D,\omega^\vee)$ of framed $\eps$-connections on $\bb{CP}^1$.
 \end{theorem}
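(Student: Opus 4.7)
The plan is to exploit the hyperk\"ahler structure on $\mhiggs^{\text{red}}_G(\bb{CP}^1, D, \omega^\vee)$ coming from its identification (established earlier in the paper, as the hyperk\"ahler strengthening of the Charbonneau-Hurtubise correspondence) with a moduli space of framed periodic monopoles, and then to interpret the twistor rotation in algebraic terms. Once the hyperk\"ahler structure is in hand, the twistor sphere automatically produces a $\bb{CP}^1$-family of complex structures, and the task reduces to describing the rotated complex structure at $\zeta = \varepsilon$ as a classical moduli problem.

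First I would work on the monopole side, where the three complex structures arise from the three real directions of $\bb{R}^2 \times S^1$. The ``multiplicative Higgs'' complex structure at $\zeta = 0$ pairs one $\bb{R}$-direction with the $S^1$ direction to form $\bb{C}^\times$, leaving the second $\bb{R}$-direction as the ``real time'' of the Bogomolny equation. A rotation by $\zeta = \varepsilon$ mixes the Higgs component with the connection component in the modified Bogomolny system, and I would translate this rotated PDE back into an algebraic moduli problem by a twistor-equivariant version of the Charbonneau-Hurtubise dimensional reduction: the outcome should be a $G$-bundle $P$ on $\bb{CP}^1$ together with an isomorphism $P \to T^*_\varepsilon P$, where the shift $T_\varepsilon$ emerges from combining the $S^1$-holonomy with the translation by $\varepsilon$ in the rotated holomorphic direction. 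The degeneration $\varepsilon \to 0$ recovers the multiplicative Higgs description, providing a consistency check, and the framing at infinity is preserved because the twistor rotation acts on the metric and complex structure but not on the boundary data. The dominant-coweight singularities $\omega^\vee_{z_i}$ match on the two sides because the local behavior near a puncture, viewed as a pair of trivializations on overlapping formal punctured neighborhoods, is insensitive to whether one encodes it as a meromorphic automorphism or as an $\varepsilon$-shift isomorphism -- both are controlled by the same $L^+G$-double coset in $LG$.

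The auxiliary-parameter limit enters because a pure hyperk\"ahler rotation changes only the complex structure on a fixed real manifold, so to obtain an honest algebraic isomorphism with the $\varepsilon$-connection moduli space one must rescale, in direct analogy with Deligne's presentation of $\lambda$-connections as scalings of the Higgs field in which sending a rescaling parameter to $\infty$ isolates the pure connection data. The main obstacle I anticipate is making this limit precise in the multiplicative setting: one must verify that rescaled families of solutions to the rotated Bogomolny equation converge to genuine $\varepsilon$-connection data uniformly with respect to the prescribed local behavior at each $z_i$, and check that no pathological boundary contributions arise from the noncompactness of $\bb{CP}^1 \setminus D$ at $\infty$ -- this is exactly the point at which the framing at infinity becomes essential to tame the analysis.
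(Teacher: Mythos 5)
Your overall strategy --- pass to the periodic-monopole model, interpret the twistor rotation as changing which monopole moduli problem one is looking at, and then apply a twisted Charbonneau--Hurtubise correspondence to land on $\eps$-connections --- is the same skeleton as the paper's proof of Theorem \ref{HK_rotation_thm}, and your identification of the target (a bundle $P$ with an isomorphism $P \to \eps^* P$, with singularity data matched through the same $L^+G$-double cosets) is correct. But there are two substantive problems. First, your description of the $\zeta = 0$ complex structure is backwards for the rational case: the curve is $C = \bb{CP}^1 \supset \RR^2$, so the two $\RR$-directions combine into the holomorphic coordinate $z$ and the $S^1$ is the direction along which one takes the holonomy defining the multiplicative Higgs field. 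The structure you describe (one $\RR$-direction paired with $S^1$ to give $\CC^\times$) is a different point of the twistor sphere, the one relevant to the Nahm-transform/ordinary-Hitchin-on-$\CC^\times$ picture of Example \ref{Nahm_example}, not the multiplicative Higgs point.

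Second, and more seriously, the mechanism you propose for the limit is not the right one. The auxiliary parameter in the theorem is the circle radius $R$, and the limit is $R \to \infty$ with $2\pi \zeta R = \eps$ held fixed, so the twistor parameter $\zeta$ actually tends to $0$. The reason a limit is needed at all is geometric: writing $\RR^2 \times S^1_R = \RR^3 / L_{R,0}$, the twistor rotation by $\zeta$ replaces $L_{R,0}$ by the rotated lattice $\zeta(L_{R,0})$, which is a \emph{sheared} lattice --- but with shear parameter $\eps/(1+|\zeta/2|^2)$ and radius $R(1-|\zeta/2|^2)/(1+|\zeta/2|^2)$ rather than exactly $(\eps, R)$. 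The paper's proof consists of the explicit estimate showing that the resulting discrepancy between the rotated holomorphic symplectic form and the one on $\mon^{\red}_G(\bb{CP}^1\times_\eps S^1_R, D, \omega^\vee)$ is of order $R\lvert\zeta\rvert^2 = O(R^{-1})$ and hence vanishes pointwise as $R \to \infty$. Your appeal to Deligne's $\lambda$-connections points in the wrong direction: in that picture the twistor family produces $\lambda$-connections exactly, with no limit, and the passage from a $\lambda$-connection to a flat connection is an honest rescaling isomorphism rather than a degeneration of an auxiliary geometric parameter. Without identifying $R$ as the parameter being sent to infinity and the rotated-lattice-versus-sheared-lattice comparison as the quantity that must converge, the ``rescale and take a limit'' step has no content, and the convergence you flag as ``the main obstacle'' is precisely the part of the argument that remains to be supplied.
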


\subsection{Moduli Space of Monopoles}

If we focus on the example where a real three-dimensional Riemannian
manifold $M = C \times S^1$ splits as the product of a compact Riemann
surface and a circle, for the rational case $C = \CC$ the moduli space
of monopoles on $M$ was studied by Cherkis and Kapustin
\cite{CherkisKapustin1,CherkisKapustin2,CherkisKapustin3} from the
perspective of the Coulomb branch of vacua of 4d $\mathcal{N}=2$
supersymmetric gauge theory. For general $C$ the moduli space of
monopoles on $M = C \times S^{1}$ was studied by
Charbonneau--Hurtubise \cite{CharbonneauHurtubise}, for
$G_\RR = \mr U(n)$, and by Smith \cite{Smith} for general $G$.  These
moduli spaces were also studied where $C = \CC$ with more general boundary conditions than the framing at infinity
which we consider.  This example has also been studied in the
mathematics literature by Foscolo \cite{FoscoloDef, FoscoloThesis} and
by Mochizuki \cite{Mochizuki} -- note that considering weaker boundary
conditions at infinity means they need to use far more sophisticated
analysis in order to work with hyperk\"ahler structures on their
moduli spaces than we'll consider in this paper.

The connection between periodic monopoles and multiplicative Higgs bundles is provided by the following theorem.
\begin{theorem}[Charbonneau--Hurtubise, Smith]
  There is an analytic isomorphism 
  \begin{equation*}
    H: \mon_G^\fr(C \times S^1,D \times\{0\},\omega^\vee) \to \mhiggs_G^{\text{fr}}(C,D,\omega^\vee)
  \end{equation*}
between the moduli space of (polystable) multiplicative $G$-Higgs bundles on a compact curve $C$ with singularities at points $z_1, \ldots, z_k$ and residues $\omega^\vee_{z_1}, \ldots, \omega^\vee_{z_k}$ and the moduli space of periodic monopoles on $C \times S^1$ with Dirac singularities at $(z_1,0), \ldots, (z_k,0)$ with charges $\omega^\vee_{z_1}, \ldots, \omega^\vee_{z_k}$
\end{theorem}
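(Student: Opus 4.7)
The plan is to mimic the classical Hitchin--Kobayashi / Donaldson--Uhlenbeck--Yau correspondence, but in the periodic monopole setting. The forward map $H$ will be constructed algebraically by complexification and $S^1$-holonomy; the genuinely hard step will be constructing the inverse, which requires solving a nonlinear PDE for a Hermitian metric.

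To build $H$, I start with a monopole $(A, \Phi)$ on $M = C \times S^1$ for a principal $G_\RR$-bundle $P_\RR$, where $G_\RR \subset G$ is a compact real form. Decompose $A = A_C + A_t\, dt$ and form the two operators $\bar D_C := \bar\partial_C + A^{0,1}_C$ and $D_t := \partial_t + A_t + i\Phi$ acting on the complexified bundle $P_\CC := P_\RR \times_{G_\RR} G$. The Bogomolny equation $F_A = *d_A \Phi$ is equivalent to $[\bar D_C, D_t] = 0$ (a holomorphicity statement) together with a real moment-map condition (the unitarity piece). Restricted to $\{0\} \times C$, the operator $\bar D_C$ endows $P_\CC$ with the structure of a holomorphic $G$-bundle, and the parallel transport of $D_t$ around the $S^1$-factor produces a $\bar D_C$-holomorphic automorphism $g$ of $P_\CC$, defined away from the singularity locus $D \times \{0\}$. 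Real gauge transformations of the monopole correspond to holomorphic isomorphisms of the pair $(P_\CC, g)$, and the chosen framing on both sides is preserved, so $H$ descends to a well-defined map on moduli.

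Next I match the singularity data. Near $(z_i, 0)$, the standard Dirac monopole model with magnetic charge $\omega^\vee_{z_i}$ forces the $S^1$-holonomy $g$ to factor, on a formal neighborhood, as a product of a regular holomorphic $G$-valued function with $z^{\omega^\vee_{z_i}}$. This places $g$ exactly in the $L^+G \times L^+G$ orbit labelled by $\omega^\vee_{z_i}$, i.e.\ in the corresponding stratum of $L^+G \backslash \Gr_G$, matching the prescription on the multiplicative Higgs side. Verifying this asymptotic factorization is a local calculation using the explicit Dirac model.

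The main obstacle is the inverse map, the Hitchin--Kobayashi step. Given a polystable multiplicative Higgs bundle $(P_\CC, g)$ with the prescribed singularities, I need to produce a reduction of structure group $h$ of $P_\CC$ to $G_\RR$ on $M \setminus D \times \{0\}$ such that the Chern connection of $h$ together with the Hermitian logarithm of $g$ furnished by $h$ solve the Bogomolny equation, with the correct Dirac asymptotics at the punctures. Following the pattern of Donaldson, Uhlenbeck--Yau, and Hitchin, this is achieved by producing a critical point of a Donaldson-type functional on the space of Hermitian metrics. The essential analytic inputs will be $C^0$ and $L^p$ a priori estimates valid near the Dirac singularities (and compatible with the framing at infinity when $C$ is non-compact), together with the use of polystability to rule out escape to infinity along destabilizing subobjects; uniqueness of $h$ modulo real gauge then follows from convexity of the functional along geodesics in the space of metrics. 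Carrying out this singular boundary-value analysis is the heart of Charbonneau--Hurtubise's argument for $G_\RR = U(n)$ and of Smith's extension to general reductive $G$.
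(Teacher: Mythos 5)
Your proposal is correct and follows essentially the same route as the paper (and the cited Charbonneau--Hurtubise/Smith arguments): the forward map is the $S^1$-holonomy of $\nabla_t - i\Phi$ after splitting the Bogomolny equation into a holomorphicity condition and a real moment-map equation, the charges are matched by the local Dirac model, and the inverse is the hard Hitchin--Kobayashi step driven by polystability. The only structural detail you elide is that the existence result is actually invoked in the form of Simpson's Hermitian--Yang--Mills theorem for an $S^1$-invariant holomorphic bundle on the K\"ahler surface $C \times T^2$ (built by suspending $(P,g)$), which is then dimensionally reduced to the monopole, rather than by running a Donaldson-type functional directly on the $3$-manifold.
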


The morphism $H$,  discussed first by Cherkis and Kapustin in \cite{CherkisKapustin2} and later in
\cite{CharbonneauHurtubise}, \cite{Smith}, \cite{NekrasovPestun}
defines the value of the multiplicative Higgs field at $z \in C$ to be equal to
the holonomy of the monopole connection (complexified by the scalar field) along the fiber circle $\{z\} \times S^{1} \subset M$.

One of our main goals in this paper is to compare -- in the rational case -- the symplectic structure on the moduli space of multiplicative Higgs fields analogous to the symplectic structure defined by Hurtubise and Markman in the elliptic case, and the hyperk\"ahler structure on the moduli space of periodic monopoles, defined by realizing the moduli space of periodic monopoles as a hyperk\"ahler quotient.  We prove that they coincide.

\begin{theorem}[See Theorem \ref{symplectic_comparison_thm}]
  The holomorphic symplectic structure on
  $\mon_G^\fr(\bb{CP}^1 \times S^1,D \times\{0\},\omega^\vee)$ is isomorphic to the pullback of the natural symplectic structure on
  $\mhiggs_G^{\text{fr}}(\bb{CP}^1,D,\omega^\vee)$ under the holonomy morphism $H \colon \mon_G^\fr(\bb{CP}^1 \times S^1,D \times\{0\},\omega^\vee) \to \mhiggs_G^{\text{fr}}(\bb{CP}^1,D,\omega^\vee)$  
\end{theorem}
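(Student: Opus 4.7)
The plan is to compute both symplectic forms on representatives of tangent vectors and then match them via the differential of the holonomy map $H$. First, realize $\mon_G^\fr(\bb{CP}^1 \times S^1, D \times \{0\}, \omega^\vee)$ as the hyperk\"ahler quotient of the affine space of pairs $(A,\Phi)$ on $\bb{CP}^1 \times S^1$ (with the prescribed Dirac singularities and framing at infinity). In the complex structure on $C$, a tangent vector is represented by a linearized pair $(\alpha, \phi) \in \Omega^1(C \times S^1, \ad P) \oplus \Omega^0(C \times S^1, \ad P)$ solving the linearized Bogomolny equations, modulo infinitesimal gauge transformations. The holomorphic symplectic form $\omega_\mon$ is the one descended from the standard holomorphic Chern-Simons 2-cocycle for the complex connection $\mc{A} = A + i\Phi\, d\theta$ on $C \times S^1$, and after a choice of holomorphic coordinate on $C$ it can be written as an explicit integral over $C \times S^1$ of traces of pairings of $(\alpha,\phi)$ with itself, wedged with $d\theta$ and $dz$.

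On the multiplicative Higgs side, represent the tangent space to $\mhiggs_G^\fr(\bb{CP}^1, D, \omega^\vee)$ at $(P,g)$ by pairs $(\bar\beta, \gamma)$, where $\bar\beta$ is a deformation of the $\bar\partial$-operator on $P$ and $\gamma$ encodes a deformation of the meromorphic $G$-valued section $g$. The natural symplectic form is an adaptation to the rational, framed case of the Hurtubise-Markman form, presented as a sum over the punctures of residues of traces of pairings of $g^{-1}\gamma_i$ and $\bar\beta_i$, plus a global term.

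The core computation is the differential of $H$. The standard variational formula for holonomy gives
\[
dH(\alpha,\phi)(z) \;=\; g(z)\cdot \int_{S^1} \Ad_{\mr{hol}_\theta^{-1}}\bigl(\alpha_\theta + i\phi\bigr)(z,\theta)\, d\theta,
\]
together with the induced variation of the complex structure on $P|_C$ coming from $\alpha^{0,1}$. Substitute these expressions into the Hurtubise-Markman form, then use Fubini and Stokes' theorem in the $\theta$-direction (together with $\bar\partial$-Stokes on $C$) to convert the sum of residues at the $z_i$ into a three-dimensional integral over $C \times S^1$. An integration by parts in $\theta$, combined with the linearized Bogomolny equations and the gauge-fixing conditions, identifies the resulting expression term-by-term with $\omega_\mon$.

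The main obstacle is matching the local contributions near the Dirac singularities, where both sides are singular: the holonomy has logarithmic behaviour around each puncture while the Higgs side contributes a residue. A model calculation in a formal neighborhood of each $z_i$, organized by the coweight $\omega^\vee_{z_i}$ labelling the affine Grassmannian orbit, should yield the matching at each puncture. The framing at infinity is designed precisely to suppress any boundary contribution at $\infty$, which is a routine verification, and the passage between gauge representatives on the two sides requires only showing that the gauge-fixing and linearized-Bogomolny constraints are mapped correctly by $dH$.
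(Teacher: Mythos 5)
Your overall strategy --- compute $\d H$, substitute into the residue formula for the symplectic form on $\mhiggs^\fr_G(\bb{CP}^1,D,\omega^\vee)$, and use Stokes in the circle direction together with $\ol\dd$-Stokes on the curve to identify the result with the three-dimensional integral defining the monopole form --- is the same as the paper's, just run in the opposite direction (the paper starts from the $3$d integral and applies Stokes patch-by-patch to reduce to contour integrals). Your integral formula for the variation of the holonomy is correct and, when evaluated on exact representatives, reduces to the description the paper actually uses.

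There is, however, a genuine gap at exactly the step you flag as the main obstacle. You propose to match the local contributions at each Dirac singularity by a model calculation in a formal neighbourhood of $z_i$ organized by $\omega^\vee_{z_i}$, and leave it as something that ``should yield the matching.'' That local computation is both unperformed and, in the paper's treatment, unnecessary: the paper fixes an atlas $\{U_0,U_1,\dots,U_k,U_\infty\}$ adapted to $D$, uses the vanishing of the curvature of $\ol\dd_{A_{0,1}}+\d_{\mc A}$ (i.e.\ the complex Bogomolny equation) to write each tangent vector on a patch as $\d_{\mr{mon}}b_i$ for a local potential $b_i$ on $U_i\times(0,2\pi)$, so that $\d H$ lands directly in the two-frame description $(X_i^L,X_i^R)=(b_i(2\pi),-b_i(0))$ of sections of $\ad(g)$; all boundary contour integrals are then taken over $\dd U_i$, away from the singularities, and the only delicate inputs are the gauge choice trivializing the deformation of the holomorphic structure on the slice $t=0$ (so that the $X_i^{L/R}$ are holomorphic and the $t$-boundary terms drop) and the independence of the residue pairing from the choice of equivalence frame. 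Without some such device your argument does not close: the ``logarithmic behaviour of the holonomy'' you invoke is not the relevant structure here --- the Higgs field has a zero or pole of type $\omega^\vee_{z_i}$ and the tangent vectors live in $\ad(g)$, which is precisely what passing between left and right equivalence frames is designed to handle --- so the singularity matching is the content you would still need to supply.
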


In particular, after Hamiltonian reduction by the centralizer $T$ of the framing value $g_\infty$, the symplectic structure on the multiplicative Higgs moduli space extends to a hyperk\"ahler structure, as we discussed in point 1 above.

\subsection{Poisson-Lie Groups}
The third perspective that we'll consider connects our moduli spaces directly to the theory of Poisson Lie groups and Lie bialgebras, and leads to an interesting connection to quantum groups upon quantization.  By the \emph{rational Poisson Lie group} we'll mean the group $G_1[[z^{-1}]]$ consisting of $G$-valued power series in $z^{-1}$ with constant term $1$.  More precisely this is an ind-algebraic group: it is expressed as a direct limit of algebraic varieties $G_1[[z^{-1}]]_n$ with an associative multiplication $G_1[[z^{-1}]]_m \times G_1[[z^{-1}]]_n \to G_1[[z^{-1}]]_{m+n}$.  The Poisson structure on this group can be thought of as coming from a Manin triple: specifically the triple $(G(\!(z^{-1})\!), G_1[[z^{-1}]], G[z])$, where $G(\!(z)\!)$ is equipped with the residue pairing (see Section \ref{quantization_section} for more details).  

We claim that our moduli spaces $\mhiggs_G^{\text{fr}}(\bb{CP}^1,D,\omega^\vee)$ correspond to \emph{symplectic leaves} in the rational Poisson Lie group, extending a classification result of Shapiro \cite{Shapiro} for the groups $G = \SL_n$ and $G = \GL_n$.  More specifically, we prove the following.
\begin{theorem}[See Theorem \ref{theorem:symplectic_leaf}]
The map $\mhiggs^\fr_G(\bb{CP}^1,D,\omega^\vee) \to G_1[[z^{-1}]]$, defined by restricting a multiplicative Higgs field to a formal neighborhood of $\infty$, is Poisson.  That is, the pullback of the Poisson structure on $\OO(G_1[[z^{-1}]])$ coincides with the Poisson bracket on $\OO(\mhiggs^\fr_G(\bb{CP}^1,D,\omega^\vee))$. 
\end{theorem}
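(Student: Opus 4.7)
The plan is to recognize both Poisson brackets as instances of the classical Sklyanin bracket attached to the rational $r$-matrix $r(z-w) = \Omega/(z-w)$, with $\Omega \in \gg \otimes \gg$ the Casimir element associated to an invariant bilinear form on $\gg$. First I would use the framing at $\infty$ together with the Birkhoff decomposition on $\bb{CP}^1$ to represent a framed multiplicative Higgs bundle by a rational $G$-valued function $g(z)$ on $\bb{CP}^1$ whose singularity divisor is prescribed by $(D,\omega^\vee)$ and whose Laurent expansion at $\infty$ lies in the coset $g_\infty \cdot G_1[[z^{-1}]]$. Under this identification the restriction map is simply Laurent expansion in powers of $z^{-1}$, exhibiting $\mhiggs^\fr_G(\bb{CP}^1,D,\omega^\vee)$ as a locally closed sub-ind-scheme of a translate of $G_1[[z^{-1}]]$.

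A standard Manin-triple-to-$r$-matrix computation then shows that the Poisson bracket on $G_1[[z^{-1}]]$ induced by the triple $(G(\!(z^{-1})\!), G_1[[z^{-1}]], G[z])$ with the residue pairing is governed by the formula $[r(z-w), g(z) \otimes g(w)]$ on matrix elements. I would next verify that the Poisson structure on the moduli space, defined earlier in the paper in the manner of Hurtubise and Markman, is given by the same formula when pulled back to the rational representative $g(z)$. The Hurtubise--Markman construction presents the Poisson tensor as a sum of local residue contributions at the singularities in $D$; the key move is to repackage this sum as a single contour integral of $r(z-w)\cdot(g(z)\otimes g(w))$ along a loop in $\bb{CP}^1$ enclosing all of $D$, then to deform the contour past $\infty$. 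The framing condition guarantees that the residue at infinity vanishes, so what remains is precisely the Sklyanin bracket evaluated on the Laurent expansion at $\infty$.

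The main obstacle is this contour manipulation and the associated residue bookkeeping. The framing at $\infty$ enters in an essential way: it eliminates the complementary Lagrangian $G[z]$ in the Manin triple as a source of additional contributions, without which one would pick up extra terms from the behaviour of $g$ at infinity. Once the two Poisson tensors are recognized as giving the same formula on $g(z)$, the theorem follows immediately. This viewpoint also clarifies why the image should in fact be a symplectic leaf of $G_1[[z^{-1}]]$, recovering Shapiro's description in type $A$, though that stronger statement is not what is being asserted here.
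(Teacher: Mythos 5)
Your overall framework is the right one --- the paper also works with the rational $r$-matrix $r = \Omega/(z-w)$, the Manin triple $(G(\!(z^{-1})\!), G[z], G_1[[z^{-1}]])$, the identification of framed multiplicative Higgs bundles with rational $G$-valued functions, and a contour-deformation/residue argument over a cover of $\bb{CP}^1$ adapted to $D \cup \{\infty\}$. But there is a genuine gap in the logical structure of your argument.

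You treat the Poisson structure on $\mhiggs^\fr_G(\bb{CP}^1,D,\omega^\vee)$ as an independently given object ``defined in the manner of Hurtubise and Markman,'' and reduce the theorem to checking that two known tensors agree. In the rational case the paper has no such independent input: Hurtubise--Markman construct their structure on elliptic curves, and what the paper does instead is write down a \emph{candidate} bilinear form $\Omega$ on the tangent space as a sum of residues over $D \cup \{\infty\}$ (using the two-term deformation complex $\gg_P \to \ad(g)$ twisted by $\OO(-\infty)$), and then \emph{deduce} that $\Omega$ is closed and non-degenerate from the fact that the moduli space is a symplectic leaf. Concretely, the proof requires two containments that your proposal does not address: (a) the Hamiltonian vector field of an evaluation function $\phi_u$ on $G_1[[z^{-1}]]$ is tangent to the image of the moduli space --- this is a nontrivial regularity check at the points of $D$, at $u$, and at $\infty$ (Lemma \ref{phitoX}), and without it the ``pullback of the Poisson structure'' is not even a well-defined bracket on the subvariety; and (b) every tangent vector to the moduli space is the Hamiltonian vector field of some function on $G_1[[z^{-1}]]$, which the paper proves by an explicit contour-integral formula for a Hamiltonian potential (Lemma \ref{xtophi}). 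Only after (a), (b), and the compatibility computation does one know that the restricted bracket is Poisson and coincides with $\Omega$. Your plan, as stated, is circular on this point.

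A smaller issue: the framing does not enter by ``killing the residue at infinity'' of $r(z-w)\,(g(z)\otimes g(w))$. In the paper's bookkeeping $\infty$ is one of the points over which residues are summed, and the framing enters through the twist of the deformation complex by $\OO(-\infty)$ (deformations vanish at $\infty$), which is what makes the contour around $\partial U_0$ collapse onto residues at the evaluation points $u,v$ in the compatibility computation. Indeed the residue at infinity of adjoint-invariant evaluation functions is genuinely nonzero and is used later to generate the Hamiltonian torus action, so one should be careful not to discard it wholesale.
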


From this point of view it's natural to try to understand how our moduli spaces behave under deformation quantization.  The quantization of the rational Poisson Lie group is well known: it's modelled by the \emph{Yangian} quantum group $Y(\gg)$.  When we quantize our symplectic leaves, we obtain $Y(\gg)$-modules.  This follows from the work of Gerasimov, Kharchev, Lebedev and Oblezin \cite{GKLO} who constructed the $Y(\gg)$-modules in question and analyzed their classical limits.

\begin{remark}
The article \cite{GKLO}, as well as its generalizations such as the work of Kamnitzer, Webster, Weekes and Yacobi \cite{KWWY}, view these $Y(\gg)$-modules as quantizing certain moduli spaces of monopoles on $\RR^3$, and not of monopoles on $\RR^2 \times S^1$.  These two points of view are expected to be related in the limit where the radius of $S^1$ is sent to infinity, with the positions of the singularities in $S^1$ kept fixed (note that while the holomorphic symplectic structure on the moduli space is independent of this radius, the hyperk\"ahler structure \emph{will} be sensitive to it).  The moduli space of periodic monopoles in the rational case would in this limit be related to the moduli space of monopoles on $\RR^3$ (perhaps with certain restrictions as in the work of Finkelberg, Kuznetsov and Rybnikov on trigonometric Zastava spaces \cite{FKR}).
\end{remark}

\subsection{Moduli Spaces in Supersymmetric Gauge Theory} \label{intro_gauge_section}
Having discussed three points of view on our moduli space in geometric representation theory, let us move on to explain some of the ways in which multiplicative Higgs bundles / periodic monopoles arise naturally in the world of supersymmetric gauge theory.  The general program relating supersymmetric gauge theory and quantum integrable systems relevant for the present work arose from the work of Nekrasov and Shatashvili in \cite{Nekrasov:2009ui,Nekrasov:2009rc}.

Concretely, the moduli spaces of periodic monopoles (and implicitly its quantization) appeared in work of the second author, Nekrasov and Shatashvili \cite{NekrasovPestun, NekrasovPestunShatashvili} concerning the Seiberg-Witten integrable systems of 4d $\mc N=2$ ADE quiver gauge theories.  Let $\gg$ be a simple Lie algebra of ADE type.  One can define an $\mc N=2$ superconformal 4d gauge theory modelled on the Dynkin diagram of $\gg$.  To define this theory one must specify masses $m_{i,j}$ for fundamental hypermultiplets associated to the vertices of the Dynkin diagram, so $i = 1,\ldots,r$ varies over the vertices of the Dynkin diagram and $j=1, \ldots,w_i$ is an index parameterizing the number of hypermultiplets at each vertex.  Let us assume we're in the generic situation, where the masses are all distinct.

The paper \cite{NekrasovPestun} studied the Seiberg-Witten integrable system associated to this theory, so in particular the hyperk\"ahler moduli space $\mf P$, the Coulomb branch of the moduli space of vacua,  occuring as the target of the $\mc N=4$ supersymmetric sigma model obtained by compactifying the 4d theory on a circle.  In particular, it proved the following.

\begin{theorem}[{\cite[Section 8.1]{NekrasovPestun}}]
Let $G$ be a simple Lie group with ADE Lie algebra $\gg$.  The phase space $\mf P$ is isomorphic, as a hyperk\"ahler manifold, to the moduli space $\mon_G^\red(\RR^2 \times S^1,D \times\{0\},\omega^\vee)$ of periodic monopoles, where $D = \{m_{i,j}\}$, and where the residue $\omega^\vee_{m_{i,j}}$ is the fundamental coweight corresponding to the vertex $i$ of the Dynkin diagram of $\gg$.  The notation ``red'' indicates that we fix the value of the holonomy around $S^1$ at infinity, then perform Hamiltonian reduction by the adjoint action of the maximal torus.
\end{theorem}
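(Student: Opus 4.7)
The plan is to interpret the 3d Coulomb branch $\mf P$ as a complex integrable system in one distinguished complex structure, identify that integrable system with the multiplicative Hitchin fibration on the monopole moduli space, and then upgrade the identification across the twistor sphere to an isomorphism of hyperk\"ahler manifolds.

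First I would use the Seiberg-Witten solution of the 4d $\mc N=2$ ADE quiver gauge theory. Upon compactification on $S^1$ supersymmetry enhances to 3d $\mc N=4$, and the Coulomb branch $\mf P$ inherits a hyperk\"ahler structure. In the complex structure invariant under the $U(1)_R$ symmetry that rotates the three scalars of the vector multiplet, $\mf P$ is algebraically the Seiberg-Witten integrable system: it fibers over the 4d Coulomb branch with generic fiber the Jacobian of the Seiberg-Witten curve. For an ADE quiver theory with masses $m_{i,j}$, this curve is cut out by the vanishing of the characters of the fundamental representations of $\gg$ (one for each node $i$), viewed as meromorphic functions of a spectral parameter $z$ with prescribed singularities at the $m_{i,j}$ whose principal parts are dictated by the fundamental coweights $\omega^\vee_{m_{i,j}}$.

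Next I would match this with the multiplicative Hitchin fibration on $\mhiggs_G^\fr(\bb{CP}^1, D, \omega^\vee)$. The Chevalley map $\chi \colon G \to T/W$ turns a multiplicative Higgs field into a tuple of meromorphic functions on $\bb{CP}^1$ valued in the fundamental characters of $G$, and the base of the multiplicative Hitchin fibration is exactly the space of such tuples with the prescribed singularity data. This base is identified with the 4d Coulomb branch, and the generic fibers (compact torus times $T$) are identified with the Seiberg-Witten Jacobians. Hamiltonian reduction by the adjoint $T$-action, i.e.\ fixing the framing at infinity and quotienting, corresponds on the gauge theory side to fixing the holonomy of the photon around $S^1$ at infinity, which is the standard procedure that yields a genuine integrable system. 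The Charbonneau-Hurtubise-Smith theorem then transports this picture to $\mon_G^\red(\RR^2 \times S^1, D \times \{0\}, \omega^\vee)$ via the holonomy around the fiber $S^1$.

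The main obstacle is promoting this complex-analytic identification to a hyperk\"ahler one: matching Seiberg-Witten data pins down only one complex structure, whereas the hyperk\"ahler structure consists of the whole twistor $S^2$ of compatible complex structures and K\"ahler forms. To handle this I would invoke the twistor family: Theorem \ref{HK_rotation_thm} identifies the complex structure at $\eps$ in the twistor sphere on the multiplicative Higgs side with the moduli space of framed $\eps$-connections, and on the gauge theory side turning on the twistor rotation corresponds to turning on the Nekrasov-Shatashvili Omega-background with parameter $\eps$, whose effect on the Seiberg-Witten description is precisely to replace multiplicative Higgs fields by $\eps$-difference connections. Matching the two twistor families, together with a weak-coupling check of the $\SO(3)_R$-invariant metric at a single point on the Coulomb branch where both metrics are explicitly computable, suffices to extend the algebraic isomorphism to an isomorphism of hyperk\"ahler manifolds.
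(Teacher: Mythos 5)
The first thing to observe is that the paper offers no proof of this statement at all: it is imported from \cite[Section 8.1]{NekrasovPestun} and used as an external input, and the justification there is a physical derivation (the holonomy/spectral description of the Coulomb branch of the circle-compactified ADE quiver theory). So there is no internal argument to compare against step by step. That said, your reconstruction of the underlying reasoning --- Seiberg--Witten curves cut out by the fundamental characters with singularities at the masses $m_{i,j}$, identification of that data with the base and fibers of the multiplicative Hitchin fibration via the Chevalley map, transport to periodic monopoles through the Charbonneau--Hurtubise--Smith holonomy isomorphism, and Hamiltonian reduction by the adjoint torus to fix the holonomy at infinity --- is faithful to the logic of the cited reference and to how the present paper deploys the result.

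The genuine gap is in your final step, the upgrade from a complex-analytic to a hyperk\"ahler isomorphism. Matching the complex structure at each point of the twistor sphere, supplemented by a metric check at one point of the Coulomb branch, does not determine the hyperk\"ahler metric: to recover the metric from twistor data one needs an isomorphism of the full twistor spaces carrying the real structure and the twisted family of holomorphic symplectic forms along the twistor lines, not merely a pointwise identification of complex manifolds, and a one-point comparison of metrics does not propagate since distinct hyperk\"ahler metrics can induce the same family of complex structures. Moreover, the twistor identification you want to borrow, Theorem \ref{HK_rotation_thm}, is established only in the limit $R\to\infty$ with $2\pi\zeta R=\eps$ held fixed, whereas the hyperk\"ahler structure --- unlike the holomorphic symplectic one --- genuinely depends on the radius $R$ (the paper makes exactly this point in its remark comparing with the GKLO modules), so the two twistor families you propose to match do not obviously live at the same point of the $R$-family. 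Finally, the assertion that twistor rotation corresponds to turning on the Nekrasov--Shatashvili $\Omega$-background is itself a physical claim of the same epistemic standing as the theorem being proved, so invoking it is close to circular. None of this makes your outline wrong as physics --- it is essentially the argument of \cite{NekrasovPestun} --- but as a mathematical proof the hyperk\"ahler step is not closed, which is presumably why the paper states the result as a quoted theorem rather than proving it.
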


\begin{remark}
If one considers gauge theories that are not necessarily conformal, but only asymptotically free, the phase space is modelled by periodic monopoles that are permitted to have a singularity at infinity.  If $G_\RR = \SU(2)$ then this moduli space fits into the framework studied by Cherkis--Kapustin and by Foscolo.  We won't discuss this more complicated setting further in this paper.
\end{remark}

\begin{remark}[Elliptic fiber version]
  The analysis of \cite{NekrasovPestun} suggests an elliptic fiber
  generalization of our multiplicative story, corresponding to the
  phase space of a theory modelled on an \emph{affine} ADE quiver.
  The phase space of these theories is identified with a moduli space
  of doubly periodic instantons.  In our setting, the idea is the
  following.  We can think of the moduli space of multiplicative Higgs
  bundles as a moduli space of meromorphic functions into the adjoint
  quotient stack $G/G$, just like we can think of the moduli space of
  ordinary Higgs bundles as a moduli space of meromorphic functions
  into the adjoint quotient stack $\gg/G$ for the Lie algebra.  There
  is a rational/trigonometric/elliptic trichotomy extending these two
  examples: one studies the moduli stack of semistable $G$-bundles on
  a cuspidal, nodal or smooth elliptic curve.  That is:
\begin{align*}
\bun^{\mr{ss}}_G(E^{\mr{cusp}}) &\iso \gg/G \\
\bun^{\mr{ss}}_G(E^{\mr{nod}}) &\iso G/G \\
\bun^{\mr{ss}}_G(E_q) &\iso LG/_q LG.
\end{align*}
The last statement is a theorem of Looijenga (see e.g. \cite{Laszlo}): $LG/_q LG$ denotes the $q$-twisted adjoint quotient of the loop group $LG$: so $g(z)$ acts by $h(z) \mapsto g(qz)^{-1}h(z)g(z)$.  

Without singularities, therefore, the elliptic analogue of our multiplicative Hitchin system should be the space of maps from $C$
into $\bun^{\mr{ss}}_G(E_q)$: this moduli space is closely related to the moduli space of instantons on $C \times E_q$, i.e. doubly periodic instantons.  Moduli spaces of this form for the group $\SL_2$, including their hyperk\"ahler structures, have been studied by Biquard and Jardim \cite{BiquardJardim}.  In particular, the relationship between the elliptic version of the Hitchin system for $G = \SL_2$ and the space of doubly periodic $\SU(2)$-instantons is provided by \cite[Theorem 0.2]{BiquardJardim}.
\end{remark}

\subsection{Multiplicative $q$-Geometric Langlands Correspondence}
The final perspective we'll consider is perhaps the most interesting from our point of view, because it suggests that the categorical geometric Langlands conjecture of Beilinson and Drinfeld might admit a multiplicative analogue, built from the multiplicative Hitchin system instead of the ordinary Hitchin system.  Again, we'll describe a moduli space coming from supersymmetric gauge theory, but in a quite different setting to that of perspective 4 above.

We consider now a five-dimensional $\mc N=2$ supersymmetric gauge theory.  In order to extract interesting moduli spaces for geometric representation theory, we won't study its moduli space of vacua, but instead we'll \emph{twist} the theory, then look at the entire moduli space of solutions to the equations of motion in the twisted theory.  We'll review the idea behind twisting at the beginning of Section \ref{twist_section}, but very informally, we'll choose a supersymmetry $Q_\HT$ that squares to zero, and study the derived space of $Q_\HT$-invariant solutions to the equations of motion.  In Section \ref{twist_section} we'll sketch the following, which also follows from forthcoming work of Butson \cite{Butson}.

\begin{claimnum}
The twist by $Q_\HT$ of $\mc N=2$ super Yang-Mills theory with gauge group $G$, on the 5-manifold $C \times S^1 \times \RR^2$ where $C$ is a Calabi-Yau curve,and  with monopole surface operators placed at the points $(z_1,0), \ldots, (z_k,0)$ in $C \times S^1$ with charges $\omega^\vee_{z_1}, \ldots, \omega^\vee_{z_k}$ respectively, has the following stack of solutions to the equations of motion:
\[\mr{EOM}(C \times S^1 \times \RR^2) = T^*[1]\mhiggs(C,D,\omega^\vee)\]
where $T^*[1]X$ denotes the ``1-shifted cotangent space'' of $X$.
\end{claimnum}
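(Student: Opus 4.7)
The plan is to compute the HT-twist of the relevant 5d theory explicitly and to identify the resulting derived stack of solutions to the EOM with $T^*[1]\mhiggs(C, D, \omega^\vee)$. I would proceed in three steps: first describe the twisted BV theory, then analyze its EOM with $\RR^2$ collapsed, and finally account for the $\RR^2$-direction to obtain the $1$-shifted cotangent structure.

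For the first step, following the general recipe for HT twists of maximally supersymmetric gauge theories (in particular the forthcoming work of Butson cited in the excerpt), one expects that after twisting by $Q_\HT$ the fields of 5d $\mc{N}=2$ SYM on $C \times S^1 \times \RR^2$, with $C$ taken holomorphically and $S^1 \times \RR^2$ topologically, assemble into a superconnection
\[
\mc{A} \in \Omega^{0,\bullet}(C) \otimes \Omega^\bullet_{dR}(S^1 \times \RR^2) \otimes \gg[1]
\]
together with a BV antifield $\mc{B}$ valued in the Serre-dual complex, using the Calabi-Yau trivialization of $K_C$. The twisted action is a Chern-Simons-type pairing
\[
S(\mc{A},\mc{B}) = \int_{C \times S^1 \times \RR^2} \langle \mc{B},\, F(\mc{A}) \rangle, \qquad F(\mc{A}) = (\bar\partial_C + d_{S^1} + d_{\RR^2})\mc{A} + \tfrac12[\mc{A},\mc{A}].
\]

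For the second step, restricting to $\RR^2$-constant fields and setting $\mc{B} = 0$, the EOM reduces to a flatness condition for the remaining superconnection on $C \times S^1$. Any such solution has underlying $(0,1)$-connection on $C$ giving a holomorphic $G$-bundle $P$, together with a flat connection along $S^1$ valued in automorphisms of $P$. Taking the holonomy around $S^1$ produces a meromorphic automorphism $g \colon P \to P$, i.e. precisely a multiplicative Higgs field; the defect insertions at $(z_i, 0)$ with charges $\omega^\vee_i$ produce exactly the required pole type, so that the underlying $0$-truncation of the EOM recovers $\mhiggs(C,D,\omega^\vee)$.

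For the third step, the antifield $\mc{B}$ together with the non-trivial $\RR^2$-direction produce the $T^*[1]$ shift. By an AKSZ-style argument, $\EOM(C \times S^1 \times \RR^2)$ can be presented as a mapping stack from the de Rham shape of $\RR^2$ into a shifted version of the EOM on $C \times S^1$; as $\RR^2$ is an oriented $2$-dimensional topological manifold, this shifts the natural $(-1)$-shifted symplectic BV structure by $+2$ to a $1$-shifted symplectic structure, while the $\mc{B}$-zero modes freely generate the shifted cotangent fiber at each solution, yielding $T^*[1]\mhiggs$. The main technical obstacle is making this shifted cotangent identification precise: one must verify that the $1$-shifted symplectic structure inherited from the BV pairing agrees with the canonical one on $T^*[1]\mhiggs$, which I would carry out by comparing the residue pairing on $C$ used in the symplectic structure on $\mhiggs$ with the Serre-duality pairing between $\mc{A}$ and $\mc{B}$ appearing in the twisted action.
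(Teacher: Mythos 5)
Your proposal reaches the right answer at the same (physics-level) degree of rigor as the paper, but by a genuinely different route. The paper never works directly with a 5d BV complex: it computes the minimal and $Q_\HT$ twists of 6d $\mc N=(1,1)$ super Yang--Mills on $S \times \Sigma$ with $S = C \times E_q$ a Calabi--Yau surface, obtaining $T^*[-1]\map(S \times \Sigma_{\mr{dR}}, BG)$, and then produces the group-valued Higgs field by degenerating the elliptic fiber $E_q$ to a nodal curve and invoking $\bun^{\mr{ss}}_G(E^{\mr{nod}}) \iso G/G$; the shift from $T^*[-1]$ to $T^*[1]$ comes, as in your step three, from taking $\Sigma = \CC$ non-compact. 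You instead write the twisted theory directly in 5d as a holomorphic-topological BF-type theory with superconnection $\mc A \in \Omega^{0,\bullet}(C) \otimes \Omega^\bullet(S^1 \times \RR^2) \otimes \gg[1]$ and extract the multiplicative Higgs field as the $S^1$-holonomy of the flat part of $\mc A$ --- i.e.\ you realize $G/G$ as the derived loop space $\map(S^1_B, BG)$ rather than as bundles on a nodal curve. Your route has the advantage of making the group-valued field concrete and of connecting directly to the holonomy map $H$ used later in the paper; the paper's 6d route buys the connection to the rational/trigonometric/elliptic trichotomy ($E^{\mr{cusp}}$, $E^{\mr{nod}}$, $E_q$) and reduces the hard input to the better-understood minimal twist of 6d $(1,0)$ theory. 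Both arguments lean on the same unproven inputs: the identification of the twisted BV complex itself (deferred to Butson in the paper, and implicitly by you), and the purely heuristic treatment of the monopole surface operators, which in both cases is asserted to "produce the required pole type" rather than derived. One point worth tightening in your write-up: the AKSZ transgression along a \emph{non-compact} oriented $\RR^2$ requires compactly supported cochains (or a boundary condition at infinity) for the degree-$+2$ shift of the cotangent fiber to be correct; the paper glosses this identically, but since this is the step that turns $T^*[-1]$ into $T^*[1]$ it deserves at least a sentence.
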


This story leads us to a ``multiplicative'' analogue of the approach to geometric Langlands introduced by Kapustin and Witten \cite{KapustinWitten}.  When we take the radius of $S^1$ to zero we recover Kapustin's partially topological twist \cite{KapustinHolo} of 4d $\mc N=4$ super Yang-Mills, which degenerates to Kapustin-Witten's A- and B-topologically twisted theories.  By ``multiplicative geometric Langlands'' we mean a version of the geometric Langlands conjecture modelled on the multiplicative Hitchin system instead of the ordinary Hitchin system, where the radius of this circle is kept positive.  For more details on what this means, see Section \ref{Langlands_section}.

\begin{remark}[Derived geometry]
  We'll use the language of derived algebraic geometry in several places in this paper, mainly in Sections \ref{mhiggs_def_section} and \ref{twist_section}.  While we believe this perspective provides a clear way of thinking about the moduli spaces we're interested in studying, the derived language is not necessary when we state and prove the main results of this paper.  We hope that the majority of the paper is understandable without a derived geometry background.  There are several clear and references explaining the point of view of derived algebraic geometry, for instance the survey articles of To\"en \cite{ToenOverview,ToenSurvey}.  A comprehensive account of the theory of derived geometry can be found in the book of Gaitsgory and Rozenblyum \cite{GRvol1, GRvol2}.  We'll also refer a few times to the theory of shifted symplectic structures.  This theory was developed by Pantev, To\"en, Vaqui\'e and Vezzosi \cite{PTVV}, and we also recommend the explanations in \cite{Calaque}.  From the physical point of view, the ideas of derived geometry appear in classical field theory through the Batalin-Vilkovisky formalism, and through Pantev, To\"en, Vaqui\'e and Vezzosi's derived version of the AKSZ construction of symplectic structures on mapping spaces \cite{AKSZ}. 
\end{remark}

\subsection{Outline of the Paper}
The paper is divided into two parts.  In Part \ref{part1} we will investigate the symplectic geometry of the moduli space of multiplicative Higgs bundles, and in Part \ref{part2} we will study the connection between multiplicative Higgs bundles and periodic monopoles, and the hyperk\"ahler geometry of these moduli spaces.

We begin in Section \ref{mhiggs_def_section} by introducing and defining the moduli spaces we'll be studying in this paper: the moduli spaces of multiplicative Higgs bundles.  We discuss the trichotomy of rational, trigonometric and elliptic examples that are most relevant to us: where the moduli space has the structure of an integrable system.  We include a discussion of how the symplectic structures we're studying in this paper are expected to arise from the theory of derived symplectic geometry.

The following section, Section \ref{twist_section}, stands alone from the rest of the paper.  In this section we explain how the moduli space of multiplicative Higgs bundles appears when one studies a certain partially topological twist of 5d $\mc N=2$ supersymmetric gauge theory.  As a consequence, we can speculate on the existence of a multiplicative version of the geometric Langlands conjecture, using the work of Kapustin and Witten.

In Section \ref{symp_section} we construct a symplectic structure on the moduli space of multiplicative Higgs bundles in the rational case.  We do this by realizing the moduli space as a symplectic leaf in the infinite-dimensional Poisson Lie group $G_1[[z^{-1}]]$ of $G$-valued Taylor series with constant leading coefficient.

We conclude Part \ref{part1} with Section \ref{quantization_section}, in which we discuss the quantization of our moduli spaces.  We identify the moduli spaces of multiplicative Higgs bundles with symplectic leaves in the rational Poisson Lie group, and then after quantization we identify the quantized algebra of functions on our moduli spaces with modules for the Yangian as first constructed by Gerasimov, Kharchev, Lebedev and Oblezin. 

We begin Part \ref{part2} of the paper with Section \ref{periodic_monopole_section}, in which we introduce the other main player of the talk, the moduli space of periodic monopoles.  We explain how these moduli spaces arise as a hyperk\"ahler quotient, and discuss the relationship, following Charbonneau--Hurtubise \cite{CharbonneauHurtubise} and Smith \cite{Smith} with the multiplicative Higgs moduli space.

In Section \ref{symp_comparison_section} we extend this result to an equivalence of holomorphic symplectic manifolds, using the construction of the symplectic structure from the previous part of the paper.  Then in Section \ref{hyperkahler_section} we investigate the twistor family of holomorphic symplectic structures on the two equivalent moduli spaces, and prove that twistor rotation of the multiplicative Higgs moduli space is equivalent to the deformation from Higgs fields to $q$-difference connections.

Finally, in Section \ref{qchar_section} we discuss the relationship between the multiplicative analogue of the space of opers and the $q$-character, motivated by the origin of the multiplicative Hitchin system in quiver gauge theory.

The preliminary results of this work were announced by the second author at String-Math 2017 \cite{PestunStringMath}.

\subsection{Acknowledgements}
We would like to thank David Jordan, Davide Gaiotto, Dennis Gaitsgory, Nikita Nekrasov, Kevin Costello and especially Pavel Safronov for helpful conversations about this work. The calculation of twists of 5d and 6d supersymmetric gauge theories was performed independently by Dylan Butson, and we would like to thank him for sharing his forthcoming manuscript.  We are also very grateful to Takuro Mochizuki for pointing out an error in an earlier version of the article. CE would like to thank the Perimeter Institute for Theoretical Physics for supporting research on this project. Research at Perimeter Institute is supported by the Government of Canada through Industry Canada and by the Province of Ontario through the Ministry of Economic Development \& Innovation. We acknowledge the support of IH\'ES.  This project has received funding from the European Research Council (ERC) under the European Union's Horizon 2020 research and innovation programme (QUASIFT grant agreement 677368).

\renewcommand{\thepart}{\Alph{part}}
\part{Symplectic Structures} \label{part1}

\section{Multiplicative Higgs Bundles and $q$-Connections} \label{mhiggs_def_section}
We'll begin with an abstract definition of moduli spaces of multiplicative Higgs bundles using the language of derived algebraic geometry.  We note, however, that once we specialize to our main, rational, example, the moduli spaces we'll end up studying are actually smooth algebraic varieties, not derived stacks.  However, the derived point of view gives us a useful and concise definition of these moduli spaces in full generality.

Let $G$ be a reductive complex algebraic group, let $C$ be a smooth complex algebraic curve and fix a finite set $D = \{z_i, \ldots, z_k\}$ of closed points in $C$.  We write $\bun_G(C)$ for the moduli stack of $G$-bundles on $C$, which we view as a mapping stack $\map(C, BG)$ into the classifying stack of $G$.

\begin{definition}
The moduli stack of \emph{multiplicative $G$-Higgs fields} on $C$ with singularities at $D$ is the fiber product
\[\mhiggs_G(C,D) = \bun_G(C) \times_{\bun_G(C \! \bs \! D)} \map(C \! \bs \! D, G/G)\]
where $G/G$ is the adjoint quotient stack.
\end{definition}

\begin{remark}
A closed point of $\mhiggs_G(C,D)$ consists of a principal $G$-bundle $P$ on $C$ along with an automorphism of the restriction $P|_{C \! \bs \! D}$, i.e. a section of $\Ad_P$ away from $D$.
\end{remark}

The adjoint quotient stack can also be described as the derived loop space $\map(S^1_B, BG)$ of the classifying stack, where $S^1_B$ is the ``Betti stack'' of $S^1$, i.e. the constant derived stack at the simplicial set $S^1$.  We can therefore view $\map(C \! \bs \! D, G/G)$ instead as the mapping stack $\map((C \! \bs \! D) \times S^1_B, BG)$, and the moduli stack of multiplicative Higgs bundles instead as
\[\mhiggs_G(C,D) = \map((C \times S^1_B) \bs (D \times \{0\}), BG).\]  
The source of this mapping stack can be $q$-deformed.  Indeed, let $q$ denote an automorphism of the curve $C$.  Write $C \times_q S^1_B$ for the \emph{mapping torus} of $q$, i.e the derived fiber product
\[C \times_q S^1_B = C \times_{C \times C} C\]
where the two maps $C \to C \times C$ are given by the diagonal and the $q$-twisted diagonal $x \mapsto (x,q(x))$ respectively.

\begin{definition}
The moduli stack of \emph{$q$-difference connections} for the group $G$ on $C$ with singularities at $D$ is the mapping space
\[\qconn_G(C,D) = \map((C \times_q S^1_B) \bs (D \times \{0\}), BG).\] 
In particular when $q=1$ this recovers the moduli stack of multiplicative Higgs bundles.
\end{definition}

\begin{remark}
  A closed point of $\qconn_G(C,D)$ consists of a principal $G$-bundle
  $P$ on $C$ along with a \emph{$q$ difference connection}: an
  isomorphism of $G$-bundles
  $P|_{C \! \bs \! D} \to q^*P|_{C \! \bs \! D}$ away from the divisor
  $D$.  For an introduction and review of the classical theory of $q$-difference
  connections we refer the reader to \cite{STSSevostyanov} and \cite{Sauloy}.
\end{remark}

\subsection{Local Conditions at the Singularities}
These moduli stacks are typically of infinite type.  In order to obtain finite type stacks, and later in order to define symplectic rather than only Poisson structures, we can fix the behaviour of our multiplicative Higgs fields and $q$-difference connections near the punctures $D \sub C$.

We'll write $\bb D$ to denote the \emph{formal disk} $\spec \CC[[z]]$.  Likewise we'll write $\bb D^\times$ for the \emph{formal punctured disk} $\spec \CC(\!(z)\!)$.  We'll then write $\bb B$ for the derived pushout $\bb D \sqcup_{\bb D^\times} \bb D$.  Let $LG = \map(\bb D^\times, G)$ and let $L^+G = \map(\bb D, G)$.

There is a canonical inclusion $\bb B^{\sqcup k} \to (C \times_q S^1_B) \!\!\bs\!\! (D \times \{0\})$, the inclusion of the formal punctured neighborhood of $D \times \{0\}$.  This induces a restriction map on mapping spaces
\[\res_D \colon \qconn_G(C, D) \to \bun_G(\bb B)^k.\]

One can identify $\bun_G(\bb B)$ with the double quotient stack $L^+G \!\bs\! LG / L^+G$, or equivalently with the quotient $L^+G \!\bs\! \Gr_G$ of the affine Grassmannian.  The following is well-known (see e.g. the expository article \cite{Zhu}).

\begin{lemma}
The set of closed points of $\bun_G(\bb B)$ is in canonical bijection with the set of dominant coweights of $G$.
\end{lemma}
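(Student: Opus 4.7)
The plan is to reduce the statement to the Cartan decomposition of the loop group $LG$, a standard result which identifies $L^+G$-orbits on the affine Grassmannian $\Gr_G$ with dominant coweights.

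First, I would unpack the identification $\bun_G(\bb B) \iso L^+G \!\bs\! LG / L^+G$, which the excerpt already mentions but whose proof I would make explicit. Since $\bb B = \bb D \sqcup_{\bb D^\times} \bb D$ is a pushout, a $G$-bundle on $\bb B$ is the same datum as a pair of $G$-bundles on $\bb D$ together with an isomorphism of their restrictions to $\bb D^\times$. Because $G$ is connected reductive and $\CC[[z]]$ is a complete local ring (so $H^1_{\mr{\acute{e}t}}(\bb D, G) = \ast$ by a version of Lang's theorem / Steinberg's theorem), every $G$-bundle on $\bb D$ is trivializable, uniquely up to the automorphism group $L^+G = G[[z]]$. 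After choosing trivializations on each disk, the gluing datum becomes an element of $LG = G(\!(z)\!)$, and changing the two trivializations exchanges this element by left and right multiplication by $L^+G$. On the level of closed points this gives a bijection between isomorphism classes of $G$-bundles on $\bb B$ and the double coset set $L^+G \!\bs\! LG / L^+G$.

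Next, I would invoke the Cartan decomposition of the loop group: any $g(z) \in LG$ can be written as $k_1(z) z^{\lambda} k_2(z)$ for some $k_1, k_2 \in L^+G$ and a coweight $\lambda \in X_*(T)$, where $z^\lambda$ denotes the cocharacter $\lambda \colon \CC^\times \to T$ evaluated at $z$, and the coweight $\lambda$ is unique up to the Weyl group $W$-action; hence it admits a unique dominant representative $\omega^\vee \in X_*(T)^+$. This gives the desired bijection
\[
L^+G \!\bs\! LG / L^+G \;\longleftrightarrow\; X_*(T)^+.
\]
For the existence of the decomposition one can either cite Beauville–Laszlo-type results or reduce to $\GL_n$ by choosing a faithful representation (where it becomes the elementary divisors theorem over the PID $\CC[[z]]$, giving Smith normal form $\mr{diag}(z^{n_1},\ldots,z^{n_r})$) and then using that the decomposition is compatible with closed embeddings of reductive groups. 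Uniqueness up to $W$ comes from the fact that if $k_1 z^\lambda k_2 = z^\mu$ with $k_i \in L^+G$, reducing modulo $z$ gives a unipotent-type identity that forces $\mu = w \cdot \lambda$ for some $w \in W$.

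The main obstacle, such as it is, lies in carefully setting up the Cartan decomposition and its uniqueness in a uniform way for all reductive $G$; the $\GL_n$ case is a direct calculation, and the reduction from general $G$ to $\GL_n$ via a faithful representation is the standard technical step. Since the lemma as stated concerns only the bijection on closed points, I would not need to worry about the scheme structure of the orbits $L^+G \cdot z^{\omega^\vee} \sub \Gr_G$ or their closures, though this stratification is what justifies the subsequent use of these orbits to parametrize ``local degrees'' earlier in the paper.
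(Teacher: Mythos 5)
Your proof is correct and follows exactly the standard route the paper has in mind: the paper does not prove this lemma but simply cites Zhu's expository article on the affine Grassmannian, where the identification $\bun_G(\bb B) \iso L^+G \!\bs\! LG/L^+G$ and the Cartan decomposition $LG = \coprod_{\omega^\vee} L^+G\, z^{\omega^\vee} L^+G$ are established just as you outline. The only cosmetic quibble is the invocation of ``Lang's theorem'' for triviality of $G$-bundles over $\spec \CC[[z]]$ — over an algebraically closed residue field this is just smoothness of $G$ plus Hensel lifting — but this does not affect the argument.
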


\begin{definition}
Choose a map from $D$ to the set of dominant coweights and denote it by $\omega^\vee \colon z_i \mapsto \omega^\vee_{z_i}$.  Write $\Lambda_i$ for the isotropy group of the point $\omega^\vee_{z_i}$ in $\bun_G(\bb B)$. The moduli stack of $q$ difference connections on $C$ with singularities at $D$ and fixed local data given by $\omega^\vee$ is defined to be the fiber product
\[\qconn_G(C,D, \omega^\vee) = \qconn_G(C,D) \times_{\bun_G(\bb B)^k} (B\Lambda_1 \times \cdots \times B\Lambda_k).\]
\end{definition}

\begin{remark}\label{ind_structure_remark}
  In a similar way we can define a filtration on the moduli stack of
  $q$-connections.  Recall that the affine Grassmannian $\Gr_G$ is
  stratified by dominant coweights $\omega^\vee$, and there is an
  inclusion $\Gr_G^{\omega^\vee_1}\sub \ol{\Gr_G^{\omega^\vee_2}}$ of
  one stratum into the closure of another stratum if and only if
  $\omega^\vee_1 \preceq \omega^\vee_2$ with respect to the standard
  partial order on dominant coweights (again, this is explained in \cite{Zhu}).  We can then define
\[\qconn_G(C,D, \preceq \omega^\vee) = \qconn_G(C,D) \times_{\bun_G(\bb B)^k} \left(L^+G \!\!\bs\!\! \ol{\Gr_G^{\omega^\vee_1}} \times \cdots \times L^+G \!\!\bs\!\! \ol{\Gr_G^{\omega^\vee_k}}\right).\]
The full moduli stack $\qconn_G(C,D)$ is the filtered colimit of these moduli spaces.  One can additionally take the filtered colimit over all finite subsets $D$ in order to define a moduli stack $\qconn^\mr{sing}_G(C)$ of $q$-connections on $C$ with arbitrary singularities.
\end{remark}

\begin{examples}
The most important examples for our purposes are given by the following rational/trigonometric/elliptic trichotomy.
 \begin{itemize}
  \item \textbf{Rational:} We can enhance the definition of our moduli space by including a framing at a point $c \in C$ not contained in $D$.  We always assume that such framed points are fixed by the automorphism $q$.
    \begin{definition}
      \label{def:framing}
    The moduli space of $q$-difference connections on $C$ with a framing at $c$ is defined to be the relative mapping space 
    \[\qconn_G^\fr(C) = \map(C \times_q S^1_B, BG; f)\]
    where $f \colon \{c\} \times S^1_B \to BG$ (or equivalently $f \colon \{c\} \to G/G$) is a choice of adjoint orbit.  We can define the framed mapping space with singularities and fixed local data in exactly the same way as above.  
  \end{definition}
    
  In this paper we'll be most interested in the following example.  Let $C = \bb{CP}^1$ with framing point $c = \infty$ and framing given by a fixed element in $G/G$, and consider automorphisms of the form $z \mapsto z + \eps$ for $\eps \in \CC$.  Choose a finite subset $D \sub \bb A^1$ and label the points $z_i \in D$ by dominant coweights $\omega^\vee_{z_i}$.  We can then study the moduli space $\epsconn^\fr_G(\bb{CP}^1,D, \omega^\vee)$.  The main object of study in this paper will be the holomorphic symplectic structure on this moduli space. Note that the motivation for this definition comes in part from
    Spaide's formalism \cite{Spaide} of AKSZ-type derived symplectic structures (in the sense of \cite{AKSZ,PTVV}) on
    relative mapping spaces -- in this formalism $\bb{CP}^1$ with a single framing point is relatively 1-oriented, so mapping spaces
    out of it with 1-shifted symplectic targets have AKSZ 0-shifted symplectic structures.
  
  \item \textbf{Trigonometric:} Alternatively, we can enhance our definition by including a reduction of structure group at a point $c \in C$ not contained in $D$, again fixed by the automorphism $q$.
  \begin{definition}
   The moduli space of $q$-difference connections on $C$ with an $H$-reduction at $c$ for a subgroup $H \sub G$ is defined to be the fiber product
   \[\qconn_G^{H,c}(C) = \map(C \times_q S^1_B,BG) \times_{G/G} H/H\]
   associated to the evaluation at $c$ map $\map(C \times_q S^1_B,BG) \to G/G$.  We can define the moduli space with $H$-reduction with singularities and fixed local data in the same way as above.
  \end{definition}
  
  Again let $C = \bb{CP}^1$.  Fix a pair of opposite Borel subgroups $B_+$ and $B_- \sub G$ with unipotent radicals $N_\pm$ and consider the moduli space of $q$-connections with $B_+$-reduction at $0$ and $N_-$-reduction at $\infty$.  We'll now take $q$ to be an automorphism of the form $z \mapsto qz$ for $q \in \CC^\times$.  We'll defer in depth analysis of this example to future work.
  
  \item \textbf{Elliptic:} Finally, let $C = E$ be a smooth curve of genus one.  In this case we won't fix any additional boundary data, but just consider the moduli space $\qconn_G(E,D, \omega^\vee)$.  In the case $q = 1$ this space -- or rather its polystable locus -- was studied by Hurtubise and Markman \cite{HurtubiseMarkman}, who proved that it can be given the structure of an algebraic integrable system with symplectic structure related to the elliptic R-matrix of Etingof and Varchenko \cite{EtingofVarchenko}.
 \end{itemize}
\end{examples}

\begin{remark}
In the rational case, the moduli space of framed $q$-difference connections now depends on a new parameter: the value of the framing $g_\infty \in G/G$.  From the point of view of the ADE quiver gauge theory, as in Section \ref{intro_gauge_section}, this value -- or rather its image in $H/W$ -- corresponds to the value of the gauge coupling constants in the quiver gauge theory.

In a little more detail, for an ADE quiver gauge theory, a physical choice of finite gauge coupling constants $\mathfrak{q}_i = \exp (2 \pi i \tau_i)$ corresponds to a choice of regular semi-simple conjugacy class $[g_{\inf}]  = \prod_{i} \mathfrak{q}_{i}^{-\check \omega_i}$.  So the contribution of an instanton with second Chern class $k_i$ with respect to the $i^{\text{th}}$ factor of the gauge group is counted with weight $\mathfrak{q}_{i}^{k_i}$.
\end{remark}

\begin{remark} \label{Elliptic_AKSZ_remark}
In the elliptic case it's natural to ask to what extent Hurtubise and Markman's integrable system structure can be extended from the variety of polystable multiplicative Higgs bundles to the full moduli stack.  If $D$ is empty then it's easy to see that we have a symplectic structure given by the AKSZ construction of Pantev-To\"en-Vaqui\'e-Vezzosi \cite{PTVV}.  Indeed, $E$ is compact 1-oriented and the quotient stack $G/G$ is 1-shifted symplectic, so the mapping stack $\map(E, G/G) = \mhiggs_G(E)$ is equipped with a 0-shifted symplectic structure by \cite[Theorem 2.5]{PTVV}.  The role of the Hitchin fibration is played by the Chevalley map $\chi \colon G/G \to T/W$, and therefore
\[\map(E,G/G) \to \map(E,T/W).\]
The fibers of this map over regular points in $T/W$ are given by moduli stacks of the form $\bun_T(\wt E)^W$ where $\wt E$ is a $W$-fold cover of $E$ (the cameral cover).  Note that in this unramified case the curve $\wt E$ also has genus 1; counting dimensions we see that the base has dimension $r = \mr{rk}(G)$ and the generic fibers are $r$-dimensional (Lagrangian) tori.
\end{remark}

\begin{remark}
While the moduli space of multiplicative Higgs bundles makes sense on a general curve it's only after restricting attention to this trichotomy of examples that we'll expect the existence of a Poisson structure.  In the non-singular case, such a structure arises by the AKSZ construction, i.e. by transgressing the 1-shifted symplectic structure on $G/G$ to the mapping space using a fixed section of the canonical bundle on $C$ (possibly with a boundary condition).  
\end{remark}

\subsection{The Multiplicative Hitchin System} \label{Hitchin_system_section}
We can define the global Chevalley map as in Remark \ref{Elliptic_AKSZ_remark} in the case of non-empty $D$ as well.  We'll show that in the rational case this defines a completely integrable system structure.  Let $T \sub G$ be a maximal torus, and write $W$ for the Weyl group of $G$.

\begin{definition} \label{mult_Hitchin_system_def}
Fix a curve $C$, a divisor $D$ and a dominant coweight $\omega_{z_i}^\vee$ at each point $z_i$ in $D$.  The \emph{multiplicative Hitchin base} is the stack
\[\mc B(C,D,\omega^\vee) = \mr{Sect}(C, X(D,\omega^\vee)/W)\]
of sections of $X(D,\omega^\vee)/W$, the $T/W$-bundle on $C$ where $X(D,\omega^\vee)$ is the $T$-bundle characterized by the condition that the associated line bundle $X(D,\omega^\vee) \times_T {\lambda}$ corresponding to a weight $\lambda$ is given by $\OO(\sum \omega_{z_i}^\vee(\lambda) \cdot z_i)$ (c.f. \cite[Section 3.3]{HurtubiseMarkman}).

The \emph{multiplicative Hitchin fibration} is the map
\[\pi \colon \mhiggs_G(C,D,\omega^\vee) \to \mc B(C,D,\omega^\vee)\]
given by post-composing a map $C \bs D \to G/G$ with the Chevalley map $\chi \colon G/G \to T/W$.  
\end{definition}

\begin{prop}
The multiplicative Hitchin fibration described above is well-defined.
\end{prop}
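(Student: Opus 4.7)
The claim to check is that post-composition with $\chi \colon G/G \to T/W$, which a priori defines a map $C \setminus D \to T/W$, actually lifts to a section of the twisted $T/W$-bundle $X(D,\omega^\vee)/W$ over all of $C$. Since the statement away from $D$ is obvious, the whole question is local around each puncture $z_i$, and my plan is to verify the condition for sections of $X(D,\omega^\vee)/W$ in a formal neighborhood of $z_i$.

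First, I would fix a local trivialization of the underlying $G$-bundle $P$ in a formal disk around $z_i$; on this disk the multiplicative Higgs field $g$ becomes an element of $G(\!(z-z_i)\!)$, and the assumption on local data means $g$ lies in the double coset $L^+G \cdot (z-z_i)^{\omega^\vee_{z_i}} \cdot L^+G$. Changes of trivialization act by conjugation, under which $\chi$ is invariant, so $\chi(g) \in (T/W)(\!(z-z_i)\!)$ is well-defined independently of this choice. By the Cartan decomposition I can write $g = k_+ \cdot (z-z_i)^{\omega^\vee_{z_i}} \cdot k_-$ with $k_\pm \in L^+G$; this representative is not unique but is enough to estimate pole orders.

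Next, I would translate the condition ``section of $X(D,\omega^\vee)/W$ near $z_i$'' into a concrete condition on the Weyl characters. The ring of functions on $T/W$ is spanned, as a module over polynomials in $W$-invariant characters, by the characters $\chi_\lambda = \tr_{V_\lambda}$ of irreducible representations with dominant highest weight $\lambda$, and by definition of $X(D,\omega^\vee)$, the associated line bundle to a weight $\lambda$ is $\OO(\omega^\vee_{z_i}(\lambda)\, z_i)$ in a neighborhood of $z_i$. So it suffices to show that for every dominant weight $\lambda$, the function $\chi_\lambda(g)$ has a pole of order at most $\omega^\vee_{z_i}(\lambda)$ at $z_i$.

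The computation is then short: in the representation $\rho_\lambda \colon G \to \mr{End}(V_\lambda)$, the middle factor $\rho_\lambda((z-z_i)^{\omega^\vee_{z_i}})$ is diagonal in a weight basis with entries $(z-z_i)^{\mu(\omega^\vee_{z_i})}$ as $\mu$ ranges over weights of $V_\lambda$, while $\rho_\lambda(k_\pm)$ are regular at $z_i$. Hence every matrix entry of $\rho_\lambda(g)$, and in particular $\chi_\lambda(g) = \tr \rho_\lambda(g)$, has pole order at most $\max_\mu \mu(\omega^\vee_{z_i})$ at $z_i$. Because $\omega^\vee_{z_i}$ is a dominant coweight and every weight $\mu$ of $V_\lambda$ satisfies $\lambda - \mu \in \sum_i \ZZ_{\geq 0} \alpha_i$, this maximum is attained at $\mu = \lambda$ and equals $\omega^\vee_{z_i}(\lambda)$, giving the required estimate. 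Assembling the local extensions at each $z_i \in D$ with the tautological section on $C \setminus D$ produces the desired section of $X(D,\omega^\vee)/W$. The main conceptual point (and the only step where one has to be careful) is the third one: the identification of the $T/W$-bundle $X(D,\omega^\vee)/W$ with the bundle ``cut out'' by pole-order conditions on Weyl characters, which is where the Hurtubise--Markman line-bundle description plays a direct role.
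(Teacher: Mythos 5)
Your proof is correct and follows essentially the same route as the paper's: reduce to a formal neighbourhood of each $z_i$, use the Cartan decomposition to put the Higgs field in the normal form (coweight power) times a regular factor, and bound the pole order of the resulting $T/W$-valued function --- you simply make explicit, via the Weyl characters $\chi_\lambda$ and the line-bundle description of $X(D,\omega^\vee)$, what the paper compresses into ``a meromorphic $T/W$-valued function of the required type.'' One sign should be made consistent: the paper's local normal form is $(z-z_i)^{-\omega^\vee_{z_i}}\phi_0$, and your bound $\max_\mu \mu(\omega^\vee_{z_i}) = \lambda(\omega^\vee_{z_i})$ is the pole order for \emph{that} convention, whereas with your middle factor $(z-z_i)^{+\omega^\vee_{z_i}}$ the pole order of $\chi_\lambda(g)$ is controlled by the lowest weight and equals $(-w_0\lambda)(\omega^\vee_{z_i})$, so the exponent convention must be aligned with the definition of $X(D,\omega^\vee)$ for the final estimate to come out as stated.
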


\begin{proof}
We need to verify that the image of a point in $\mhiggs_G(C,D,\omega^\vee)$ under $\pi$, viewed as a section of the trivial $T/W$-bundle on $C \bs D$, extends to a section of $X(D,\omega^\vee)/W$.  We look locally near a singularity $z_i \in D$.  Let $\phi \in G((z_i))$ be a local representative for a multiplicative Higgs field in $\mhiggs_G(C,D,\omega^\vee)$, i.e. an element of the associated $G[[z_i]]$-adjoint orbit.  Since $\phi$ is equivalent to $z_i^{-\omega^\vee_{z_i}}$ under the action of $G[[z_i]]^2$ by left and right multiplication, without loss of generality we can say that $\phi = z_i^{-\omega^\vee_{z_i}} \phi_0$ for some $\phi_0 \in G[[z_i]]$.  Consider $\chi(\phi) \in T((z_i))$ -- the singular part of this element is the same as $\chi(z_i^{-\omega^\vee_{z_i}} g)$ for some $g \in G$, which implies the section extends to a meromorphic $T/W$-valued function of the required type.
\end{proof}

\begin{remark}
Note that the base stack that we're defining here is not exactly the same as the base of the integrable system defined in \cite{HurtubiseMarkman}.  The difference comes from the way in which we treat the Weyl group quotient; the Hurtubise-Markman base is an algebraic variety defined essentially by taking the part of our space of sections where $W$ acts freely as an open subset, then constructing a compactification using techniques from toric geometry.
\end{remark}

We'll argue in Section \ref{Reduced_section} that this fibration defines a completely integrable system in the rational case where $C = \bb{CP}^1$ with a framing at $\infty$. More specifically, we'll describe a non-degenerate pairing on the tangent space of the multiplicative Higgs moduli space and check that the multiplicative Hitchin fibers are isotropic for this pairing.  We'll then show in Theorem \ref{theorem:symplectic_leaf} that the pairing was in fact symplectic.

Here, we'll first observe that the generic fibers are half-dimensional tori, which will ultimately imply that the multiplicative Hitchin system is generically a Lagrangian fibration.  Computing the fibers of the Hitchin fibration works similarly to the non-singular case: a point in the base is, in particular, a map $C \bs D \to T/W$.  Suppose this map lands in the regular locus $T^{\mr{reg}}/W$, then an element of the fiber over this point defines, in particular, a map $C \bs D \to BT/W$.  We would like to argue that the fiber consists of $T$-bundles on the cameral cover $\wt C$: a $W$-fold cover of $C$ ramified at the divisor $D$.

In order to say this a bit more precisely this we'll compare our moduli space with the space of abstract Higgs bundles introduced by Donagi and Gaitsgory \cite{DonagiGaitsgory} (see also \cite{DonagiLectures}, where Donagi proposed the applicability of this abstract Higgs theory to the multiplicative situation and asked for a geometric interpretation).  Our argument will follow the same ideas as the arguments of \cite[Section 6]{HurtubiseMarkman}.

\begin{definition}
An \emph{abstract $G$-Higgs bundle} on a curve $C$ is a principal $G$-bundle $P$ along with a sub-bundle $\mf c$ of $\gg_P$ of \emph{regular centralizers}, meaning that the fibers are subalgebras of $\gg$ which arise as the centralizer of a regular element of $\gg$.  Write $\higgs_G^{\mr{abs}}(C)$ for the moduli stack of abstract $G$-Higgs bundles on $C$.
\end{definition}

There's an algebraic map from the regular part of our moduli space $\mhiggs_G(C,D,\omega^\vee)_{\mr{reg}}$ (where the Higgs field is required to take regular values) into $\higgs_G^{\mr{abs}}(C)$ that sends a multiplicative Higgs bundle $(P,g)$ to the abstract Higgs bundle $(P, \mf c_g)$, where $\mf c_g$ is the sub-bundle of $\gg_P$ fixed by the adjoint action of the multiplicative Higgs field $g$.  What's more, there is a commutative square relating the Hitchin fibration for the multiplicative moduli space with a related projection for the abstract moduli stack:
\[\xymatrix{
\mhiggs_G(C,D,\omega^\vee)_{\mr{reg}} \ar[r] \ar[d] &\higgs_G^{\mr{abs}}(C) \ar[d] \\
\mc B(C,D,\omega^\vee)_{\mr{reg}} \ar[r] &\mr{Cam}_G(C),
}\]
where $\mr{Cam}_G(C)$ is the stack of cameral covers of $C$, as defined in \cite[Section 2.8]{DonagiGaitsgory}.  The map $\mc B(C,D,\omega^\vee)_{\mr{reg}} \to \mr{Cam}_G(C)$ is defined by sending a meromorphic function $f \colon C \to T/W$ to the $D$-ramified cameral cover $\wt C = C \times_{T/W} T$.  In particular there's a map from the multiplicative Hitchin fiber to the corresponding Donagi-Gaitsgory fiber: the moduli space of abstract $G$-Higgs bundles with fixed cameral cover.  This map is surjective: having fixed the cameral cover, and therefore the ramification data, every sub-bundle $\mf c \sub \gg_P$ of regular centralizers arises as the centralizer of some regular multiplicative Higgs field.  Likewise once one restricts to a single generic multiplicative Hitchin fiber the map is an unramified $W$-fold cover.

To conclude this discussion we'll discuss dimensions and the geometry of the multiplicative Hitchin fibers.  Firstly, we can compute the dimension of a regular multiplicative Hitchin fiber by computing the dimension of the base and the dimension of the total space.  The dimension of the base is given by computing the number of linearly independent sections of the $T$-bundle $X(D,\omega^\vee)$ on $\bb{CP}^1$ vanishing at $\infty$.  This is given by 
\[\dim \mc B(C,D,\omega^\vee) = \sum_{z_i \in D} \langle \rho, \omega^\vee_{z_i} \rangle,\]
where $\rho$ is the Weyl vector.  On the other hand the dimension of the total space is calculated in Corollary \ref{dim_of_moduli_space_cor} to be $2 \sum_{z_i \in D} \langle \rho, \omega^\vee_{z_i} \rangle$.  The base is indeed half-dimensional, therefore so is the fiber.

The Donagi-Gaitsgory fiber is, according to the main theorem of \cite{DonagiGaitsgory}, equivalent to the moduli space of $W$-equivariant $T$-bundles on the cameral curve $\wt C$ up to a discrete correction involving the root datum of $G$.  In particular it is generically an abelian variety. The multiplicative Hitchin fiber is isogenous to this abelian variety, since the map from the multiplicative Hitchin fiber to the Donagi-Gaitsgory fiber is surjective and \'etale.

\begin{remark} \label{non_compact_fiber_remark}
This argument that the fibers are abelian varieties does \emph{not} apply to our main case of interest, because it does not account for the data of the framing at $\infty \in \bb{CP}^1$.  In fact, according to the physical arguments in \cite{NekrasovPestun}, we do not expect the fibers in this example to be compact.  In order to resolve this we will need to take the Hamiltonian reduction of $\mhiggs^\fr(\bb{CP}^1, D, \omega^\vee)$ by the adjoint action of a maximal torus of $G$.  We'll discuss this reduction in Section \ref{Reduced_section}.
\end{remark}

\begin{remark} \label{q_opers_remark}
Like in the case of the ordinary Hitchin system, in good examples the multiplicative Hitchin system admits a canonical Hitchin section.  One can construct this section using the Steinberg section (the multiplicative analogue of the Kostant section) \cite{Steinberg}.  This is a section of the map $G/G \to T/W$, canonical after choosing a Borel subgroup $B$ with maximal torus $T$ and a basis vector for each simple root space, and well defined as long as $G$ is simply connected \footnote{By a theorem of Popov \cite{Popov} this condition is necessary for semisimple $G$.  A section also exists for $G = \GL_n$, but we aren't aware of a necessary and sufficient condition for general reductive groups.}.  The \emph{multiplicative Hitchin section} is the map $\sigma \colon \mc B(C,D,\omega^\vee) \to \mhiggs_G(C,D,\omega^\vee)$ defined by post-composition with the Steinberg section.  One can use this section to define the moduli space of \emph{$q$-opers} for the group $G$ and the curve $\bb{CP}^1$ with its framing at infinity.  We'll discuss the hyperk\"ahler structure on the reduced moduli space of multiplicative Higgs bundles in Section \ref{hyperkahler_section}.  In particular we'll show that when one rotates to $q$ in the twistor sphere one obtains the moduli space of $q$-connections on $\bb{CP}^1$.  The moduli space of $q$-opers is defined to be the Hitchin section, but viewed as a subspace of $\qconn^{\fr}_G(\bb{CP}^1,D,\omega^\vee)$.  For a more detailed discussion of the multiplicative Hitchin section and $q$-opers see Section \ref{qchar_section}. 
\end{remark}

\subsection{Stability Conditions} \label{stability_section}
For comparison to results in the literature it is important that we briefly discuss the role of stability conditions for difference connections.  In our main example of interest -- the rational case -- these conditions will take a relatively simple form, but they do appear more generally in the comparison results between $q$-connections and monopoles in the literature for more general curves.  For definitions for general $G$ we refer to \cite{Smith}, although see also \cite{AnchoucheBiswas} on polystable $G$-bundles.  In what follows we fix a choice of vector $\overrightarrow{t}$ with $0 < t_1 < \cdots < t_k < 2\pi R$.

\begin{remark} \label{t_stability_remark}
There will be a constraint on the allowable values for the $t_i$, as explained in \cite[Section 3.2]{CharbonneauHurtubise}. 
\end{remark}

\begin{definition} \label{polystable_def}
Let $(P,g)$ be a $q$-connection on a curve $C$, and let $\chi$ be a character of $G$.  The \emph{$(\chi, \overrightarrow{t})$-degree} of $(P,g)$ is defined to be 
\[\deg_\chi(P,g) = \deg(P \times_\chi \CC) - \frac 1{2\pi R} \sum_{i=1}^k t_i\deg(\chi \circ \omega^\vee_{z_i}).\]

A $q$-connection $(P,g)$ on $C$ is \emph{stable} if for every maximal parabolic subgroup $H \sub G$ with Levi decomposition $H = LN$ and every reduction of structure group $(P_H, g)$ to $H$, we have
\[\deg_\chi(P_H, g) < 0\]
for the character $\chi = \det(\mr{Ad}_L^{\mf n})$ defined to be the determinant of the adjoint representation of $L$ on $\mf n$.

The $q$-connection $(P,g)$ is \emph{polystable} if there exists a (not necessarily maximal) parabolic subgroup $H$ with Levi factor $L$ and a reduction of structure group $(P_L, g)$ to $L$ so that $(P_L,g)$ is a stable $q$-connection and so that the associated $H$-bundle is admissible, meaning that for every character $\chi$ of $H$ which is trivial on $Z(G)$ the associated line bundle $P_H \times_\chi \CC$ has degree zero. 
\end{definition}

\begin{remark}
The $(\chi, \overrightarrow{t})$-degree here, at least in the case where $G = \GL_n$, can be thought of as the average, over $S^1$ of the degree of the holomorphic vector bundle obtained by restricting $P$ to the slice $C \times \{t\}$ for $t \in S^1$.  See the argument in \cite[Lemma 4.5]{CharbonneauHurtubise}. 
\end{remark}

Below we'll write $\qconn_G^{\text{ps}}(C, D, \omega^\vee) \sub \qconn_G(C,D,\omega^\vee)$ for the moduli space of polystable $q$-connections.  This moduli space is a smooth algebraic variety of finite type, at least when $q$ is the identity \cite{CharbonneauHurtubise,Smith}.  

\begin{example}
An example of a multiplicative Higgs bundle on $\bb{CP}^1$ which is not polystable, for the group $G = \SL_2$, is provided by taking the trivial $G$-bundle $P$ and the constant multiplicative Higgs field with value \[g = \begin{pmatrix}1&1\\0&1\end{pmatrix}.\]  The multiplicative Higgs field does not fix a maximal torus in $G$, so the pair $(P,g)$ cannot be reduced to any non-trivial Levi subgroup.  On the other hand, for the upper triangular Borel subgroup $B \sub G$ we have $\deg_\chi(P_B, g) = 0$, so $(P,g)$ is not stable.
\end{example}

In our main example of interest -- the rational setting where $C = \bb{CP}^1$ with a fixed framing at infinity -- we will impose an additional stability condition constraining the value of the framing.  We'll discuss why this condition is necessary in Section \ref{Reduced_section} (from the point of view of Hamiltonian reduction) and Section \ref{periodic_monopole_section} (when we compare to the moduli space of periodic monopoles).

\begin{definition} \label{rational_stability_def}
A $q$-connection $(P,g)$ on $\bb{CP}^1$ with a fixed framing $g_\infty$ at $\infty$ is polystable if it is polystable as in Definition \ref{polystable_def}, and the chosen framing $g_\infty$ is regular semisimple.
\end{definition}

\begin{remark}
We observe that the element $g_\infty$ determines an $S^1$-equivariant holomorphic $G$-bundle on an elliptic curve, where $S^1$ acts on $T^2$ by rotating one of the two circles.  This $G$-bundle is polystable if $g_\infty$ is semisimple, which ensures that the $G$-bundle admits a reduction of structure group to a $T$-bundle for a maximal torus $T \sub G$.
\end{remark}

\subsection{Poisson Structures from Derived Geometry}
As we mentioned above in Remark \ref{Elliptic_AKSZ_remark}, in the case where $C$ is an elliptic curve and there are no punctures there is a symplectic structure on $\mhiggs_G(C)$ given by the AKSZ formalism.  More generally, when we do have punctures, we expect the moduli space $\mhiggs_G(C,D)$ to have a Poisson structure with a clear origin story coming from the theory of derived Poisson geometry.  In this section we'll explain what this story looks like.  However, we emphasise that there are technical obstructions to making this story precise with current technology: this section should be viewed as motivation for the structures we'll discuss in the rest of the paper.  On the other hand, readers who aren't familiar with derived symplectic geometry can freely skip this section.  We refer the reader to \cite{CPTVV} for the theory of derived Poisson structures and to \cite{MelaniSafronov1, MelaniSafronov2, Spaide} for that of derived coisotropic structures.  We would like to thank Pavel Safronov for explaining many of the ideas discussed in this section to us.

Here's the idea.  Recall that we can identify the moduli space of singular $q$-connections on a curve $C$ as a fiber product: $\qconn_G(C, D) \iso \bun_G^\fr(C) \times_{\bun_G(C \! \bs \! D)^2} \bun_G(C \! \bs \! D)$ where the map $g \colon \bun_G^\fr(C) \to \bun_G(C \! \bs \! D)$ is given by $P \mapsto (P|_{C \! \bs \! D}, q^*P|_{C \! \bs \! D})$.  Consider the following commutative cube:

\[\xymatrix@C-30pt@R-8pt{
& \qconn_G(C, D) \ar[rr]^{f_1} \ar'[d][dd]^(.25){\mr{res}} & & \bun_G(C) \ar[dd]
\\
\bun_G(C \bs D) \ar@{<-}[ur]^{f_2} \ar[rr]^(.6){g_2} \ar[dd] & & \bun_G(C \bs D)^2 \ar@{<-}[ur]^{g_1} \ar[dd]^(.4)r
\\
& \bun_G(\bb B)^k \ar'[r][rr] & & BL^+G^{2k}
\\
BLG^k \ar[rr]\ar@{<-}[ur] & & BLG^{2k}. \ar@{<-}[ur]
}\]
Here the top and bottom faces are homotopy Cartesian squares.  What does this setup buy us?  We'll first answer informally.

\begin{claim}
First consider the bottom face of the cube.  The stack $BLG$ is 2-shifted symplectic because the Lie algebra $L\gg$ has a non-degenerate invariant pairing: the residue pairing.  The Lie subalgebra $L^+\gg$ forms part of a Manin triple $(L\gg, L^+\gg, L^-+0\gg)$ which means that $BL^+G \to BLG$ is 2-shifted Lagrangian.  Therefore the bottom face of the cube defines a 2-shifted Lagrangian intersection, which means that the pullback $\bun_G(\bb B)^k$ is 1-shifted symplectic.

Now consider the top face of the cube.  If either $C$ is an elliptic curve, or $C=\bb{CP}^1$ and we fix a framing at $\infty$, then the map $\bun_G(C \bs D) \to BLG^k$ is also 2-shifted Lagrangian.  In particular $\bun_G(C \bs D)$ is 1-shifted Poisson.  Finally, the map $\bun_G(C) \to \bun_G(C \bs D)$ is 1-shifted coisotropic, or equivalently the canonical map $\bun_G(C) \to \bun_G(C \bs D) \times_{BLG^k} BL^+G^k$ is 1-shifted Lagrangian.  That means that the top face of the cube defines a 1-shifted coisotropic intersection, which means that the pullback $\qconn_G(C,D)$ is 0-shifted Poisson.

The restriction map $\qconn_G(C,D) \to \bun_G(\bb B)^k$ is 1-shifted Lagrangian, which means that if we form the intersection with a $k$-tuple of Lagrangians in $\bun_G(\bb B)$ then we obtain a 0-shifted symplectic stack.  For example, if $\omega_i^\vee$ is a point in $\bun_G(\bb B)$ corresponding to a dominant coweight with stabilizer $\Lambda_i$ then $B \Lambda_i \to \bun_G(\bb B)$ is 1-shifted Lagrangian, so the moduli stack $\qconn_G(C,D, \omega^\vee)$ obtained by taking the derived intersection is ind 0-shifted symplectic.
\end{claim}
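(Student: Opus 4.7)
The plan is to assemble the claim from the general machinery of shifted symplectic/coisotropic intersections (PTVV, Melani--Safronov, Spaide), organizing the argument according to the two horizontal faces of the cube. Throughout, one should treat the infinite-dimensional stacks ($LG$, $L^+G$, $\bun_G(\bb B)$) as formal prestacks equipped with filtrations by finite-type pieces so that PTVV-style intersections make sense as ind-shifted structures; this is the main technical point where ``current technology'' falls short, and I would flag it as the chief obstruction.

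First I would handle the bottom face. The residue pairing $\langle \alpha,\beta\rangle = \mr{res}_{z=0}\,\tr(\alpha\beta)\,dz$ on $L\gg$ is non-degenerate and invariant, which makes $BLG$ 2-shifted symplectic in the standard way (e.g.\ as the classifying stack of a group with invariant form). The decomposition $L\gg = L^+\gg \oplus z^{-1}\gg[z^{-1}]$ is a splitting into complementary Lagrangian subalgebras (a Manin triple), which exhibits $BL^+G \to BLG$ and $BL^-_0G \to BLG$ as 2-shifted Lagrangians. Since the restriction map from $\bun_G(\bb D)$ to $\bun_G(\bb D^\times)$ is identified with $BL^+G \to BLG$, the derived pushout defining $\bun_G(\bb B) = BL^+G \times_{BLG} BL^+G$ is a Lagrangian intersection in $BLG$, and by PTVV's theorem it acquires a 1-shifted symplectic structure. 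This gives the 1-shifted symplectic structure on $\bun_G(\bb B)^k$.

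Next, for the top face, in the elliptic case $C$ is compact 1-oriented, so AKSZ makes $\bun_G(C)$ and $\bun_G(C \bs D)$ 1-shifted symplectic, and the restriction to a formal punctured neighborhood of $D$ is 2-shifted Lagrangian by naturality of AKSZ. In the rational framed case I would use Spaide's relative orientation formalism: $(\bb{CP}^1, \infty)$ is relatively 1-oriented, so $\bun_G^\fr(C)$ picks up a 1-shifted symplectic structure, and again restriction gives a Lagrangian. Once one knows the map $\bun_G(C\bs D) \to BLG^k$ is 2-shifted Lagrangian, the map from $\bun_G(C)$ realizing a coisotropic in $\bun_G(C\bs D)$ follows from the same residue-splitting argument: the divisor $D$ makes the local sections near $D$ split off. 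Then the top face is a $1$-shifted coisotropic intersection in the sense of Melani--Safronov, and their coisotropic intersection theorem produces the $0$-shifted Poisson structure on $\qconn_G(C,D)$.

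Finally, I would argue that $B\Lambda_i \to \bun_G(\bb B)$ is 1-shifted Lagrangian for each dominant coweight $\omega^\vee_{z_i}$: the inclusion of an $L^+G$-orbit on $\Gr_G$ is Lagrangian because the orbit is isotropic (the pairing restricts to zero on the tangent space, a standard affine Grassmannian computation using the Manin triple), and a dimension/cotangent fiber check exhibits non-degeneracy of the induced map on normal complexes. Intersecting the 1-shifted Lagrangian $\mr{res}_D \colon \qconn_G(C,D) \to \bun_G(\bb B)^k$ with $\prod_i B\Lambda_i$ then gives a $0$-shifted symplectic structure on $\qconn_G(C,D,\omega^\vee)$ by PTVV's Lagrangian intersection theorem, establishing the final clause. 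The hardest step is verifying that the various maps out of $LG$-type stacks are genuinely (co)isotropic at the derived level rather than only classically: this requires either a direct tangent-complex computation or a reduction to a presentation of $\bun_G(\bb B)$ by its ind-finite-type stratification by affine Schubert cells, and is precisely the technical content that the present paper leaves as motivation.
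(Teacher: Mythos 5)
Your proposal follows essentially the same route as the paper: the residue pairing and Manin triple making $BLG$ 2-shifted symplectic with $BL^+G$ Lagrangian, PTVV intersection for $\bun_G(\bb B)$, AKSZ/Spaide relative orientations for the top face, Melani--Safronov coisotropic intersection for the 0-shifted Poisson structure, and the Lagrangian $B\Lambda_i$ for the final symplectic statement. Note that the paper itself does not prove this Claim but presents it as motivation and immediately reformulates it as a Conjecture, citing exactly the obstruction you flag (the stacks are not Artin, requiring Hennion's Tate-stack formalism), and its only substantive verification is the rational-case non-degeneracy check via the split exact sequence $0 \to \gg_- \to \gg(\!(z)\!)^k \to \gg_+ \to 0$, which your ``residue-splitting argument'' reproduces.
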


Now, let us make this claim more precise.  The main technical condition that makes this claim subtle comes from the fact that most of the derived stacks appearing in this cube, for instance the stack $BLG$, are not Artin.  As such we need to be careful when we try to, for instance, talk about the tangent complex to such stacks.  One can make careful statements using the formalism of ``Tate stacks'' developed by Hennion \cite{Hennion}.  We can therefore make our claim into a more formal conjecture.

\begin{conjecture}
Suppose $C$ is either an elliptic curve or $\bb{CP}^1$ with a fixed framing at $\infty$.
\begin{enumerate}
\item The stack $BLG$ is Tate 2-shifted symplectic, and both $BL^+G \to BLG$ and $\bun_G(C \bs D) \to BLG^k$ are Tate 2-shifted Lagrangian.  
\item The stack $\bun_G(C \bs D)$ is ind 1-shifted Poisson, and the map $\bun_G(C) \to \bun_G(C \bs D)$ is ind 1-shifted coisotropic witnessed by the 2-shifted Lagrangian map $BL^+G^k \to BLG^k$.
\item The Lagrangian intersection $\bun_G(\bb B)$ is Tate 1-shifted symplectic, and the map $B\Lambda_i \to \bun_G(\bb B)$ associated to the inclusion of the stabilizer of a closed point is 1-shifted Lagrangian.
\end{enumerate}
As a consequence, the moduli stack $\qconn_G(C,D)$ is ind 0-shifted Poisson and the moduli stack $\qconn_G(C,D, \omega^\vee)$ is 0-shifted symplectic.
\end{conjecture}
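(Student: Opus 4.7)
The plan is to decouple the three parts of the conjecture and then assemble them via the shifted Lagrangian intersection theorems of PTVV and Melani-Safronov, working throughout in Hennion's formalism of Tate stacks so that infinite-dimensional objects such as $BLG$ carry well-defined Tate shifted symplectic forms.

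First I would establish the 2-shifted symplectic structure on $BLG$. The (Tate) tangent complex at the basepoint is $L\gg[1]$, and the residue pairing on $L\gg$ coupled with the Killing form on $\gg$ is a continuous non-degenerate invariant pairing in the Tate sense. Running the Chern-Weil-type construction of \cite{PTVV} for $BG$ inside Hennion's framework should yield a Tate 2-shifted symplectic form on $BLG$. The Lagrangian structure on $BL^+G \to BLG$ is then a consequence of the Manin triple $(L\gg, L^+\gg, L^-_0\gg)$: $L^+\gg$ is isotropic and its orthogonal complement is itself, giving the required fibre-sequence-level non-degeneracy $L^+\gg[1] \to L\gg[1] \to L^+\gg^\vee[2]$ on tangent complexes. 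The statement that $\bun_G(C \bs D) \to BLG^k$ is Lagrangian is a relative AKSZ result: in each admissible case, $C$ is 1-oriented relative to its formal neighbourhood of $D$ (or relative to the framing at $\infty$) in the sense of Spaide \cite{Spaide}, so mapping into $BG$ produces a 1-shifted symplectic stack $\bun_G(C \bs D)$ together with a Tate 2-shifted Lagrangian restriction map to $BLG^k$.

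Part (2) then follows from the general principle that a 2-shifted Lagrangian $X \to Y$ with $Y$ 2-shifted symplectic endows $X$ with an induced 1-shifted Poisson structure, realising the ind 1-shifted Poisson structure on $\bun_G(C \bs D)$. The ind 1-shifted coisotropic structure on $\bun_G(C) \to \bun_G(C \bs D)$ is witnessed by the tautological identification $\bun_G(C) \simeq \bun_G(C \bs D) \times_{BLG^k} BL^+G^k$, which expresses a $G$-bundle on $C$ as a $G$-bundle away from $D$ together with an $L^+G$-reduction at each puncture; combined with the Lagrangian $BL^+G \to BLG$ this is precisely the data of a coisotropic morphism in the sense of \cite{MelaniSafronov1}. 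Part (3) is a Tate version of the standard Lagrangian intersection theorem: the derived fibre product $BL^+G \times_{BLG} BL^+G = \bun_G(\bb B)$ inherits a Tate 1-shifted symplectic form, and each stabiliser stack $B\Lambda_i \hookrightarrow \bun_G(\bb B)$ is Lagrangian because the corresponding $L^+G$-orbit in $\Gr_G$ is isotropic for the induced pairing. The final consequence then follows by applying the Lagrangian intersection theorem twice: once to the top face of the cube in its coisotropic-intersection form \cite{MelaniSafronov2}, producing the ind 0-shifted Poisson structure on $\qconn_G(C,D)$, and once to cut down by $\prod_i B\Lambda_i \to \prod_i \bun_G(\bb B)$ to obtain the 0-shifted symplectic structure on $\qconn_G(C,D,\omega^\vee)$.

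The main obstacle is purely foundational: Hennion's framework and Spaide's relative AKSZ construction must be combined, and the coisotropic intersection theorem of Melani-Safronov must be promoted to this Tate/ind-setting, so that all of the shifted symplectic, Lagrangian, Poisson and coisotropic structures above live in a single homotopy-coherent formalism. At the level of linear algebra the essential inputs are the Manin triple $(L\gg, L^+\gg, L^-_0\gg)$ and the invariant pairing on $\gg$, both classical; but producing globally coherent closed shifted symplectic and Lagrangian data outside the Artin world is the genuinely new derived-geometric content and the reason we state this only as a conjecture.
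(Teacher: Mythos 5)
This statement is stated in the paper only as a conjecture: the paper offers no proof, just the informal ``Claim'' preceding it (the commutative cube, the Manin triple $(L\gg, L^+\gg, \gg_-)$ making $BL^+G \to BLG$ and $\bun_G(C\bs D)\to BLG^k$ Lagrangian, the coisotropic top face, and the Lagrangian cut by $B\Lambda_i$) together with a remark checking non-degeneracy via the residue pairing in the rational case. Your proposal reproduces essentially that same route --- AKSZ/PTVV for $BLG$, the Manin-triple isotropy argument, Spaide's relative orientation, and the Melani--Safronov coisotropic intersection --- and correctly identifies the same genuine obstruction the authors do, namely that these structures must all be made coherent in Hennion's Tate/ind setting outside the Artin world; so your sketch is a faithful account of the intended argument rather than a proof, exactly as in the paper.
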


\begin{remark}
We should explain heuristically why the Calabi-Yau condition on $C$ is necessary.  This is a consequence of the AKSZ formalism in the case where $D$ is empty: for the mapping stack $\map(C, G/G)$ to be 0-shifted symplectic, or for the mapping stack $\map(C,BG)$ to be 1-shifted symplectic, we need $C$ to be compact and 1-oriented.  A $d$-orientation on a smooth complex variety of dimension $d$ is exactly the same as a Calabi-Yau structure.

More generally we can say the following.  Let us consider the rational case where $C = \bb{CP}^1$.  Consider the inclusion $\d r \colon \gg_- = \bb T_{\bun_G^\fr(\bb{CP}^1 \! \bs \! D)}[-1] \to r^*\bb T_{BLG^k}[-1] = \gg(\!(z)\!)^k$: a map of ind-pro Lie algebras concentrated in degree zero.  The residue pairing vanishes after pulling back along $r$ since elements of $\gg_-$ are  $\gg$-valued functions on $\bb{CP}^1$ with at least a simple pole at every puncture in $D$.  So the map $r$ is isotropic with zero isotropic structure; this structure is unique for degree reasons.  We must check that this structure is non-degenerate.  It suffices to check that the sequence
\[\bb T_{\bun_G^\fr(\bb{CP}^1 \! \bs \! D)}[-1] \to r^*\bb T_{BLG^k}[-1] \to (\bb T_{\bun_G^\fr(\bb{CP}^1 \! \bs \! D)}[-1])^\vee\]
is an exact sequence of ind-pro vector spaces, and therefore an exact sequence of quasi-coherent sheaves on the stack $\bun_G^\fr(\bb{CP}^1 \! \bs \! D)$.  To do this we identify the pair $(\gg_-, \gg(\!(z)\!)^k)$ as part of a Manin triple, where a complementary isotropic subalgebra to $\gg_-$ is given by $\gg_+ = \gg[[z]]^k$.  Using the residue pairing we can identify $\gg_+$ with $(\gg_-)^\vee$ and therefore identify our sequence with the split exact sequence
\[0 \to \gg_- \to \gg(\!(z)\!)^k \to \gg_+ \to 0.\]
\end{remark}

\begin{remark}
We will conclude this section with some comments on the multiplicative Hitchin system described above in Section \ref{Hitchin_system_section}.  In particular, the derived point of view suggests that both the multiplicative Hitchin fibers and the multiplicative Hitchin section of Remark \ref{q_opers_remark} will be -- at least generically -- canonically Lagrangian.  We can motivate this directly from the Chevalley map $\chi \colon G/G \to T/W$, by studying the non-singular example.  Generically, i.e. after restricting to the regular semisimple locus, we can identify $G^{\mr{rss}}/G$ with $(T^{\mr{reg}} \times BT)/W$, so that the fibers of the tangent space to $G/G$ are generically equivalent to $\mf t[1] \oplus \mf t$.  The directions tangent to the generic fibers of $\chi$ are concentrated in degree $-1$, meaning that the generic fibers of $\chi$ are canonically 1-shifted Lagrangian for degree reasons.  Likewise, the directions tangent to the Steinberg section $\sigma \colon T/W \to G/G$, at regular semisimple points, are concentrated in degree 0, meaning that after restriction to the regular semisimple locus is also canonically 1-shifted Lagrangian for degree reasons.  Now, if $L \to X$ is $n$-shifted Lagrangian and $M$ is $k$-oriented then there is an AKSZ $n-k$-shifted Lagrangian structure on the mapping space $\map(M,L) \to \map(M,X)$ \cite[Theorem 2.10]{Calaque}, which establishes our claim in the non-singular case. 

We should compare this discussion to Corollary \ref{isotropic_fiber_cor} in the rational case where $C = \bb{CP}^1$, where we prove that, in the rational case, the generic multiplicative Hitchin fibers are, indeed, Lagrangian.  
\end{remark}

\begin{remark} \label{Bouthier_remark}
There's yet another perspective that one might hope to pursue in order to define our symplectic structures in the language of derived symplectic geometry.  Bouthier \cite{Bouthier2} gave a description of the multiplicative Hitchin system along the following lines.  The Vinberg semigroup $V_G$ of $G$ is a family of affine schemes over $\CC^r$ whose generic fiber is isomorphic to $G$ but whose fibers lying on coordinate hyperplanes correspond to various degenerations of $G$.  Bouthier showed that the moduli space of multiplicative Higgs bundles is equivalent to the moduli space of $G$-bundles, along with a section of an associated bundle $V_G^{\omega^\vee}$ built from the Vinberg semigroup and the data of the coloured divisor $(D,\omega^\vee)$.  It's reasonable to ask whether one can construct an AKSZ shifted symplectic structure on the moduli space from this point of view, using the oriented structure on $(\bb{CP}^1,\infty)$ and the results of Ginzburg and Rozenblyum on shifted symplectic structures on spaces of sections \cite{GinzburgRozenblyum}.  
\end{remark}

\section{Twisted Gauge Theory} \label{twist_section}
Before we move on to the main mathematical content of the paper, we'll digress a little to talk about one situation where multiplicative Hitchin systems appear in gauge theory.  This story was our original motivation for studying the objects appearing in this paper, but we should emphasise that it is quite independent from the rest of the paper.  The reader who is only interested in algebraic and symplectic geometry, and not in gauge theory, can safely skip this section.

We'll describe our multiplicative Hitchin systems as the moduli spaces of solutions to the classical equations of motion in certain twisted five-dimensional supersymmetric gauge theories.  This story is distinct from the appearance of the moduli space in \cite{NekrasovPestun} as the Seiberg-Witten integral system of a 4d $\mc N=2$ theory.  Instead the moduli space will appear as the moduli space of solutions to the equations of motion in a twisted 5d $\mc N=2$ supersymmetric gauge theory, compactified on a circle.  This story will be directly analogous to the occurence of the ordinary moduli stack of Higgs bundles in a holomorphic twist of 4d $\mc N=4$ theory (see \cite{CostelloSH,ElliottYoo1} for a discussion of this story); we'll recover that example in the limit where the radius of the circle shrinks to zero.

\subsection{Background on Supersymmetry and Twisting}

We should begin by briefly recalling the idea behind twisting for supersymmetric field theories.  This idea goes back to Witten \cite{WittenTQFT}.  Supersymmetric field theories have odd symmetries coming from odd elements of the supersymmetry algebra.  Choose such an odd element $Q$ with the property that $[Q,Q]=0$.  Then $Q$ defines an odd endomorphism $\nu(Q)$ of the algebra of observables of the supersymmetric field theory with the property that $\nu(Q)^2 = 0$.  The \emph{twisted algebra of observables} associated to $Q$ is the cohomology of the operator $\nu(Q)$.  If $Q$ is chosen appropriately -- if the stress-energy tensor of the theory is $\nu(Q)$-exact -- then the $Q$-twisted field theory becomes topological.

\begin{remark}
From a modern perspective, using the language of factorization algebras, the first author and P. Safronov discussed the formalism behind topological twisting in \cite{ElliottSafronov}, and gave criteria for twisted quantum field theories to genuinely be topological.  The supersymmetry algebras and their loci of square zero elements are discussed in all cases in dimensions up to 10.  This classification is also performed in a paper of Eager, Saberi and Walcher \cite{EagerSaberiWalcher}.
\end{remark}

With the basic idea in hand, we'll focus in on the example we're interested in.  We'll be interested in twists by square-zero supercharges $Q$ that are not fully topological.  We'll begin by describing the supercharges we'll be interested in in dimensions 5 and 6.

Recall that there is an exceptional isomorphism identifying the groups $\Spin(5)$ and $\mr{USp}(4)$.  The Dirac spinor representation $S$ of $\Spin(5)$ is four dimensional: under the above exceptional isomorphism it is identified with the defining representation of $\mr{USp}(4)$.  Likewise there is an exceptional isomorphism identifying the groups $\Spin(6)$ and $\SU(4)$.  The two Weyl spinor representations $S_\pm$ are four dimensional: under the exceptional isomorphism they are identified with the defining representation of $\SU(4)$ and its dual.

\begin{definition}
The complexified $\mc N=k$ supersymmetry algebra in dimension 5 is the super Lie algebra
\[\mf A^5_k = (\sp(4;\CC) \oplus \sp(2k;\CC)_R \oplus V) \oplus \Pi(S \otimes W),\]
where $V$ is the five-dimensional defining representation of $\so(5;\CC) \iso \sp(4;\CC)$, $W$ is the $2k$-dimensional defining representation of $\sp(2k;\CC)_R$, and where there's a unique non-trivial way of defining an additional bracket $\Gamma \colon \sym^2(S \otimes W) \to V$.

Likewise, the complexified $\mc N=(k_+,k_-)$ supersymmetry algebra in dimension 6 is the super Lie algebra
\[\mf A^6_{(k_+,k_-)} = (\sl(4;\CC) \oplus \sp(2k_+;\CC)_{R} \oplus \sp(2k_-;\CC)_R \oplus V) \oplus \Pi(S_+ \otimes W_+ \oplus S_- \otimes W_-),\]
where $V$ is the six-dimensional defining representation of $\so(6;\CC) \iso \sl(4;\CC)$, $W_\pm$ is the $2k_\pm$-dimensional defining representation of $\sp(2k_\pm;\CC)_R$, and where there's a unique non-trivial way of defining an additional bracket $\Gamma_\pm \colon \sym^2(S_\pm \otimes W_\pm) \to V$.  Choosing a hyperplane in $V$ defines a super Lie algebra map $\mf A^5_{k_+ + k_-} \to \mf A^6_{(k_+,k_-)}$ which is an isomorphism on the odd summands.
\end{definition}

Let us fix some notation for the square-zero supercharges -- odd elements $Q$ where $\Gamma(Q,Q)=0$ -- that we will refer to in the discussion below.  Fix once and for all an embedding $\CC^5 \inj \CC^6$ and a symplectic embedding $W_+ \inj W$ (the twisted theories that we'll define don't depend on these choices).  Compare to the discussion in \cite[4.2.5--4.2.6]{ElliottSafronov}.
\begin{itemize}
 \item Let $Q_{\mr{min}} \in \mf A^5_1$ be any non-zero element such that $\Gamma(Q_{\mr{min}},Q_{\mr{min}})=0$ -- here ``min'' stands for ``minimal''.  All such elements have rank one, and lie in a single orbit for the action of $\so(5;\CC) \oplus \sp(2;\CC)$.  The image $\Gamma(Q_{\mr{min}}, -) \sub \CC^5$ is 3-dimensional.  Using the given embeddings we can view $Q_{\mr{min}}$ equally as an element of the larger supersymmetry algebras $\mf A^5_2, \mf A^6_{(1,0)}$ and $\mf A^6_{(1,1)}$.
 \item Let $Q_{\mr{HT}} = Q_{\mr{min}} + Q' \in \mf A^5_2$ be a non-zero element that squares to zero, and where the image $\Gamma(Q_{\mr{min}}, -) \sub \CC^5$ is 4-dimensional -- here ``HT'' stands for ``holomorphic-topological''.  All such elements have rank two and lie in a single orbit under the action of $\so(5;\CC) \oplus \sp(4;\CC)$.  Using the given embedding $\CC^5 \inj \CC^6$ we can view $Q_{\HT}$ equially as an element of $\mf A^6_{(1,1)}$.
\end{itemize}

\subsection{Description of the Twist}
Having set up our notation, we can state one the way in which the moduli space of multiplicative Higgs bundles arises from twisted supersymmetric gauge theory.  We will first state the relationship, then explain what exactly the statement is supposed to mean.

\begin{claimnum} \label{twist_claim}
If we take the twist of $\mc N=2$ super Yang-Mills theory with gauge group $G$ by $Q_\HT$, on the 5-manifold $C \times S^1 \times \RR^2$ where $C$ is a Calabi-Yau curve, with monopole surface operators placed at the points $(z_1,0), \ldots, (z_k,0)$ in $C \times S^1$ with charges $\omega^\vee_{z_1}, \ldots, \omega^\vee_{z_k}$ respectively, its moduli space of solutions to the equations of motion can be identified with the shifted cotangent space
\[\mr{EOM}_\HT(C \times S^1 \times \RR^2) \iso T^*[1]\mhiggs(C,D,\omega^\vee).\]
\end{claimnum}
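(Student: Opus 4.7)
The plan is to identify the classical BV theory obtained from the $Q_\HT$-twist of 5d $\mc N=2$ super Yang-Mills and then directly compute its stack of solutions to the twisted equations of motion. Throughout I would invoke the classification of twists in \cite{ElliottSafronov, EagerSaberiWalcher} together with Butson's forthcoming computation \cite{Butson} of twists of 5d and 6d SYM.

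First, since $\Gamma(Q_\HT, -) \sub \CC^5$ has dimension $4$, the twisted theory is holomorphic in two complex directions and topological in one real direction. Decomposing the spacetime as $C \times S^1 \times \CC_w$ — with $C$ and $\CC_w = \RR^2$ carrying their given complex structures and $S^1$ providing the topological direction — I would recognise the twisted BV field content as a holomorphic-topological Chern-Simons--BF hybrid. The physical gauge field and five real scalars reassemble into a superconnection
\[\mc A \in \Omega^{0,\bullet}(C) \otimes \Omega^\bullet(S^1) \otimes \Omega^{0,\bullet}(\CC_w) \otimes \gg[1]\]
together with a BV-conjugate antifield $\mc B$, paired with $\mc A$ by integration against the Calabi--Yau forms $\Omega_C$ on $C$ and $dw$ on $\CC_w$.

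Second, the twisted action is the holomorphic-topological BF--Chern-Simons hybrid
\[S(\mc A, \mc B) = \int_{C \times S^1 \times \CC_w} \Omega_C \wedge dw \wedge \tr\bigl(\mc B \,(\bar\partial_C + d_{S^1} + \bar\partial_w)\mc A + \tfrac{1}{2}\mc B [\mc A, \mc A]\bigr).\]
The equation of motion for $\mc B$ asserts that $\mc A$ is a partial flat superconnection, equivalently a map from the mixed ringed space $(C \times S^1 \times \CC_w)_\HT$ into $BG$; the equation of motion for $\mc A$ identifies $\mc B$ as a covariantly closed section of the coadjoint bundle. Viewing the $\CC_w$-factor as the complement of the framing point in the relative 1-oriented pair $(\bb{CP}^1, \infty)$ of Definition \ref{def:framing}, the $\mc A$-moduli reduces (before adding defects) to $\map(C \times S^1_B, BG) = \mhiggs(C)$, while the $\mc B$-directions contribute a cotangent fiber; AKSZ \cite{PTVV} then upgrades this to a 1-shifted cotangent structure, yielding $T^*[1]\mhiggs(C)$.

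Third, I would incorporate the monopole surface operators at $(z_i, 0) \in C \times S^1$ extended along $\CC_w$. These magnetic 't~Hooft-type defects impose Dirac monopole singularities on the restriction of $\mc A$ to a formal neighbourhood of each $\{(z_i, 0)\} \times \CC_w$: the holonomy of $\mc A$ around a transverse $S^2$ is required to lie in the conjugacy class labelled by $\omega^\vee_{z_i}$. Under the identification with $\mhiggs$ this translates precisely into the affine Grassmannian coweight-orbit data cutting out $\mhiggs(C, D, \omega^\vee) \sub \mhiggs(C)$, and hence yields the claimed moduli $T^*[1]\mhiggs(C, D, \omega^\vee)$. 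The main obstacle I anticipate is the precise matching between the Dirac-type boundary conditions imposed by the monopole surface operators and the coweight-orbit data defining $\mhiggs(C, D, \omega^\vee)$: this is a twisted analogue of the Kapustin--Witten dictionary between 't Hooft operators in 4d $\mc N=4$ theory and geometric Satake, and careful justification requires either adapting their arguments or directly invoking Butson's forthcoming treatment \cite{Butson}. A secondary subtlety is the rigorous application of AKSZ in the infinite-type/non-Artin setting, which would proceed via the Tate-stack formalism of Hennion used in the conjecture following Remark \ref{Elliptic_AKSZ_remark}.
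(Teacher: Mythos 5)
There is a genuine gap, and it sits at the very first step: your holomorphic/topological decomposition of the spacetime does not match the supercharge $Q_\HT$. You declare the theory holomorphic on $C$ and on $\CC_w = \RR^2$ and topological only on $S^1$; that accounts for $2+1 = 3$ directions in the image of $\Gamma(Q,-)$, which is the signature of the \emph{minimal} twist $Q_{\mr{min}}$. For $Q_\HT$ the image $\Gamma(Q_\HT,-) \sub \CC^5$ is 4-dimensional, so the twisted theory is holomorphic in only \emph{one} complex direction ($C$) and topological in three real directions ($S^1 \times \RR^2$). This is not a bookkeeping quibble: the identification $\EOM_\HT \iso T^*[1]\map(C, G/G)$ depends on the $\RR^2$ factor being a de Rham (topological) direction, so that there is a unique flat $G$-bundle on $\CC$ and the $\RR^2$-moduli collapses to a point while shifting the cotangent degree from $-1$ to $+1$. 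With $\CC_w$ treated holomorphically as you propose, the $\mc A$-moduli would instead be $\map(C \times S^1_B \times \CC_{w,\mr{Dol}}, BG)$, and your attempted rescue — compactifying $\CC_w$ to $(\bb{CP}^1,\infty)$ with a framing — does not help: framed holomorphic $G$-bundles on $\bb{CP}^1$ form (a version of) the affine Grassmannian, not a point. You are also conflating two different spaces here: the framing of Definition \ref{def:framing} lives on the curve $C$ carrying the multiplicative Higgs bundles (which in the claim is an arbitrary Calabi--Yau curve), not on the $\RR^2$ factor of the 5-manifold. Relatedly, the degree shift $[1]$ is asserted via ``AKSZ upgrades this'' but never derived; it comes precisely from the two topological de Rham directions you have misassigned.

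For comparison, the paper's route is not a direct 5d Lagrangian analysis at all: it starts from the minimal twist of 6d $\mc N=(1,1)$ super Yang--Mills on a Calabi--Yau 3-fold $C \times E_q \times \Sigma$, identified with $T^*[-1]\map(X^3,\gg/G)$, deforms to $Q_\HT$ via the Hodge deformation $T[1]\Sigma \rightsquigarrow \Sigma_{\mr{dR}}$, and then dimensionally reduces on a circle of $E_q$ by degenerating it to a nodal curve, so that the group-valued target arises from $\bun^{\mr{ss}}_G(E^{\mr{nod}}) \iso G/G$; only then does $\Sigma = \CC$ collapse and the shift become $+1$, with the monopole surface operators inserted at the end as prescribed singularities of the fields. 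A direct 5d derivation of the kind you attempt could in principle work and would be a genuinely different (and arguably cleaner) route, but only after you correct the decomposition to: Dolbeault on $C$, Betti on $S^1$ (this is where $G/G = \map(S^1_B, BG)$ comes from), and de Rham on $\RR^2$.
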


\begin{remark}
The shifted cotangent space of a stack is a natural construction in the world of derived geometry.  In brief, the $k$-shifted cotangent space of a stack $\mc X$ has a canonical projection $\pi \colon T^*[k]\mc X \to \mc X$, and the fiber over a point $x \in \mc X$ is a derived affine space (i.e. a cochain complex) -- the tangent space to $\mc X$ at $x$ shifted down in cohomological degree by $k$.
\end{remark}

So, let us try to unpack the meaning of Claim \ref{twist_claim}.  Firstly, what does it mean to identify ``the moduli stack of solutions to the equations of motion'' of our twisted theory?  We have the following idea in mind (discussed in more detail, for instance, in \cite{CostelloSUSY, ElliottYoo1}).

\begin{construction} \label{twist_construction}
We'll use the Lagrangian description for the twisted field theory.  On the one hand, we can describe a twisted space of fields and twisted action functional.  This is a fairly standard construction: one factors the action functional into the sum $S_{\mr{tw}}(\phi) + Q\Lambda(\phi)$ of a ``twisted'' action functional and a $Q$-exact functional.  Note that the action of the group $\Spin(n) \times G_R$, where $G_R$ is the group of R-symmetries, is broken to a subgroup.  Only the subgroup of this product stabilizing $Q$ still acts on the twisted theory. 

Having written down the twisted action functional, we'll consider its critical locus: the moduli space of solutions to the equations of motion.  However, we can consider a finer invariant than only the ordinary critical locus: we can form the critical locus in the setting of derived geometry.  In classical field theory this idea is captured by the classical \emph{BV formalism}: for a modern formulation see e.g. \cite{CostelloBook}.  In particular, for each classical point in the critical locus, we can calculate the classical BV-BRST complex of the twisted theory -- concretely, we calculate the BV-BRST complex of the untwisted theory, and add the operator $Q$ to the differential.  The twisted BV-BRST complex models the tangent complex to the derived critical locus of the twisted theory.

So, to summarize, the BV formalism gives us a space: the critical locus of the twisted action functional, equipped with a sheaf of cochain complexes, whose fiber at a solution is the twisted BV-BRST complex around that solution.  This structure is weaker than a derived stack, but when we say a derived stack $\mc X$ ``can be identified with'' the moduli space $\EOM$ of solutions to the equations of motion, we mean that we can identify the space of $\CC$-points of $\mc X$ in the analytic topology with the underlying space of $\EOM$, and we can coherently identify the tangent complex of $\mc X$ at each $\CC$-point $x$ with the twisted BV-BRST complex at the corresponding classical solution.
\end{construction}

\begin{remark}
There's another subtlety that we glossed over in our discussion of the BV-BRST complex above: a priori when you add $Q$ to the differential of the untwisted BV-BRST complex the result is only $\ZZ/2\ZZ$-graded, since $Q$ has ghost number 0 and superdegree 1, whereas the BV-BRST differential has ghost number 1 and superdegree 0.  In order to promote it to a $\ZZ$-graded complex we need to modify the degrees of our fields using an action of $\mr U(1)$ inside the R-symmetry group, so that $Q$ has weight 1.  This is possible in all the examples we'll discuss below, and in our sketch argument we'll use this $\mr U(1)$ action to collapse the $\ZZ/2\ZZ \times \ZZ$-grading down to a single $\ZZ$-grading everywhere without further comment.
\end{remark}

Now that we know what Claim \ref{twist_claim} means, why is it true?  We'll only outline an argument here, since the whole twisted gauge theory story is somewhat orthogonal to the emphasis of the rest of this paper.  The outline we'll give goes via the minimal twist of 6d $\mc N=(1,1)$ super Yang-Mills theory.  This calculation was done independently by Dylan Butson -- a detailed version will appear in his forthcoming article \cite{Butson}.

\begin{enumerate}
 \item First, compute the twist of $\mc N=(1,0)$ super Yang-Mills theory in 6-dimensions by $Q_{\mr{min}}$.  This twisted theory is defined on a compact Calabi-Yau 3-fold $X^3$, and can be identified, in the sense of Construction \ref{twist_construction}, with the shifted cotangent space $T^*[-1]\bun_G(X^3)$.  Note: there are at least two ways of doing this calculation.  One can calculate the twist directly using a method very similar to Baulieu's calculation of the minimal twist of 10-dimensional $\mc N=1$ super Yang-Mills theory in \cite{Baulieu}.  Alternatively one can work in a version of the first order formalism where one introduces an auxiliary 2-form field: this is the approach followed by Butson. 
 
 \item One can extend this calculation to include a hypermultiplet valued in a representation $W$ of $G$.  One finds that the inclusion of the twisted hypermultiplet couples the moduli space of holomorphic $G$-bundles to sections in the associated bundle corresponding to $W$.  That is, the moduli space of solutions of the twisted theory can be identified with the shifted cotangent space $T^*[-1]\map(X^3, W/G)$, where $W/G$ is the quotient stack.  This tells us the twist of $\mc N=(1,1)$ super Yang-Mills: this is the case where $W = \gg$.
 
 \item The twist by the holomorphic-topological supercharge $Q_{\mr{HT}}$ corresponds to a deformation of this moduli space.  Specifically, let us set $X^3 = S \times C$ where $S$ is a Calabi-Yau surface and $\Sigma$ is a Calabi-Yau curve.  In this case we can identify the $Q_{\mr{min}}$-twisted $\mc N=(1,1)$ theory with
 \begin{align*}
 T^*[-1]\map(S \times \Sigma, \gg/G) &\iso T^*[-1]\map(S \times \Sigma, T[-1]BG) \\
 &\iso T^*[-1]\map(S \times T[1]\Sigma, BG).  
 \end{align*}
 The deformation to the twist by $Q_\HT$ corresponds to the Hodge deformation, deforming $T[1]\Sigma$ (the Dolbeault stack of $\Sigma$) to $\Sigma_{\mr{dR}}$ (the de Rham stack of $\Sigma$: this has the property that $G$-bundles on $\Sigma_{\mr{dR}}$ are the same as $G$-bundles on $\Sigma$ with a flat connection).  
 
 \item Now, all of these theories can be dimensionally reduced down to 5 dimensions.  Specifically, let us split $S$ as $C \times E_q$, and send the radius of one of the $S^1$ factors to zero, or equivalently degenerate the smooth elliptic curve $E_q$ to a nodal curve.  We're left with the identification, for the example we're interested in:
 \[\EOM_\HT(C \times S^1 \times \Sigma) \iso T^*[-1]\map(C \times \Sigma_{\mr{dR}}, G/G).\]
 Here we have identified $\map(E^{\mr{nod}}, BG)$ with the adjoint quotient $G/G$ \footnote{Strictly speaking this is only the semistable part of the moduli space.  We get $G/G$, for instance, by asking for only those bundles which lift to the trivial bundle on the normalization.}.  So far, tn this discussion $\Sigma$ was a compact curve.  If we want to set $\Sigma = \CC$ instead the only change is that the $-1$-shifted tangent space becomes the 1-shifted tangent complex.  There's a unique flat $G$-bundle on $\CC$, so we can identify
 \[\EOM_\HT(C \times S^1 \times \CC) \iso T^*[1]\map(C, G/G).\]
 
 \item Finally, we need to include surface operators.  In order to see how to do this, let us go back to the Lagrangian description of our classical field theory.  The inclusion of monopole operators is usually thought of as modifying the space of fields in our supersymmetric gauge theory, so that the gauge field is permitted to be singular along the prescribed surface, with specified residues.  If we track the above calculation where the fields are allowed those prescribed singularities, the result is that we should replace $\map(C \times E^{\mr{nod}}, BG)$ with the moduli space of singular multiplicative Higgs fields with local singularity data corresponding to the choice of charges of the monopole operators.  That is we can identify,
 \[\mr{EOM}_\HT(C \times S^1 \times \RR^2) = T^*[1]\mhiggs(C,D,\omega^\vee).\]
\end{enumerate}

\begin{remark}[Reduction to 4-dimensions]
If we reduce further, to four dimensions, by sending the radius of the remaining $S^1$ to zero, then we recover a more familiar twist first defined by Kapustin \cite{KapustinHolo}.  We can interpret this further degeneration as degenerating the factor $E_q$ not to a nodal, but to a cuspidal curve.  The semistable part of the stack $\map(E^{\mr{cusp}},BG)$ of $G$-bundles on a cuspidal curve is equivalent the Lie algebra adjoint quotient stack $\gg/G$, so we can identify the $Q_\HT$-twisted 4d $\mc N=4$ theory with
\begin{align*}
\EOM_\HT(C \times \Sigma) &\iso T^*[-1]\map(C \times \Sigma_{\mr{dR}}, \gg/G) \\
&\iso T^*[-1]\map(T[1]C \times \Sigma_{\mr{dR}}, BG).
\end{align*}
This is the shifted cotangent space whose base is the moduli space of $G$-bundles on $C \times \Sigma$ with a flat connection on $\Sigma$ and a Higgs field on $C$ (note that this twisted theory is defined where $C$ and $\Sigma$ are any curves, not necessarily Calabi-Yau).  This agrees with the calculation performed in \cite{ElliottYoo1}.  In particular, if we set $\Sigma = \CC$ the result is
\[\EOM_\HT(C \times \CC) \iso T^*[1]\higgs_G(C).\]
\end{remark}

\begin{remark} \label{Kapustin_twist_remark}
This supersymmetric gauge theory story should be compared to several recent twisting calculations in the literature.  Firstly, the minimal twist of 5d $\mc N=2$ gauge theory was discussed by Qiu and Zabzine \cite{QiuZabzine} on quite general contact five-manifolds.  They described the twisted equations of motion in terms of the Haydys-Witten equations.

A recent article of Costello and Yagi \cite{CostelloYagi} also described a relationship between multiplicative Higgs moduli spaces and twisted supersymmetric gauge theory in yet another context.  They calculated the twist of 6d $\mc N=(1,1)$ gauge theory, but with respect to yet another twisting supercharge $Q$ with the property that the image of $[Q,-]$ in $\CC^6$ is five-dimensional.  They then consider this theory on $\RR^2 \times C \times \Sigma$, and place it in the $\Omega$-background in the $\RR^2$ directions.  They argue that the resulting theory is a four-dimensional version Chern-Simons theory on $C \times \Sigma$ holomorphic on $C$ and topological in $\Sigma$.

The $C$-holomorphic $\Sigma$-topological 4d Chern-Simons theory on $C \times \Sigma$  was introduced for the first time that we're aware of by Nekrasov in his Ph.D. thesis (page 89 of \cite{NekrasovThesis}), where it was also speculated that such a Chern-Simons theory
would provide a new geometric origin for quantum affine algebras. The idea was recently independently discovered and investigated in detail by Costello \cite{CostelloYangian}): the moduli stack of derived classical solutions of 4d Chern-Simons theory on $C \times \Sigma$ can be identified with the mapping stack $\map(C \times \Sigma_{\mr{dR}}, BG)$. Costello and Yagi in \cite{CostelloYagi} go on to argue via a sequence of string dualities that this 4d CS theory is dual to the ADE quiver gauge theories discussed in \cite{NekrasovPestun, NekrasovPestunShatashvili}; this approach was discussed independently by both Costello \cite{CostelloStringMath} and Nekrasov \cite{NekrasovStringMath} at String Math 2017.  From another point of view, dualities of this form were also discussed by Ashwinkumar, Tan and Zhao \cite{AshwinkumarTanZhao}.

Heuristically speaking, the 4d Chern-Simons theory of \cite{NekrasovThesis} and \cite{CostelloYangian} on $C_{\mathrm{Dol}} \times \Sigma_{\mathrm{dR}}$ with $\Sigma_{\mathrm{dR}} = S^{1}_{dR} \times \mathbb{R}_{\mathrm{time}}$ quantizes, in the path formalism, the holomorphic symplectic phase space of solutions to the Bogomolny monopole equations on $C_{\mathrm{Dol}} \times S^{1}_{\mathrm{dR}}$, in the same way that ordinary 3d Cherns-Simons theory on $C_{\mathrm{dR}} \times \mathbb{R}_{\mathrm{time}}$  quantized the moduli space
of flat connections on the Riemann surface $C_{\mathrm{dR}}$ in the work of Witten \cite{witten1989}.
While Costello \cite{CostelloYangian} considers Wilson operators,  we consider Dirac singularties of the monopoles or, equivalently, t'Hooft operators.
\end{remark}

\subsection{Langlands Duality} \label{Langlands_section}
Our main motivation for describing the multiplicative Hitchin system in terms of a twist of 5d $\mc N=2$ super Yang-Mills theory is to make the first steps towards a ``multiplicative'' version of the geometric Langlands conjecture.  The description in terms of supersymmetric gauge theory allows us to use the description of the geometric Langlands conjecture in terms of S-duality for topological twists of 4d $\mc N=4$ super Yang-Mills theory given by Kapustin and Witten \cite{KapustinWitten}.  Better yet, Witten described S-duality in terms of the 6d $\mc N=(2,0)$ superconformal field theory compactified on a torus: one obtains dual theories by compactifying on the two circles in either of the two possible orders \cite{Witten6d}.  We can leverage this story in order to describe a conjectural duality for twisted 5d gauge theories. 

The A- and B-twists of 4d $\mc N=4$ super Yang-Mills can both be defined by deforming an intermediate twist sometimes referred to as the Kapustin twist \cite{KapustinHolo} (see \cite{CostelloSUSY, ElliottYoo1} for more details).  The moduli stack of solutions to the equations of motion on $C \times \RR^2$ in the Kapustin twist is the 1-shifted cotangent space of $\higgs_G(C)$: the moduli stack of $G$-Higgs bundles on $C$.  The A- and B-twists modify this in two different ways.  In the B-twist $\higgs_G(C)$ is deformed by a hyperk\"ahler rotation, and it becomes $\Flat_G(C)$: the moduli stack of flat $G$-bundles on $C$.  In the A-twist the shifted cotangent bundle $T^*[1]\higgs_G(C)$ is deformed to the de Rham stack $\higgs_G(C)_{\mr{dR}}$ -- the de Rham stack of $\mc X$ has the property that quasi-coherent sheaves on $\mc X_{\mr{dR}}$ are the same as $D$-modules on $\mc X$.  

To summarize Kapustin and Witten's argument therefore, S-duality interchanges the A- and B-twists of 4d $\mc N=4$ super Yang-Mills thory,  and provides an equivalence between their respective categories of boundary conditions on the curve $C$.  These categories of boundary conditions can be identified on the B-side as the category of coherent sheaves on $\Flat_G(C)$, and on the A-side as the category of D-modules on $\bun_G(C)$, or equivalently coherent sheaves on $\bun_G(C)_{\mr{dR}}$.

Now, we observe that this family of twists arises as the limit of a family of twisted of 5d $\mc N=2$ gauge theory compactified on a circle, where the radius of the circle shrinks to zero.  This motivates an analogous five-dimensional story, which we summarize with the following ``pseudo-conjecture'', by which we mean a physical statement whose correct mathematical formulation remains to be determined.

\begin{pseudoconj}[Multiplicative Geometric Langlands] \label{multLanglands}
Let $G$ be a Langlands self-dual group.  There is an equivalence of categories
\[\text{A-Branes}_{q^{-1}}(\mhiggs_G(C,D,\omega^\vee)) \iso \text{B-Branes}(\qconn_G(C, D, \omega^\vee))\]
where the category on the right-hand side depends on the value $q$.
\end{pseudoconj}

What does this mean, and are there situations in which we can make it precise?  We'll discuss a few examples where we can say something more concrete.  In each case, by ``B-branes'' we'll just mean the category $\coh(\qconn_G(C, D, \omega^\vee))$ of coherent sheaves.  By ``A-branes'' we'll mean some version of \emph{$q^{-1}$-difference modules} on the stack $\bun_G(C)$. 

\begin{remark}
This equivalence is supposed to interchange objects corresponding to branes of opers on the two sides, and introduce an analogue of the Feigin-Frenkel isomorphism between deformed W-algebras (see \cite{FrenkelReshetikhinSTS, STSSevostyanov}.  This isomorphism only holds for self-dual groups, which motivates the restriction to the self-dual case here.
\end{remark}

\begin{remark}
Even for the ordinary geometric Langlands conjecture there are additional complications that we aren't addressing here.  For example, the most natural categorical version of the geometric Langlands conjecture is false for all non-abelian groups.  Arinkin and Gaitsgory \cite{ArinkinGaitsgory} explained how to correct the statement to obtain a believable conjecture: one has to consider not all coherent sheaves on the B-side, but only those sheaves satisfying a ``singular support'' condition.  In \cite{ElliottYoo2} it was argued that, from the point of view of twisted $\mc N=4$ super Yang-Mills theory these singular support conditions arise when one restricts to only those boundary conditions compatible with a choice of vacuum.  The same sorts of subtleties should equally occur in the multiplicative setting.
\end{remark}

\subsubsection{The Abelian Case}
Suppose $G = \GL(1)$ (more generally we could consider a higher rank abelian gauge group).  In general for an abelian group the moduli spaces we have defined are trivial -- for instance the rational and trigonometric spaces are always discrete.  However there is one interesting non-trivial example: the elliptic case.  For simplicity let us consider the abelian situation with $D = \emptyset$: the case with no punctures.

\begin{definition}
A \emph{$q$-difference module} on a variety $X$ with automorphism $q$ is a module for the sheaf $\Delta_{q,X}$ of non-commutative rings generated by $\OO_X$ and an invertible generator $\Phi$ with the relation $\Phi \cdot f = q^*(f) \cdot \Phi$.  Write $\diff_q(X)$ for the category of $q$-difference modules on $X$.
\end{definition}

In the abelian case the space $\qconn_{\GL(1)}(E)$ is actually a stack, but one can split off the stacky part to define difference modules on it.  Indeed, for any $q$ one can write
\[\bun_{\GL(1)}(E) \iso B\GL(1) \times \ZZ \times E^\vee\]
and so
\[\qconn_{\GL(1)}(E) \iso B\GL(1) \times \ZZ \times (E^\vee \times_q \CC^\times)\]
which means one can define difference modules on these stacks associated to an automorphism of $E^\vee$ or $E^\vee \times_q \CC^\times$ respectively.

\begin{conjecture}
There is an equivalence of categories for any $q \in \bb{CP}^1$
\[\diff_q(\bun_{\GL(1)}(E)) \iso \coh(q^{-1}\conn_{\GL(1)}(E)).\]
\end{conjecture}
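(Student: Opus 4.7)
My approach is to reduce the conjecture to a core equivalence on the Picard factor, and then prove that core equivalence by recognizing both sides as two presentations of the same category of equivariant sheaves.

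First I would reduce: the decompositions $\bun_{\GL(1)}(E) \iso B\GL(1) \times \ZZ \times E^\vee$ and $\qconn_{\GL(1)}(E) \iso B\GL(1) \times \ZZ \times (E^\vee \times_q \CC^\times)$ yield a K\"unneth-type splitting of both sides of the conjectured equivalence. Since the automorphism of $E$ parameterized by $q$ acts trivially on the discrete factor $\ZZ$ and on the stacky factor $B\GL(1)$, the tensor factor $\coh(B\GL(1)) \otimes \coh(\ZZ)$ passes through transparently on both sides, and the conjecture reduces to the core assertion
\[\diff_q(E^\vee) \iso \coh(E^\vee \times_{q^{-1}} \CC^\times),\]
where $q$ is understood to act on $E^\vee$ through the induced automorphism of $\mr{Pic}^0(E)$.

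Next I would model both sides as equivariant sheaves. By derived descent, one interprets $E^\vee \times_{q^{-1}} \CC^\times$ as the homotopy quotient $E^\vee / q^{\ZZ}$ (with the algebraic $\CC^\times$ playing the role of the Betti stack $S^1_B$), so coherent sheaves on it are equivalent to coherent sheaves $\mc F$ on $E^\vee$ equipped with an isomorphism $\mc F \iso q^* \mc F$. On the other hand, unwinding the definition of $\diff_q$, a $q$-difference module on $E^\vee$ is a coherent sheaf $\mc F$ together with an invertible $q^*$-semilinear endomorphism $\Phi$, and this is exactly the data of an $\OO_{E^\vee}$-linear isomorphism $\mc F \iso q^*\mc F$. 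The two descriptions coincide (after identifying $q^*$ with $(q^{-1})^{-1,*}$, which accounts for the $q$ vs.\ $q^{-1}$ discrepancy in the statement), giving the core equivalence. I would then reassemble with the $\ZZ$- and $B\GL(1)$-factors to recover the full conjecture.

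The main obstacle is the second step --- specifically, pinning down the precise meaning of $E^\vee \times_{q^{-1}} \CC^\times$ as a derived stack and showing that its category of coherent sheaves agrees with the equivariant category on $E^\vee$. The subtlety is that the abstract construction of the mapping torus uses the Betti stack $S^1_B$, while the concrete decomposition of $\qconn_{\GL(1)}(E)$ has an algebraic $\CC^\times$ in its place. For the conjecture to hold as stated one wants descent along $E^\vee \to E^\vee / q^{\ZZ}$ to hold at the level of (ind-)coherent sheaves, which requires delicate verification in the Tate derived geometry of Hennion cited earlier. If this identification cannot be made in full generality, a fallback would be to establish the equivalence only at the level of underlying abelian categories, or only after restricting to a locus where the $q$-action on $E^\vee$ is free so that the quotient is an honest smooth variety.
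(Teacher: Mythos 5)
First, a point of calibration: the paper does not prove this statement --- it appears only as a conjecture, followed by the remark that it ``should be provable using the same techniques as the ordinary geometric Langlands correspondence in the abelian case, i.e.\ by a (quantum) twisted Fourier--Mukai transform'' in the sense of Polishchuk--Rothstein. So there is no proof in the paper to compare against; what can be assessed is whether your argument would establish the conjecture, and it would not.

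The central gap is that you treat the equivalence as a definitional unwinding --- ``two presentations of the same category of equivariant sheaves'' --- whereas the content of the conjecture is a nontrivial duality of Fourier--Mukai type. Concretely: (i) the action of a translation $q$ of $E$ on $E^\vee = \mr{Pic}^0(E)$ by pullback is trivial, so the ``$q^*$-semilinear automorphism'' you propose on the automorphic side does not depend on $q$ at all; the $q$-dependence there has to enter as a twist by the degree-zero line bundle on $E^\vee$ classified by $q$, which is exactly what the quantum twisted Fourier--Mukai transform is designed to produce and what your identification never sees. (ii) The twisted product $E^\vee\times_{q^{-1}}\CC^\times$ is a (twisted) $\CC^\times$-fibration over $E^\vee$ --- for $\GL(1)$ the $\CC^\times$ records the automorphism of a degree-zero line bundle --- and is not the homotopy quotient $E^\vee/q^{\ZZ}$; moreover, even at $q=1$ the category of coherent sheaves on $E^\vee\times\CC^\times$ is not equivalent to coherent sheaves on $E^\vee$ equipped with an automorphism, since the latter only yields sheaves of finite length in the $\CC^\times$-direction (for instance $\OO_{E^\vee\times\CC^\times}$ has no preimage), so your ``core equivalence'' already fails in the untwisted case. (iii) The factors $\ZZ$ and $B\GL(1)$ do not pass through transparently: under abelian Langlands duality they are exchanged by Cartier duality (difference modules on $\ZZ$ match coherent sheaves on $B\GL(1)$ and vice versa), and the $E^\vee$ factor is processed by a Fourier--Mukai kernel on the product of an abelian variety with its dual, not by the identity. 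A viable proof would have to construct such a kernel --- the difference analogue of the Laumon--Rothstein transform --- rather than identify the two sides pointwise.
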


In this abelian case we can go even farther and make a more sensitive 2-parameter version of the conjecture.

\begin{conjecture}
There is an equivalence of categories for any $q_1, q_2 \in \bb{CP}^1$
\[\diff_{q_1}(q_2\conn_{\GL(1)}(E)) \iso \diff_{q_2^{-1}}(q_1^{-1}\conn_{GL(1)}(E)),\]
where $q_1$ is the automorphism of $E^\vee \times_{q_2} \CC^\times$ acting fiberwise over each point of $\CC^\times$.
\end{conjecture}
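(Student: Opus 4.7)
My plan is to exhibit both sides as categories associated to two derived algebraic stacks related by a two-parameter noncommutative Fourier--Mukai transform, specializing at $q_1 = q_2 = 1$ to the ordinary Fourier--Mukai duality for the elliptic curve $E$ and its dual $E^\vee$.

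First, I would unpack both sides concretely. Using the splitting $\bun_{\GL(1)}(E) \iso B\GL(1) \times \ZZ \times E^\vee$ and the computation preceding the previous conjecture, write
\[ q_i\conn_{\GL(1)}(E) \iso B\GL(1) \times \ZZ \times (E^\vee \times_{q_i} \CC^\times). \]
Since the factor $B\GL(1) \times \ZZ$ is inert under the formation of $\diff_\bullet$ (difference modules decompose as a direct sum of sheaves indexed by this factor), the claim reduces to an equivalence
\[ \diff_{q_1}(E^\vee \times_{q_2} \CC^\times) \iso \diff_{q_2^{-1}}(E^\vee \times_{q_1^{-1}} \CC^\times), \]
where on each side $E^\vee$ carries an elliptic direction and there are two $\CC^\times$ factors (one geometric, entering via the mapping-torus construction, and one operatorial, entering through the difference operator). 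The essential content is a duality exchanging the two roles of the two $\CC^\times$ factors, with appropriate inversions.

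Next, I would construct a kernel. The classical ($q_1 = q_2 = 1$) Fourier--Mukai equivalence between $\coh(E)$ and $\coh(E^\vee)$ is implemented by the Poincar\'e line bundle $\mc P$ on $E \times E^\vee$. I would build a deformation $\wt{\mc P}$ of $\mc P$ on the product
\[ (E^\vee \times_{q_2} \CC^\times) \times (E^\vee \times_{q_1^{-1}} \CC^\times), \]
by taking the pullback of $\mc P$ along the obvious maps, tensored with a line bundle capturing the tautological coupling between the pair of $\CC^\times$ factors on the two sides (the analogue of the Poincar\'e bundle for the pair of tori $\CC^\times \times \CC^\times$). One then equips $\wt{\mc P}$ with a $q_1$-equivariant structure along the first $\CC^\times$ factor on the left and a $q_2^{-1}$-equivariant structure along the first $\CC^\times$ factor on the right; these are determined uniquely by the Manin-triple-type interpretation of the $q$-twist together with the naturality of $\mc P$. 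Integral transform along $\wt{\mc P}$ is then the proposed equivalence.

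Finally, I would verify the equivalence by a two-step degeneration argument. Setting $q_1 = 1$ reduces the left-hand side to $\coh\bigl((E^\vee \times_{q_2} \CC^\times) \times \CC^\times\bigr)$, and the right-hand side to $\diff_{q_2^{-1}}(E^\vee \times \CC^\times)$, matching via the previous one-parameter conjecture applied to the $E$-fiber; further setting $q_2 = 1$ recovers the standard Fourier--Mukai equivalence times the identity on an auxiliary $\CC^\times \times \CC^\times$ factor. Deforming back by turning on $q_1$ and $q_2$ is a flat deformation of both sides, and flatness of Fourier--Mukai under such deformations (which preserves the kernel's invertibility in the appropriate derived category) upgrades the specialization to an equivalence for generic $(q_1, q_2)$.

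The main obstacle is the absence of a fully developed theory of difference modules on stacks which are themselves $q'$-twisted quotients: one must simultaneously handle two noncommutative deformations in different directions. Concretely, the subtle point is verifying that the two equivariant structures on $\wt{\mc P}$ are mutually compatible, i.e. that the $q_1$-action in the geometric $\CC^\times$-direction of the left factor is exchanged, under Fourier--Mukai along $E^\vee$, with the $q_2^{-1}$-difference operator action on the right factor. This is an elliptic cousin of the standard fact that Fourier--Mukai on $\CC^\times$ exchanges multiplication by $z$ with the shift $f(z) \mapsto f(qz)$, and I expect it to follow from a careful analysis of the automorphic factors defining the Poincar\'e bundle on the Tate curve, but establishing it rigorously is the technical heart of the argument.
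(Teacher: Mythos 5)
First, note that the paper does not prove this statement: it appears only as a conjecture, accompanied by the one-line remark that it ``should be provable using the same techniques as the ordinary geometric Langlands correspondence in the abelian case, i.e.\ by a (quantum) twisted Fourier--Mukai transform'' in the sense of Polishchuk--Rothstein. Your overall strategy --- splitting off the inert $B\GL(1)\times\ZZ$ factor and then building a deformed Poincar\'e kernel $\wt{\mc P}$ on the product of the two mapping tori, reducing to the classical Fourier--Mukai equivalence for $E$ and $E^\vee$ at $q_1=q_2=1$ --- is exactly the route the paper gestures at, so there is no divergence of approach to report; the question is only whether your sketch closes the argument, and it does not.

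There are two genuine gaps. The first is the final ``flat deformation'' step: invertibility of an integral transform is not a property that automatically propagates from a special fibre of a family of kernels to generic parameters. The categories $\diff_q(X)$ are module categories over genuinely noncommutative sheaves of rings, and they are highly sensitive to arithmetic properties of $q$ (torsion versus non-torsion automorphisms give drastically different categories of difference modules), so one cannot deduce full faithfulness and essential surjectivity at general $(q_1,q_2)$ from the equivalence at $(1,1)$ without controlling the cones of the unit and counit across the entire family. Moreover, your intermediate specialization at $q_1=1$ invokes the preceding one-parameter conjecture, which is itself unproven, so the degeneration check rests on another open statement. The second gap is the one you flag yourself: the existence and mutual compatibility of the $q_1$- and $q_2^{-1}$-equivariant structures on $\wt{\mc P}$ is asserted to be ``determined uniquely by the Manin-triple-type interpretation of the $q$-twist,'' but this compatibility --- the exchange, under the elliptic Fourier--Mukai transform, of the geometric twist direction with the difference-operator direction --- is precisely the content of the conjecture. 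Until it is established (for instance via explicit automorphy factors for the Poincar\'e bundle on the Tate curve, or by adapting the Polishchuk--Rothstein construction from $\mc D$-modules to difference modules), the kernel has not actually been constructed, and the proposal remains a plausible programme rather than a proof.
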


This conjecture should be provable using the same techniques as the ordinary geometric Langlands correspondence in the abelian case, i.e. by a (quantum) twisted Fourier-Mukai transform (as constructed by Polishchuk and Rothstein \cite{PolishchukRothstein}).

\subsubsection{The Classical Case}
Now, let us consider the limit $q \to 0$.  This will give a conjectural statement involving coherent sheaves on both sides analogous to the classical limit of the geometric Langlands conjecture as conjectured by Donagi and Pantev \cite{DonagiPantev}.  The existence of an equivalence isn't so interesting in the self-dual case (where both sides are the same), but the classical multiplicative Langlands functor should be an \emph{interesting} non-trivial equivalence.  For example we can make the following conjecture

\begin{conjecture}
Let $G$ be a Langlands self-dual group and let $E$ be an elliptic curve.  There is an automorphism of categories (for the rational, trigonometric and elliptic moduli spaces)
\[F \colon \coh(\mhiggs_G(E)) \iso \coh(\mhiggs_{G}(E))\]
so that the following square commutes:
\[\xymatrix{
\coh(\mhiggs_T(E)) \ar[r]^{\mathrm{FM}} \ar[d]_{p_*q^!} &\coh(\mhiggs_T(E)) \ar[d]^{p_*q^!} \\
\coh(\mhiggs_G(E)) \ar[r]^{F} &\coh(\mhiggs_G(E)).
}\]
Here we're using the natural morphisms $p \colon \mhiggs_B(E) \to \mhiggs_G(E)$ and $q \colon \mhiggs_B(E) \to \mhiggs_T(E)$, and $\mr{FM}$ is the Fourier-Mukai transform.
\end{conjecture}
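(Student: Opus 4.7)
The plan is to follow the strategy of Donagi--Pantev \cite{DonagiPantev} for the classical limit of ordinary geometric Langlands, adapted to the multiplicative setting via the multiplicative Hitchin fibration $\pi \colon \mhiggs_G(E) \to \mc B(E)$ of Definition \ref{mult_Hitchin_system_def}. The key inputs are that, on the regular semisimple locus $\mc B^{\mr{reg}} \sub \mc B(E)$, the fibers are isogenous to abelian varieties built from $W$-equivariant $T$-bundles on the cameral cover $\wt E \to E$, and the analogous (much simpler) structure exists on $\mhiggs_T(E)$.

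First I would restrict both sides to the regular locus and construct the autoduality of the multiplicative Hitchin system. For Langlands self-dual $G$ one expects a relative Poincar\'e line bundle $\mc P$ on $\mhiggs^{\mr{reg}}_G(E) \times_{\mc B^{\mr{reg}}} \mhiggs^{\mr{reg}}_G(E)$ identifying each abelian-variety fiber with its dual. The most direct path is to carry the Donagi--Gaitsgory cameral description \cite{DonagiGaitsgory} over to our multiplicative setting using the identification $G/G \iso \bun_G^{\mr{ss}}(E^{\mr{nod}})$, and then use classical autoduality of Jacobians of the cameral cover together with the Langlands self-duality pairing on the cocharacter lattice to produce $\mc P$. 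The required automorphism $F$ over $\mc B^{\mr{reg}}$ is then the relative Fourier--Mukai transform along $\mc P$.

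Next I would verify that the square commutes on the regular locus. The parabolic correspondence $\mhiggs_T(E) \xleftarrow{q} \mhiggs_B(E) \xrightarrow{p} \mhiggs_G(E)$ is compatible with the multiplicative Hitchin fibrations, and upon restriction to a single cameral fiber becomes an instance of the standard pushforward/pullback diagram that intertwines Fourier--Mukai for a base abelian variety with Fourier--Mukai for its quotient by a finite group (here the Weyl group acting on the cameral abelian variety). Checking commutativity therefore reduces, fiberwise, to a classical statement about the Fourier--Mukai transform on $W$-equivariant sheaves that can be verified by a direct calculation using the Poincar\'e bundle on $\mhiggs_T(E) \times_{\mc B_T(E)} \mhiggs_T(E)$ and descent along $\wt E \to E$.

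The hard part will be extending $F$ across the non-regular locus and making precise the categorical statement on all of $\coh(\mhiggs_G(E))$. On the singular Hitchin fibers the abelian-variety picture degenerates, the Poincar\'e sheaf loses its good properties, and the naive fiberwise Fourier--Mukai fails to be an equivalence of unrestricted coherent-sheaf categories. This is exactly where Arinkin--Gaitsgory's singular support condition \cite{ArinkinGaitsgory} enters in the ordinary geometric Langlands program; formulating and proving a multiplicative analogue, including the correct restricted class of coherent sheaves on both sides, is the principal obstacle. A secondary difficulty, subtle already on the regular locus, is that the Langlands self-dual hypothesis only buys autoduality up to a gerbe, so $F$ should really be an equivalence between categories of coherent sheaves twisted by mutually dual gerbes, and one must track these gerbes through the parabolic induction diagram to obtain a genuine automorphism rather than merely an equivalence of twisted categories.
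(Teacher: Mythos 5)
The statement you are addressing is posed in the paper as a conjecture, and the paper offers no proof of it --- only motivation from the Donagi--Pantev classical limit of geometric Langlands, the folding picture for non-simply-laced groups, and the gauge-theoretic origin of the moduli spaces. So there is no ``paper proof'' to compare against; the relevant question is whether your proposal actually closes the conjecture, and it does not. What you have written is a plausible plan of attack, and one consistent with the references the paper itself invokes (Donagi--Gaitsgory for the cameral description, Donagi--Pantev for autoduality of the Hitchin system, Arinkin--Gaitsgory for singular support), but every genuinely difficult step is flagged rather than carried out.

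Concretely, three things remain unproven in your outline. First, the existence of the relative Poincar\'e sheaf $\mc P$ on $\mhiggs^{\mr{reg}}_G(E) \times_{\mc B^{\mr{reg}}} \mhiggs^{\mr{reg}}_G(E)$: the paper only establishes, via the map to the Donagi--Gaitsgory moduli of abstract Higgs bundles, that the regular multiplicative Hitchin fibers are \emph{isogenous} to abelian varieties of $W$-equivariant $T$-bundles on the cameral cover, and an isogeny is not enough --- you would need to control the discrete root-datum correction in the Donagi--Gaitsgory classification and show that the self-duality pairing on the cocharacter lattice descends through it. Second, the extension of the fiberwise Fourier--Mukai functor across the non-regular locus is precisely the content of the conjecture; observing that Arinkin--Gaitsgory-type singular support conditions ``should enter'' does not produce the automorphism $F$ of the full category $\coh(\mhiggs_G(E))$ that the statement demands, and the conjecture as written asks for an automorphism of the unrestricted, untwisted category. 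Third, the gerbe issue you raise at the end is not a secondary difficulty but a potential obstruction to the statement as literally formulated: if autoduality only holds between categories twisted by mutually dual gerbes, then $F$ as an automorphism of $\coh(\mhiggs_G(E))$ may simply fail to exist, and the conjecture would have to be reformulated before it could be proved. A correct write-up would either resolve these points or present the result explicitly as a refined conjecture rather than a theorem.
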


Alternatively, we can say something about classical geometric Langlands duality in the non-simply-laced case.  Recall in the usual geometric Langlands story that non-simply-laced gauge theories in dimension 4 arise by ``folding'' the Dynkin diagram.  In other words, one identifies a non-simply laced simple Lie group as the invariants of a simply laced self-dual Lie group $\wt G$ with respect to its finite group of outer automorphisms (either $\ZZ/2\ZZ$, or $S_3$ in the case of the exceptional group $G_2$).  One obtains a 5d $\mc N=2$ gauge theory with gauge group $G$ by taking the invariants of the 6d $\mc N=(2,0)$ theory reduced on a circle, where $\mr{Out}(\wt G)$ acts simultaneously on $\wt G$ and on the circle we reduce along.

Putting this together yields the following conjecture.
\begin{conjecture}
\label{eq:classical-q-langlands}
Let $G$ and $G^\vee$ be Langlands dual simple Lie groups, and say that $G$ arose by folding the Dynkin diagram of a self-dual group $\wt G$.  Then there is an equivalence of categories
\[\coh(\mhiggs_{G^\vee}(E) \iso \coh(\mhiggs_{\wt G}(E)^{\mr{Out}(\wt G)}),\]
where $\mr{Out}(\wt G)$ acts simultaneously on $\wt G$ and on the circle $S^1_B$, under the identification $\mhiggs_{\wt G}(E) = \bun_{\wt G}(E \times S^1_B)$.  As above, this equivalence should be compatible with the Fourier-Mukai transform relating the categories for the maximal tori.
\end{conjecture}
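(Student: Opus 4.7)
My plan is to prove the conjecture by abelianization along the multiplicative Hitchin fibration, combined with the Fourier-Mukai transform for the maximal tori (already conjectured immediately above) and a folding identification that converts $\mr{Out}(\wt G)$-invariants on the self-dual side into the Langlands dual on the folded side. The underlying observation is that, at the level of root data, Langlands duality interchanges short and long coroots, and folding the Dynkin diagram of a simply-laced self-dual group by $\mr{Out}(\wt G)$ exchanges the two Langlands-dual non-simply-laced types; composing these two operations produces the passage from $\wt G$ to $G^\vee$.

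First I would establish the torus case. Using the identification $\mhiggs_T(E) = \bun_T(E \times S^1_B)$ from earlier in the paper, the classical Fourier-Mukai transform on $\bun_T(E)$ extends to a transform $\coh(\mhiggs_T(E)) \xrightarrow{\sim} \coh(\mhiggs_{T^\vee}(E))$ using the pairing of the character lattice of $T$ with the cocharacter lattice of $T^\vee$ on the auxiliary $\bun_T(S^1_B) = T \times BT$ factor. Second I would globalize via the multiplicative Hitchin fibration of Section \ref{Hitchin_system_section}. The generic fibers of $\pi \colon \mhiggs_{\wt G}(E) \to \mc B(E)$ are, up to isogeny, abelian varieties of the form $\bun_{\wt T}(\wt E)^W$ for the cameral cover $\wt E \to E$, and a relative Fourier-Mukai transform is well-defined on these fibers. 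The $\mr{Out}(\wt G)$-action on $\mhiggs_{\wt G}(E)$ covers an action on $\mc B(E)$ respecting the cameral cover; taking invariants on both the base and the fibers, the folded base identifies with the $G^\vee$-Hitchin base and the folded fibers identify with the generic fibers of the $G^\vee$-Hitchin system. Compatibility with the torus Fourier-Mukai square then gives the conjectural equivalence on the regular locus.

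The main obstacle will be extending this equivalence across the discriminant locus of the Hitchin fibration, where cameral covers degenerate and fibers cease to be abelian varieties. This is the same fundamental difficulty that motivates the singular support condition of Arinkin-Gaitsgory in the ordinary geometric Langlands program, and one should expect an analogous singular support condition to be required here; in particular one must check that the $\mr{Out}(\wt G)$-invariants construction naturally produces objects of the correct singular support. A secondary but nontrivial issue is pinning down the precise meaning of $\mhiggs_{\wt G}(E)^{\mr{Out}(\wt G)}$: one must settle on homotopy fixed points versus the derived fixed-point substack, and determine whether the equivalence requires a twist by a character of $\mr{Out}(\wt G)$ tied to the chosen pinning of $\wt G$ and the orientation of $S^1_B$. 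Only once these two points are resolved can one expect a clean statement and proof along the lines sketched above.
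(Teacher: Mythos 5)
The statement you are addressing is presented in the paper as a conjecture: the authors offer physical motivation (folding the Dynkin diagram, the twisted circle compactification of the 6d $\mc N=(2,0)$ theory, and compatibility with the abelian Fourier--Mukai story) but no proof, so there is no argument in the paper to compare yours against. Your proposal is likewise not a proof but a strategy, and you say so yourself: the abelianization along the multiplicative Hitchin fibration, the relative Fourier--Mukai transform on the generic (isogeny classes of) abelian fibers $\bun_{\wt T}(\wt E)^{\wt W}$, and the matching of folded data with $G^\vee$-data on the regular locus are all plausible and consistent with the Donagi--Pantev-style classical limit that the authors explicitly cite as their model. But the two obstacles you defer --- extension across the discriminant locus (with the attendant singular-support corrections of Arinkin--Gaitsgory type, which the paper itself warns about in the multiplicative setting) and the precise derived meaning of $\mhiggs_{\wt G}(E)^{\mr{Out}(\wt G)}$ --- are exactly where the mathematical content of the conjecture lives, so the proposal cannot be counted as closing the statement.

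One point in your sketch deserves more care even at the level of a plan: the role of the simultaneous $\mr{Out}(\wt G)$-action on the circle $S^1_B$. Your root-data heuristic ("fold, then dualize") treats the passage from $\wt G$ to $G^\vee$ as a composition of two operations on finite root data, but naive folding (taking the fixed subgroup) of $A_{2n-1}$ produces $C_n = G$, not $B_n = G^\vee$; it is precisely the twist by the rotation action on $S^1_B$ in $\mhiggs_{\wt G}(E) = \bun_{\wt G}(E \times S^1_B)$ --- i.e.\ passing to twisted conjugacy classes in the loop direction --- that is supposed to convert the fixed points into data for the \emph{dual} non-simply-laced group rather than for $G$ itself. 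Your description of the induced action on the base $\mc B(E)$ and on the cameral covers does not engage with this twist, and without it the identification of the folded fibers with the $G^\vee$-Hitchin fibers (rather than the $G$-Hitchin fibers) is not justified. Any serious attempt at this conjecture will need to make that mechanism precise before the relative Fourier--Mukai step can even be set up.
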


\section{Construction of the Symplectic Structure} \label{symp_section}
\subsection{An Example with $G=\GL_2$} \label{GL2_example_section}

Let us discuss in detail the geometry of the moduli space of multiplicative Higgs bundles in some of the simplest examples, for the group $G = \GL_2$ and minimal singularity data.  The symplectic structures, and the Hitchin integrable system, which we'll analyze for the more general moduli spaces can be described very concretely in this simple situation. 

Fix $G=\GL_2$.  We'll work in the rational case, so we'll work over the curve $C = \bb{CP}^1 = \CC \cup \{\infty\}$ where $\CC$ has coordinate $z$, with fixed framing point $z_\infty = \infty$, and fixed value $g_\infty \in \GL_2$ for the framing. 

Consider the connected component of $\mhiggs^\fr_{G}(\bb{CP}^1, D, \omega^{\vee})$ consisting of those multiplicative Higgs bundles whose underlying $G$-bundle is trivializable.  Fixing the framing fixes a trivialization of the underlying $G$-bundle, which means this connected component of the moduli space can be identified with the space of $G$-valued
rational functions $g(z)$ on $\CC$ with certain singularity conditions that we can write down explicitly.

As our first explicit example we will consider the moduli space with two singularities at distinct points $z_1$ and $z_2$ in $\CC$.  Fix the corresponding coweights to be the generators $\omega^\vee_{z_1} = (1,0)$ and $\omega^\vee_{z_2} = (0,-1)$ in the defining basis of the coweight lattice of $\GL_2$.  We'll denote the zero connected component in the moduli space of multiplicative Higgs bundles by $\mc M(z_1,z_2)$.
 
The zero connected component of our moduli space  $\mhiggs_{G}^\fr(C, D, \omega^{\vee})$ can then be identified with the space of functions $g(z)$ valued in $2 \times 2$ matrices 
   \begin{equation*}
     g(z) =
     \begin{pmatrix}
       a(z) & b(z) \\
       c(z) & d(z)
     \end{pmatrix}
   \end{equation*}
   where $a(z), b(z), c(z), d(z)$ are rational functions on $\bb{CP}^1$ satisfying the following conditions. 
   \begin{enumerate}
   \item The functions $a(z), b(z), c(z), d(z)$ are regular everywhere on $\bb{CP}^1 \setminus \{z_2\}$,
     in particular they are regular at $\infty$ and $z_1$.
   \item When evaluated at the point at infinity, $g(\infty) = g_\infty$ where $g_\infty \in \GL_2$ is a representative of the conjugacy class we fixed by our choice of framing
     \begin{equation*}
       g_\infty =
       \begin{pmatrix}
         a_\infty & b_\infty \\
         c_\infty & d_\infty 
       \end{pmatrix}, \qquad a_\infty d_\infty - c_\infty b_\infty \neq 0 
     \end{equation*}
where $a_\infty, b_\infty, c_\infty, d_\infty \in \CC$  satisfy the condition
   \item
     \begin{equation*}
       \det g(z) =  \frac{ z- z_1}{ z - z_2}  \det g_\infty.
     \end{equation*}
   \end{enumerate}
 
   The conditions (1), (2) and (3) together imply that the functions $a(z), b(z), c(z), d(z)$ have the form
   \begin{equation*}
     a(z) =  \frac{a_\infty z  - a_0}{z - z_2},\quad
     b(z) = \frac{ b_\infty z  - b_0}{z - z_2},\quad
     c(z) = \frac{ c_\infty  z  - c_0}{z - z_2},\quad
     d(z) = \frac{d_\infty  z  - d_0}{z - z_2},
   \end{equation*}
for an element $(a_0, b_0, c_0, d_0) \in \CC^4$ such that
   \begin{equation*}
 (a_\infty z - a_0)( d_\infty z - d_0) -   (b_\infty z - b_0)(c_\infty z -c_0) =
     (z - z_1)(z- z_2) (a_\infty d_\infty - b_\infty c_\infty).
   \end{equation*}

   The above equation translates into the system of a linear
   equation and a quadric equation on $(a_0, b_0, c_0, d_0) \in \CC^4$, so our moduli space is encoded by the following complex affine variety:
   \begin{multline*}
\mc M(z_1,z_2)=
\big \{  (a_0, b_0, c_0, d_0) \in \CC^4 | \\
       -a_0 d_\infty - a_\infty d_0 + b_0 c_\infty + b_\infty c_0 
       = ( - z_1 - z_2) (a_\infty d_\infty - b_\infty c_\infty), \\
      a_0 d_0 - b_0 c_0 = 
       z_1 z_2 (a_\infty d_\infty - b_\infty c_\infty) \big \}
     \end{multline*}
     We conclude that, in this example, our moduli space $\mc M(z_1,z_2)$ is described by the complete intersection of a hyperplane and a  quadric in $\CC^4$, therefore by a quadric on $\CC^3$. For example,
     say $(a_\infty, b_\infty, c_\infty, d_\infty) = (1,0,0,1)$,
     and $z_1 = -m, z_2 = m$ for some $m \in \CC^\times$, then the linear equation implies that $d_0 = -a_0$,
     and the quadratic equation gives a canonical form of the smooth affine quadric
     surface 
     \begin{equation*}
       \label{eq:quadric}
         a_0^2 + b_0 c_0  = m^2 
     \end{equation*}
on  $\CC^3 = (a_0, b_0, c_0)$.

\begin{remark}
  In the limit when the singularities $z_1$ and $z_2$ collide, that is where $m \to 0$,
  the quadric becomes singular: $a_0^2 + b_0 c_0 = 0$. The resolved
  singularity on a quadric obtained by blowing up the singularity is identified with $T^{*} \bb{CP}^1$.
  The $m$-deformed quadric $a_0^2 + b_0 c_0 = m^2$ can be identified
  with the total space of an affine line bundle over $\bb{CP}^1$. This base $\bb{CP}^1$ is the orbit
  of the fundamental miniscule weight in the affine Grassmanian of the group $\GL_2$.
  We see that the moduli space $\mc M(z_1,z_2)$ of multiplicative Higgs bundles in the case of two miniscule co-weight singularities for $\GL_2$ is an affine line bundle over the flag variety $\bb{CP}^1$, 
where, locally, the 1-dimensional base arises 
from the insertion of one singularity,
and 1-dimensional fiber comes from the insertion of the other.
\end{remark}

\begin{remark}
  The canonical coordinates $a \in \CC, b \in \CC^{\times}$ on the quadric (\ref{eq:quadric}) are given by
  \begin{equation*}
    a_0 = a, \qquad b_0 = b(m - a), \qquad c_0 = b^{-1}(m+a)
  \end{equation*}
  with Poisson brackets
  \begin{equation*}
    {a,b} = b
  \end{equation*}
  and symplectic form $\d a \wedge \frac{\d b} b$.  
\end{remark}

\begin{remark}
We can likewise calculate the multiplicative Hitchin section as in Remark \ref{q_opers_remark} in this example.  The Steinberg section for the group $G = \GL_2$ sends a pair of eigenvalues $(s,t) \in T/W$ to the element
\[\begin{pmatrix}-s-t & st \\ 1 & 0\end{pmatrix} \in G.\]
In order to describe the multiplicative Hitchin section, let us use a framing that lands in the Steinberg section, say $(a_\infty, b_\infty,c_\infty, d_\infty) = (0,-1,1,0)$.  Then the moduli space $\mc M(z_1,z_2)$ as above with this framing can be identified with the smooth affine quadric surface 
    \begin{equation*}
        a_0d_0 + b_0^2  = m^2 .
    \end{equation*}
The multiplicative Hitchin section lies within the locus where $d_0=0$, i.e. to the pair of lines $b_0 = \pm m$ inside our quadric surface.  The fixed framing at infinity picks out the line $b_0 = - m$, or the locus of matrices of the form 
\[g(z) = \begin{pmatrix} \frac{a_0}{z-m} & - \frac{z+m}{z-m} \\ 1 & 0\end{pmatrix}.\]
In Poisson coordinates $a,b$ as above, the multiplicative Hitchin section is equivalent to the Lagrangian line $a = m$: a section of the projection map onto the space $\CC^\times$ on which $b$ is a coordinate.    

For a more general singularity datum, but for the group $G=\GL_2$ the multiplicative Hitchin section admits a similar description, where now the datum $(D,\omega^\vee)$ is encoded by a rational function $p_1(z)/p_2(z)$ where $p_1$ and $p_2$ are monic polynomials of the same degree, say $d$.  The multiplicative Hitchin section then can be described as the locus of all matrices of the form
\[g(z) = \begin{pmatrix} \frac{q(z)}{p_2(z)} & - \frac{p_1(z)}{p_2(z)} \\ 1 & 0\end{pmatrix},\]
where $q(z)$ is a polynomial of degree less than $d$ (so the section is $d$-dimensional).  The sections which are in fact $\SL_2$-valued are those with $p_1 = p_2$.
\end{remark}

More general explicit cases of $\mhiggs^\fr_G(C)$ with parametrization by Darboux coordinates for $C = \bb{CP}^1$ and $G=\GL_n$ case are discussed in \cite{FrassekPestun}.

\subsection{The Rational Poisson Lie Group} \label{Poisson_Lie_section}
The symplectic structures we'll build on the rational moduli spaces of multiplicative Higgs bundles ultimately arise by viewing the moduli spaces, ranging over colored divisors $(D,\omega^\vee)$, as symplectic leaves in an infinite-dimensional Poisson Lie group. This connection should be compared to the connection between the elliptic moduli space and the elliptic quantum group in the work of Hurtubise and Markman \cite[Theorem 9.1]{HurtubiseMarkman}.  

Consider the infinite-type moduli space $\mhiggs^{\text{fr,sing}}(\bb{CP}^1)$ of multiplicative Higgs bundles with arbitrary singularities.  There is a map
\[r_\infty \colon \mhiggs^{\text{fr,sing}}(\bb{CP}^1) \to G_1[[z^{-1}]],\]
defined by restricting a multiplicative Higgs bundle to a formal neighborhood of infinity.  In particular we can restrict $r_\infty$ to any of the finite-dimensional subspaces $\mhiggs^\fr_G(\bb{CP}^1,D,\omega^\vee)$ (the symplectic leaves).  Here the notation $G_1[[z^{-1}]]$ denotes Taylor series in $G$ with constant term 1.  Recall that $G_1[[z^{-1}]]$ is a Poisson Lie group, with Poisson structure defined by the Manin triple $(G(\!(z)\!), G[z], G_1[[z^{-1}]])$ (\cite{DrinfeldICM}, see also \cite{Shapiro, Williams}).

The Poisson Lie group $G_1[[z^{-1}]]$ has the structure of an ind-scheme using the filtration where $G_1[[z^{-1}]]_n$ consists of $G$-valued polynomials in $z^{-1}$ with identity constant term and where the matrix elements in a fixed faithful representation have degree at most $n$.  When we talk about the Poisson algebra $\OO(G_1[[z^{-1}]])$ of regular functions on $G_1[[z^{-1}]]$, we are therefore referring to the colimit over $n \in \ZZ_{\ge 0}$ of the finitely generated algebras $\OO(G_1[[z^{-1}]]_n)$.  Due to the condition on the constant term the group $G_1[[z^{-1}]]$ is nilpotent, which means we can identify its algebra of functions with the algebra of functions on the Lie algebra.  That is:
\begin{align*}
\OO(G_1[[z^{-1}]]) &= \underset n \colim \OO(G_1[[z^{-1}]]_n) \\
&\iso \underset n \colim \sym(\gg_1[[z^{-1}]]_n^*).
\end{align*}
The Poisson bracket is compatible with this filtration, in the sense that each filtered piece $\OO(G_1[[z^{-1}]]_n)$ is Poisson, the structure maps $\OO(G_1[[z^{-1}]]_m) \to \OO(G_1[[z^{-1}]]_n)$ are Poisson maps, and so is the multiplication map
\[m \colon \OO(G_1[[z^{-1}]]_m) \otimes \OO(G_1[[z^{-1}]]_n) \to \OO(G_1[[z^{-1}]]_{m+n})\] 
for each $m$ and $n$.

We can describe the Poisson structure on $G_1[[z^{-1}]]$ concretely, using the rational classical $r$-matrix (see for instance \cite{GelfandCherednik,DrinfeldICM}, or \cite{Shapiro}).  Define
\[r = \frac \Omega{z-w} = \Omega \sum_{i=0}^\infty z^{-i-1}w^i \in \gg^{\otimes 2}[w][[z^{-1}]],\] 
where $\Omega$ is the quadratic Casimir element in $\gg^{\otimes 2}$ associated to a non-degenerate invariant pairing $\kappa$.  If $f$ is a function in $\OO(G_1[[z^{-1}]])$, let $\nabla_L(f)$ and $\nabla_R(f)$ be the $\gg[z]$-valued functions on $G_1[[z^{-1}]]$ obtained from the left- and right-invariant vector fields respectively on the group.  In these terms, the Poisson bracket can be calculated as 
\[\{f_1, f_2\}(g) = \langle r, \nabla_L(f_1)(g) \otimes \nabla_L(f_2)(g) - \nabla_R(f_1)(g) \otimes \nabla_R(f_2)(g)\rangle,\]
where here $\langle - , - \rangle$ denotes the residue pairing on $\gg(\!(z^{-1})\!)^{\otimes 2}$.  We'll sometimes call this the \emph{Sklyanin Poisson structure} on $G_1[[z^{-1}]]$ \cite{Sklyanin}.

\begin{example} \label{ev_function_example}
Let's investigate the Poisson bracket of some specific functions.  Let $u$ and $v$ be points in $\CC$, and let $\phi$ and $\psi \colon G \to \CC$ be algebraic functions.  Associated to these pairs we can define \emph{evaluation functions} $\phi_u$ and $\psi_v$ on a subgroup of $G_1[[z^{-1}]]$ by setting
\begin{align*}
\phi_v \colon \{g \in G_1[[z^{-1}]] \colon g(v) \text{ converges}\} &\to \CC \\
g &\mapsto \phi(g(v)).
\end{align*}
The left and right derivatives $\nabla_L(\phi_v)(g)$ and $\nabla_R(\phi_v)(g)$ of the evaluation function at $g$ can be paired with a vector field $X$ by 
\begin{equation}
\label{eq:left-right-gradient}
  (X, \nabla_{L}(\phi_u)) := \left. \frac{\d}{\d t} \phi(e^{X(u)t} g(u))\right|_{t=0},
  \qquad (X, \nabla_{R}(\phi_u)) \rangle := \left. \frac{\d}{\d t} \phi(g(u) e^{X(u)t})\right|_{t=0}.
\end{equation}

If $g$ is a point in $G_1[[z^{-1}]]$ where $g(v)$ and $g(u)$ both converge, we can calculate the value of the Poisson bracket $\{\phi_u, \psi_v\}$ at $g$, using a choice of basis $\{X^i\}$ for the Lie algebra $\gg$.  
\begin{align*}
\{\phi_u, \psi_v\}(g) &= \left\langle \frac \Omega {z-w}, \nabla_L(\phi_u)(g)(z) \otimes \nabla_L(\psi_v)(g)(w) - \nabla_R(\phi_u)(g)(z) \otimes \nabla_R(\psi_v)(g)(w)\right\rangle \\
 &= \sum_{i,j=1}^{\dim \gg} \sum_{k=0}^\infty \Big\langle \Omega_{ij}X^i \otimes X^j z^{-k-1}w^k, \nabla_L(\phi_u)(g)(z) \otimes \nabla_L(\psi_v)(g)(w) \\
 &\qquad\qquad\qquad\qquad\qquad\qquad\qquad\qquad\qquad\qquad - \nabla_R(\phi_u)(g)(z) \otimes \nabla_R(\psi_v)(g)(w)\Big\rangle \\
 &= \sum_{i,j,k} \Big\langle X^j,\frac{\d}{\d t} \left.\phi \left(e^{\Omega_{ij}u^{-k-1}w^kX^it}g(u)\right)\right|_{t=0} \otimes \nabla_L(\psi_v)(g)(w) \\
 &\qquad\qquad\qquad\qquad\qquad\qquad\qquad -\left.\phi \left(g(u) e^{\Omega_{ij}u^{-k-1}w^kX^it}\right)\right|_{t=0} \otimes \nabla_R(\psi_v)(g)(w)\Big\rangle \\
&= \sum_{i,j} \frac {\Omega_{ij}}{u-v} \left( \left.\frac{\d}{\d t} \phi(e^{X^i t}g(u))\right|_{t=0} \left.\frac{\d}{\d s} \psi(e^{X^i s}g(v))\right|_{s=0} - \left.\frac{\d}{\d t} \phi(g(u)e^{X^i t})\right|_{t=0} \left.\frac{\d}{\d s} \psi(g(v)e^{X^i s})\right|_{s=0} \right)\\
&= \frac 1 {u-v} (\kappa(\nabla_L^0(\phi)(g(u)), \nabla_L^0(\psi)(g(v))) - \kappa(\nabla_R^0(\phi)(g(u)), \nabla_R^0(\psi)(g(v)))),
\end{align*}
where now $\nabla_L^0$ and $\nabla_R^0$ denote the left and right derivatives on the finite-dimensional group $G$, rather than the infinite-dimensional group $G_1[[z^{-1}]]$, and where we've used the canonical identification between $\gg$ and $\gg^*$.
\end{example}

\begin{lemma} \label{evaluation_vector_fields_generate_lemma}
Let $g$ be an element of the subgroup $G_1(\bb{CP}^1) \sub G_1[[z^{-1}]]$ consisting of rational functions $\bb{CP}^1 \to G$ framed at $\infty$.  The cotangent space $T^*_g G_1(\bb{CP}^1)$ is generated by the set of derivatives of evaluation vector fields $\{\d \phi_u(g)\}$, where $u$ ranges over points in $\CC$ where $g$ is regular, and $\phi$ ranges over algebraic functions $G \to \CC$.
\end{lemma}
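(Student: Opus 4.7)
The plan is to establish that the collection $\{d\phi_u(g)\}$ separates tangent vectors at $g$, and then leverage the ind-finite-dimensional structure of $G_1(\bb{CP}^1)$ at $g$ to promote separation to generation in the cotangent space.

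First, I would describe the tangent space $T_g G_1(\bb{CP}^1)$ concretely. Via left translation, any smooth curve $g_t$ based at $g$ can be written $g_t(z) = \exp(t\,\xi_X(z))\, g(z) + O(t^2)$ for a $\gg$-valued rational function $\xi_X$ on $\bb{CP}^1$ with $\xi_X(\infty) = 0$ (and poles controlled by those of $g$), and the correspondence $X \mapsto \xi_X$ identifies tangent vectors at $g$ with such rational functions. Using the definition of the left-invariant gradient in \eqref{eq:left-right-gradient}, the pairing of $X$ with an evaluation differential $d\phi_u(g)$ at a regular point $u$ of $g$ becomes
\[
\langle X,\,d\phi_u(g)\rangle \;=\; \left.\tfrac{d}{dt}\,\phi\bigl(e^{t\,\xi_X(u)}\,g(u)\bigr)\right|_{t=0},
\]
which depends on $X$ only through the single vector $\xi_X(u) \in \gg$ and on the point $g(u) \in G$.

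Next, assume that $X$ annihilates every $d\phi_u(g)$ for every regular $u \in \CC$ and every $\phi \in \OO(G)$. Since $G$ is an affine algebraic group, the differentials of regular functions span $T^*_{g(u)}G$; equivalently, for any nonzero $\xi \in \gg$ there is an algebraic $\phi$ with $\tfrac{d}{dt}\phi(e^{t\xi}g(u))|_{t=0} \neq 0$. Applying this to $\xi = \xi_X(u)$, the vanishing hypothesis forces $\xi_X(u) = 0$ at every regular point of $g$. Because $g$ has only finitely many poles, $\xi_X$ is a rational $\gg$-valued function vanishing on a dense open subset of $\bb{CP}^1$, hence vanishes identically. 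Therefore $X = 0$, so the family $\{d\phi_u(g)\}$ separates tangent vectors.

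Finally, to convert separation into the asserted generation of the cotangent space, I would exploit the ind-scheme structure on $G_1(\bb{CP}^1)$: filter it by the finite-dimensional subvarieties $V_{D,m}$ of rational maps with poles contained in a finite set $D$ of orders at most $m$, so $g$ lies in some $V_{D,m}$, and $T_g G_1(\bb{CP}^1) = \colim_{D,m} T_g V_{D,m}$ while $T^*_g G_1(\bb{CP}^1) = \lim_{D,m} T^*_g V_{D,m}$. The restrictions of the $\{d\phi_u(g)\}$ to each finite-dimensional $T_g V_{D,m}$ separate tangent vectors by the argument above, and in finite dimensions a separating family of covectors spans the dual; this is the precise sense in which $\{d\phi_u(g)\}$ generates $T^*_g G_1(\bb{CP}^1)$. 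The main subtlety is this last step: the notion of a generating family in the cotangent space of an ind-scheme must be interpreted through the finite-dimensional stratification rather than as finite linear combinations. Once that is set up, the remaining content is elementary, combining the fact that algebraic functions on an affine algebraic group separate tangent directions with the rigidity of nonzero rational functions on $\bb{CP}^1$.
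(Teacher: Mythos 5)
Your proposal is correct and follows essentially the same route as the paper's proof: identify tangent vectors with $\gg$-valued rational functions vanishing at $\infty$ via left translation, compute the pairing with $\d\phi_u(g)$ as $\frac{\d}{\d t}\phi(e^{t\xi_X(u)}g(u))|_{t=0}$, and use the fact that algebraic functions on $G$ separate tangent directions together with the rigidity of rational functions to show the family separates, hence spans. The only difference is that you spell out more carefully than the paper why separation implies spanning (via the finite-dimensional filtration), which is a worthwhile clarification but not a different argument.
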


\begin{proof}
Let $\gg_0(\bb{CP}^1)$ be the Lie algebra of $G_1(\bb{CP}^1)$, consisting of $\gg$-valued rational functions $f$ on $\bb{CP}^1$ with $f(\infty) = 0$. We can first identify the cotangent space $T^*_g G_1(\bb{CP}^1)$ with the dual Lie algebra $\gg_0(\bb{CP}^1)^* \sub \gg_0[[z^{-1}]]^*$ using the left-invariant vector field; the image of $\d \phi_u(g)$ under this identification is exactly $\nabla_L(\phi_u)$.  We realize the infinite-dimensional Lie algebra as the colimit over $n$ of finite-dimensional Lie algebras $\gg_n^*$ as in Section \ref{Poisson_Lie_section}. 

We can pair our element $\nabla_L(\phi_u)$ with rational functions $f(z) \in \gg_0(\bb{CP}^1)$, the result is
\[(\nabla_{L} \phi_u, f(z)) = \frac{\d}{\d t} \phi(e^{f(u) t} g(u))|_{t=0}.\]
In particular, for any choice of rational function $f$ we can find some $u$ and $\phi$ such that this pairing is non-vanishing.  Since the pairing between $\gg_0(\bb{CP}^1)$ and its dual is non-degenerate, the set $\{\nabla_L(\phi_u)\}$ of covectors must span the entire cotangent space of the group $G_1(\bb{CP}^1)$.
\end{proof}

We'll conclude this subsection with some observations about vector fields generated by adjoint invariant evaluation functions $\phi_u$ that will be useful for understanding the integrable system structure on multiplicative Higgs moduli spaces. 
\begin{lemma}
 If $\phi$ is an adjoint invariant function on the group $G$, then
$\nabla_{L} \phi_u  = \nabla_{R} \phi_u$. 
\end{lemma}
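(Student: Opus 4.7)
The plan is to work directly from the definitions of $\nabla_L \phi_u$ and $\nabla_R \phi_u$ recorded in equation \eqref{eq:left-right-gradient}, and reduce the claim to the elementary observation that adjoint invariant functions on $G$ are central in the sense that $\phi(ab) = \phi(ba)$ for all $a, b \in G$.

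First I would verify this centrality property. Adjoint invariance of $\phi$ means $\phi(hgh^{-1}) = \phi(g)$ for all $h, g \in G$. Setting $h = a^{-1}$ and $g = ab$ gives $\phi(ab) = \phi(a^{-1}(ab)a) = \phi(ba)$. Applying this with $a = e^{X(u)t}$ and $b = g(u)$, we conclude that for every vector field $X$ on $G_1[[z^{-1}]]$ (restricted to a neighborhood of $g$ where the relevant evaluations converge) and every $t$ near $0$,
\[
\phi\bigl(e^{X(u)t} g(u)\bigr) \;=\; \phi\bigl(g(u)\, e^{X(u)t}\bigr).
\]

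Next I would differentiate this identity at $t=0$. By the two formulas in \eqref{eq:left-right-gradient}, the left side differentiates to $(X, \nabla_L \phi_u)$ and the right side to $(X, \nabla_R \phi_u)$. Thus
\[
(X, \nabla_L \phi_u) \;=\; (X, \nabla_R \phi_u)
\]
for every vector field $X$. Since the natural pairing between vector fields and one-forms on $G_1[[z^{-1}]]$ is non-degenerate (after passing to the appropriate finite-dimensional approximation $G_1[[z^{-1}]]_n$ as in Section \ref{Poisson_Lie_section}, where both sides land in a finite-dimensional cotangent space), this forces the equality $\nabla_L \phi_u = \nabla_R \phi_u$.

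I do not anticipate any serious obstacle here: the content of the statement is essentially the classical fact that class functions on a Lie group have equal left- and right-invariant differentials, transported to the infinite-dimensional setting of $G_1[[z^{-1}]]$ via pointwise evaluation at $u$. The only mild subtlety is the domain of definition of $\phi_u$ (one must work on the subgroup where $g(u)$ makes sense), but this plays no role in the pointwise identity above.
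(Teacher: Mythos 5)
Your proposal is correct and is essentially the paper's own argument: the paper differentiates $\phi(e^{Xt} g e^{-Xt})$ in $t$ (which vanishes by adjoint invariance) to get $\nabla_L\phi_u - \nabla_R\phi_u = 0$, while you differentiate the equivalent identity $\phi(e^{Xt}g(u)) = \phi(g(u)e^{Xt})$ term by term — the same computation written slightly differently. No gaps.
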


\begin{proof}
  If $\phi$ is adjoint invariant then
  \[
    \frac{\d}{\d t} \phi(e^{Xt} g e^{-Xt}) = 0
  \]
  and consequently $\nabla_{L} \phi_u - \nabla_{R} \phi_u  = 0$. 
\end{proof}

\begin{corollary}\label{cor:poisson-commuting}
  If $\phi$ and $\psi$ are adjoint invariant functions then the evaluation functions $\phi_u$ and $\psi_v$ Poisson commute.
\end{corollary}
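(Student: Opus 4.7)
The proof is essentially immediate from the preceding lemma combined with the explicit Sklyanin bracket formula from Example \ref{ev_function_example}. The plan is to simply substitute the identity $\nabla_L \phi_u = \nabla_R \phi_u$ (and the analogous identity for $\psi_v$) into the $r$-matrix expression for the Poisson bracket and observe that the two terms cancel.

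More precisely, I would first recall the definition of the Sklyanin Poisson bracket on $G_1[[z^{-1}]]$:
\[\{\phi_u,\psi_v\}(g) = \langle r, \nabla_L(\phi_u)(g) \otimes \nabla_L(\psi_v)(g) - \nabla_R(\phi_u)(g) \otimes \nabla_R(\psi_v)(g)\rangle.\]
By hypothesis, both $\phi$ and $\psi$ are adjoint-invariant functions on $G$, so the preceding lemma applies to each evaluation function: $\nabla_L(\phi_u) = \nabla_R(\phi_u)$ and $\nabla_L(\psi_v) = \nabla_R(\psi_v)$. Therefore the tensor product $\nabla_L(\phi_u)(g) \otimes \nabla_L(\psi_v)(g)$ coincides with $\nabla_R(\phi_u)(g) \otimes \nabla_R(\psi_v)(g)$, and their difference vanishes identically. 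Pairing with the $r$-matrix $r = \Omega/(z-w)$ then produces zero.

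Alternatively, one could appeal directly to the evaluated form of the bracket computed at the end of Example \ref{ev_function_example}, namely
\[\{\phi_u,\psi_v\}(g) = \frac{1}{u-v}\bigl(\kappa(\nabla_L^0\phi(g(u)),\nabla_L^0\psi(g(v))) - \kappa(\nabla_R^0\phi(g(u)),\nabla_R^0\psi(g(v)))\bigr),\]
and observe that ad-invariance of $\phi$ on the finite-dimensional group $G$ directly forces $\nabla_L^0\phi = \nabla_R^0\phi$ (this is the same cyclic-invariance argument used in the previous lemma, applied on $G$ itself rather than on the loop group), and similarly for $\psi$, so the bracket vanishes termwise.

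There is no real obstacle here: the content lies entirely in the preceding lemma and in the explicit computation of the bracket on evaluation functions. The corollary is a bookkeeping consequence of those two ingredients, and the only point to double-check is that the cancellation happens before (not after) contraction with the $r$-matrix, which it does because the equality $\nabla_L(\phi_u) = \nabla_R(\phi_u)$ holds pointwise as elements of the relevant cotangent space.
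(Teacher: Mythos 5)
Your proposal is correct and follows exactly the route the paper intends: the corollary is stated as an immediate consequence of the preceding lemma, and substituting $\nabla_L(\phi_u)=\nabla_R(\phi_u)$ and $\nabla_L(\psi_v)=\nabla_R(\psi_v)$ into the Sklyanin bracket formula makes the two terms cancel before contraction with the $r$-matrix. Your alternative check via the evaluated form of the bracket from Example \ref{ev_function_example} is also valid and consistent with the same cancellation.
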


In particular this tells us the following.

\begin{corollary}\label{isotropic_fiber_cor}
The intersection of the image of $r_\infty \colon \mhiggs^\fr_G(\bb{CP}^1, D, \omega^\vee) \to G_1[[z^{-1}]]$ with a fiber of the Chevalley map $\chi \colon G_1[[z^{-1}]] \to T[[z^{-1}]]/W$ is isotropic for the Sklyanin Poisson structure.
\end{corollary}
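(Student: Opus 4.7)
The plan is to deduce the corollary directly from Corollary \ref{cor:poisson-commuting} together with the observation that fibers of the Chevalley map are (locally) cut out by adjoint-invariant evaluation functions. First I would fix a fiber $F = \chi^{-1}(t) \subset G_1[[z^{-1}]]$ and note that functions pulled back from $T[[z^{-1}]]/W$ are exactly the adjoint-invariant functions on $G_1[[z^{-1}]]$. On the open subset where $g$ is regular at $u$, these are generated (in the same sense as in Lemma \ref{evaluation_vector_fields_generate_lemma}) by evaluation functions $\phi_u$, where $\phi$ ranges over adjoint-invariant algebraic functions on $G$ (e.g.\ coefficients of the characteristic polynomial in a faithful representation) and $u$ ranges over points where $g$ is regular. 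In particular, the conormal bundle of $F$ at a point of the image of $r_\infty$ is spanned by differentials $\d \phi_u$ of such functions.

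Next I would invoke Corollary \ref{cor:poisson-commuting}: any two adjoint-invariant evaluation functions $\phi_u$, $\psi_v$ Poisson-commute under the Sklyanin bracket. Applied to the spanning set above, this says that the Poisson bivector, paired against any two covectors in $N^*F$, vanishes along $F$. Equivalently, the Hamiltonian vector fields $X_{\phi_u}$ are tangent to $F$ and pairwise symplectically orthogonal along the symplectic leaf through any point $g \in F$. Restricting to the symplectic leaf containing $\mr{im}(r_\infty)$ (the leaf structure of the image being the subject of Theorem~\ref{theorem:symplectic_leaf}, which can be used here), one concludes that $F \cap \mr{im}(r_\infty)$ is coisotropic inside this leaf, since its symplectic orthogonal is generated by the $X_{\phi_u}$ which are themselves tangent to the intersection.

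To upgrade from coisotropic to isotropic—i.e.\ to Lagrangian—I would invoke the dimension count carried out in the discussion of the multiplicative Hitchin fibration: the image has dimension $2\sum_i \langle \rho, \omega^\vee_{z_i}\rangle$ (as recorded via Corollary~\ref{dim_of_moduli_space_cor}), whereas the Hitchin base has dimension $\sum_i \langle \rho, \omega^\vee_{z_i}\rangle$, so generic fibers $F \cap \mr{im}(r_\infty)$ are exactly half-dimensional. A half-dimensional coisotropic submanifold of a symplectic manifold is Lagrangian, in particular isotropic, giving the statement.

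The main obstacle I anticipate is not conceptual but technical: one must make sense of the various claims on the ind-scheme $G_1[[z^{-1}]]$, where the evaluation functions $\phi_u$ are only defined on the open locus where $g(u)$ converges (so that the Lie algebra pairing and residue calculations are well-posed), and where the spanning statement for the conormal bundle must be checked at each finite filtered piece $G_1[[z^{-1}]]_n$. Fortunately, the image of $r_\infty$ lies in the subgroup $G_1(\mathbb{CP}^1)$ of rational $G$-valued functions framed at $\infty$, where the analogue of Lemma~\ref{evaluation_vector_fields_generate_lemma}—restricted to adjoint-invariant $\phi$—supplies enough covectors to span $N^*F$ along the image, and so the argument above can be carried out literally.
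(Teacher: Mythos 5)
Your argument is correct and fills in exactly the reasoning the paper leaves implicit: the corollary is presented there as an immediate consequence of Corollary \ref{cor:poisson-commuting}, with the half-dimensionality of the fibers (via Corollary \ref{dim_of_moduli_space_cor} and the dimension of the multiplicative Hitchin base) supplied by the surrounding discussion of the Hitchin fibration. Your route --- conormal directions spanned by adjoint-invariant evaluation functions, coisotropy from their Poisson-commutativity, then the dimension count to upgrade to Lagrangian on generic fibers --- is the same approach, and your restriction to generic fibers matches how the paper itself later cites this corollary.
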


\subsection{Deformations of Multiplicative Higgs Bundles} \label{def_section}
We'll begin this section by considering the tangent complex to the moduli space of $q$-connections.  For the arguments in this article we'll only need to carefully consider the case $q=\id$ of multiplicative Higgs bundles, but we'll include some remarks regarding the more general case.  In this case the calculation was performed by Bottacin \cite{Bottacin}, see also \cite[Section 4]{HurtubiseMarkman}. We begin with an illustrative description of the deformation complex, then use a more careful description gollowing Hurtubise and Markman.  Fix a multiplicative Higgs bundle $(P,g)$ on $C$.  

Let $G = \GL_n$, so we can identify the adjoint bundle $\gg_P$ as the bundle of endomorphisms of the rank $n$ vector bundle $V$, obtained from $P$ as the associated bundle for the defining representation.  We consider the sheaf of cochain complexes on $C$
\[\mc F_{(P,g)} = (\gg_P[1] \overset {A_g} {\to} \gg_P(D))\]
in degrees $-1$ and 0 with differential given by the difference $A_g = L_g - R_g$ of the left- and right-multiplication maps by the singular section $g$ of $P$.  We can alternatively phrase this, in a way that avoids analyzing the extension across the singularities, as in \cite[Section 4]{HurtubiseMarkman}.  

\begin{definition} \label{ad_g_definition}
Define $\ad(g)$ to be the vector bundle
\[\ad(g) = (\gg_P \oplus \gg_P)/\{(X, -g X g^{-1}): X \in \gg_P\}.\]
Then we can write $\mc F$ as the sheaf of complexes
\[\mc F_{(P,g)} = (\gg_P[1] \overset {A_g} {\to} \ad(g))\]
where now $A_g$ is just the map $X \mapsto [(X,-X)]$.  We will often denote a section of $\ad(g)$ by an equivalence class of pairs $[(X^L,X^R)]$.
\end{definition}

To connect this to the paragraph above, note that there is a surjective bundle map $\gg_P \oplus \gg_P \to \gg_P(D)$ sending $(X,Y)$ to $L_g(X) + R_g(Y)$, whose kernel is exactly $\{(X, -g X g^{-1}): X \in \gg_P\}$, and so it induces an isomorphism $\ad(g) \to \gg_P(D)$.  This isomorphism intertwines the two descriptions of the bundle map $A_g$.

\begin{remark}
In \cite{HurtubiseMarkman}, the map $A_g$ is denoted by $\Ad_g$.  We are choosing to use slightly difference notation in order to avoid confusion with the adjoint action by $g$.  The map $A_g$ heuristically sends a section $X$ of $\gg_P$ to a section $gX - Xg$ of $\gg_P(D)$, or equivalently, after an overall right multiplication by $g^{-1}$, to $gXg^{-1} - X$.  Using the notation $\Ad_g$ risks confusion with the map $X \mapsto gXg^{-1}$.
\end{remark}

\begin{remark}
If one introduces a framing at a point $c \in C$ then we must correspondingly twist the complex $\mc F$ above by the line bundle $\OO(-c)$ on $C$, i.e. we restrict to deformations that preserve the framing and therefore are zero at the point $c$.  So in that case we define
\[\mc F^\fr_{(P,g)} = (\gg_P[1] \overset {A_g} {\to} \ad(g)) \otimes \OO(-c).\]
\end{remark}

\begin{remark}
For more general $q$ we should modify this description by replacing $g$ by a $q$-connection.  Note that one can still define the ($q$-twisted) adjoint action $X \mapsto g X g^{-1}$ using a $q$-connection, and so we can still define the complex
\[\mc F_{(P,g)} = (\gg_P[1] \overset {A_g} {\to} \ad(g))\]
just as in the untwisted case.
\end{remark}

This complex defines the deformation theory of the moduli space of multiplicative Higgs bundles.

\begin{prop}[{\cite[Proposition 3.1.3]{Bottacin}}]
The tangent space to $\mhiggs_G(C, D, \omega^\vee)$ at the point $(P,g)$ is quasi-isomorphic to the hypercohomology $\bb H^0(C; \mc F_{(P,g)})$ of the sheaf $\mc F$.
\end{prop}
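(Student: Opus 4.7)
The plan is to compute first-order deformations of $(P,g)$ directly via \v{C}ech cohomology with respect to a suitable trivializing cover, and to match the resulting cocycle-modulo-coboundary description with the \v{C}ech model of $\bb H^0(C, \mc F_{(P,g)})$. First I would fix an open cover $\{U_\alpha\}$ of $C$ that trivializes $P$, refined so that each $U_\alpha$ meets $D$ in at most one point. Relative to this cover, $P$ is encoded by transition functions $g_{\alpha\beta}\colon U_{\alpha\beta}\to G$, and the multiplicative Higgs field by meromorphic $G$-valued representatives $g_\alpha$ on $U_\alpha$, regular away from $D\cap U_\alpha$, satisfying $g_\beta = g_{\alpha\beta}g_\alpha g_{\alpha\beta}^{-1}$ on double overlaps.

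Next I would describe a first-order deformation over $\CC[\eps]/\eps^2$ as a pair $(\{g_{\alpha\beta}(1+\eps h_{\alpha\beta})\},\{g_\alpha + \eps f_\alpha\})$, where the $h_{\alpha\beta}$ are $\gg_P$-valued \v{C}ech $1$-cochains and the $f_\alpha$ are meromorphic sections of $\gg_P$ on $U_\alpha$ with poles bounded by $D$, i.e.\ local sections of $\gg_P(D)\iso \ad(g)$. The cocycle identity for the deformed transition functions is $\check\delta h = 0$. The crucial step is to expand
\[
g_\beta + \eps f_\beta = \bigl(g_{\alpha\beta}(1+\eps h_{\alpha\beta})\bigr)\,(g_\alpha + \eps f_\alpha)\,\bigl(g_{\alpha\beta}(1+\eps h_{\alpha\beta})\bigr)^{-1}
\]
to first order; using the zeroth-order relation and conjugating, the first-order part rearranges to $\Ad_{g_{\alpha\beta}^{-1}}(f_\beta) - f_\alpha = [h_{\alpha\beta}, g_\alpha]$, which under the pair description $[(X^L,X^R)]$ of sections of $\ad(g)$ from Definition~\ref{ad_g_definition} is precisely $\check\delta f = A_g(h)$. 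Combined with $\check\delta h = 0$, this exhibits $(\{f_\alpha\},\{h_{\alpha\beta}\})$ as a \v{C}ech hypercocycle of total degree zero in $\mc F_{(P,g)}$.

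Finally, gauge equivalences correspond to local redefinitions of trivialization $1+\eps \xi_\alpha$ with $\xi_\alpha\in \check C^0(\gg_P)$, and a direct computation shows they shift $(\{f_\alpha\},\{h_{\alpha\beta}\})$ by $(A_g\xi_\alpha,(\check\delta\xi)_{\alpha\beta})$, i.e.\ by the hypercohomology coboundary. Passing to equivalence classes yields
\[
T_{(P,g)}\mhiggs_G(C,D)\iso \check{\bb H}^0(C,\mc F_{(P,g)}) = \bb H^0(C,\mc F_{(P,g)}).
\]

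The main technical obstacle will be checking that the identification $\ad(g)\iso\gg_P(D)$ is coherent across the punctures, so that the local $f_\alpha$ with prescribed poles on $D$ genuinely assemble into a global section of $\ad(g)$, and that restricting from $\mhiggs_G(C,D)$ to $\mhiggs_G(C,D,\omega^\vee)$ does not further constrain the infinitesimal deformation theory. The latter follows since each dominant coweight $\omega^\vee_{z_i}$ labels a discrete $L^+G$-double coset in $LG$ and is therefore preserved by any infinitesimal deformation.
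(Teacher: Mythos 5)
The paper offers no proof of this proposition at all: it is imported verbatim from Bottacin (and the same computation appears in Hurtubise--Markman, Section 4). Your \v{C}ech argument is the standard one underlying that citation, and its skeleton is correct: first-order deformations of the pair (transition functions, local representatives of the Higgs field) give total-degree-zero hypercocycles $(h,f)\in\check C^1(\gg_P)\oplus\check C^0(\ad(g))$ for the two-term complex, the expansion yielding $\Ad_{g_{\alpha\beta}^{-1}}(f_\beta)-f_\alpha=[h_{\alpha\beta},g_\alpha]$ is the correct hypercocycle condition, and changes of local trivialization shift $(h,f)$ by exactly the total coboundary of $\xi\in\check C^0(\gg_P)$.

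Two points need tightening, both concerning the degree-zero coefficient sheaf. First, for a general coweight the bundle $\ad(g)$ is \emph{not} $\gg_P(D)$ for the reduced divisor $D$: in the root space $\gg_\alpha$ the allowed pole order at $z_i$ is $\langle\alpha,\omega^\vee_{z_i}\rangle$ (compare the explicit basis (\ref{eq:XL}) later in the paper), and the isomorphism $\ad(g)\iso\gg_P(D)$ quoted in the text is the illustrative $\GL_n$/minuscule case. So you should define $f_\alpha$ intrinsically as a local section of $\ad(g)$, i.e.\ of the form $g_\alpha X^L_\alpha+X^R_\alpha g_\alpha$ with $X^L_\alpha,X^R_\alpha$ regular, rather than as ``$\gg_P$-valued with poles bounded by $D$''. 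Second, your closing justification --- that the coweight labels a discrete double coset and is ``therefore preserved by any infinitesimal deformation'' --- does not work: the strata of the affine Grassmannian are only locally closed, a point of a small stratum lies in the closure of larger ones, and it certainly admits first-order deformations normal to its own stratum (e.g.\ the cone point of $\ol{\Gr^{(1,-1)}}$ for $\GL_2$). The correct reason that passing from $\mhiggs_G(C,D)$ to $\mhiggs_G(C,D,\omega^\vee)$ costs nothing further is that $\ad(g)$ is, by its very definition as the image of $L_g\oplus R_g$ on regular sections, the tangent space to the $L^+G\times L^+G$-orbit through the local singularity; choosing it as the degree-zero term is precisely what encodes the constraint that the local type $\omega^\vee$ is preserved. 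With those two repairs your argument is complete and agrees with the cited proof.
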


\begin{remark}
The remaining hypercohomology of the sheaf $\mc F_{(P,g)}$ generically has dimension $\dim \mf z_{\gg}$ (or 0 if we fix a framing at $c \in C$) in degree $-1$, and dimension $\mr{genus}(C) \cdot \dim \mf z_{\gg}$ in degree $1$.  However the moduli space $\mhiggs_G(C, D,\omega^\vee)$ is in fact a smooth algebraic variety.  This follows from a result of Hurtubise and Markman \cite[Theorem 4.13]{HurtubiseMarkman}, noting that their argument does not rely on the curve $C$ being of genus 1.
\end{remark}

\begin{corollary} \label{dim_of_moduli_space_cor}
In the rational case, the moduli space $\mhiggs^\fr_G(\bb{CP}^1, D, \omega^\vee)$ has dimension 
\[2 \sum_{z_i \in D} \langle \rho, \omega^\vee_{z_i} \rangle.\]
\end{corollary}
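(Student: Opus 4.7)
The plan is to convert the dimension count into an Euler characteristic on $\bb{CP}^1$ using the deformation complex $\mc F^{\fr}_{(P,g)}$ from the preceding proposition, and then reduce everything to the degree of the rank-$\dim\gg$ bundle $\ad(g)$, which I can compute locally at each singularity.

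First, I would invoke the smoothness of the moduli space (Hurtubise--Markman, cited just before the corollary) to conclude $\bb H^{-1}(\bb{CP}^1, \mc F^{\fr}_{(P,g)}) = \bb H^{1}(\bb{CP}^1, \mc F^{\fr}_{(P,g)}) = 0$ at a smooth point.  The vanishing of $\bb H^{-1}$ is already clear at any point where the underlying bundle $P$ is trivializable, since the framed infinitesimal automorphism sheaf is contained in $\OO(-1)^{\oplus \dim\gg}$, which has no sections.  Consequently the tangent-space dimension at such a point equals the hypercohomological Euler characteristic $\chi(\mc F^{\fr})$.  Additivity together with Riemann--Roch on $\bb{CP}^1$ then gives
\[
  \chi(\mc F^{\fr}) = \chi(\ad(g) \otimes \OO(-\infty)) - \chi(\gg_P \otimes \OO(-\infty)) = \deg \ad(g) - 0 = \deg \ad(g),
\]
so the corollary will follow from the equality $\deg \ad(g) = 2 \sum_{z_i} \langle \rho, \omega^\vee_{z_i}\rangle$.

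The technical heart of the argument -- and where I expect the main obstacle -- is this degree computation.  From the defining short exact sequence $0 \to \mc S \to \gg_P \oplus \gg_P \to \ad(g) \to 0$, with $\mc S$ the saturation in $\gg_P \oplus \gg_P$ of the image of $X \mapsto (X, -gXg^{-1})$, the problem reduces to computing $-\deg \mc S$.  Near each $z_i$ I would fix a local trivialization in which $g$ takes the normal form $g = z^{\omega^\vee_{z_i}}$ and decompose $\gg = \bigoplus_\alpha \gg_\alpha$ into weight spaces for the cocharacter $\omega^\vee_{z_i}$.  A direct analysis in each weight direction identifies the saturated local generator of $\mc S$ as $(z^{a_\alpha}, -z^{b_\alpha})$, where $a_\alpha = \max(0,-\langle\alpha,\omega^\vee_{z_i}\rangle)$ and $b_\alpha = \max(0,\langle\alpha,\omega^\vee_{z_i}\rangle)$; comparing this with the generic trivialization $X \mapsto (X, -gXg^{-1})$ yields a \v{C}ech transition of order $z^{a_\alpha}$, contributing $-a_\alpha$ to the local degree of $\mc S$.

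Finally, summing these contributions over weights and singular points and using dominance of $\omega^\vee_{z_i}$ -- so that $a_\alpha$ is nonzero only on negative roots, where it equals $-\langle\alpha,\omega^\vee_{z_i}\rangle$, and $\sum_{\alpha \in R^+} \langle \alpha, \omega^\vee_{z_i}\rangle = \langle 2\rho, \omega^\vee_{z_i}\rangle$ -- one arrives at
\[
  \deg \ad(g) = -\deg \mc S = \sum_{z_i \in D} \langle 2\rho, \omega^\vee_{z_i}\rangle = 2 \sum_{z_i \in D} \langle \rho, \omega^\vee_{z_i}\rangle,
\]
as required.  The delicate point in this plan is justifying the reduction to the local normal form $g = z^{\omega^\vee_{z_i}}$ and checking that the regular $G[[z]]$-factor in $g$ does not alter the saturated structure of $\mc S$, so that the degree depends only on the coweight data $\omega^\vee_{z_i}$ rather than on a particular representative of the $G[[z]]$-double coset.
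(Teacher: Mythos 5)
Your architecture is the same as the paper's: identify the dimension with the hypercohomology Euler characteristic of the framed deformation complex $\mc F^\fr_{(P,g)} = (\gg_P[1] \to \ad(g))\otimes\OO(-c)$ by killing the outer hypercohomology groups, then compute that Euler characteristic by Riemann--Roch. The difference is that the paper outsources the Euler characteristic to Hurtubise--Markman and Charbonneau--Hurtubise, whereas you actually carry it out: the local analysis of the saturated subsheaf $\mc S$, with generators $(z^{a_\alpha}, -z^{b_\alpha})$ in the weight decomposition for $\omega^\vee_{z_i}$, and the resulting identity $\deg\ad(g) = \sum_{z_i}\langle 2\rho,\omega^\vee_{z_i}\rangle$, are correct and make the count self-contained. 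The worry you flag at the end is not a real obstacle: replacing $g$ by $h_1 g h_2$ with $h_1,h_2\in G[[z]]$ carries the kernel of $(X,Y)\mapsto gX+Yg$ to its image under the bundle automorphism $(\Ad_{h_2^{-1}},\Ad_{h_1})$ of $\gg_P\oplus\gg_P$, which preserves saturation and local degree, so the contribution at $z_i$ depends only on the double coset, i.e.\ on $\omega^\vee_{z_i}$.

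The one step that does not hold as stated is deducing $\bb H^1 = 0$ from smoothness of the moduli space. Smoothness identifies the dimension with $\dim\bb H^0$ but says nothing about the obstruction space: a moduli space can be smooth with $\bb H^1\neq 0$, in which case (given $\bb H^{-1}=0$) one has $\dim\bb H^0 = \chi(\mc F^\fr) + \dim\bb H^1 > \chi(\mc F^\fr)$ and your count would undershoot. You need an independent vanishing argument, and the paper's is Serre duality: the Killing form identifies $\mc F^\fr_{(P,g)}$ with its Serre dual --- this is precisely where the twist by $\OO(-c)$, i.e.\ the relative Calabi--Yau structure on $(\bb{CP}^1,\infty)$, is used --- so $\bb H^1 \cong (\bb H^{-1})^\vee$, and your argument for $\bb H^{-1}=0$ then finishes the job. (Your $\bb H^{-1}$ argument is also slightly narrower than the paper's, since it requires $P$ trivializable, while the paper's observation that $\ker A_g$ consists generically of central constant sections vanishing at $\infty$ works on every component; on the components you consider the two agree.)
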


\begin{proof}
One can use the same argument as \cite{HurtubiseMarkman} (see also \cite[Proposition 5.6]{CharbonneauHurtubise}), with the additional observation that tensoring by the line bundle $\OO(-c)$ (where here the framing point $c$ is the point at infinity) kills the outer cohomology groups ($\bb H^{-1}$ and $\bb H^1$ with our degree conventions, which differ from the conventions of loc. cit. by one).  Indeed, $\bb H^{-1}$ consists of sections of $\gg_P$ that are annihilated by $A_g$ (given for generic $g$ by constant sections valued in $\mf z_{\gg}$) while vanish at $\infty$, which are necessarily 0.  Likewise we can use the equivalence between the sheaf $\mc F_{(P,g)}$ and its Serre dual to see that $\bb H^1$ also vanishes.  Finally the Euler characteristic of the two step complex is unchanged by tensoring by $\OO(-c)$. 
\end{proof}

In the rational case with our framing at infinity, when additionally $P$ is the trivial $G$-bundle, we can actually replace hypercohomology with ordinary cohomology.  Indeed, consider the spectral sequence on hypercohomology for our two step complex $(\gg_P \to \ad(g))(-c)$.  The $E_2$ page of the spectral sequence has the form
\[H^\bullet(\bb{CP}^1; \gg_P(-c))[1] \to H^\bullet(\bb{CP}^1; \ad(g)(-c)),\]
with differential induced from the map $A_g$.  Since the cohomology of the two steps are concentrated in degrees 0 and 1, there are no further differentials in the spectral sequence.  If $P$ is trivial, the first term, $H^\bullet(\bb{CP}^1; \gg_P(-c))$, vanishes identically, so in particular we have the following.

\begin{corollary}
The tangent space to $\mhiggs^\fr_G(\bb{CP}^1, D, \omega^\vee)$ at the point $(P,g)$ with $P$ trivial is quasi-isomorphic to the space of global sections $H^0(\bb{CP}^1; \ad(g)(-c))$ of the sheaf $\ad(g)(-c)$.  In other words, to pairs $(X^L, X^R)$ of $\gg$-valued functions on $\bb{CP}^1$ vanishing at infinity, modulo the equivalence relation (as in Definition \ref{ad_g_definition})
\begin{equation}\label{eq:equivalence} 
(X^L, X^R) \sim (X^L + X, X^R - gXg^{-1}).
\end{equation}
\end{corollary}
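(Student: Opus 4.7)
The plan is to carry out in detail the spectral sequence argument already sketched in the paragraph immediately preceding the statement. Starting from the previous proposition, the tangent space at $(P,g)$ is the degree zero hypercohomology $\bb{H}^0\bigl(\bb{CP}^1; \mc{F}^\fr_{(P,g)}\bigr)$ of the two-term complex
\[
\mc F^{\fr}_{(P,g)} = \bigl(\gg_P[1] \xrightarrow{A_g} \ad(g)\bigr) \otimes \OO(-c),
\]
so I would introduce the standard hypercohomology spectral sequence whose $E_1$ page is given by the sheaf cohomology of each column, with vertical differential the sheaf differential $A_g$.

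Next I would exploit the triviality assumption on $P$. Since $P$ is trivial, $\gg_P \otimes \OO(-c) \cong \gg \otimes \OO_{\bb{CP}^1}(-1)$, and both $H^0$ and $H^1$ of $\OO_{\bb{CP}^1}(-1)$ vanish. This kills the entire column of the $E_1$ page coming from $\gg_P(-c)[1]$. On the other column, using the isomorphism $\ad(g) \cong \gg_P(D)$ noted just after Definition \ref{ad_g_definition}, the sheaf $\ad(g)(-c)$ is a coherent sheaf on $\bb{CP}^1$ and hence has cohomology concentrated in degrees $0$ and $1$. Because only one column is nonzero, every higher differential vanishes for trivial reasons and the spectral sequence degenerates at $E_2$, giving an identification
\[
\bb H^0\bigl(\bb{CP}^1; \mc F^{\fr}_{(P,g)}\bigr) \;\cong\; H^0\bigl(\bb{CP}^1; \ad(g)(-c)\bigr),
\]
which is the first assertion of the corollary.

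For the explicit description in terms of pairs $(X^L, X^R)$, I would simply unpack Definition \ref{ad_g_definition}: by definition $\ad(g)$ is the quotient bundle $(\gg_P \oplus \gg_P)/\{(X,-gXg^{-1})\}$, and since this quotient comes from a short exact sequence of locally free sheaves, tensoring with $\OO(-c)$ commutes with the quotient. A global section of $\ad(g)(-c)$ is therefore represented by a pair $(X^L, X^R)$ of global sections of $\gg_P(-c)$, that is, a pair of $\gg$-valued rational functions on $\bb{CP}^1$ (regular everywhere since $P$ is trivial) vanishing at $c = \infty$, modulo the equivalence relation \eqref{eq:equivalence}. This yields the second form of the statement.

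The only mildly delicate point, which I would address explicitly, is confirming that tensoring by $\OO(-c)$ preserves the quotient presentation of $\ad(g)$ and that it correctly imposes the vanishing condition at $\infty$ on both representatives simultaneously; this follows because the subbundle $\{(X,-gXg^{-1})\} \subset \gg_P \oplus \gg_P$ is a direct summand as an $\OO$-module (being the image of the section $X \mapsto (X,-gXg^{-1})$), so tensoring with a line bundle is exact on this short exact sequence. No serious obstacle is expected beyond this bookkeeping, since both the vanishing $H^\bullet(\bb{CP}^1;\OO(-1)) = 0$ and the collapse of the spectral sequence are formal consequences of working on $\bb{CP}^1$.
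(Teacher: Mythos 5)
Your argument is correct and is essentially the paper's own: the paper likewise runs the hypercohomology spectral sequence for the two-term complex $(\gg_P \to \ad(g))(-c)$, notes that $H^\bullet(\bb{CP}^1;\gg_P(-c)) = H^\bullet(\bb{CP}^1;\gg\otimes\OO(-1))$ vanishes when $P$ is trivial, and concludes that $\bb H^0$ reduces to $H^0(\bb{CP}^1;\ad(g)(-c))$. Your extra remark that every global section of the quotient lifts to a pair $(X^L,X^R)$ is also harmless, since the subsheaf is again a copy of $\gg\otimes\OO(-1)$ with vanishing $H^1$.
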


\begin{remark} \label{atlas_remark}
In what follows we will use a standard atlas for $\bb{CP}^1$ associated to our choice $D$ of divisor.  We cover $\bb{CP}^1$ by open subsets $\{U_0, U_1, \ldots, U_k, U_\infty\}$, where for $i=1, \ldots, k$ $U_i$ is a small open disk around $z_i \in D$, $U_\infty$ is a small open disk around $\infty$, and $U_0$ is 
\[U_0 = \bb{CP}^1 \bs (V_1 \cup V_2 \cup \cdots \cup V_k \cup V_\infty)\]
for $V_i \sub U_i$ closed sub-disks.
\end{remark}

\subsection{Multiplicative Higgs bundles as Symplectic Leaves} \label{symplectic_leaf_section}
We'll now investigate the restriction of the Sklyanin Poisson structure on $G_1[[z^{-1}]]$ to our finite-dimensional moduli spaces of multiplicative Higgs bundles, $\mhiggs^\fr_G(\bb{CP}^1, D, \omega^\vee)$.  Let $g$ be a point in $\mhiggs^\fr_G(\bb{CP}^1, D, \omega^\vee)$; we'll abuse notation and also write $g$ for its image under the restriction map $r_\infty$ to a formal neighbourhood of infinity.

Choose $u \in \CC \bs D$, and let $\phi_u$ be an evaluation function as in Example \ref{ev_function_example}.  We can identify the Hamiltonian vector field associated to $\phi_u$ using the Sklyanin Poisson structure.  In terms of left and right components, it is represented as
\begin{equation}
\label{eq:Xphiz}
X_{\phi_u}(w) =  \frac{1}{w - z} (\nabla_{L} \phi_u,  - \nabla_{R} \phi_u),
\end{equation}
where we've identified $\gg$ and $\gg^*$ using the Killing form $\kappa$.  We must first check that this gives a well-defined vector field on the moduli space $\mhiggs^\fr_G(\bb{CP}^1, D, \omega^\vee)$, not just on the Poisson Lie group.

\begin{lemma}\label{phitoX}
  If $\phi$ is an algebraic function on $G$, and $u$ is a point in $\CC \bs D$, then the Hamiltonian vector field $X_{\phi_u}$ on $\mhiggs^\fr_G(\bb{CP}^1, D, \omega^\vee)$ belongs to the tangent space to $\mhiggs^\fr_G(\bb{CP}^1, D, \omega^\vee)$.  
\end{lemma}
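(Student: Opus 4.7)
The strategy is to exhibit $X_{\phi_u}$ as a global section of $\ad(g) \otimes \OO(-c)$, which by the corollary preceding the lemma is precisely the tangent space to $\mhiggs_G^\fr(\bb{CP}^1, D, \omega^\vee)$ at $g$. There are three conditions to check on the rational expression \eqref{eq:Xphiz}: vanishing at $\infty$, regularity near every $z_i \in D$, and removability of the apparent pole at $w=u$ via the equivalence $(X^L, X^R) \sim (X^L + X, X^R - g X g^{-1})$.

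The first two conditions are essentially immediate. Both components of $X_{\phi_u}(w) = \frac{1}{w-u}(\nabla_L \phi_u, -\nabla_R \phi_u)$ have constant numerators and a denominator that blows up at $\infty$, so they vanish at $w = \infty$, matching the framing condition. On each patch $U_i$ in the atlas of Remark \ref{atlas_remark} around a puncture $z_i \in D$, the expression is manifestly regular since its only pole is at $u \notin D$, and so the prescribed singularity type $\omega^\vee_{z_i}$ is preserved automatically.

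The substantive step is eliminating the pole at $u$ using the equivalence relation. I would pass to the description $\ad(g) \cong \gg_P(D)$ from Definition \ref{ad_g_definition}, under which the pair $(X^L, X^R)$ is sent to $g X^L + X^R g$. The residue of this expression at $w=u$ is a constant built from $g(u)$, $\nabla_L \phi_u$, and $\nabla_R \phi_u$, and the key observation is that it vanishes thanks to the identity
\[
\nabla_L \phi(g) \;=\; g \, \nabla_R \phi(g) \, g^{-1},
\]
which is a direct consequence of the definitions \eqref{eq:left-right-gradient} and the $\Ad$-invariance of the Killing form $\kappa$. Equivalently, one chooses the gauge transformation $X(w) = A/(w-u)$ with $A$ determined by the residue of $X^L$; the compatible condition needed to cancel the residue of $X^R$ is satisfied for exactly the same reason.

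The main obstacle is this residue computation itself: it is the algebraic fingerprint of the rational $r$-matrix $r = \Omega/(z-w)$. The simple pole of $r$ at $z=w$ is what produces the factor $1/(w-u)$ in the Hamiltonian vector field, and the antisymmetric $\nabla_L \otimes \nabla_L - \nabla_R \otimes \nabla_R$ form of the Sklyanin bracket is what ensures the residue cancels upon passing from $(X^L, X^R)$ to $gX^L + X^R g$. All subsequent arguments in the section rely on this compatibility, so once it is verified the lemma follows immediately: $X_{\phi_u}$ descends to a well-defined section of $\ad(g) \otimes \OO(-c)$, hence to a tangent vector of $\mhiggs^\fr_G(\bb{CP}^1, D, \omega^\vee)$ at $g$.
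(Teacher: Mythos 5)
Your proof is correct and takes essentially the same route as the paper: both arguments reduce to the same three checks (vanishing at $\infty$ from the $1/(w-u)$ prefactor, regularity at each $z_i$ because the numerators are constant, and cancellation of the apparent pole at $w=u$ via the adjoint relation $\nabla_L \phi_u = \Ad_{g(u)} \nabla_R \phi_u$ between the left and right gradients). The only cosmetic difference is that you verify the cancellation as a residue computation in the realization $\ad(g) \cong \gg_P(D)$, whereas the paper passes to the equivalence frame $(X^L,0)$ as in (\ref{eq:sklyanin-left}) and observes that the numerator $\Ad_{g(u)}\nabla_R\phi_u - \Ad_{g(z)}\nabla_R\phi_u$ vanishes at $z=u$; these are the same computation.
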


\begin{proof}
  Let us pass to a representative for the Hamiltonian vector field (\ref{eq:Xphiz}) of the form $(X^{L}, 0)$ using the equivalence relation (\ref{eq:equivalence}). We get
  \begin{equation}
\label{eq:xlw0}
(X^L_{\phi_u} (z) , 0) =  \frac{1}{z - u} (\nabla_{L} \phi_u  - \Ad_{g(z)} (\nabla_{R} \phi_{u}), 0 )
  \end{equation}
  We need to check two points to ensure that $(X^L_{\phi_z} (w) , 0)$ is a deformation
  of $g$ in tangent direction to $\mhiggs^{\fr}_{D}$
  \begin{enumerate}
  \item in each chart in $\bb{CP}^1$ there exists an equivalence frame in which $X(z) \sim (\tilde X_{\phi_{u}}^{L}(z), \tilde X_{\phi_{u}}^{R}(z))$  are regular sections (as functions of $z$).
  \item $X(z) \to 0$ as $z \to \infty$.
  \end{enumerate}

  To check (1) we need to look on the potential singularities as $z \to u$ or as $z \to z_i$.  There is no singularity as $z \to u$ since the representative (\ref{eq:xlw0}) can be rewritten as
  \begin{equation}
\label{eq:sklyanin-left}
(X^L_{\phi_u}(z) , 0)  =     \frac{1}{z - u} (\Ad_{g(u)}(\nabla_{R} \phi_u)  - \Ad_{g(z)} (\nabla_{R} \phi_{u}),0)
  \end{equation}
  and since $g(z)$ is regular in the limit $z \to u$ the ratio is also regular as $z \to u$.
  
  There is also no singularity as $z \to z_i$ in the original equivalence frame (\ref{eq:Xphiz}) (or equivalently, the singularity of (\ref{eq:xlw0}) near $z \to z_i$  is in the image under $\Ad_{g(z)}$ of the space of regular functions by (\ref{eq:Xphiz})).
 
 Point (2) is clear since $g(z)$ is regular at $z = \infty$. 
\end{proof}

Write $\pi \colon T^*_g G_1[[z^{-1}]] \to T_g G_1[[z^{-1}]]$ for the map induced by the Sklyanin Poisson structure.  We've just shown that the image under $\pi$ of the space of evaluation covectors -- derivatives of evaluation functions -- is contained in the tangent space $T_{g}\mhiggs^\fr_G(\bb{CP}^1, D, \omega^\vee)$.  In particular, by Lemma \ref{evaluation_vector_fields_generate_lemma}, the image of the cotangent space $T^*_g G_1(\bb{CP}^1)$ to the subgroup of $G$-valued framed rational functions on $\bb{CP}^1$ is contained in $T_{g}\mhiggs^\fr_G(\bb{CP}^1, D, \omega^\vee)$.  We'll now compare the Sklyanin Poisson structure with the restriction to $\mr{im}(\pi)$ of a natural formula for a symplectic structure on $T_{g}\mhiggs^\fr_G(\bb{CP}^1, D, \omega^\vee)$ given by a sum over residues.

 Recall that a symplectic structure $\Omega$ on a subspace $S \sub M$ of a Poisson manifold is compatible with the Poisson structure if, for any two Hamiltonian functions $\phi, \psi$ on $M$, we have an equality
 \begin{equation}
\label{eq:compatibility}
   \Omega(X_{\psi}, X_{\phi}) = \pi (d\phi, d \psi),
 \end{equation}
 where $X_{\psi}, X_{\phi}$ are Hamiltonian vector fields in $T_{S}$ generated by $\phi$ and  $\psi$, i.e.
 \begin{equation}
   X_{\psi} = \pi d \psi, \qquad X_{\phi} = \pi d \phi.
 \end{equation}
So, in other words, we want to find an explicit symplectic form $\Omega: T_{S} \to T_{S}^{*} $ such that for any Hamiltonian function $\phi$ on $M$ and any vector field $X$ in $T_{S}$ we have
 \begin{equation}
   \Omega(X, \pi d \phi) = d_{X} \phi 
 \end{equation}

Let $X$ and $X'$ be vectors in the tangent space to our moduli space $\mhiggs^\fr_G(\bb{CP}^1, D, \omega^\vee)$ at a point $g$. Consider the bilinear form $\Omega(X, X')$ defined by the sum of residues over the set $\tilde D := D \cup \{\infty\}$,
 \begin{equation}
\label{eq:Omega}
   \Omega(X, X')  = \frac{1}{2 \pi \imath} \sum_{z_i \in \tilde D} \oint_{\partial U_i}  \d w (
\kappa(X^{L}_{i}, X^{L'}_{0})  - \kappa( X^{R}_{i},  X^{R'}_{0})),
\end{equation}
where we have choosen equivalence frames in the open subsets $U_i$ and $U_0$ defined in Remark \ref{atlas_remark} such that $X_i^{L}, X_i^{R}$ are regular in the open subset $U_i$.

\begin{prop}
The bilinear form $\Omega(X, X')$ is well-defined, i.e. independent of the choice of representative $(X^L_i, X^R_i)$ in each chart $U_i$, and anti-symmetric.
\end{prop}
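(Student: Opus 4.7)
The plan is to treat the formula as a \v{C}ech-style residue pairing associated to the open cover from Remark \ref{atlas_remark} and to deduce both properties from the Ad-invariance identity $\kappa(gAg^{-1}, gBg^{-1}) = \kappa(A, B)$ combined with Cauchy's and Stokes' theorems.

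For well-definedness there are two independent checks: invariance of $\Omega(X, X')$ under a change of the $U_i$-regular representative of $X$, and invariance under a change of the $U_0$-regular representative of $X'$; the other two reduce to these by symmetry. For the former, replacing $(X_i^L, X_i^R)$ by $(X_i^L + Y, X_i^R - gYg^{-1})$ with $Y$ and $gYg^{-1}$ holomorphic on $U_i$ changes the integrand by $\kappa(Y, X^{L'}_0) + \kappa(gYg^{-1}, X^{R'}_0)$. Substituting the transition identities $X^{L'}_0 = X^{L'}_i + Y'_i$, $X^{R'}_0 = X^{R'}_i - gY'_ig^{-1}$ on the overlap and applying Ad-invariance, the cross-terms in $Y'_i$ cancel, and the remaining integrand $\kappa(Y, X^{L'}_i) + \kappa(gYg^{-1}, X^{R'}_i)$ is holomorphic on $U_i$, so the contour integral vanishes by Cauchy. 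The $U_0$-check is dual: after the analogous Ad-invariance cancellation the leftover integrand is holomorphic on $U_0$, and $\sum_i \oint_{\partial U_i} = -\oint_{\partial U_0}$ together with Stokes' theorem gives zero.

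For anti-symmetry I would produce two equivalent rewrites of $\Omega(X, X')$ by substitutions of opposite flavour. Substituting $X^{L'}_0 = X^{L'}_i + Y'_i$ and $X^{R'}_0 = X^{R'}_i - gY'_ig^{-1}$ in the defining formula kills the diagonal $\kappa(X^L_i, X^{L'}_i) - \kappa(X^R_i, X^{R'}_i)$ by Cauchy on $U_i$, and one further substitution $X^L_i = X^L_0 - Y_i$, $X^R_i = X^R_0 + gY_ig^{-1}$ combined with $\kappa(gY_ig^{-1}, gY'_ig^{-1}) = \kappa(Y_i, Y'_i)$ leads to
\[
\Omega(X, X') = \frac{1}{2\pi\imath} \sum_{z_i \in \tilde D} \oint_{\partial U_i} \bigl[\kappa(X^L_0, Y'_i) + \kappa(X^R_0, gY'_ig^{-1})\bigr] \d w.
\]
Dually, substituting only $X^L_i = X^L_0 - Y_i$, $X^R_i = X^R_0 + gY_ig^{-1}$ in the defining formula kills the diagonal $\kappa(X^L_0, X^{L'}_0) - \kappa(X^R_0, X^{R'}_0)$ by Stokes on $U_0$, giving
\[
\Omega(X, X') = -\frac{1}{2\pi\imath} \sum_{z_i \in \tilde D} \oint_{\partial U_i} \bigl[\kappa(Y_i, X^{L'}_0) + \kappa(gY_ig^{-1}, X^{R'}_0)\bigr] \d w.
\]
Writing out the first formula for $\Omega(X', X)$ and applying the symmetry of $\kappa$, the right-hand side of the second formula is precisely $-\Omega(X', X)$, which gives $\Omega(X, X') = -\Omega(X', X)$.

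The main obstacle is the bookkeeping: several closely similar residue sums appear, and at each stage one must keep careful track of which variables are holomorphic on which chart so that the correct vanishing principle (Cauchy on $U_i$ or Stokes on $U_0$) can be invoked, and of precisely where the Ad-invariance identity for $\kappa$ is used. One must also verify that the auxiliary regularity conditions on the coboundaries $(Y, -gYg^{-1})$ really are met, which is where the structure of the singularities of $g$ prescribed by the coweight data $\omega^\vee$ enters.
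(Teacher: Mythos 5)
Your proof is correct and follows essentially the same route as the paper's: a \v{C}ech-residue computation on the cover $\{U_i\}$, using Ad-invariance of $\kappa$ to cancel the cross-terms in the transition cochains and holomorphy of the leftover integrands on $U_i$ (resp.\ on $U_0$, via $\sum_i \oint_{\partial U_i} = -\oint_{\partial U_0}$) to kill the residues. The only organizational difference is that the paper verifies frame-independence in the second argument directly and then deduces it for the first argument from anti-symmetry (which it proves by normalizing $X_0^{R'}=0$, resp.\ $X_0^{L'}=0$), whereas you check both invariances directly and carry the transition cochains $Y_i$, $Y'_i$ explicitly throughout; both versions rest on the same identities.
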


\begin{proof}
First we consider the change of equivalence frame in the second argument.  So let $X'$ and $\tilde X'$ be two representatives of the same equivalence class differing by $X_0'$ in the patch $U_0$. The section $X_{0}'$ is regular in $U_{0}$. 
\begin{align*}
   \Omega(X, \tilde X') - \Omega( X,X')  &= \frac{1}{2 \pi \imath} \sum_{z_i \in \tilde D} \oint_{\partial U_i}  \d z (
   \kappa(  X^{L}_{i} ,  X^{L'}_{0} + \Ad_{g(z)} X_{0}' )  - \kappa( X^{R}_{i} ,  X^{R'}_{0} - X_{0}') ) \\
   &\qquad - \frac{1}{2 \pi \imath} \sum_{z_i \in \tilde D} \oint_{\partial U_i}  \d z (
   \kappa(  X^{L}_{i} ,  X^{L'}_{0} )  - \kappa( X^{R}_{i} ,  X^{R'}_{0} ) ) \\  
   &= \frac{1}{2 \pi \imath} \sum_{z_i \in \tilde D} \oint_{\partial U_i}  \d z (
\kappa(   X_{i}^{L},   \Ad_{g(z)}X^{'}_{0} )  + \kappa(  X_{i}^{R},  X^{'}_{0} ) ) \\ 
&= \frac{1}{2 \pi \imath} \sum_{z_i \in \tilde D} \oint_{\partial U_i}  \d z (\kappa(   \Ad_{g(z)^{-1}}  X_{i}^{L} + X_{i}^{R}, X_{0}' ) ) \\ 
&= \frac{1}{2 \pi \imath} \sum_{z_i \in \tilde D} \oint_{\partial U_i}  \d z (\kappa(   \Ad_{g(z)^{-1}}  X_{0}^{L} + X_{0}^{R}, X_{0}' ) ) \\ 
&= - \frac{1}{2 \pi \imath} \oint_{\partial U_0} \d z (\kappa(   \Ad_{g(z)^{-1}}  X_{0}^{L} + X_{0}^{R}, X_{0}' ) ) \\ 
&= 0 
\end{align*}

We used that $\Ad_{g(z)^{-1}}  X_{i}^{L} + X_{i}^{R}$ (the right variation of $g$) is invariant across the patches, and the fact that the integrand is regular on $U_0$ in the final equality.

Now, since we've proven invariance under the change of equivalence frame in the second argument, let's set $X_{0}^{R'} = 0$ by a suitable change of equivalence frame.  In this equivalence frame we have $X_{0}^{L'} = X_i^{L'} + \Ad_{g} X_i^{R'}$
on the overlap between $U_i$ and $U_0$, hence the original formula for the bilinear pairing becomes 
\[
\Omega(X, X') =   \frac{1}{2 \pi \imath} \sum_{z_i \in \tilde D} \oint_{\partial U_i}  \d z (
\langle  X^{L}_{i}, X_i^{L'} + \Ad_{g} X_i^{R'} \rangle 
\]
Since $\langle X_i^{L}, X_{i}^{L'} \rangle $ is regular on $U_i$ this term vanishes and we get
\begin{equation}
\label{eq:drop-right}
  \Omega(X, X') =   \frac{1}{2 \pi \imath} \sum_{z_i \in \tilde D} \oint_{\partial U_i}  \d z (
\langle  X^{L}_{i},  \Ad_{g} X_i^{R'} \rangle  =  \frac{1}{2 \pi \imath} \sum_{z_i \in \tilde D} \oint_{\partial U_i}  \d z (
\langle  \Ad_{g^{-1}} X^{L}_{i},   X_i^{R'} \rangle 
\end{equation}

On the other hand, we can set $X_0^{L'} = 0$ in the original formula by a suitable change
of framing in the second argument, and in this equivalence frame we have $X_0^{R'} =  X_{i}^{R'}  + \Ad_{g^{-1}} X_{i}^{L'}$ on the
overlaps between $U_i$ and $U_0$, and consequently, the original formula is transformed into the form
\begin{equation}
\label{eq:drop-left}
  \Omega(X, X') =   -\frac{1}{2 \pi \imath} \sum_{z_i \in \tilde D} \oint_{\partial U_i}  \d w (
\langle  X^{R}_{i},  \Ad_{g^{-1}} X_i^{L'} \rangle 
\end{equation}
Comparing the expression (\ref{eq:drop-left}) with (\ref{eq:drop-right}) we have demonstrated that $\Omega(X,X')$ is anti-symmetric.

To conclude, we've proven that $\Omega(X,X')$ is invariant under a change of equivalence frame in the second argument, and so by the anti-symmetry we obtain that $\Omega(X, X')$ is also invariant under a change of equivalence frame in the first argument.
\end{proof}

\begin{remark}
Note that we don't yet assume that $\Omega$ is non-degenerate: we will demonstrate this as part of Theorem \ref{theorem:symplectic_leaf} below.
\end{remark}

\begin{lemma}\label{lemma:OmegaPi}
The restriction of the bilinear form (\ref{eq:Omega}) to the image $\mr{im}(\pi) \sub T_g \mhiggs^\fr_G(\bb{CP}^1, D, \omega^\vee)$  is compatible with Sklyanin's Poisson structure $\pi$ on $G_1[[z^{-1}]]$.
\end{lemma}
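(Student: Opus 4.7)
My plan is to verify the compatibility relation (\ref{eq:compatibility}) by reducing to evaluation functions and then performing a global residue calculation on $\bb{CP}^1$.

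By Lemma \ref{evaluation_vector_fields_generate_lemma}, the cotangent space $T^{*}_{g} G_1(\bb{CP}^1)$ is spanned by differentials $d\phi_u$ of evaluation functions. Since both $\Omega$ and the Sklyanin bivector $\pi$ are bilinear, it suffices to verify, for any pair of evaluation functions $\phi_u, \psi_v$ with $u, v \notin \tilde D$, the identity
\[\Omega(X_{\phi_u}, X_{\psi_v}) = -\{\phi_u, \psi_v\}(g),\]
where the right-hand side is the Sklyanin bracket computed explicitly in Example \ref{ev_function_example}; this is equivalent to (\ref{eq:compatibility}) by the antisymmetry of $\Omega$ established above.

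To evaluate the left-hand side I will use the simplified formula (\ref{eq:drop-right}) together with the natural representatives $X_{\phi_u}(z) = \frac{1}{z-u}(\nabla_L \phi_u, -\nabla_R \phi_u)$ and $X_{\psi_v}(z) = \frac{1}{z-v}(\nabla_L \psi_v, -\nabla_R \psi_v)$. Because $u, v \notin \tilde D$, both components of each representative are regular in every chart $U_i$ for $z_i \in \tilde D$, so these choices satisfy the regularity hypothesis of (\ref{eq:drop-right}). The resulting integrand is the globally meromorphic one-form
\[-\frac{\kappa(\nabla_L \phi_u,\, \Ad_{g(z)} \nabla_R \psi_v)}{(z-u)(z-v)}\,\d z\]
on $\bb{CP}^1$, whose only singularities are simple poles at $z = u$, $z = v$, and at the points $z_i \in D$ (the latter arising from the singularities of $\Ad_{g(z)}$), and which decays like $1/z^2$ at infinity.

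Consequently the sum $\sum_{z_i \in \tilde D} \oint_{\partial U_i}$ captures $2\pi i$ times the residues at the points $z_i \in D$, with no residue contribution from $U_\infty$. By the total residue theorem on $\bb{CP}^1$ this is equal to the negative of the sum of residues at $u$ and $v$. Evaluating those two residues and then applying the Lie group identity $\nabla_L \phi_u = \Ad_{g(u)} \nabla_R \phi_u$ (and similarly for $\psi_v$) collapses the remaining mixed $\Ad_g$-terms to yield $\frac{1}{u-v}\bigl(\kappa(\nabla_R \phi_u, \nabla_R \psi_v) - \kappa(\nabla_L \phi_u, \nabla_L \psi_v)\bigr)$, which is exactly $-\{\phi_u, \psi_v\}(g)$ as desired.

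The main technical subtlety is the choice of equivalence frames: although $\Omega$ is independent of these choices, only the specific choice built into (\ref{eq:drop-right}) makes the integrand extend as a globally meromorphic function on $\bb{CP}^1$, so that contour deformation via the residue theorem is legitimate. The vanishing of the residue at infinity, essential to the argument, is a direct consequence of the $1/z$ decay of the natural representatives of Hamiltonian vector fields, which in turn relies on the fixed framing at $\infty$ in the definition of $\mhiggs^{\fr}_G(\bb{CP}^1, D, \omega^\vee)$.
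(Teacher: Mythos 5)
Your argument is correct and follows the same strategy as the paper's proof: reduce to evaluation functions $\phi_u,\psi_v$ via Lemma \ref{evaluation_vector_fields_generate_lemma} and evaluate $\Omega(X_{\phi_u},X_{\psi_v})$ as a sum of residues at $z=u$ and $z=v$, recovering $-\{\phi_u,\psi_v\}$. The only difference is cosmetic: the paper puts the second argument in the left frame (\ref{eq:xlw0}) so that the integrand is regular on $U_0\setminus\{u,v\}$ and converts $\sum_i\oint_{\partial U_i}$ directly into $-\oint_{\partial U_0}$, whereas you keep the frame (\ref{eq:Xphiz}) for both arguments and instead invoke the total residue theorem on $\bb{CP}^1$, using that the discrepancy between the two integrands is a constant multiple of $(z-u)^{-1}(z-v)^{-1}$ with vanishing total residue and that $\Ad_{g(z)}$ is regular at $\infty$ so the residue there vanishes.
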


\begin{proof}
  Given two evaluation functions $\phi_{u}$ and $\psi_{v}$, where $u$ and $v$ are points in $\CC \bs D$, we'd like to check
  that Sklyanin's Poisson bracket is compatible (in the sense of (\ref{eq:compatibility})) with the bilinear form $\Omega$, so
\begin{equation}
  \Omega(X_{\psi_{u}}, X_{\phi_{v}})  = - \{\psi_{u}, \phi_{v}\}.
\end{equation}
First we'll choose equivalence frames such that the Hamiltonian vector field $X_{\phi_{u}}$ is regular at $\tilde D$, and $X_{\psi_{v}}$ is regular in $\bb{CP}^1 \setminus \tilde D$.  The equivalence frame regular in $\tilde D$ can be taken as in  (\ref{eq:Xphiz}),
and the equivalence frame regular in $C \setminus \tilde D$ can be taken
as in (\ref{eq:xlw0}). That is, we'll take
\begin{align*}
    (X_{\phi_{u}})_{i} &= ((X_{\phi_{u}}^{L}, X_{\phi_{u}}^{R}))_{i} \\
    &=  \frac{1}{z - u} \left(\Ad_{g(u)} \nabla_R \phi_{u},   \nabla_{R} \phi_{u}\right) \\
    (X_{\psi_{v}})_{0} &= ((X_{\psi_{v}}^{L}, X_{\psi_{v}}^{R}))_{0} \\
    &= \frac{1}{z - v} \left(\left( \Ad_{g(v)} \nabla_R \psi_{v} - \Ad_{g(z)} \nabla_{R} \psi_{v}\right), 0\right).
\end{align*}
Then the definition of the bilinear form $\Omega$ in this equivalence frame becomes
\begin{equation}
  \label{eq:pairing}
  \Omega(X_{\phi_{u}}, X_{\psi_{v}}) = \frac{1}{2 \pi \imath }
  \sum_{z_i \in \tilde D} \oint_{\partial U_i} \frac{\d z}{(z - u)(z - v)} \langle \Ad_{g(u)} \nabla_{R} \phi_{u},
  \Ad_{g(v)} \nabla_{R} \psi_{v}  - \Ad_{g(z)} \nabla_{R} \psi_{v} \rangle.
\end{equation}
The integrand is regular everywhere on $U_0 \setminus \{u, v\}$, and since
\[\sum_{i \in \tilde D} \oint_{\partial U_i } f(z) \d z  = - \oint_{\partial U_0} f(z) \d z\]
the pairing (\ref{eq:pairing}) is given as a sum of residues at $w = u$ and $w = v$.  Explicitly:
  \begin{align*}
   \Omega(X_{\phi_{u}}, X_{\psi_{v}}) &= - (\res_{z = u}  + \res_{z = v}) \frac{\d z}{(z - u)(z - v)} \langle \Ad_{g(u)} \nabla_{R} \phi_{u},   \Ad_{g(v)} \nabla_{R} \psi_{v}  - \Ad_{g(z)} \nabla_{R} \psi_{v} \rangle \\
   &=   \frac{-1}{u - v}  \langle \Ad_{g(u)} \nabla_{R} \phi_{u} , \Ad_{g(v)} \nabla_{R} \psi_{v}  - \Ad_{g(u)} \nabla_{R} \psi_{v} \rangle  \\
   &\quad - \frac{-1}{u - v}  \langle \Ad_{g(u)} \nabla_{R} \phi_{u} , \Ad_{g(v)} \nabla_{R} \psi_{v}  - \Ad_{g(v)} \nabla_{R} \psi_{v}   \rangle \\
      &=   \frac{-1}{u - v}  \langle \Ad_{g(u)} \nabla_{R} \phi_{u} ,  - \Ad_{g(u)} \nabla_{R} \psi_{v} \rangle   - \frac{-1}{u - v}  \langle \Ad_{g(u)} \nabla_{R} \phi_{u} ,  - \Ad_{g(v)} \nabla_{R} \psi_{v}   \rangle \\
   &= \frac{1}{u - v} (\langle \nabla_{R} \phi_{u} , \nabla_{R} \psi_{v}\rangle - 
   \langle  \nabla_{L} \phi_{u} , \nabla_{L} \psi_{v} \rangle ) \\
   &=  - \{\phi_{u}, \psi_{v} \}.
  \end{align*}
\end{proof}

To complete the argument that $\mhiggs^\fr_G(\bb{CP}^1, D, \omega^\vee)$ is a symplectic leaf for the Sklyanin Poisson structure on $G_1[[z^{-1}]]$ we need to show that in fact, we have an identification 
\[\mr{im}(\pi) = T_g \mhiggs^\fr_G(\bb{CP}^1, D, \omega^\vee).\]
That means for any tangent vector $X \in T_g \mhiggs^\fr_G(\bb{CP}^1, D, \omega^\vee)$ we'd like to find a Hamiltonian
function $\phi$ on $G_1[[z^{-1}]]$ such that $X = X_{\phi} = \pi d \phi$.

\begin{theorem}\label{theorem:symplectic_leaf}
The restriction map $r_\infty \colon \mhiggs^\fr_G(\bb{CP}^1, D, \omega^\vee) \to G_1[[z^{-1}]]$ to a formal neighbourhood of infinity is the inclusion of a symplectic leaf for the group $G_1(\bb{CP}^1)$ of $G$-valued framed rational functions on $\bb{CP}^1$ under the Sklyanin Poisson structure.
\end{theorem}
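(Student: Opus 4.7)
The three preceding lemmas reduce the theorem to the assertion that at each $g \in \mhiggs^\fr_G(\bb{CP}^1,D,\omega^\vee)$, the map $\pi^\sharp_g \colon T^*_g G_1(\bb{CP}^1) \to T_g G_1(\bb{CP}^1)$ induced by the Sklyanin Poisson bivector has image exactly $T_g \mhiggs^\fr_G(\bb{CP}^1,D,\omega^\vee)$. This is exactly what it means for the moduli space to be the symplectic leaf through $g$ (once one knows it is connected, or upon restricting to the connected component). My plan proceeds in three steps.

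\textbf{Step 1 (one inclusion).} Lemma \ref{phitoX} shows that $\pi^\sharp_g(d\phi_u) \in T_g \mhiggs^\fr_G$ for every evaluation covector, and Lemma \ref{evaluation_vector_fields_generate_lemma} shows that such covectors span $T^*_g G_1(\bb{CP}^1)$. By linearity we obtain $\mr{im}(\pi^\sharp_g) \subseteq T_g \mhiggs^\fr_G$, which in particular means the Sklyanin Poisson structure on $G_1[[z^{-1}]]$ restricts to a Poisson structure on $G_1(\bb{CP}^1)$ for which $\mhiggs^\fr_G$ is a Poisson subvariety.

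\textbf{Step 2 (the key residue identity).} Extend Lemma \ref{lemma:OmegaPi} by showing that for every evaluation function $\phi_u$ with $u \in \CC \setminus D$ and every tangent vector $X \in T_g \mhiggs^\fr_G$,
\[\Omega(X_{\phi_u}, X) = -d_X \phi_u.\]
The argument mimics the computation in Lemma \ref{lemma:OmegaPi}: take the equivalence frame (\ref{eq:Xphiz}) for $X_{\phi_u}$, which is regular at every puncture in $\tilde D$, and a representative $(X^L_0, X^R_0)$ for $X$ regular in the chart $U_0$. The integrand in $\Omega(X_{\phi_u},X)$ is then regular everywhere on $U_0$ except at $z = u$, so applying Cauchy's theorem collapses the sum of boundary integrals $\sum_{z_i \in \tilde D}\oint_{\partial U_i}$ to $-\res_{z=u}$ of the integrand. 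Evaluating this residue and unfolding the definitions of $\nabla_L\phi_u$, $\nabla_R\phi_u$ (as in (\ref{eq:left-right-gradient})) identifies the result with $-d_X\phi_u$.

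\textbf{Step 3 (conclusion).} If $X \in T_g \mhiggs^\fr_G$ is $\Omega$-orthogonal to every Hamiltonian vector field $X_{\phi_u}$, then by Step 2 we have $d_X \phi_u = 0$ for every evaluation covector, so Lemma \ref{evaluation_vector_fields_generate_lemma} forces $X = 0$. Thus the $\Omega$-orthogonal complement of $V := \mr{im}(\pi^\sharp_g)$ inside $W := T_g \mhiggs^\fr_G$ is trivial. On the other hand, $\Omega|_V$ is non-degenerate: by Lemma \ref{lemma:OmegaPi} its restriction to $V$ corresponds under $\pi^\sharp_g$ to the Sklyanin bracket, which restricts to a non-degenerate pairing on its own image by general Poisson theory. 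The standard finite-dimensional splitting $W = V \oplus V^{\perp_{\Omega},W}$ associated to a bilinear form that is non-degenerate on a subspace then gives $W = V$, exactly as required.

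\textbf{Main obstacle.} Steps 1 and 3 are formal given the setup. The real work is in Step 2: carrying out the residue calculation so that all the signs, the sides (left vs.\ right), and the identifications coming from the deformation complex description $\ad(g)$ of $T_g \mhiggs^\fr_G$ align correctly to produce exactly $-d_X\phi_u$. Once this identity is in hand, non-degeneracy and the symplectic leaf property drop out of pure linear algebra without further genuine input.
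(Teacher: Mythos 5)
Your proposal is correct, and its Step 1 and the overall leaf criterion coincide with the paper's setup (Lemmas \ref{phitoX}, \ref{evaluation_vector_fields_generate_lemma} and \ref{lemma:OmegaPi}); where you genuinely diverge is in the reverse inclusion $T_g\mhiggs^\fr_G(\bb{CP}^1,D,\omega^\vee) \subseteq \mr{im}(\pi)$. The paper proves this constructively: it first writes down an explicit basis (\ref{eq:XL}) of the tangent space using the root-space decomposition of $\Ad_{g(z)}$ near each singularity --- which requires $g$ to take regular semisimple values near each $z_i$ plus an analytic-extension hypothesis, so that argument is a priori carried out only on a dense open subset --- and then, in Lemma \ref{xtophi}, exhibits for each tangent vector $X$ an explicit Hamiltonian potential $\phi$ as a contour integral and checks $X = \pi\,\d\phi$ by a residue computation. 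Your Steps 2--3 replace this with a duality argument: the identity $\Omega(X_{\phi_u},X) = -d_X\phi_u$ for \emph{arbitrary} $X$ in the tangent space (which does follow from the same contour-collapse used in Lemma \ref{lemma:OmegaPi}, since in the frame (\ref{eq:Xphiz}) the integrand is regular on $U_0$ away from $z=u$ and the sum of boundary integrals reduces to a single residue there) shows the $\Omega$-orthogonal complement of $\mr{im}(\pi)$ inside the tangent space vanishes, and non-degeneracy of $\Omega$ on $\mr{im}(\pi)$ then forces equality by finite-dimensional linear algebra. The paper's route buys explicit Hamiltonians, which it reuses essentially verbatim for the torus moment maps in Section \ref{Reduced_section}; your route is softer, avoids the genericity assumptions and the explicit basis, and works at every point where $\Omega$ is defined, at the cost of producing no explicit potentials. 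Two small points you should make explicit for your argument to close: that $\d r_\infty$ is injective on $T_g\mhiggs^\fr_G(\bb{CP}^1,D,\omega^\vee)$, so that $d_X\phi_u = 0$ for all $u,\phi$ really forces $X=0$ (clear on the trivial-bundle component, since a framed rational map is determined by its expansion at $\infty$), and that the tangent space is finite dimensional (Corollary \ref{dim_of_moduli_space_cor}), which your splitting in Step 3 uses.
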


We'll show this at the level of tangent spaces at a point $g$ in $\mhiggs^\fr_G(\bb{CP}^1, D, \omega^\vee)$.  For any deformation $\delta g $ of $g$ that preserves the singularity divisor $D$ of $g$ we want to find a Hamiltonian function $\phi$ for Sklyanin's bracket such that the associated Hamiltonian vector field $X_\phi = \delta g$; we've already shown the opposite inclusion in Lemma \ref{phitoX}.

First let's give an explicit basis for the tangent space of $\mhiggs^\fr_G(\bb{CP}^1, D, \omega^\vee)$ at a point $g$.  Assume that the restriction of $g$ to each punctured open neighbourhood $U^\times_i$ of a point in $D$ is regular semi-simple valued.  With this assumption, the operator $\Ad_{g}(z): \gg \to \gg$ is diagonalizable in $U_{i}^{\times}$.  We have a family of Cartan sublagebras $\hh_{z} \subset \gg$ parameterized by $z \in U_{i}^{\times}$, where $\hh_z$ centralizes the regular semisimple elements $g(z)$.  We then have a ($z$-dependent) decomposition of $\gg$ into the Cartan $\hh_z$ and the root spaces:
\[\gg = \hh_{z} \oplus \sum_{\alpha} \gg_{\alpha,z}.\]
Since the family $g(z)$ has a singularity of co-weight $\omega_{z_i}^{\vee}$ in $z_i$ with $[g(z)] \sim z^{-\omega_{z_i}^{\vee}}$, the operator $\Ad_{g}$ has eigenvalue $ (z - z_i)^{- (\alpha, \omega^{\vee}_{z_i})}$ on $\gg_{\alpha}$
in leading order. Let $e_{\alpha, z}$ be a generator of $\gg_{\alpha, z}$.

Below we'll need a technical assumption on $g(z)$ in $U_{i}^{\times}$: that the splitting
$\hh_{z} \oplus \sum_{\alpha} \gg_{\alpha,z}$ extends analytically from $U_{i}^{\times}$ to $U_{i}$,
i.e. that there is a limit 
 \[ e_{\alpha, z_i} = \lim_{z \to z_i} e_{\alpha, z}.\] 
We'll use these conditions to calculate the dimension of the moduli space $\mhiggs^\fr_G(\bb{CP}^1, D, \omega^\vee)$; they are satisfied for $g$ living in a dense open subset of the moduli space. 

\begin{lemma}
  Assuming $g(z)$ has regular semisimple values near each singularity, the tangent space of $\mhiggs^\fr_G(\bb{CP}^1, D, \omega^\vee)$ at $g$ is isomorphic to the space of meromorphic $\gg$-valued functions $X^{L}$ of the form
  \begin{equation}
\label{eq:XL}
    X^{L} = \sum_{i} \sum_{\alpha: \langle \alpha, \omega_{z_i}^{\vee} \rangle > 0 }
    \sum_{k_{i, \alpha} = 1}^{ \langle \alpha , \omega_{z_i}^{\vee} \rangle} e_{\alpha,{z_i}} x_{i, \alpha, k_{i, \alpha}} (z - z_i)^{-\langle \alpha, \omega_{z_i}^{\vee}\rangle }, \qquad x_{i, \alpha, k_{i, \alpha}} \in \mathbb{C}
  \end{equation}
  and consequently the tangent space $T_g \mhiggs^\fr_G(\bb{CP}^1, D, \omega^\vee)$ has dimension 
  \begin{equation}
\label{eq:dim-unreduced} 
\dim T_g \mhiggs^\fr_G(\bb{CP}^1, D, \omega^\vee) = \sum_{i} \sum_{\alpha: \langle \alpha, \omega_{z_i}^{\vee} \rangle > 0 } \langle \alpha, \omega_{z_i}^{\vee} \rangle = 2 \sum_{i} \langle \rho, \omega^{\vee}_{z_i}\rangle 
  \end{equation}
  where $\rho = \frac 1 2 \sum_{\alpha > 0} \alpha$ is the Weyl vector.
\end{lemma}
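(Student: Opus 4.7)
The plan is to compute the tangent space using the cohomological description established in the preceding discussion, then explicitly parametrize its sections by studying the local diagonalization of $\Ad_g$ near each singularity. To begin, I would invoke Bottacin's computation of the deformation complex to identify the tangent space at $g$ with the hypercohomology $\mathbb{H}^0(\bb{CP}^1; \mc F^{\fr}_{(P,g)})$. On the connected component where $P$ is trivial, the outer cohomology groups of $\gg_P(-c)$ vanish identically (as observed in the paragraph preceding Corollary \ref{dim_of_moduli_space_cor}), so the spectral sequence on hypercohomology collapses and the tangent space reduces to $H^0(\bb{CP}^1; \ad(g)(-c))$.

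Next, I would realize sections of $\ad(g)(-c)$ as global meromorphic functions on $\bb{CP}^1$. Using a \v{C}ech description with the atlas from Remark \ref{atlas_remark}, a global section consists of a collection of regular local representatives $(X^L_U, X^R_U)$ on each chart, compatible across overlaps under the local equivalence $(X^L, X^R) \sim (X^L + Y, X^R - gYg^{-1})$ inherited from Definition \ref{ad_g_definition}. Since $g$ is regular on $U_0$, I can use this equivalence to set $X^R_0 = 0$ there, leaving a single function $X^L_0$ regular on $U_0$ and (after gluing across the remaining charts) meromorphic on all of $\bb{CP}^1$ with poles confined to $D$ and a zero at $\infty$ imposed by the framing twist.

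The crux of the argument is then the analysis of the permissible pole structure of $X^L_0$ at each $z_i$. I would match the representative $(X^L_0, 0)$ on $U_0$ against a regular representative $(X^L_i, X^R_i)$ on $U_i$; the overlap condition expresses the difference $X^L_0 - X^L_i$ as the $\Ad_g$-twisted transform of $X^R_i$. Under the regular-semisimple hypothesis and the extension of the splitting $\gg = \hh_z \oplus \bigoplus_\alpha \gg_{\alpha,z}$ to $U_i$, I can diagonalize $\Ad_g$ on each root space $\gg_{\alpha,z}$ with eigenvalue of the form $\mu_\alpha(z)(z - z_i)^{-\langle\alpha,\omega^\vee_{z_i}\rangle}$ (with $\mu_\alpha$ a local unit) and with identity action on $\hh_z$. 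Reading off components, this forces the principal part of $X^L_0$ at $z_i$ to vanish on the Cartan and on root spaces with $\langle\alpha, \omega^\vee_{z_i}\rangle \leq 0$, while in the remaining root spaces $\gg_{\alpha,z_i}$ arbitrary principal parts of order up to $\langle\alpha, \omega^\vee_{z_i}\rangle$ are produced by suitably prescribed leading Taylor coefficients of $(X^R_i)_\alpha$.

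Finally, Mittag--Leffler on $\bb{CP}^1$ ensures that a rational function vanishing at $\infty$ is determined uniquely by its principal parts at the finite points, so the space of admissible $X^L$ has dimension $\sum_i \sum_{\alpha : \langle\alpha,\omega^\vee_{z_i}\rangle > 0} \langle\alpha, \omega^\vee_{z_i}\rangle$. Since each $\omega^\vee_{z_i}$ is dominant we have $\langle\alpha, \omega^\vee_{z_i}\rangle \leq 0$ for negative roots, so the inner sum restricts to positive roots and equals $\langle 2\rho, \omega^\vee_{z_i}\rangle$, recovering the formula of Corollary \ref{dim_of_moduli_space_cor}. The main obstacle I anticipate is the third step: tracking the left/right conventions in the equivalence relation carefully enough to obtain the correct asymmetry -- that poles appear in $\gg_\alpha$ precisely when $\langle\alpha, \omega^\vee_{z_i}\rangle > 0$ -- since the opposite convention would send the pole locus to the opposite chamber, and verifying surjectivity of the realization map requires the explicit bijection between the first $\langle\alpha, \omega^\vee_{z_i}\rangle$ Taylor coefficients of $(X^R_i)_\alpha$ and the allowed principal parts of $X^L_0$.
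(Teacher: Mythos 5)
Your proposal is correct and follows essentially the same route as the paper: reduce to $H^0(\bb{CP}^1;\ad(g)(-c))$ via the collapse of the hypercohomology spectral sequence for trivial $P$, then use the local diagonalization of $\Ad_g$ near each $z_i$ to show that the admissible principal parts of $X^L$ lie exactly in the root spaces with $\langle\alpha,\omega^\vee_{z_i}\rangle>0$ and have order bounded by $\langle\alpha,\omega^\vee_{z_i}\rangle$, being precisely the image of $\Ad_g$ applied to regular sections. The paper's proof is a two-sentence sketch of exactly this local matching; your write-up supplies the surrounding steps (the \v{C}ech gluing, the Mittag--Leffler count) that the paper delegates to the preceding discussion.
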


\begin{proof}
  In the framing $(X^{L}, X^{R}) \sim (X^{L} + \Ad_{g} X^{R}, 0 )$, the singular
  part of $X^{L}$ in $U_{i}^{\times}$ in the subspace generated by $e_{\alpha, z_i}$
  is in the image of the operator $\Ad_{g}$ applied to a regular section generated by $e_{\alpha, z_i}$.  Consequently, there is an equivalence frame $(X^{L}, X^{R}) \sim (\tilde X^{R}, \tilde X^{R})$  in which $\tilde X^{L}$ and $\tilde X^{R}$ are both regular in $U_{i}$. 
\end{proof}

Having described the tangent space to $\mhiggs^\fr_G(\bb{CP}^1, D, \omega^\vee)$ at $g$, we'd like to show that for every tangent vector $X$ there is a Hamiltonian function $\phi$ on $G_1[[z^{-1}]]$ such that $X = X_\phi$ with respect to Sklyanin's Poisson structure on $G_1[[z^{-1}]]$.  Suppose that is $X$ is represented in the equivalence class by $((X^{L}, X^{R}))_{i}$, where $(X^{L}, X^{R})_{i}$ are regular in each chart $U_i$ around $z_i \in \tilde D$.  We'll describe a Hamiltonian for $X$ in the following way.

\begin{lemma}\label{xtophi}
  For any $X \in T_{g} \mhiggs^\fr_G(\bb{CP}^1, D, \omega^\vee)$ there is a Hamiltonian potential $\phi$ on $G_1[[z^{-1}]]$ such that $X = \pi \d \phi|_{g}$.  For $\tilde g$ in a local neighborhood of $g$, a potential $\phi$ is given by the formula 
\[
  \phi(\tilde g) : = -\frac 1 2 \sum_{z_i \in \tilde D} \frac{1}{2 \pi \imath c_\rho} \oint_{\partial U_{i}} 
dz  \tr \rho( \tilde g_z  g_z^{-1}) \rho(X^{L}_i - \Ad_{ g_z} X_{i}^{R}),
\]
where $\rho$ is a faithful representation of $G$ and we use a non-degenerate pairing of the form $\tr\rho(X) \rho(X') = c_\rho \kappa(X, X')$.
\end{lemma}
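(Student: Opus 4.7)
The plan is to verify the formula directly by computing $\d\phi|_g(\delta g)$ for arbitrary tangent vectors $\delta g \in T_g\mhiggs^\fr_G(\bb{CP}^1, D, \omega^\vee)$ and showing that this equals $\Omega(X,\delta g)$, where $\Omega$ is the bilinear form defined in (\ref{eq:Omega}). The conclusion $\pi\d\phi|_g = X$ then follows from Lemma \ref{lemma:OmegaPi} combined with a spanning-and-non-degeneracy argument.

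The first step is to verify well-definedness. Since $\tilde g$ and $g$ lie in the same moduli space $\mhiggs^\fr_G(\bb{CP}^1, D, \omega^\vee)$ with identical singularity datum $\omega^\vee$ at each $z_i$, the ratio $\tilde g_z g_z^{-1}$ extends holomorphically across the punctures, so the contour integrals are honest residue pairings. A change of representative $(X_i^L, X_i^R) \mapsto (X_i^L + Y, X_i^R - g_z Y g_z^{-1})$ with $Y$ regular on $U_i$ alters $X_i^L - \Ad_{g_z} X_i^R$ by a section regular on $U_i$; multiplying by the regular function $\tr\rho(\tilde g_z g_z^{-1})$ produces an integrand holomorphic on $U_i$ whose contour integral vanishes by the residue theorem. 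Furthermore, since the sum over $\partial U_i$ can be deformed (using vanishing of total residues on $\bb{CP}^1$) into an integral around $\partial U_\infty$, the function $\phi$ descends to one defined on a neighbourhood of $g$ in $G_1[[z^{-1}]]$.

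The main calculation is the first-order variation. Expanding $\tilde g_z g_z^{-1} = I + \epsilon\,\delta g_z\cdot g_z^{-1} + O(\epsilon^2)$ and using $\tr\rho(A)\rho(B)=c_\rho\kappa(A,B)$, one obtains
\begin{equation*}
\d\phi|_g(\delta g) = -\tfrac12\sum_{z_i\in\tilde D}\frac{1}{2\pi\imath}\oint_{\partial U_i}\d z\,\kappa\bigl(\delta g_z\cdot g_z^{-1},\ X_i^L - \Ad_{g_z} X_i^R\bigr).
\end{equation*}
Writing $\delta g_z\cdot g_z^{-1}$ via a representative $(\delta g_i^L, \delta g_i^R)$ regular on $U_i$ and expanding the Killing form, two of the four resulting terms are pairings of two regular sections on $U_i$ and hence integrate to zero; the two surviving terms can be rearranged using $\Ad$-invariance of $\kappa$ and the equivalence relations transferring between representatives on $U_i$ and on $U_0$, matching, after the symmetrization implemented by the overall prefactor $-\tfrac12$, the two equivalent expressions (\ref{eq:drop-right}) and (\ref{eq:drop-left}) of $\Omega(X,\delta g)$. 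One thereby concludes $\d\phi|_g(\delta g)=\Omega(X,\delta g)$.

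With this identity in hand, the conclusion is immediate: for any Hamiltonian vector field $X_{\psi_u}$ of an evaluation function, Lemma \ref{lemma:OmegaPi} yields $\Omega(\pi\d\phi|_g,X_{\psi_u})=\d\phi|_g(X_{\psi_u})=\Omega(X,X_{\psi_u})$. Lemma \ref{evaluation_vector_fields_generate_lemma} together with Lemma \ref{phitoX} shows that these Hamiltonian vector fields span $T_g\mhiggs^\fr_G(\bb{CP}^1, D, \omega^\vee)$, and non-degeneracy of $\Omega$ on this tangent space (which can be verified using the basis (\ref{eq:XL}) and the dimension count (\ref{eq:dim-unreduced})) forces $\pi\d\phi|_g = X$. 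The main obstacle is the chart-juggling in the central computation: one must carefully align equivalence-class representatives adapted to $U_i$ versus $U_0$, deploy $\Ad$-invariance at the right places, and recognize the resulting four-term expression as the antisymmetrized combination of (\ref{eq:drop-right}) and (\ref{eq:drop-left}).
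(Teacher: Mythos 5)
Your first-order variation computation is sound and coincides with the opening step of the paper's own proof: differentiating $\phi$ at $\tilde g = g$ does give $\nabla_L\phi|_g = -\tfrac12\sum_i\tfrac{1}{2\pi\imath}\oint_{\partial U_i}\d z\,(X_i^L - \Ad_{g_z}X_i^R)$. The gap is in how you pass from there to $\pi\,\d\phi|_g = X$. You argue: $\d\phi|_g(\cdot) = \Omega(X,\cdot)$, hence by compatibility $\Omega(\pi\d\phi|_g, X_{\psi_u}) = \Omega(X, X_{\psi_u})$ on a spanning set of directions, hence by non-degeneracy of $\Omega$ the two vectors agree. This relies on two facts that are not available at this point. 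First, non-degeneracy of $\Omega$ on $T_g\mhiggs^\fr_G(\bb{CP}^1,D,\omega^\vee)$: the paper explicitly defers this (see the remark preceding Lemma \ref{lemma:OmegaPi}) and only obtains it in Theorem \ref{theorem:symplectic_leaf} \emph{as a consequence of} Lemma \ref{xtophi}, since it follows from $\mr{im}(\pi) = T_g\mhiggs^\fr_G(\bb{CP}^1,D,\omega^\vee)$ together with $\pi$ being Poisson. Your parenthetical claim that non-degeneracy ``can be verified using the basis (\ref{eq:XL}) and the dimension count'' is an assertion, not an argument; as written, the proof is circular. Second, Lemma \ref{lemma:OmegaPi} is proved only for pairs of \emph{evaluation} functions, and your $\phi$ is not one; so the identity $\Omega(\pi\d\phi|_g, X_{\psi_u}) = \d\phi|_g(X_{\psi_u})$ --- and even the prior fact that $\pi\d\phi|_g$ lies in $T_g\mhiggs^\fr_G(\bb{CP}^1,D,\omega^\vee)$, which is needed for $\Omega(\pi\d\phi|_g,\cdot)$ to be defined at all --- requires separate justification (Lemma \ref{phitoX} likewise only covers evaluation functions).

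The paper avoids all of this by computing $\pi\d\phi|_g$ directly: it substitutes $\nabla_L\phi$ (and $\nabla_R\phi = \Ad_{g^{-1}}\nabla_L\phi$) into the left-frame Sklyanin formula (\ref{eq:xlw0}), splits the resulting contour integrand into a piece regular on the charts $U_i$ and a piece regular on $U_0$, and evaluates the residue at $z=w$ in each case to recover exactly the representative $(X_i^L + \Ad_{g_w}X_i^R,\,0)$ of $X$ in every chart. That route needs neither non-degeneracy of $\Omega$ nor a spanning argument. To repair your proof you would have to either supply an independent verification that $\Omega$ is non-degenerate on the full tangent space and extend Lemmas \ref{phitoX} and \ref{lemma:OmegaPi} to the constructed potential, or --- more economically --- replace your final step by this direct residue computation.
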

  
\begin{proof}
The left gradient of $\phi(\tilde g)$ under $ \delta \tilde g = Y^{L} \tilde g$ at $\tilde g = g$ is given by
\begin{equation}
  Y^{L} \nabla_{L} \phi|_{\tilde g = g}  = - \frac 1 2  \sum_{z_i \in \tilde D} \frac{1}{2 \pi \imath } \oint_{\partial U_{i}}  \d z
 \langle  Y^{L} , (X^{L}_i - \Ad_{g_z} X_i^R) ) 
\end{equation}
so 
\begin{equation}
  \nabla_{L} \phi|_{\tilde g = g}  = -\frac 1 2  \sum_{z_i \in \tilde D} \frac{1}{2 \pi \imath } \oint_{\partial U_{i}}  \d z
 (X^{L}_i - \Ad_{g_z} X_i^R) 
\end{equation}
where we've identified $\gg$ and $\gg^{*}$ using the pairing $\kappa$. 

Now we'll compute Sklyanin's Hamiltonian vector field in the left frame $(X_{\phi}^{L}, 0)$ using the equation (\ref{eq:xlw0}) (and using the fact that $\nabla_{R} \phi  = \Ad_{g^{-1}} \nabla_{L} \phi$).  We find
\begin{align*}
  X^{L}_\phi &= -\frac 1 2 \sum_{i \in \tilde D} \frac{1}{2 \pi \imath} \oint_{\partial U_i}
  \frac{\d z}{w - z}  (X_i^{L} - \Ad_{g_z} X_i^{R})  - \Ad_{g_w} \Ad_{g_z}^{-1} (X_i^{L} - \Ad_{g_z} X_i^{R}) \\
&=  -\frac 1 2 \sum_{i \in \tilde D} \frac{1}{2 \pi \imath} \oint_{\partial U_i}
\frac{\d z}{w - z}  (X_i^{L}  + \Ad_{g_w} X_i^{R}) - (\Ad_{g_z} X_i^{R} +  \Ad_{g(z)} \Ad_{g_{z}}^{-1} X_i^{L}) \\
 &= -\frac 1 2  \sum_{i \in \tilde D} \frac{1}{2 \pi \imath} \oint_{\partial U_i}
 \frac{\d z}{w - z}  (X_i^{L}  + \Ad_{g_w} X_i^{R}) - (X_0^{L} + \Ad_{g_z} X_0^{R}- X_{i}^{L} +  \Ad_{g(z)} ( \Ad_{g_{z}}^{-1} X_0^{L} + X_0^{R} - X_{i}^{R})) \\
  &= -\frac 1 2  \sum_{i \in \tilde D} \frac{1}{2 \pi \imath} \oint_{\partial U_i}
    \frac{\d z}{w - z} 2 (X_i^{L}  + \Ad_{g_w} X_i^{R}) - (X_0^{L} + \Ad_{g_z} X_0^{R} +  \Ad_{g(z)} ( \Ad_{g_{z}}^{-1} X_0^{L} + X_0^{R} )).
  \end{align*}
Now, as a function of $z$, the first term of the numerator of the integrand is regular in each chart $U_i$ for $I \in \tilde D$, and the second term of the numerator is regular in the remaining chart $U_0$. Therefore the first term evaluates to the residue at $z = w$ while the second is zero if $w \in U_i$, and the second term evaluates to a residue at $z = w$ while the first is zero if $w \in U_0$. (Assume without loss of generality that $w$ is not on the integration contour).  Either way, if $w \in U_i$ we have, from the residue in the first term,
  \[
    (X^{L}_\phi, 0)(w) = \mathrm{res}_{z = w} \frac{\d z}{ z - w} (X_{i}^{L} + \Ad_{g_w} X_{i}^{R}) =
    (X_{i}^{L} + \Ad_{g_w} X_{i}^{R})(w),
  \]
  and if $ w \in U_0$ we have, from the residue in the second term,
  \begin{align*}
    (X^{L}_\phi, 0)(w) &= \frac 1 2 \mathrm{res}_{z = w} \frac{\d z}{ z - w} (X_0^{L} + \Ad_{g_z} X_0^{R} +  \Ad_{g(z)} ( \Ad_{g_{z}}^{-1} X_0^{L} + X_0^{R} )) \\
    &= ( X_{0}^{L} + \Ad_{g_w} X_{0}^{R})(w).
  \end{align*}
  Thus $X_\phi$, in the left equivalence frame $(X^{L}_{\phi}, 0)$ in each chart, coincides with the vector field $X$ that we've started
  in the same left equivalence frame $(X^{L}, X^{R}) \sim (X^{L} + \Ad_{g} X^{R}, 0)$.
\end{proof}

\begin{proof}[Proof of Theorem \ref{theorem:symplectic_leaf}]
We've shown in Lemma \ref{phitoX} that $\mathrm{im}(\pi) \subset T_{g} \mhiggs^\fr_G(\bb{CP}^1, D, \omega^\vee)$ and in Lemma \ref{xtophi} that $T_{g} \mhiggs^\fr_G(\bb{CP}^1, D, \omega^\vee) \subset \mathrm{im}(\pi)$, hence $\mr{im}(\pi) = T_{g} \mhiggs^\fr_G(\bb{CP}^1, D, \omega^\vee)$. In combination with Lemma \ref{lemma:OmegaPi} proving the compatibility of the Poisson structure $\pi$ and the bilinear form $\Omega$, this implies that $\mhiggs^\fr_G(\bb{CP}^1, D, \omega^\vee)$ is
a symplectic leaf for the Poisson-Lie group $G_1(\bb{CP}^1)$. In particular, since $\pi$ is Poisson, the bilinear form $\Omega$ is non-degenerate and closed, and so the form $\Omega$ actually defines a symplectic structure on the moduli space $\mhiggs^\fr_G(\bb{CP}^1, D, \omega^\vee)$.
\end{proof}

\begin{remark} \label{shapiro_leaves_remark}
It's instructive to compare this calculation with the work of Shapiro \cite{Shapiro} on symplectic leaves for the rational Poisson Lie group.  According to Shapiro, and for the group $G = \SL_n$, there are symplectic leaves in $G_1[[z^{-1}]]$ indexed by Smith normal forms, i.e. by a sequence $d_1, \ldots, d_n$ of polynomials where $d_i | d_{i+1}$ for each $i$.  This data is equivalent to a dominant coweight coloured divisor $(D,\omega^\vee)$ in the following way.  The data of the sequence of polynomials is equivalent to the data of an increasing sequence $(D_1, \ldots, D_n)$ of $n$ effective divisors in $\bb{CP}^1$ disjoint from $\infty$: the tuples of roots of the polynomials $d_i$.  Let $D = \{z_1, \ldots, z_k\}$ be the support of the largest divisor $D_n$, and for each $z_j$ let $\omega^\vee_{z_j} = (m_1, \ldots, m_n)$ be the dominant coweight where $m_i$ is the order of the root $z_j$ in the polynomial $d_i$.  The dominant coweight Shapiro refers to as the ``type'' of a leaf is, therefore, the sum of all these coweights.

To be a little more precise, Shapiro proves that for $G = \SL_n$ these symplectic leaves span the Poisson subgroup $\mc G \sub G_1[[z^{-1}]]$ of elements that can be factorized as the product of a polynomial element in $G_1[z^{-1}]$ and a monic $\CC$-valued power series.  In fact, the map $r_\infty$ (for any group, not necessarily $G = \SL_n$) factors through the subgroup $\mc G$: the Taylor expansion of all \emph{rational} $G$ valued functions can be factorized in this way.  As a consequence, Theorem \ref{theorem:symplectic_leaf} implies that our symplectic leaves for the group $\SL_n$ agree with Shapiro's symplectic leaves.
\end{remark}

\begin{remark}
We can also compare this description of the symplectic structure $\Omega$ on our symplectic leaves with the symplectic structure on the moduli space of multiplicative Higgs bundles on an elliptic curve studied by Hurtubise and Markman \cite{HurtubiseMarkman}.  In order to construct a symplectic structure by a procedure analogous to their construction, we could describe a pairing induced from the natural equivalence between the tangent and cotangent spaces of the moduli space of multiplicative Higgs bundles as described in Section \ref{def_section}.  That is, there's a map of complexes of sheaves
\[\xymatrix{
(\mc F^\fr_{(P,g)})^*[1] \ar@{=}[r] \ar[d] &\Big(\gg^*_P(-D)[1] \otimes \OO(-c) \ar[d]^{\kappa \circ A_g^*} \ar[r]^{A_g^*} &\gg^*_P \otimes \OO(-c)\Big)\ar[d]^{A_g \circ \kappa^{-1}} \\
\mc F^\fr_{(P,g)} \ar@{=}[r] &\Big(\gg_P[1] \otimes \OO(-c) \ar[r]^{A_g} &\gg_P(D) \otimes \OO(-c)\Big)
}\]
where here $\kappa$ denotes the isomorphism from $\gg_P \to \gg^*_P$ induced by the Killing form.  The top line is the Serre dual complex to the bottom line; note that the incorporation of the framing was necessary for this to be the case (that is, we're using the relative Calabi-Yau structure on the pair $(\bb{CP}^1, c)$).  Taking 0th hypercohomology we obtain a map from the cotangent space to the tangent space of our moduli space of multiplicative Higgs bundles.
\end{remark}

\subsection{Hamiltonian Reduction} \label{Reduced_section}

Now, let us assume that we consider the moduli space of framed multiplicative Higgs bundles on $\bb{CP}^1$ where the framing at infinity is a regular semi-simple framing element $g_{\infty}$ of the group $G$.  Multiplication by the constant function with value $g_\infty$ allows us to identify the space $G_{g_\infty}[[z^{-1}]]$ of $G$-valued power series with constant term $g_\infty$ as a torsor for the Poisson Lie group $G_1[[z^{-1}]]$, so the moduli space of multiplicative Higgs bundles with framing value $g_\infty$ can still be viewed as a symplectic leaf in the Poisson Lie group.

Let $T \sub G$ be the centralizer of the element $g_\infty \in G$; since $g_\infty$ is a regular semisimple element, $T$ is a maximal torus in $G$. Notice that adjoint action $\Ad_{h}$ by a constant element $h \in T$ on the Higgs field $g(z)$
\[
\Ad_{h} \colon  g(z) \mapsto h g(z) h^{-1} 
\]
preserves the degree of singularities at the divisor $D$ and also preserves the framing $g_\infty$. Therefore, the differential of $\Ad_{h}$ defines a cotangent vector in $T^*_{g} \mhiggs^\fr_G(\bb{CP}^1, D, \omega^\vee)$. 

Recall from Corollary \ref{cor:poisson-commuting} that adjoint invariant functions of $g(z)$ descend to Poisson commuting functions on the moduli space $\mhiggs^\fr_G(\bb{CP}^1, D, \omega^\vee)$.
 
\begin{example}
Let $\rho$ be a faithful representation of $G$, and let $u$ be an element of $\CC$. The \emph{evaluation character}  
\begin{align*}
\varphi_{\rho,v} \colon \{g \in G_1[[z^{-1}]] \colon g(v) \text{ converges}\} &\to \mathbb{C}\\
g &\mapsto  \tr_{\rho} g(v)
\end{align*}
is an adjoint invariant function. 
\end{example}

In this subsection, we'll show that the $T$ action on $\mhiggs^\fr_G(\bb{CP}^1, D, \omega^\vee)$ by global conjugation is Hamiltonian, generated by the residues at infinity of adjoint invariant Hamiltonian functions.

\begin{remark}
More precisely, in the reductive case we won't have a Hamiltonian $T$ action but a Hamiltonian action of a subtorus $T'$ of rank $r$, the rank of the semisimple part of the gauge group.
\end{remark}

First we'll proceed in the reverse direction.  That is, we'll show that the Hamiltonian vector field generated by the residue at infinity of an adjoint invariant function corresponds to the adjoint action by an element $h \in T$. 
 \begin{lemma} 
 Let $\varphi_{\rho, z}$ be the evaluation character of a faithful $G$-representation evaluated at the point $z$. Define
   \begin{equation}
    \res  \varphi_{\rho, \infty} : = \frac{1}{2 \pi \imath} \oint_{\partial U_{\infty}}  \d z \varphi_{\rho, z} 
   \end{equation}
   to be the residue of $\varphi_{\rho, z}$ at $z=\infty$.   Then the Hamiltonian vector field  $X_{\rho, \infty} = \pi d \res \varphi_{\rho, \infty}$ generates a constant adjoint action by $h \in T$ on $\mhiggs^\fr_G(\bb{CP}^1, D, \omega^\vee)$
   where $T \subset G$ is the centralizer of the framing value $g_\infty$. 
 \end{lemma}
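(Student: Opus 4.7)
The plan is to combine the adjoint-invariance of $\varphi_{\rho,u}$ with the left-frame formula (\ref{eq:xlw0}) for the Sklyanin Hamiltonian vector field, reducing the problem to a single $\gg$-valued contour integral which I will then evaluate in a local coordinate at $\infty$.

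First, since $\varphi_{\rho,u}$ is adjoint-invariant on the target $G$, its left and right gradients coincide: $\nabla_L \varphi_{\rho,u} = \nabla_R \varphi_{\rho,u} = \nabla\varphi(g(u))$, where $\nabla\varphi \colon G \to \gg$ denotes the $G$-equivariant gradient of the trace character. Substituting into (\ref{eq:xlw0}) collapses that formula into
\[
X^L_{\varphi_{\rho,u}}(w) \;=\; \frac{1-\Ad_{g(w)}}{w-u}\,\nabla\varphi(g(u)).
\]
Because $1 - \Ad_{g(w)}$ is independent of $u$, it factors out of the contour integral, leaving
\[
X^L_{\rho,\infty}(w) \;=\; (1-\Ad_{g(w)}) \cdot \frac{1}{2\pi \imath}\oint_{\partial U_\infty}\frac{du}{w-u}\,\nabla\varphi(g(u)).
\]

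The central step is to evaluate this $\gg$-valued integral. I will change variables to $v = 1/u$, under which $\partial U_\infty$ becomes a small counterclockwise loop around $v=0$ and the integrand transforms to $\frac{dv}{v(1-wv)}\,\nabla\varphi(g(1/v))$. Since the framing condition forces $g(u) \to g_\infty$ as $u \to \infty$, the map $v \mapsto g(1/v)$ is regular at $v=0$ with value $g_\infty$, while $(1-wv)^{-1} = 1 + O(v)$ is also regular there. The residue at $v=0$ therefore equals the constant term of $\nabla\varphi(g(1/v))$, namely $\nabla\varphi(g_\infty)$. Setting $h := \nabla\varphi(g_\infty) \in \gg$, I conclude
\[
X^L_{\rho,\infty}(w) \;=\; (1-\Ad_{g(w)})\, h,
\]
which is precisely the left-frame expression for the infinitesimal conjugation $\delta g(z) = [h, g(z)]$ on the moduli space.

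It remains to check that $h \in \mathfrak{t} := \operatorname{Lie}(T)$. The equivariance relation $\nabla\varphi(kgk^{-1}) = \Ad_k \nabla\varphi(g)$ specialized to $k = g = g_\infty$ gives $\Ad_{g_\infty} \nabla\varphi(g_\infty) = \nabla\varphi(g_\infty)$, so $h$ commutes with the regular semisimple element $g_\infty$ and hence lies in the Cartan subalgebra $\mathfrak{t}$ centralizing $g_\infty$. The main subtlety I anticipate is bookkeeping around the contour orientation in the change of variables at $\infty$, and confirming that the $v=0$ residue is insensitive to the poles of $\nabla\varphi(g(u))$ at points of $D$: these lie outside $U_\infty$ by construction, so the integrand $\frac{\nabla\varphi(g(1/v))}{v(1-wv)}$ is holomorphic on the punctured $v$-disk except at $v=0$, and only the leading term of the Laurent expansion contributes. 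Once that expansion $g(1/v) = g_\infty + O(v)$ is in hand, the remaining residue computation is routine.
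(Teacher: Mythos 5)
Your proof is correct and follows essentially the same route as the paper: both evaluate the left-frame Sklyanin formula for $X_{\rho,\infty}$ by a residue computation on $\partial U_\infty$, land on $(1-\Ad_{g(w)})\nabla\varphi(g_\infty)$, and deduce membership in $\mf t$ from the $\Ad$-equivariance of the gradient of an invariant function (the paper does this last step by an explicit trace computation, you do it abstractly; same content). One small inaccuracy: after you factor out $(1-\Ad_{g(w)})$, the remaining integrand $\frac{\nabla\varphi(g(u))}{w-u}$ is \emph{not} holomorphic on the punctured disk when $w\in U_\infty$ --- it acquires a pole at $u=w$ --- but its residue there, $-\nabla\varphi(g(w))$, is killed by the prefactor since $\Ad_{g(w)}\nabla\varphi(g(w))=\nabla\varphi(g(w))$, so your final formula holds for all $w$; the paper avoids this by keeping the numerator $(\Ad_{g_z}-\Ad_{g_w})\nabla_R\varphi_{\rho,z}$ inside the integral, where it visibly vanishes at $z=w$.
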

 
 \begin{proof}
   From the left frame version of Sklyanin's formula (\ref{eq:sklyanin-left})  we have
   \begin{equation}
     (X^{L}_{\rho, \infty}(w),0)  =\frac{1}{2 \pi \imath} \oint_{\partial U_{\infty}} \frac{\d z}{w - z}
     ((\Ad_{g_z} \nabla_{R} \varphi_{\rho, z} - \Ad_{g_{w}} \nabla_{R} \varphi_{\rho, z} ), 0)
   \end{equation}
Here, as usual, we've identified $\gg$ and $\gg^{*}$ using the Killing form.  The numerator of the integrand is a regular function in the chart $U_{\infty}$ away from the point $z = \infty$.   Therefore the contour integral can be evaluated as a sum of the residues at $z=\infty$ and at $z = w$ if $w \in U_{\infty}$.  The residue at $z = w$ vanishes because the numerator vanishes at $z = w$. Therefore
   \begin{align*}
     (X^{L}_{\rho, \infty}(w),0)  &= \mathrm{res}_{z = \infty} (\frac{dz}{z - w}
     (\Ad_{g_z} \nabla_{R} \varphi_{\rho,z}  - \Ad_{g_{w}} \nabla_{R} \varphi_{\rho, z} ),0) \\
    &= ( (\Ad_{g_\infty} - \Ad_{g(w)}) \nabla_{R} \varphi_{\rho, \infty} ,0) \\
    &\sim (\nabla_{L} \varphi_{\rho, \infty}, - \nabla_{R} \varphi_{\rho, \infty})
\end{align*}
where the last operation $\sim$ indicates equivalence under the relation (\ref{eq:equivalence}). Consequently we've obtained that $X_{\rho, \infty} \sim (\nabla_{L} \varphi_{\rho, \infty}, - \nabla_{R} \varphi_{\rho, \infty})$ is a constant vector field, i.e. is independent of $w$. Moreover, since $\varphi$ is adjoint invariant it follows that $\nabla_{L} \varphi_{\rho, \infty} = \nabla_{R} \varphi_{\rho, \infty}$, and therefore $X_{\rho, \infty}$ generates the constant adjoint action on $\mhiggs^\fr_G(\bb{CP}^1, D, \omega^\vee)$ generated by the element $\nabla_{L} \varphi_{\rho, \infty} $ of $\gg$. It remains to show that the element $\nabla_{L} \varphi_{\rho, \infty} $ belongs to the Lie
algebra of the centralizer $T \subset G$ of $g_{\infty}$.  Recall that $\varphi_{\rho, \infty} = \tr \rho(g_\infty)$. By definition, using the Killing form,
$X = \nabla_{L} \varphi_{\rho, \infty}$ is an element of $\gg$ such that
\[\tr_{\rho}(Y g_\infty)  = \langle Y, X \rangle \quad \text{ for all } Y \in \gg,\]
where we are using the short-hand notation $\tr_\rho(a_1 a_2 \cdots a_n)$ for $\tr(\rho(a_1)\rho(a_2) \cdots \rho(a_n))$.

We want to show that $X$ is in the centralizer of $g_\infty$, so $\tilde X = X$
where $\tilde X: = \Ad_{g_\infty} X$. We can characterize $\tilde X$ by the condition
\begin{equation}
  \tr_{\rho}(Y g_\infty)  = \langle \Ad_{g_\infty} Y, \tilde X \rangle \quad \text{ for all } Y \in \gg
\end{equation}
Let $\tilde Y =  \Ad_{g_\infty} Y$.  Since $\Ad_{g_\infty}: \gg \to \gg$ is an isomorphism, we can equivalently write this as
\begin{align*}
  \tr_{\rho}(\Ad_{g_\infty^{-1}} \tilde Y g_\infty)  &= \langle \tilde Y, \tilde X \rangle \quad \text{ for all } \tilde Y \in \gg\\
  \text{or } \tr_\rho(\tilde Y g_\infty) &= \langle \tilde Y, \tilde X \rangle \quad \text{ for all } \tilde Y \in \gg
\end{align*}
by cyclic invariance of the trace. Therefore, $\tilde X$ is uniquely determined by the same relation as $X$, and hence $\tilde X = X$, i.e. $\Ad_{g_\infty} X = X$, and hence $X$ is in the centralizer $T \subset G$ of $g_\infty$ as required. 
\end{proof}

Now we'd like to find a collection of Hamiltonian functions of the form $\res \varphi_{\rho_i, \infty}$, for a set of faithful $G$-representations $\{ \rho_i \}$, which generate the action of the whole group $T$.  The vector field $X_\rho$ generated by $\res \varphi_{\rho, \infty}$ as in the previous lemma is
defined by the relation 
\[\tr_{\rho}(Y g_\infty)  = \langle Y, X_\rho \rangle \quad \text{ for all } Y \in \mf t,\]
where now we can assume that $X_\rho, Y$ lie in the Lie algebra $\mf t$ of $T$, and that $g_\infty \in H$.  Write $g_\infty$
  in the form
\[g_\infty =   \prod_{k} \qq_k ^{\omega_k^{\vee}}\]
where $\omega_k^{\vee}$ are fundamental co-weights $\omega_{k}^{\vee} \colon \mathbb{C}^{\times} \to T$
and their evaluation is denoted exponentially, i.e. by $\omega \colon \qq \mapsto \qq^{\omega^{\vee}}$. Assume that $|\qq_k | < 1$. 

\begin{remark}
This form for the framing value $g_\infty$ is motivated by the gauge theory construction in \cite{NekrasovPestun}, where $\qq_i$ correspond to the exponentiated coupling constants for an ADE quiver theory.
\end{remark}

In this form, $X_\rho$ is determined by
\[\sum_{w \in \mathrm{weights}_{\rho}}(w(Y) w(g_\infty))  = \langle Y, X_\rho \rangle .\]
 Identified with an element of the dual Lie algebra $\mf t^{*}$, the $X_\rho$ can then be written as
  \begin{equation}
\label{eq:Xrho}
    X_{\rho} = \sum_{w \in \mathrm{weights}_{\rho}} w   \prod_{k} \qq^{w(\omega_k^{\vee})},
\end{equation}
 where the sum is over weights $w \in \mf t^{*}$ of the representation $\rho$.  We'll use this form to show the following.

\begin{lemma}\label{lemma:smallq}
  Suppose that $g_\infty = \prod_{k} \qq_k ^{\omega_k^{\vee}}$ is a regular semi-simple element.  There exists $\eps > 0$ such that if $|\qq_k| < \eps$ for all $k$, then for a collection of highest weight representations $\rho_w$ with linearly independent highest weights $w$ in the Lie algebra $\mf t^*$, the Hamiltonian functions $\res \varphi_{\rho_w, \infty}$ generate the adjoint action of a subtorus $T'$ of the centralizer $T \subset G$ of the element $g_\infty$ of dimension $r$, the rank of the semisimple part of $G$. 
\end{lemma}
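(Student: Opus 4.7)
The strategy is a leading-order analysis as $|\qq_k| \to 0$. Using formula (\ref{eq:Xrho}),
\[X_{\rho_w} = \sum_{w' \in \mathrm{wt}(\rho_w)} w' \prod_k \qq_k^{w'(\omega_k^\vee)},\]
and since $\rho_w$ is a highest weight representation, every weight $w'$ satisfies $w = w' + \sum_i n_i \alpha_i$ with $n_i \geq 0$. Because each $\omega_k^\vee$ is a dominant coweight, $\alpha_i(\omega_k^\vee) \geq 0$, so the exponent $w'(\omega_k^\vee)$ is bounded between $(w_0 w)(\omega_k^\vee)$ (attained at the lowest weight $w' = w_0 w$) and $w(\omega_k^\vee)$ (attained at the highest weight $w' = w$), where $w_0 \in W$ is the longest Weyl element.

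For $|\qq_k| < 1$ the dominant term in the sum is the one for which $\sum_k (-\log|\qq_k|) w'(\omega_k^\vee)$ is smallest. Since the $\omega_k^\vee$ are fundamental coweights and all $-\log|\qq_k| > 0$, the combination $\check\omega_{\mathrm{tot}} := \sum_k (-\log|\qq_k|) \omega_k^\vee$ lies in the interior of the dominant chamber, so $\alpha_i(\check\omega_{\mathrm{tot}}) > 0$ for every simple root $\alpha_i$. This ensures that the minimum of $w'(\check\omega_{\mathrm{tot}})$ over $w' \in \mathrm{wt}(\rho_w)$ is uniquely attained at the lowest weight $w_0 w$. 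Rescaling by the leading coefficient,
\[\prod_k \qq_k^{-(w_0 w)(\omega_k^\vee)} X_{\rho_w} = w_0 w + O(|\qq|^\delta)\]
for some $\delta > 0$. Since $w_0$ acts invertibly on $\mf t^*$, linear independence of the highest weights $w_1,\ldots,w_r$ implies linear independence of the lowest weights $w_0 w_1, \ldots, w_0 w_r$, and by continuity the rescaled vectors $X_{\rho_{w_i}}$ themselves remain linearly independent once every $|\qq_k|$ is smaller than a uniform $\eps > 0$ depending on the chosen representations.

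The $r$ elements $X_{\rho_{w_i}} \in \mf t$ (identified with elements of $\mf t^*$ via the invariant pairing used to define $\nabla_L$ and $\nabla_R$) then span an $r$-dimensional subspace of $\mf t$, and the previous lemma shows that each $X_{\rho_{w_i}}$ generates a constant adjoint action by a corresponding element of $T$; by Corollary \ref{cor:poisson-commuting} the adjoint-invariant Hamiltonians $\res \varphi_{\rho_{w_i}, \infty}$ pairwise Poisson commute, so together they generate a commuting Hamiltonian action of a subtorus $T' \subset T$ of rank $r$. I expect the main subtlety to be the uniqueness of the dominant weight in the leading-order expansion: in representations with rich weight multiplicities (e.g.\ the adjoint of $\SL_3$) the minimum of $w'(\omega_k^\vee)$ with respect to a \emph{single} fundamental coweight is not uniquely attained at the lowest weight, and one genuinely needs multiple fundamental coweights combined with the regularity of $\check\omega_{\mathrm{tot}}$ in the interior of the dominant chamber to extract a clean leading term---this is exactly where the hypothesis $|\qq_k| < \eps$ does the work.
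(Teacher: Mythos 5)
Your proof is correct and follows essentially the same strategy as the paper's: expand $X_{\rho_w}$ via (\ref{eq:Xrho}), isolate the extreme-weight term as all $|\qq_k| \to 0$, and conclude by linear independence of the extreme weights together with the openness of linear independence. The one substantive difference is which extreme weight you declare dominant: the paper's proof asserts that the terms below the \emph{highest} weight are suppressed by $\prod_k \qq_k^{n_k}$, whereas you argue the \emph{lowest} weight $w_0 w$ dominates. With $g_\infty = \prod_k \qq_k^{+\omega_k^\vee}$ and $|\qq_k|<1$ exactly as written in the lemma, your reading is the correct one, since the coefficient of $w' = w_h - \sum_i n_i\alpha_i$ is the highest-weight coefficient times $\prod_k \qq_k^{-n_k}$, which grows rather than decays; the paper's suppression claim implicitly uses the opposite sign convention $g_\infty = \prod_k \qq_k^{-\omega_k^\vee}$ appearing in the earlier gauge-theory remark. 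Either way the conclusion is unaffected because $w_0$ acts invertibly on $\mf t^*$, so linearly independent highest weights have linearly independent lowest weights; your additional care with the regular coweight $\check\omega_{\mathrm{tot}}$ to guarantee the minimizer is unique, and your closing appeal to Corollary \ref{cor:poisson-commuting} to assemble the commuting rank-$r$ torus action, fill in details the paper leaves implicit.
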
  

\begin{proof}
Assume that $\rho$ is a finite-dimensional irreducible representation with highest weight $w_h$.
  We'll pick the highest weight term and recall that the other weights $w'$ of $\rho$ can be related to the highest weight $w_{h}$ by 
  \[w' = w_{h} - \sum {n_i \alpha_i},\]
  where $n_i \geq 0$,  $\sum n_i \geq 1$, and $\alpha_i$ are simple roots dual to coweights $\omega_i^{\vee}$.
  Therefore, the terms in (\ref{eq:Xrho}) corresponding to the lower weights $w'$ in the sum will be suppressed in the $\qq_k \to 0$ limit by the coefficient $\prod_{k} \qq_k^{n_k}$.
  
  More explicitly, take a collection of highest weight irreducible representations $\{\rho_{w}\}$ with linearly independent highest weights $w_{\rho}$ (for example, a set of fundamental weights). Then in the leading order in $\qq_k$, and in the limit $\qq_k \to 0$, the value $X_{\rho}$ is determined by the first term, with highest weight $w_{\rho}$, and the correction terms are given by polynomials in $\qq_k$ with zero constant term, therefore by expressions vanishing in the limit $\qq_k \to 0$. In particular, since the representations $\{\rho_w\}$ are linearly independent, the vector fields $X_{\rho_{w}}$ are also linearly independent in the limit where $\qq \to 0$.  Since linear independence is an open condition, it follows that the Hamiltonian vector fields $X_{\rho_w}$ are linearly independent for sufficiently small $|\qq_k|$.
\end{proof}

\begin{remark}
  The limit where the parameters $\qq_k$ become very small is identified the perturbative limit of a quiver gauge theory in \cite{NekrasovPestun}.  This limit also appears in Foscolo's work \cite{FoscoloGluing} on the construction of periodic monopoles, corresponding to the large value of the Higgs field at infinity (we'll discuss the connection to periodic monopoles in Section \ref{periodic_monopole_section}. 
\end{remark}

\begin{remark}
The rank $r$ subtorus $T'$ is not generally canonical, but any choice is canonically isogenous to the quotient $T/Z(G)$ of $T$ by the center of $G$ (by the composition of the inclusion $T' \inj T$ with the projection onto the quotient, which is a homomorphism between rank $r$ tori with finite stabilizers, therefore surjective).  Different choices will give canonically isomorphic Hamiltonian reductions.
\end{remark}

We define the \emph{reduced moduli space} of multiplicative Higgs bundles to be the Hamiltonian reduction by this adjoint $T'$-action.

\begin{corollary}
  For a regular semi-simple element $g_\infty \in G$, with $|\qq_k|$ sufficiently small, there is a symplectic space $\mhiggs^\red_G(\bb{CP}^1, D, \omega^\vee)$ defined to be the symplectic reduction of $\mhiggs^\fr_G(\bb{CP}^1, D, \omega^\vee)$ by the torus $T'$, i.e.
  \[\mhiggs^\red_G(\bb{CP}^1, D, \omega^\vee) := (\res \varphi_{\rho_{w}, \infty} )^{-1} (c_{w}) // T'\]
  where $\rho_{w}$ runs over a collection of irreducible highest weight representations with linearly independent highest weights $w$,  and fixed $c_{w} \in \mathbb{C}$. This reduced moduli space has dimension
  \begin{equation}
    \label{eq:dim-reduced}
\dim_{\mathbb{C}} \mhiggs^\red_G(\bb{CP}^1, D, \omega^\vee)  =    \dim_{\mathbb{C}}  \mhiggs^\fr_G(\bb{CP}^1, D, \omega^\vee) - 2 \rank(T').
    \end{equation}
\end{corollary}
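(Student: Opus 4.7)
The plan is to apply Marsden--Weinstein symplectic reduction to the Hamiltonian $T'$-action on $\mhiggs_G^\fr(\bb{CP}^1, D, \omega^\vee)$ whose structure has been assembled piece by piece in the preceding lemmas. First I would package the collection of functions $\{\res \varphi_{\rho_w, \infty}\}$ into a single candidate moment map $\mu \colon \mhiggs_G^\fr(\bb{CP}^1, D, \omega^\vee) \to (\mf t')^*$. Corollary \ref{cor:poisson-commuting} ensures that these residues of adjoint-invariant evaluation functions Poisson-commute pairwise, so they descend to a commutative family of Hamiltonians, and the preceding lemma identifies the Hamiltonian vector field of each $\res \varphi_{\rho_w, \infty}$ with a constant adjoint action by an element of the centralizer $T$ of $g_\infty$. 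These two facts together give $\mu$ the formal structure of a moment map for an infinitesimal abelian action by the linear span of the $X_{\rho_w}$ inside $\mf t$.

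Next I would invoke Lemma \ref{lemma:smallq}: under the smallness hypothesis $|\qq_k| < \eps$, the vector fields $X_{\rho_w}$ corresponding to linearly independent highest weights $w$ are themselves linearly independent elements of $\mf t$, and hence exponentiate to generate the adjoint action of a rank $r$ subtorus $T' \subset T$. Linear independence of the $X_{\rho_w}$ at a generic point means that $d\mu$ has rank $r$ there, so the generic level set $\mu^{-1}(c)$ is a smooth submanifold of (complex) codimension $r$ inside the symplectic manifold constructed in Theorem \ref{theorem:symplectic_leaf}. Linear independence also implies that the $T'$-action is infinitesimally free on this locus, so standard Marsden--Weinstein reduction yields a smooth holomorphic symplectic quotient with dimension reduced by $r + r = 2r = 2 \rank(T')$, giving the claimed dimension formula (\ref{eq:dim-reduced}).

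The main obstacle I expect is ensuring that the $T'$-action is not merely infinitesimally free but genuinely free (or at least proper with only finite stabilizers) on the level set $\mu^{-1}(c_w)$, so that the reduced space is a smooth symplectic manifold rather than a symplectic orbifold or a worse singular object. This may force one to work on an open dense locus of the level set and/or to impose a genericity condition on the constants $c_w$; both are reasonable in light of the stability discussion of Section \ref{stability_section} and the regularity assumption on $g_\infty$. A secondary point to check is that the resulting reduction does not depend on the particular choice of representations $\{\rho_w\}$, which follows from the observation in the remark after Lemma \ref{lemma:smallq} that any such $T'$ is canonically isogenous to $T/Z(G)$, together with the fact that symplectic reduction by canonically isogenous tori (at compatible levels) produces canonically isomorphic symplectic quotients.
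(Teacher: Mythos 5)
Your proposal is correct and follows essentially the same route as the paper, which states this as a corollary precisely because the two preceding lemmas (the identification of the Hamiltonian vector fields of $\res \varphi_{\rho_w,\infty}$ with constant adjoint actions by elements of $\mf t$, and Lemma \ref{lemma:smallq} giving their linear independence for small $|\qq_k|$) reduce it to standard Marsden--Weinstein reduction by the rank-$r$ torus $T'$, with the dimension dropping by $r + r = 2\rank(T')$. Your added caveats about genuine freeness of the $T'$-action on the level set and independence of the choice of $\{\rho_w\}$ are sensible refinements that the paper handles only implicitly (via the genericity/stability assumptions and the isogeny remark following Lemma \ref{lemma:smallq}).
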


Using the multiplicative Hitchin fibration from Section \ref{Hitchin_system_section}, this reduction is expected to resolve the problem mentioned in Remark \ref{non_compact_fiber_remark}.  We encapsulate this expectation in the following conjecture.

\begin{conjecture}
The reduced moduli space $\mhiggs^\red_G(\bb{CP}^1, D, \omega^\vee)$ is the total space of an algebraic integrable system with generically compact smooth fibers.
\end{conjecture}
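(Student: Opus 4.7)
The plan is to establish the conjecture by showing that the multiplicative Hitchin fibration of Definition \ref{mult_Hitchin_system_def} descends through the Hamiltonian reduction to a Lagrangian fibration on $\mhiggs^\red_G(\bb{CP}^1, D, \omega^\vee)$ whose generic fibers are compact smooth abelian varieties. The key idea is to combine the Donagi--Gaitsgory description of generic Hitchin fibers with the observation that the non-compact ``affine torus'' factor obstructing compactness in the unreduced setting (Remark \ref{non_compact_fiber_remark}) is precisely what is collapsed by the reduction.

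First I would verify that the multiplicative Hitchin fibration descends. The Hamiltonian functions $\res \varphi_{\rho_w, \infty}$ generating the $T'$-action are evaluation characters, and therefore pullbacks from the Hitchin base via the Chevalley map. By Corollary \ref{cor:poisson-commuting} they Poisson-commute with all other adjoint-invariant evaluation characters, and so with every function pulled back from $\mc B(\bb{CP}^1, D, \omega^\vee)$. Consequently the fibration restricts to a well-defined map $\pi^\red \colon \mhiggs^\red_G(\bb{CP}^1, D, \omega^\vee) \to \mc B^\red(\bb{CP}^1, D, \omega^\vee)$, where $\mc B^\red$ is the codimension-$r$ closed subspace of the base cut out by fixing the values of these $r$ functions. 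A dimension count then verifies the Lagrangian property: by \eqref{eq:dim-reduced}, $\dim \mhiggs^\red = 2\sum_i \langle \rho, \omega^\vee_{z_i} \rangle - 2r$, while $\dim \mc B^\red = \sum_i \langle \rho, \omega^\vee_{z_i} \rangle - r$, so generic fibers are half-dimensional, and their isotropy follows from Corollary \ref{isotropic_fiber_cor} together with the fact that isotropic subspaces remain isotropic under Hamiltonian reduction.

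Next I would identify a generic reduced fiber with a compact abelian variety. In the unreduced case, the analysis preceding Remark \ref{non_compact_fiber_remark} shows that a generic fiber of $\pi$ is an unramified $W$-fold cover of a Donagi--Gaitsgory fiber, with the framing at infinity providing an additional free action of the affine torus $T$. The effect of the Hamiltonian reduction by $T'$ is to eliminate precisely this factor: the moment map constraints fix half of the torus directions, and the $T'$-quotient collapses the remaining half. What is left on a generic reduced fiber is an isogenous cover of the Donagi--Gaitsgory abelian variety, which is compact. Smoothness of the generic fibers then follows from smoothness of $\mhiggs^\fr_G$ (Hurtubise--Markman) combined with a standard freeness argument for the $T'$-action on a dense open locus.

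The main obstacle is the last step, specifically the verification of compactness. One must check that the chosen moment values $c_w$ are regular values, that $T'$ acts freely on the preimage over a dense open locus of $\mc B^\red$, and that the resulting symplectic quotient is literally an isogenous cover of the Donagi--Gaitsgory abelian variety rather than some non-compact partial cover. A subsidiary difficulty is that the Donagi--Gaitsgory theorem is formulated without framings, so one must carefully track how the framing degree of freedom interacts with the cameral cover description through the reduction -- heuristically, the framing picks out a base point in each compact fiber, and the reduction must be shown to identify the resulting pointed abelian varieties coherently in moduli.
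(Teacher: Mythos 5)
First, a point of order: the paper does not prove this statement. It is stated as a conjecture, and the only supporting evidence the paper offers is the remark immediately following it, which reduces the problem to showing that the generic fibers are complex tori and verifies this only for $G = \GL(r)$ by computing the genus of the spectral curve from its Newton polygon. So there is no proof in the paper to compare against; the question is whether your proposal actually closes the conjecture, and it does not.

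Your first step is sound, and is essentially already contained in the paper: the moment maps $\res \varphi_{\rho_w,\infty}$ are residues of adjoint-invariant evaluation characters, hence pullbacks of linear functionals on the multiplicative Hitchin base, so the fibration descends to the reduction and the dimension count together with Corollary \ref{isotropic_fiber_cor} makes the generic reduced fibers Lagrangian. (One correction: since the moment maps are pulled back from the base, the level set $\mu_{T'}^{-1}(c)$ is a union of \emph{entire} fibers of $\pi$, and it is only the $T'$-quotient that collapses fiber directions; your phrase ``the moment map constraints fix half of the torus directions'' is wrong --- the constraint acts entirely on the base and the quotient entirely on the fiber.) The genuine gap is in your second step. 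The Donagi--Gaitsgory identification of a generic fiber with an isogenous cover of an abelian variety is established in the paper only for the \emph{unframed} moduli space, and Remark \ref{non_compact_fiber_remark} states explicitly that this argument does not apply once the framing at $\infty$ is imposed --- which is exactly the situation here. What you need, and what you assert rather than prove, is that the generic framed fiber is a torsor for the affine torus $T$ over a compact abelian variety, with the residual adjoint $T'$-action acting freely and transitively along the torsor directions, so that the quotient is compact. That structural statement is precisely the open content of the conjecture; your ``main obstacle'' paragraph names it, but the proposal supplies no mechanism for establishing it. The one route that is known to work, in type $A$, is the one the paper indicates --- pass to the spectral curve, compute its genus via the Newton polygon, and identify the reduced fiber with the Jacobian --- and it does not obviously generalize, which is why the statement is left as a conjecture.
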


\begin{remark}
We've already argued that the total space is algebraic symplectic, and that the multiplicative Hitchin fibers are generically Lagrangian.  It only remains to verify that the fibers are generically complex tori.  When $G = \GL(r)$ one can verify the conjecture directly by computing the genus of the spectral curve with the Newton polygon. 
\end{remark}

\begin{example}
Consider the moduli spacee $\mhiggs^\red_{\GL(r)}(\bb{CP}^1, D, \omega^\vee)$ associated to a colored divisor $(D, \omega^\vee)$
given by a collection of $n$ points valued in $\omega_1^{\vee}$, and a collection of $n$ points valued in $\omega_{r-1}^{\vee}$.  Here $\omega_i^{\vee}$ denotes the fundamental co-weight associated to the $i^\mr{th}$ node in the Dynkin diagram of $A_{r-1}$, numbered in increasing order from one end of the diagram. In this case, the corresponding $A_{r-1}$ quiver gauge theory in \cite{NekrasovPestun} has gauge group $\prod_{i=1}^{r-1} \SU(n_i)$ with $n_i = n$, and there are $n$ fundamental matter multiplets attached to the node $1$, and $n$ anti-fundamental matter multiplets attached to the node $r-1$. The spectral curve is a planar curve in $C \times \mathbb{C}$ of the form (see e.g. \cite{NekrasovPestun})
\[P(x,t) = 0\]
where $P(x,t) = \sum_{i=0}^{r} t^{r-i} T_{i}(x)$, and the $T_i(x)$ all have the same degree $n$.

The Newton polygon of $P(x,t)$ is a $n \times r$ rectangle. By the genus formula for a generic plane curve \cite{Baker, Khovanskii}, the genus $g(P)$ of the spectral curve is equal to the number of integral points in the interior of the Newton polygon: $g(P) = (n-1)(r-1)$. The fibers of the algebraic integrable system are identified with the Jacobian of the spectral curve, and hence we obtain that
the dimension of the fiber is $(n-1)(r-1)$.

On the other hand, recall that the dimension formula (\ref{eq:dim-reduced}) together with (\ref{eq:dim-unreduced}) gives
\begin{align*}
  \dim \mhiggs^\red_G(\bb{CP}^1, D, \omega^\vee) &=  \dim \mhiggs_{G}(\bb{CP}^1, D, \omega^\vee) - 2 \rank (T') \\
  &= \sum_{i} 2 \langle \rho, \omega_i^{\vee} \rangle  - 2 (r-1)  \\
  &= 2 n (r - 1)  - 2(r - 1) \\ 
  &= 2(n-1)(r-1),
\end{align*}
 We see that, indeed, the dimension of the fiber of the Hitchin fibration is half of the dimension of the total reduced moduli space $\mhiggs^{\red}_{G}(\bb{CP}^1, D, \omega^\vee)$. 
\end{example}

In Part \ref{part2} of the paper, we'll show that the reduced moduli space carries not only a symplectic structure, but a hyperk\"ahler structure, and that it can be identified with a hyperk\"ahler moduli space of periodic monopoles on $\RR^2$ with a framing at $\infty$.

\section{Quantization and the Yangian} \label{quantization_section}

The description of the moduli spaces of multiplicative Higgs bundles as symplectic leaves in the rational Poisson Lie group has interesting consequences upon quantization.  The Poisson algebra $\OO(G_1[[z^{-1}]])$ has a well-studied quantization to the \emph{Yangian} $Y(\gg)$.  This is the unique Hopf algebra quantizing the algebra $U(\gg[z])$ with first order correction determined by the Lie bialgebra structure on $\gg[z]$.  Recall that $(\gg(\!(z)\!), \gg[z], \gg_1[[z^{-1}]])$ is a Manin triple, so the residue pairing on $\gg(\!(z)\!)$ induces an isomorphism between $\gg_1[[z^{-1}]]$ and the dual to $\gg[z]$, and therefore a Lie cobracket on $\gg[z]$.  The uniqueness of this quantization is a theorem due to Drinfeld {\cite[Theorem 2]{DrinfeldQuantum1}}.

\begin{definition}
The \emph{Yangian} of the Lie algebra $\gg$ is the unique graded topological Hopf algebra $Y(\gg)$ which is a topologically free module over the graded ring $\CC[[\hbar]]$ (where $\hbar$ has degree 1), so that when we set $\hbar=0$ we recover $Y(\gg) \otimes_{\CC[[\hbar]]} \CC \iso U(\gg[z])$ as graded rings (where $z$ has degree 1), and where the first order term in the comultiplication $\Delta$ is determined by the cobracket $\delta$ in the sense that
\[\hbar^{-1}(\Delta(f) - \sigma(\Delta(f))) \text{ mod } \hbar = \delta(f \text{ mod } \hbar),\]
for all elements $f \in Y(\gg)$.  Here $\sigma$ is the braiding automorphism of $Y(\gg)^{\otimes 2}$ (i.e. $\sigma(f \otimes g) = g \otimes f$).
\end{definition}

\begin{remark}
We can equivalently identify the classical limit $Y(\gg) \otimes_{\CC[[\hbar]]} \CC$ of the Yangian with the algebra of functions $\OO(G_1[[z^{-1}]])$ on the Poisson Lie group.  As a pro-vector space, or equivalently as a graded vector space where $z$ has degree 1, we can identify
\begin{align*}
Y(\gg) \otimes_{\CC[[\hbar]]} \CC &\iso U(\gg[z]) \\
&\iso \sym(\gg[z]) \\
&\iso \sym(\gg_0[[z^{-1}]]^\vee) \\
&\iso \OO(\gg_0[[z^{-1}]]) \iso \OO(G_1[[z^{-1}]]).
\end{align*}
\end{remark}

There is a very extensive literature on the Yangian and related quantum groups.  The general theory for quantization of Poisson Lie groups, including the Yangian, was developed by Etingof and Kazhdan \cite{EtingofKazhdanIII}.  For more information we refer the reader to Chari and Pressley \cite{ChariPressley}, or for the Yangian specifically to the concise introduction in \cite[Section 9]{CostelloYangian}.

Likewise, we can study deformation quantization for the algebra $\OO(\mhiggs^\fr_G(\bb{CP}^1,D,\omega^\vee))$ of functions on our symplectic moduli space.  This moduli space is, in particular, a smooth finite-dimensional Poisson manifold, so its algebra of functions can be quantized, for instance using Kontsevich's results on formality \cite{KontsevichQuantization}.

\begin{definition}
The quantum algebra of functions $\OO_\hbar(\mhiggs^\fr_G(\bb{CP}^1,D,\omega^\vee))$ is a choice of deformation quantization of the Poisson algebra $\OO(\mhiggs^\fr_G(\bb{CP}^1,D,\omega^\vee))$.  That is, an associative $\CC[[\hbar]]$-algebra where the antisymmetrization of the first order term in $\hbar$ of the product recovers the Poisson bracket. 
\end{definition}

While we have a Poisson morphism $\mhiggs^\fr_G(\bb{CP}^1,D,\omega^\vee) \to G_1[[z^{-1}]]$ by Theorem \ref{theorem:symplectic_leaf}, and therefore a map of Poisson algebras $\OO(G_1[[z^{-1}]]) \to \OO(\mhiggs^\fr_G(\bb{CP}^1,D,\omega^\vee))$ there's no automatic guarantee that we can choose a quantization of the target admitting an algebra map from the Yangian quantizing this Poisson map -- one would need to verify the absence of an anomaly obstructing quantization.  There is however a natural model for this quantization, to a $Y(\gg)$-module,  constructed by Gerasimov, Kharchev, Lebedev and Oblezin \cite{GKLO} (extending an earlier calculation \cite{GKL} for $\gg = \gl_n$).

\begin{theorem}
For any reductive group $G$, and any choice of local singularity data, the Poisson map $\OO(G_1[[z^{-1}]]) \to \OO(\mhiggs^\fr_G(\bb{CP}^1,D,\omega^\vee))$ quantizes to a $Y(\gg)$-module structure on an algebra $\OO_\hbar(\mhiggs^\fr_G(\bb{CP}^1,D,\omega^\vee))$ quantizing the algebra of functions on the multiplicative Higgs moduli space.  
\end{theorem}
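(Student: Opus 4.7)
The strategy is to take $\OO_\hbar(\mhiggs^\fr_G(\bb{CP}^1,D,\omega^\vee))$ to be the explicit $Y(\gg)$-module $A_\hbar(D,\omega^\vee)$ constructed in \cite{GKLO}, and to verify that its classical limit recovers our Poisson algebra together with the Poisson morphism from $\OO(G_1[[z^{-1}]])$ provided by Theorem \ref{theorem:symplectic_leaf}. The GKLO construction associates to each dominant coweight datum $(D,\omega^\vee)$ a topologically free $\CC[[\hbar]]$-algebra with an explicit algebra homomorphism from $Y(\gg)$, presented in terms of difference operators on a polynomial ring; the first step is then straightforward bookkeeping to extract the classical limit $A_0(D,\omega^\vee)$ as a Poisson algebra, together with the induced Poisson algebra map $\OO(G_1[[z^{-1}]]) \to A_0(D,\omega^\vee)$.

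The main content is the identification of $A_0(D,\omega^\vee)$ with $\OO(\mhiggs^\fr_G(\bb{CP}^1,D,\omega^\vee))$ as Poisson algebras, compatibly with the respective morphisms from $\OO(G_1[[z^{-1}]])$. The natural route is to compare how the evaluation functions behave on the two sides: on our geometric side, the bracket of two evaluation functions $\phi_u, \psi_v$ is computed by the rational $r$-matrix formula of Example \ref{ev_function_example}, while on the GKLO side the same functions pull back to explicit expressions in the oscillator generators, whose classical Poisson brackets are prescribed by the GKLO presentation. Since by Lemma \ref{evaluation_vector_fields_generate_lemma} the evaluation functions already generate the cotangent space at each point of our symplectic leaf, matching their Poisson brackets is enough to constrain an isomorphism $A_0(D,\omega^\vee) \iso \OO(\mhiggs^\fr_G(\bb{CP}^1,D,\omega^\vee))$ compatible with the maps from $\OO(G_1[[z^{-1}]])$. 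For $G = \GL_n$ one can supplement this abstract matching by a direct comparison with the explicit Darboux parametrization of the multiplicative Higgs moduli space discussed in Section \ref{GL2_example_section} and in \cite{FrassekPestun}, providing an independent consistency check.

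The main obstacle is establishing that this prospective matching is genuinely an isomorphism of Poisson algebras rather than merely a surjection of algebras of the same dimension. A dimension count via Corollary \ref{dim_of_moduli_space_cor} together with the direct count of GKLO generators shows that both algebras have the expected dimension $2\sum \langle \rho, \omega^\vee_{z_i} \rangle$, but one must additionally check surjectivity of the restriction map $r_\infty^* \colon \OO(G_1[[z^{-1}]]) \to A_0(D,\omega^\vee)$ onto a system of generators, which is equivalent to the statement that multiplicative Higgs bundles are determined by their germ at infinity — a point established during the proof of Theorem \ref{theorem:symplectic_leaf}. The passage from the setting of simple $\gg$ addressed in \cite{GKLO} to general reductive $\gg$ is then handled by treating the center separately, adjoining the corresponding commutative polynomial factors both on the Poisson side and on the quantum side, and recognizing that this abelian extension is trivially compatible with the Yangian action.
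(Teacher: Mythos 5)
Your starting point is the same as the paper's: both arguments rest on the GKLO construction of $Y(\gg)$-modules $M_{D,\omega^\vee}$ indexed by coweight-coloured divisors, together with GKLO's analysis of their classical limits as symplectic subvarieties of $G_1[[z^{-1}]]$. The divergence, and the gap, is in your central step: the claim that matching the Poisson brackets of evaluation functions $\phi_u, \psi_v$ on the two sides ``is enough to constrain an isomorphism'' $A_0(D,\omega^\vee) \iso \OO(\mhiggs^\fr_G(\bb{CP}^1,D,\omega^\vee))$. This argument is vacuous for the purpose you need it for. Both algebras receive Poisson maps from $\OO(G_1[[z^{-1}]])$, and the evaluation functions are pulled back along those maps; their brackets in either target are therefore forced by the Sklyanin bracket upstairs and agree automatically, for \emph{any} two Poisson subvarieties of the Poisson--Lie group. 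What actually has to be shown is that the classical limit of $M_{D,\omega^\vee}$, as a subvariety of $G_1[[z^{-1}]]$, coincides with the image of $r_\infty$ for that \emph{specific} coloured divisor, and the paper explicitly flags that this does not follow from the GKLO calculations: their analysis proceeds through the rational map $e_i(z) = b_i(z)/a_i(z)$, which is insensitive to the positions of the singularities and detects only the total type $\sum \omega^\vee_{z_i}$. The paper leaves the leaf-by-leaf matching as a conjecture. Your dimension count and the generation statement of Lemma \ref{evaluation_vector_fields_generate_lemma} are consistency checks on the geometric side only and do not close this gap.

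The paper's actual proof deliberately avoids the precise matching. It uses only that the union of the GKLO classical limits of a fixed type sweeps out the union of the geometric symplectic leaves of that type; since the decomposition of a Poisson variety into symplectic leaves is canonical, each individual leaf $\mhiggs^\fr_G(\bb{CP}^1,D,\omega^\vee)$ must then arise as the classical limit of \emph{some} module in the GKLO family, which is all the theorem as stated requires. If you want to salvage your stronger statement, you would need an argument that genuinely distinguishes leaves with the same type but different singularity positions — for instance by computing the image of $r_\infty$ on the GKLO side directly in terms of the parameters $\nu_{i,k}$ — rather than relying on bracket relations that hold identically across all leaves. Your remark on passing from semisimple to reductive $\gg$ by splitting off the centre is reasonable but secondary to this issue.
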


This theorem is a direct consequence of the Gerasimov, Kharchev, Lebedev and Oblezin (GKLO) construction of $Y(\gg)$-modules whose classical limits are symplectic leaves in the rational Poisson Lie group.  They constructed these representations for any semisimple Lie algebra $\gg$; they are indexed by the data of a dominant coweight-coloured divisor $(D, \omega^\vee)$.  As in Shapiro, this data is encoded in the form of a set $\nu_{i,k}$ of complex numbers, where $i$ varies over simple roots of $\gg$: the coweight at a point $z \in D$ is encoded by the vector $(m_1, \ldots, m_r)$ where $m_i = \lvert\{\nu_{i,k} = z\}\rvert$.  Let us denote this module by $M_{D,\omega^\vee}$.

GKLO discussed the classical limits of these modules $M_{D,\omega^\vee}$ as symplectic subvarieties of the Poisson Lie group.  This is discussed with further detail in \cite[Section 4]{Shapiro}.  In particular, one can compare the Poisson Lie group with the Zastava space for the group $G$.  Recall that  moduli spaces of (non-periodic) monopoles on $\CC \times \RR$ can be modelled by spaces of framed maps from $\bb{CP}^1$ to the flag variety $G/B$, sending $\infty$ to the point $[B]$ (this is Jarvis's description of monopoles via scattering data \cite{Jarvis}).  The space $\mr{Map}_{\alpha^\vee}^\fr(\bb{CP}^1, G/B)$ of monopoles of degree $\alpha^\vee$ (where $\alpha^\vee$ is a positive coroot, identified as a class in $\mr H_2(G/B;\ZZ)$) admits a compactification known as \emph{Zastava space}.  Concretely, this is a stratified algebraic variety, with stratification given by
\[Z_{\alpha^\vee}(G) = \coprod_{\beta^\vee \preceq \alpha^\vee} \mr{Map}_{\alpha^\vee}^\fr(\bb{CP}^1, G/B) \times \sym^{\alpha^\vee - \beta^\vee}(\CC),\]
where the factor $\sym^{\alpha^\vee - \beta^\vee}(\CC)$ represents the space of coloured divisors $(D, \omega^\vee)$ with $\sum_{z_i \in D} \omega^\vee_{z_i} = \alpha^\vee - \beta^\vee$.  Zastava spaces were introduced by Finkelberg and Mirkovi{\'c} in \cite{FinkelbergMirkovicI}, based on ideas of Drinfeld and Lusztig.  See also \cite{BravermanICM}.

Using a birational map to this Zastava space, GKLO showed that the union of symplectic leaves corresponding to representations of a given type (meaning a given sum $\sum \omega^\vee_{z_i}$) is equivalent to the union of the classical leaves of that given type.  It is, therefore, reasonable to conjecture that the decompositions of these two varieties into their individual symplectic leaves (i.e. into decompositions of $\sum \omega^\vee_{z_i}$ labelled by points in $\CC$) should coincide.  We should note that this doesn't seem to follow directly from the GKLO calculations.  They analyzed the symplectic leaves corresponding to the classical limits of $Y(\gg)$-modules via a rational map, denoted by $e_i(z) = b_i(z)/a_i(z)$, defined on an open set of this symplectic leaf.  This rational map is, by construction, not sensitive to the positions of the singularities, only of the total type of a representation.

Nevertheless, the symplectic leaves obtained as classical limits of the $Y(\gg)$-modules $M_{D,\omega^\vee}$ \emph{are} symplectic subvarieties of the Poisson Lie group sweeping out the union of symplectic leaves of a given type.  In particular every symplectic leaf $\mhiggs^\fr_G(\bb{CP}^1,D,\omega^\vee)$ quantizes to some module within the GKLO classification, as claimed.

\begin{example}
One example is given by the case where the only pole lies at $0 \in \CC$, so the map to the Poisson Lie group $G_1[[z^{-1}]]$ factors through the polynomial group $G_1[z^{-1}]$.  This example is included in the work of Kamnitzer, Webster, Weekes and Yacobi \cite{KWWY}.  They calculate the quantization of slices in the thick affine Grassmannian $G(\!(t^{-1})\!)/G[t]$.  The thick affine Grassmannian has an open cell isomorphic to $G_1[[z^{-1}]]$.  For each dominant coweight $\omega^\vee$ there is a slice defining a symplectic leaf in this open cell: it's exactly the leaf corresponding to multiplicative Higgs fields with a single pole at 0 with degree $\omega^\vee$.  Kamnitzer et al quantize this slice (in particular; they also quantize slices through to the other cells in the thick affine Grassmannian) to a $Y(\gg)$-module of GKLO type. 
\end{example}

\begin{remark}
This result should be compared to the conjecture made in \cite[Chapter 8.1]{NekrasovPestun}.  The Poisson Lie group $G_1[[z^{-1}]]$ receives a Poisson map from the full moduli space $\mhiggs^{\text{fr,sing}}(\bb{CP}^1)$ of multiplicative Higgs bundles with arbitrary singularities.  Upon deformation quantization therefore, the quantized algebra of functions on this moduli space is closely related to the Yangian.
\end{remark}

\begin{remark}[$q$-Opers and Quantization]
Finally, let us refer back to Remark \ref{q_opers_remark} and discuss the brane of $q$-opers, and the associated structures that should arise after quantization.  Recall that the multiplicative Hitchin system has a natural section defined by post-composition with the Steinberg section $T/W \to G/G$.  The moduli space of $q$-opers is the subspace of $\qconn_G(C,D,\omega^\vee)$ defined to be the multiplicative Hitchin section after rotating to the point $q$ in the twistor sphere of complex structures.  A difference version of the moduli space of opers was studied in the work of Mukhin and Varchenko \cite{MukhinVarchenko}; it would be valuable to compare this definition, directly in the language of difference operators, with our approach to opers via the multiplicative Hitchin section, paralleling the work of Beilinson and Drinfeld in the additive case \cite[Section 3.1]{BDHitchin}.

Now, we expect that the space of $q$-opers will, in the rational case, be Lagrangian, and therefore will admit a canonical $\bb P_0$-structure -- a $-1$-shifted Poisson structure -- on its derived algebra of functions.  It would be interesting to study the quantization of this $\bb P_0$-algebra, and to investigate its relationship with the $q$-W algebras of Sevostyanov \cite{Knight,STSSevostyanov, Sevostyanov}, of Aganagic-Frenkel-Okounkov \cite{AFO}, and of Avan-Frappat-Ragoucy \cite{AvanFrappatRagoucy}.  We'll now proceed to discuss some first steps in this direction.  Preliminary results were announced by the second author at String-Math 2017 \cite{PestunStringMath}.  
\end{remark}

\part{Hyperk\"ahler Structures} \label{part2}

\section{Periodic Monopoles} \label{periodic_monopole_section}
\subsection{Monopole Definitions}
Moduli spaces of $q$-connections on a Riemann surface $C$ are closely related to moduli spaces of periodic monopoles, i.e. monopoles on 3-manifolds that fiber over the circle (more specifically, with fiber $C$ and monodromy determined by $q$).  Let $G_\RR$ be a compact Lie group whose complexification is $G$.  The discussion in this section will mostly follow that of \cite{CharbonneauHurtubise, Smith}.

Write $M = C\times_q S^1_R$ for the $C$-bundle over $S^1$ with monodromy given by the automorphism $q$.  More precisely, $M$ is the Riemannian 3-manifold obtained by gluing the ends of the product $C \times [0,2\pi R]$ of Riemannian manifolds by the isometry $(x,2\pi R) \sim (q(x), 0)$.

\begin{definition}
A \emph{monopole} on the Riemannian 3-manifold $M = C \times_q S^1_R$ is a smooth principal $G_\RR$-bundle $\bo P$ equipped with a connection $A$ and a section $\Phi$ of the associated bundle $\gg_{\bo P}$ satisfying the Bogomolny equation 
\[\ast F_A = \d_A \Phi.\]
\end{definition}

\begin{remark}
We should emphasise the difference between the Riemannian 3-manifold $M = C \times_q S^1_R$ appearing in this section and the derived stack $C \times_q S^1_B$ (the mapping torus) appearing in the previous section.  These should be thought of as smooth and algebraic realizations of the same object (justified by the comparison Theorem \ref{monopole_qconn_comparison_thm}) but they are a priori defined in different mathematical contexts.
\end{remark}

We can rephrase the data of a monopole on $M$ as follows.  Let $C_0 = C \times \{0\}$ be the fiber over $0$ in $S^1$, viewed as a Riemann surface.  Let $P$ be the restriction of the complexified bundle $\bo P_\CC$ to $C_0$.  Consider first the restriction of the complexification of $A$ to a connection $A_{\mr{hol}}$ on $P$ over $C_0$.  The $(0,1)$ part of $A_{\mr{hol}}$ automatically defines a holomorphic structure on $P$.  We can introduce an additional piece of structure on this holomorphic $G$-bundle.  In order to do so we can decompose the Bogomolny equation into one real and one complex equation as follows.
\begin{align}
F_{A_{0,1}} - \nabla_t \Phi \dvol_{C_0} &= 0 \label{Bogomolny_equation_real} \\
[\ol{\del}_{A_{0,1}}, \nabla_t - i\Phi \d t] &= 0 \label{Bogomolny_equation_complex}
\end{align}
where $\nabla_t$ is the component of the covariant derivative $\d_A$ normal to $C_0$.  

\begin{remark}
Note that the complex equation only depends on a complex structure on the curve $C$, and only the real equation depends on the full Riemannian metric.
\end{remark}

\begin{definition} 
From now on we'll use the notation $\mc A$ for the combination $\nabla_t - i\Phi \d t$: an element of the space $\Omega^0(M, \gg_P)\d t$ of sections of the complex vector bundle $\gg_P$ which is holomorphic after restriction to the curve $C_0$. 
\end{definition}

Let us now introduce singularities into the story.  We'll keep the description brief, referring the reader to \cite{CharbonneauHurtubise, Smith} for details.
\begin{definition}
Let $D \sub M$ be a finite subset.  Let $\omega^\vee$ be a choice of coweight for $G$.  A monopole on $M \bs D$ has \emph{Dirac singularity} at $z \in D$ with charge $\omega^\vee$ if locally on a neighborhood of $z$ in $M$ it is obtained by pulling back under $\omega^\vee$ the standard Dirac monopole solution to the Bogomolny equation, where $\Phi$ is spherically symmetric with a simple pole at $z$, and the restriction of a connection $A$ to a two-sphere $S^2$ enclosing the singularity defines a $U(1)$ bundle on this $S^2$ of degree $1$ so that
    \[\frac{1}{2\pi} \int_{S^2} F = 1 .\]
  See e.g. \cite[Section 2.2]{CharbonneauHurtubise} for a more detailed description.
\end{definition}

We can also introduce a framing (or a reduction of structure group as in the trigonometric example, though we won't consider the latter in this paper).  As usual let $c \in C$ be a point fixed by the automorphism $q$.

\begin{definition}
A monopole on $M$ with \emph{framing} at the point $c \in C$ is a monopole $(\bo P,A,\Phi)$ on $M$ (possibly with Dirac singularities at $D$) along with a trivialization of the restriction of $\bo P$ to the circle $\{c\} \times S^1_R$, with the condition that the holonomy of $\mc A$ around this circle lies in a fixed conjugacy class $f \in G/G$.
\end{definition}

The moduli theory of monopoles on general compact 3-manifolds was described by Pauly \cite{Pauly}.  In this paper we'll be interested in moduli spaces $\mon_G(M, D, \omega^\vee)$ of monopoles on 3-manifolds of the form $M = C \times S^1$, with prescribed Dirac singularities and possibly with a fixed framing at a point in $C$.  

\subsection{Hyperk\"ahler Structures on Periodic Monopoles} \label{HK_monopole_section}
Let us now describe the holomorphic symplectic and hyperk\"ahler structure on the moduli space of periodic monopoles.  We will first recall the idea that the moduli space of periodic monopoles can be described as a hyperk\"ahler quotient as in the work of Atiyah and Hitchin \cite{AtiyahHitchin}.  In a context with more general boundary data at $\infty$ this was demonstrated by Cherkis and Kapustin \cite{CherkisKapustin3} for the group $\SU(2)$, see also Foscolo \cite{FoscoloDef}[Theorem 7.12].  In the case of $\bb{CP}^1$ with a fixed framing the analysis is much easier.

\begin{remark}
In this subsection we'll define a holomorphic symplectic pairing on the moduli space of periodic monopoles, but we won't provide a proof that the pairing is closed and non-degenerate.  This will follow from Theorem \ref{symplectic_comparison_thm} below: where we prove that the holomorphic symplectic structure on periodic monopoles is equivalent to the symplectic structure we constructed in Section \ref{symp_section}.
\end{remark}

Consider the 3-manifold $M = \bb{CP}^1 \times S^1_R$.  Let us fix Dirac singularities at a divisor $D'$, and a regular semisimple framing $g_\infty$ at $\infty$. 

\begin{definition}
Let $\mc V$ be the infinite-dimensional vector space of pairs $(A,\Phi)$ where $A$ is a connection on a $G_\RR$-bundle $\bo P$ on $M \bs D'$, $\Phi$ is section of the adjoint bundle $(\gg_\RR)_{\bo P}$, and where 
\begin{enumerate}
\item The pairs $(A,\Phi)$ have a Dirac singularity with charge $\omega^\vee_{z_i}$ at each $(z_i,t_i)$ in $D'$.
 \item The holonomy of $A - i \Phi \d t$ around the circle $\{\infty\} \times S^1$ at infinity is equal to $g_\infty \in G$. 
\end{enumerate}
  Let $\mc G$ be the group of sections of $\bo P$ with value 1 on $\{\infty\} \times S^1$, acting on $\mc V$.  Let $T$ be the group of constant functions on $M$ valued in the centralizer of the element $g_\infty \in G$, also acting on $\mc V$.
\end{definition}

The vector space $\mc V$ admits a holomorphic symplectic structure using the complex structure on $\bb{CP}^1$.  We write $A$ in coordinates as $A_{0,1} + A_{1,0}  + A_t \d t$, and set $\mc A = (A_t + i \Phi) \d t$. One defines the holomorphic symplectic pairing on tangent vectors as
\begin{equation}
 \label{holo_symp_structure_fields}
 \omega_{\mr{hol}}((\delta A,\delta \Phi), (\delta A',\delta \Phi')) = \int_{\CC \times S^1_R} \kappa(\delta A_{0,1}, \delta \mc A') - \kappa(\delta \mc A, \delta A_{0,1}') \d z.
\end{equation}

The holomorphic moment map is defined to be the complex part of the Bogomolny equations \ref{Bogomolny_equation_complex}.  That is, we define
\begin{align*}
\mu_\CC \colon \mc V &\to \mr{Lie}(\mc G) \otimes_\RR \CC \\
(A,\Phi) &\mapsto [\ol{\del}_{A_{0,1}}, \dd_t + \mc A].
\end{align*}

\begin{lemma}
  The moduli space $\mon^\fr_G(\bb{CP}^1 \times S^1_R, D', \omega^\vee)$ of framed monopoles on $\bb{CP}^1 \times S^1_R$ is equivalent to the holomorphic symplectic reduction $\mu_\CC^{-1}(0)/\mc G_\CC$, and therefore inherits a holomorphic symplectic pairing from the formula (\ref{holo_symp_structure_fields}). 
\end{lemma}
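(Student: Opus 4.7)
The plan is to combine three ingredients: verify that $\mu_\CC$ is a genuine holomorphic moment map, identify $\mu_\CC^{-1}(0)/\mc G_\CC$ algebraically with the moduli of multiplicative Higgs bundles, and then invoke the Charbonneau--Hurtubise/Smith theorem (quoted earlier in the paper) to identify this with the monopole moduli space.

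First I will verify the moment map property. The infinitesimal gauge action on the holomorphic coordinates for $\xi \in \mr{Lie}(\mc G_\CC)$ is $\delta_\xi A_{0,1} = \ol{\dd}_{A_{0,1}}\xi$ and $\delta_\xi \mc A = (\dd_t + [\mc A, -])\xi \, \d t$. Pairing this infinitesimal action against an arbitrary tangent vector using the form $\omega_{\mr{hol}}$ and integrating by parts on $\CC \times S^1_R$ yields $\d\langle \mu_\CC, \xi\rangle$ with no boundary contribution, since elements of $\mc G$ vanish on $\{\infty\} \times S^1$ by definition, while integrability of the pairing at the Dirac singularities follows from the standard local model. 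This identifies $\mu_\CC$ with the complex component of the Bogomolny equation (\ref{Bogomolny_equation_complex}) as a moment map for the $\mc G_\CC$-action on the holomorphic symplectic affine space $\mc V$.

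Next I will read off $\mu_\CC^{-1}(0)/\mc G_\CC$ algebraically. A point of $\mu_\CC^{-1}(0)$ gives a holomorphic $G$-bundle $P$ on the fiber $C_0 = \bb{CP}^1$ via $\ol{\dd}_{A_{0,1}}$, together with the datum $\mc A$; the vanishing of $\mu_\CC$ says precisely that $\mc A$ is holomorphic in the $C_0$-direction with values in $\gg_P \, \d t$. Taking the holonomy of $\dd_t + \mc A$ around the $S^1$-fiber produces a meromorphic section of $\Ad_P$ on $\bb{CP}^1 \bs D$; the Dirac singularity of charge $\omega^\vee_{z_i}$ translates into a pole of the prescribed dominant coweight type at $z_i$ (via the standard Dirac local model), and the framing condition on $\mc A|_{\{\infty\}\times S^1}$ matches the framing value $g_\infty$ at infinity. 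Modding by $\mc G_\CC$ identifies all data that differ by a change of trivialization, so the quotient is canonically $\mhiggs^\fr_G(\bb{CP}^1, D, \omega^\vee)$. Composing with the Charbonneau--Hurtubise/Smith isomorphism $H$ in reverse gives the required identification, and $\omega_{\mr{hol}}$ descends to a holomorphic symplectic form on $\mon^\fr_G$ under this equivalence.

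The main obstacle is that a priori the algebraic quotient $\mu_\CC^{-1}(0)/\mc G_\CC$ is coarser than the hyperk\"ahler quotient $(\mu_\RR^{-1}(0)\cap\mu_\CC^{-1}(0))/\mc G$: one needs to know that within each $\mc G_\CC$-orbit on $\mu_\CC^{-1}(0)$ corresponding to a polystable multiplicative Higgs bundle in the sense of Definition \ref{polystable_def} there is a unique $\mc G$-orbit solving the real Bogomolny equation (\ref{Bogomolny_equation_real}). This Hitchin--Kobayashi type correspondence is exactly the analytic content of the Charbonneau--Hurtubise/Smith theorem, so invoking it (restricted to the polystable locus, which in our framed rational setting is cut out by Definition \ref{rational_stability_def}) bypasses the hard PDE analysis; a direct hyperk\"ahler quotient proof, in the style of Cherkis--Kapustin or Foscolo, would require substantially more work to control the asymptotics at infinity and near the Dirac singularities, and is not needed here.
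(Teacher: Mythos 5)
The paper does not actually supply a proof of this lemma: it is asserted with a pointer to the hyperk\"ahler-quotient literature (Atiyah--Hitchin, Cherkis--Kapustin, Foscolo), and the remark immediately preceding it explicitly defers the verification that the pairing is closed and non-degenerate to Theorem \ref{symplectic_comparison_thm}, where $\omega_{\mr{hol}}$ is pulled back along $H$ and matched with the algebraic form $\Omega$. Your reconstruction is correct and identifies the right crux. The moment-map computation in your first step is the standard integration by parts (the boundary term at $\infty$ dies because Lie-algebra elements of $\mc G$ vanish there, since the group elements are $1$), and your second step -- reading off $\mu_\CC^{-1}(0)/\mc G_\CC$ as bundles on the mapping torus, hence as restriction-to-fiber plus holonomy -- is the ``easy'' holomorphic half of the Charbonneau--Hurtubise/Smith correspondence. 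Most importantly, you correctly isolate the genuine analytic content: the equality of the complex quotient with the real symplectic (hyperk\"ahler) quotient is a Hitchin--Kobayashi statement, and it is legitimate to outsource it to Theorem \ref{CHS_thm} rather than redo the PDE analysis. The one point to state more carefully is that $\mu_\CC^{-1}(0)/\mc G_\CC$, taken naively as a set-theoretic quotient, contains non-polystable orbits on which the correspondence with solutions of the real equation fails; the lemma should be read (as you note at the end) with the quotient restricted to the polystable locus of Definitions \ref{polystable_def} and \ref{rational_stability_def}, which is also how the paper uses it in Theorem \ref{symplectic_comparison_thm}. With that reading your argument is complete, and it is arguably more self-contained than what the paper provides, at the cost of making the descent of $\omega_{\mr{hol}}$ explicit rather than deducing symplecticity a posteriori from the comparison with $\Omega$.
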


This holomorphic symplectic structure has been studied in the literature in an alternative guise: that of the moduli space of $G_\RR$-instantons on a Calabi-Yau surface, from which our moduli space can be recovered under dimensional reduction \cite{Mukai1,Mukai2,Bottacin1,Bottacin2, HurtubiseMarkman}.   

In order to obtain a hyperk\"ahler structure on our moduli space, we will have to consider a further $T$-reduction identical to the reduction from Section \ref{Reduced_section}.

\begin{definition} \label{reduced_fields_def}
  Fix a regular element $g_1 \in G$.  Let $\mc V_{\mr{red}}$ be the infinite-dimensional vector space of pairs $(A,\Phi)$ as above, satisfying the additional condition that there exists a neighbourhood $\{|z| > R\}$ of $\infty$ in $\bb{CP}^1$ on which the fields $A$ and $\Phi$ can be written in the form
  \begin{align*}
  A &=  \mr{Re}(g_1)z^{-1}\d t + a\\
  \Phi &= - \mr{Im}(g_1)z^{-1} + \phi 
  \end{align*}
  where the matrix coefficients of $a$ and $\phi$ in any faithful representation $\rho$ of $G$ have decay rate $O(|z|^{-1-\tau})$ for some $\tau > 0$, i.e. the first order coefficient of the Taylor expansion around $\infty$ should be fixed. 
\end{definition}

We can promote the holomorphic symplectic pairing on the reduced space $\mc V_{\mr{red}}$ to a hyperk\"ahler structure.  We choose coordinates $x,y$ on $\RR^2 \sub \bb{CP}^1$, and write $A = A_1 \d t + A_2 \d x + A_3 \d y$.  Then for $i=1,2,3$ we define a pairing on the space of fields by
\begin{equation}
\label{hyperkahler_structure_fields}
\omega_i((\delta A,\delta \Phi),(\delta A',\delta \Phi')) = \int_{\RR^2 \times_\eps S^1_R} \left(\kappa(\delta A_i,\delta \Phi') - \kappa(\delta \Phi, \delta A_i') + \sum_{j,k=1,2,3} \eps_{ijk} \kappa(\delta A_j,\delta A'_k)\right) \d x \d y \d t.
\end{equation}
  
When we form the complex-valued pairing $\omega_2 + i \omega_3$, we recover the holomorphic symplectic pairing of (\ref{holo_symp_structure_fields}).  We need to pass to the reduced space of fields from Definition \ref{reduced_fields_def} for this hyperk\"ahler pairing to be well-defined -- on the full space $\mc V$ the pairing $\omega_1$ will generally involve a divergent integral. 

On the moduli space $\mc V_{\mr{red}}$ we can extend the holomorphic symplectic moment map to a hyperk\"ahler moment map, given by the Bogomolny functional for the flat metric on $\RR^2 \sub \bb{CP}^1$.  
\begin{align}
\mu_{\mr{HK}} \colon \mc V_{\mr{red}} &\to \mr{Lie}(\mc G) \otimes \RR^3 \\
(A,\Phi) &\mapsto \ast_{\mr{flat}} (F_A)|_{\RR^2 \times S^1}- \d_A \Phi|_{\RR^2 \times S^1}, \nonumber
\end{align}
where the group $\mc G \otimes \RR^3$ is identified with the space of sections in $\Omega^1(M \bs (D'); \gg_\RR)$ taking value 1 on the circle $\{\infty\} \times S^1$ at infinity.
  
Let $T' \sub T$ be a subtorus of the centralizer of the framing value $g_\infty$ with the same rank as the semisimple part of $G$, and define $\mu_T = \res \varphi_{\rho_w,\infty}$ to be the $\mf t'^*$-valued moment map as in Lemma \ref{lemma:smallq}.  We also fix $c = \{\rho_w(g_1)\}$ to be the element of $\mf t' \iso \mf t'^*$ associated to $g_1$ in Definition \ref{reduced_fields_def}. 
  
\begin{lemma}
Suppose that the intersection of $T'$ and the centralizer of $g_1$ is trivial.  Then the hyperk\"ahler quotient $\mu_{\mr{HK}}^{-1}(0)/\mc G$ of $\mc V_{\mr{red}}$ is equivalent, as a holomorphic symplectic manifold, to the following Hamiltonian reduction of the moduli space of framed monopoles:
\begin{align*}
\mon^\mr{red}_G(\bb{CP}^1 \times S^1_R, D', \omega^\vee) &:= \mon^\fr_G(\bb{CP}^1 \times S^1_R, D', \omega^\vee) /\!/ T'\\
&= \mu_{T'}^{-1}(c)/T'.
\end{align*}
\end{lemma}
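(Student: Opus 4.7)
The plan is to view the claimed identification as an instance of hyperkähler Kempf--Ness applied to an enlarged symmetry group, realizing the passage from $\mc V$ to $\mc V_{\mr{red}}$ as an extra (real) moment map condition for $T'$.

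First, I would decompose the hyperk\"ahler moment map $\mu_{\mr{HK}}$ into its holomorphic and real components $(\mu_\CC, \mu_\RR)$, corresponding to the complex and real Bogomolny equations \eqref{Bogomolny_equation_complex} and \eqref{Bogomolny_equation_real}. On the level of holomorphic symplectic quotients by $\mc G_\CC$, the space $\mu_\CC^{-1}(0)/\mc G_\CC$ on $\mc V$ is exactly the framed monopole moduli space $\mon^\fr_G(\bb{CP}^1 \times S^1_R, D', \omega^\vee)$ with its holomorphic symplectic structure from \eqref{holo_symp_structure_fields}. The hyperk\"ahler Kempf--Ness principle, applied as in Atiyah--Hitchin and Cherkis--Kapustin and valid here because the gauge group $\mc G$ decays at the framed point at infinity, provides the identification $\mu_{\mr{HK}}^{-1}(0)/\mc G = \mu_\CC^{-1}(0)/\mc G_\CC$ whenever both sides make sense -- the obstruction to applying this on all of $\mc V$ being precisely the divergence of $\omega_1$ away from $\mc V_{\mr{red}}$.

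Second, I would identify the passage from $\mc V$ to $\mc V_{\mr{red}}$ with a moment map condition for the hyperk\"ahler action of $T'$ on $\mc V$ by constant conjugation, using the three symplectic forms \eqref{hyperkahler_structure_fields}. The triple moment map $(\mu_{T', 1}, \mu_{T',2}, \mu_{T',3})$ is computed by residues at $\infty$: a calculation directly parallel to the proof of Lemma \ref{lemma:smallq} identifies the complex component $\mu_{T',\CC} = \mu_{T',2} + i\mu_{T',3}$ with the evaluation characters $\res \varphi_{\rho_w,\infty}$, fixed at the value $c$ encoding $\mr{Im}(g_1)$, while $\mu_{T',1}$ extracts the $z^{-1}$-coefficient of the Laurent expansion of $A_1$ at infinity, fixed at $\mr{Re}(g_1)$. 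Together, these two conditions are equivalent to the restriction to $\mc V_{\mr{red}}$: fixing the full complex subleading coefficient $g_1$ is simultaneously a complex and a real moment map condition for $T'$.

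Third, I would combine the two steps. The product group $\mc G \times T'$ acts hyperk\"ahler--Hamiltonianly on $\mc V$ with combined moment map $(\mu_{\mr{HK}}, \mu_{T', \mr{HK}})$, and its hyperk\"ahler quotient at the appropriate value coincides in one order of reduction with $\mu_{\mr{HK}}^{-1}(0)/\mc G$ on $\mc V_{\mr{red}}$ by step two, and in the other order -- reducing by $\mc G_\CC$ first via step one and then by $T'$ -- with $\mon^\fr_G /\!/_c T' = \mon^\red_G$. The hypothesis that $T'$ intersects the centralizer of $g_1$ trivially guarantees that $T'$ acts with finite stabilizers on $\mu_{T'}^{-1}(c)$, so that the quotient is a manifold and the derived and classical reductions agree. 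The holomorphic symplectic structures on both sides are induced from the same pairing $\omega_2 + i\omega_3$ on $\mc V$ and therefore match.

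The main technical obstacle is the residue calculation that identifies the real moment map $\mu_{T',1}$ with the $\mr{Re}(g_1)$-term of the asymptotic expansion in Definition \ref{reduced_fields_def}: the non-compactness of $\CC = \bb{CP}^1 \setminus \{\infty\}$ means one must carefully justify integration by parts and the vanishing of boundary contributions away from $\infty$ using the prescribed decay $O(|z|^{-1-\tau})$. Once this boundary analysis is in place, the rest of the argument reduces to formal properties of hyperk\"ahler reduction.
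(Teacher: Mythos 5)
There is a genuine gap in your step two, and it propagates into step three. You treat the passage from $\mc V$ to $\mc V_{\mr{red}}$ as purely a (hyperk\"ahler) moment-map condition for $T'$. But the statement you are proving equates $\mu_{\mr{HK}}^{-1}(0)/\mc G$ on $\mc V_{\mr{red}}$ --- which involves \emph{no} quotient by $T'$ --- with $\mu_{T'}^{-1}(c)/T'$, which does. If restriction to $\mc V_{\mr{red}}$ were only a level-set condition, then your ``one order of reduction'' in step three would produce not $\mu_{\mr{HK}}^{-1}(0)/\mc G$ on $\mc V_{\mr{red}}$ but that space further divided by a residual $T'$-action; equivalently, $\mu_{\mr{HK}}^{-1}(0)/\mc G$ on $\mc V_{\mr{red}}$ would be a $T'$-bundle over $\mon^{\mr{red}}_G$ rather than isomorphic to it. What actually happens is that Definition \ref{reduced_fields_def} fixes the \emph{entire} $\gg$-valued coefficient $g_1$ of the expansion at infinity, which is strictly more data than the $\mf t'$-valued components recorded by $\mu_{T'}=\res\varphi_{\rho_w,\infty}$; the excess conditions serve as a slice for the $T'$-action on the level set. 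The paper's own argument makes this the crux: one must show that the locus with $z^{-1}$-coefficient exactly equal to $g_1$ meets each $T'$-orbit inside $\mu_{T'}^{-1}(c)$ in exactly one point. Uniqueness is where the hypothesis $T'\cap Z(g_1)=\{1\}$ enters (if $h\in T'$ and $hg_1h^{-1}=g_1$ then $h=1$), and surjectivity onto orbits needs its own (short) argument. You invoke the hypothesis only to get finite stabilizers, which is a weaker and different use, and you never establish the slice property; without it the two orders of reduction do not close up.

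Two smaller points. First, your splitting of the components is off: the holomorphic moment map $\res\varphi_{\rho_w,\infty}$ is evaluated on $\mc A=(A_t+i\Phi)\,\d t$ and therefore sees the full complex combination of $\mr{Re}(g_1)$ and $\mr{Im}(g_1)$, not just the imaginary part, while the real moment map for $\omega_1$ in \eqref{hyperkahler_structure_fields} picks out a different real combination; as written, your identification of $\mu_{T',\CC}$ with ``$\mr{Im}(g_1)$'' and $\mu_{T',1}$ with ``$\mr{Re}(g_1)$'' does not match Definition \ref{reduced_fields_def}. Second, your step one quietly invokes a hyperk\"ahler Kempf--Ness equivalence $\mu_{\mr{HK}}^{-1}(0)/\mc G=\mu_\CC^{-1}(0)/\mc G_\CC$, i.e.\ existence and uniqueness of solutions of the real equation \eqref{Bogomolny_equation_real} in each complex gauge orbit; this is a nontrivial analytic input (of Hermitian--Yang--Mills type, handled elsewhere in the paper via Mochizuki's theorem) and should at least be flagged as such rather than derived from decay of the gauge group alone. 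The paper instead constructs the map $\mu_{\mr{HK}}^{-1}(0)/\mc G\to\mon^{\fr}_G\to\mon^{\mr{red}}_G$ directly from the inclusion $\mc V_{\mr{red}}\hookrightarrow\mc V$, checks the dimensions agree, and proves bijectivity via the single-point-per-orbit property; your argument would be repaired by supplying exactly that property.
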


\begin{proof}
There is a map of holomorphic symplectic manifolds $\iota \colon \mu_{\mr{HK}}^{-1}(0)/\mc G \to \mon^\fr_G(\bb{CP}^1 \times S^1_R, D', \omega^\vee)$ induced by the inclusion $\mc V_{\mr{red}} \inj \mc V$.  This map lands in the locus $\mu^{-1}_{T'}(c)$ for the moment map of the Hamiltonian $T'$ action, i.e. the map sending $(A_{0,1},\mc A)$ to $\res \varphi_{\rho_w,\infty}(\mc A)$ by definition of the reduced space $\mc V_{\red}$ of fields. Therefore, there is a holomorphic symplectic composite map 
\[f \colon \mu_{\mr{HK}}^{-1}(0)/\mc G \to \mon^\red_G(\bb{CP}^1 \times S^1_R, D', \omega^\vee)\] 
between holomorphic symplectic manifolds of the same dimension.  By looking at the $z^{-1}$ terms in the Taylor expansion at $\infty$, we see that the map $\iota$ meets each $T'$ orbit in a single point, so $f$ is an isomorphism. 
\end{proof}

\subsection{Deformations of Periodic Monopoles} \label{monopole_def_section}
We will now proceed with an informal description of the deformation complex of the moduli space of periodic monopoles.  The tangent complex to this hyperk\"ahler quotient at a point $(\bo P, \mc A)$ can be written as $\Omega^0(M \!\bs\! D; (\gg_\RR)_{\bo P})[1] \to \bb T_{\mu^{-1}(0)}$ where $\bb T_{\mu^{-1}(0)}$ is the tangent complex to the zero locus of the moment map, concentrated in non-negative degrees. Roughly speaking $\bb T_{\mu^{-1}(0)} = \bb T_{\mc V} \overset {\d\mu} \to \Omega^1(M; (\gg_R)_{\bo P})[-1]$.  

More explicitly, and on $C \times S^1$ for a general Riemannian 2-manifold $C$, following \cite{FoscoloDef} define $\mc F^{\mr{mon}}_{\bo P, \mc A}$ to be the sheaf of cochain complexes
\[\left(\xymatrix{
\Omega^0(M \!\bs\! D; (\gg_\RR)_{\bo P}) \ar[r]^(.36){\d_1} &\Omega^1(M \!\bs\! D; (\gg_\RR)_{\bo P}) \oplus \Omega^0(M \!\bs\! D; (\gg_\RR)_{\bo P}) \ar[r]^(.64){\d_2} &\Omega^1(M \!\bs\! D; (\gg_\RR)_{\bo P})
}\right) \otimes_\RR \CC\]
placed in degrees $-1$, 0 and 1 where $\d_1(g) = -(\d_A(g),[\Phi, g])$ and $\d_2(a,\psi) = \ast \d_A(a) - \d_A(\psi) + [\Phi,a]$.  Write $\d_{\mr{mon}}$ for the total differential.
  
\begin{remark}
Here we have chosen a point in the twistor sphere, forgetting the hyperk\"ahler structure and retaining a holomorphic symplectic structure.  In other words we have identified the target of the hyperk\"ahler moment map -- the space of imaginary quaternions -- with $\RR \oplus \CC$, which is equivalent to choosing a point in the unit sphere of the imaginary quaternions: the twistor sphere.  
\end{remark}

\begin{remark} \label{monopole_holo_restriction_rmk}
If we restrict the complexified complex $\mc F^{\mr{mon}}_{\bo P, \mc A} \otimes_\RR \CC$ to a slice $C_t = C \times \{t\}$ in the $t$-direction we can identify it with a complex of the form
\[\Omega^\bullet(C_t; \gg_P)[1] \overset {[\Phi,-]} \to \Omega^\bullet(C_t; \gg_P)\]
with total differential given by $\d_A$ on each of the two factors along with the differential $[\Phi,-]$ mixing the two factors. 

These two summands each split up into the sum of a Dolbeault complex on $C$ and its dual.  That is, there's a natural subcomplex of the form
\[\Omega^{0,\bullet}(C_t; \gg_P)[1] \overset {[\Phi,-]} \to i \Omega^{0,\bullet}(C_t; \gg_P) \d t\]
where the internal differentials on the two factors are now given by $\ol \dd_{A_{0,1}}$.  This complex is in turn quasi-isomorphic to the complex
\[\Omega^\bullet(S^1; \Omega^{0,\bullet}(C_t;\gg_P))[1]\]
with total differential $\ol \dd_{A_{0,1}} + \d_{\mc A}$.
\end{remark}

\begin{remark}
If one introduces a framing at a point $c \in C$ then we must correspondingly twist the complex $\mc F^{\mr{mon}}$ above by the line bundle $\OO(c)$ on $C$ -- i.e. we restrict to sections that vanish at the framing point.  So in that case we define
\[\mc F^{\text{mon,fr}}_{(\bo P,\mc A)} = \mc F^{\mr{mon}}_{\bo P, \mc A} \otimes (\CC_{S^1} \boxtimes \OO(c)).\]
\end{remark}

The following is proved in \cite{FoscoloDef}.
\begin{prop}
The tangent space of $\mon_G(S^1 \times C, D', \omega^\vee)$ at the point $(\bo P,\mc A)$ is quasi-isomorphic to the hypercohomology $\bb H^0(C \times S^1; \mc F'_{(\bo P,\mc A)})$ of a subsheaf $\mc F' \sub \mc F^{\mr{mon}}$ where growth conditions are imposed on the degree 0 part of $\mc F^{\mr{mon}}$ near the singularities.
\end{prop}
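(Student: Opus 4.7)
The plan is to realize $\mon_G(S^1 \times C, D', \omega^\vee)$ explicitly as a hyperk\"ahler quotient and then recognize $\mc F^{\mr{mon}}$ as the standard three-term deformation complex for such a quotient, with the constraint subsheaf $\mc F'$ enforcing the Dirac-type growth at $D'$. First I would fix the infinite-dimensional affine space $\mc V$ of smooth pairs $(A,\Phi)$ on $M \bs D'$ with the prescribed Dirac singularities, acted on by the gauge group $\mc G$ of sections of $\aut(\bo P)$. The Bogomolny equation $*F_A - \d_A\Phi = 0$ is the vanishing of the hyperk\"ahler moment map $\mu_{\mr{HK}}\colon \mc V \to \mr{Lie}(\mc G)^*$ for this action, so the moduli stack is $\mu_{\mr{HK}}^{-1}(0)/\mc G$. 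At a smooth point $(\bo P, \mc A)$ the tangent complex of such a symplectic/hyperk\"ahler quotient is the three-step complex
\[
\xymatrix{
\mr{Lie}(\mc G) \ar[r]^-{\rho} & T_{(\bo P, \mc A)} \mc V \ar[r]^-{\d\mu_{\mr{HK}}} & \mr{Lie}(\mc G)^*
}
\]
placed in degrees $-1,0,1$, where $\rho$ is the infinitesimal gauge action and $\d\mu_{\mr{HK}}$ is the linearization of the Bogomolny map. Taking $\bb H^0$ recovers the expected $\ker(\d\mu_{\mr{HK}})/\mr{im}(\rho)$.

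Next I would identify this three-term complex, term-by-term, with the complexified $\mc F^{\mr{mon}}$ of the excerpt. The degree $-1$ term $\Omega^0(M \bs D; (\gg_\RR)_{\bo P})$ is exactly $\mr{Lie}(\mc G)$. The degree $0$ term $\Omega^1 \oplus \Omega^0$ is the tangent space to $\mc V$, splitting into variations of the connection and of the Higgs field. The degree $1$ term $\Omega^1$ is identified with $\mr{Lie}(\mc G)^*$ via the $L^2$-pairing built from the Killing form together with Hodge $*$ on the Riemannian $3$-manifold $M$. Under these identifications, a short direct calculation shows that $\rho$ is exactly $g \mapsto -(\d_A g, [\Phi,g]) = \d_1$, while differentiating the Bogomolny map in the direction $(a,\psi)$ gives $*\d_A a - \d_A\psi + [\Phi, a] = \d_2$. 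The fact that $\d_{\mr{mon}}^2 = 0$ is equivalent to $\mc G$-invariance of $\mu_{\mr{HK}}$, which is automatic because the $\mc G$-action is by symplectomorphisms in each complex structure on $\mc V$.

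The key remaining step, and the main technical obstacle, is controlling the analytical framework so that these differentials make sense and the resulting cohomology actually computes the tangent space to the \emph{moduli} space rather than to some enlargement of it. Deformations must be constrained so that the pair $(A+ta, \Phi + t\psi)$ continues to possess a Dirac singularity of the same charge $\omega_{z_i}^\vee$ at each $(z_i,t_i) \in D'$. Near each such point one writes $(a,\psi)$ in the model coordinates for the Dirac solution pulled back along $\omega_{z_i}^\vee$, and one demands that the leading order singular term is annihilated; this is a pointwise growth condition on the degree-$0$ part of $\mc F^{\mr{mon}}$ that cuts out the subsheaf $\mc F'$. For a rigorous treatment one fixes weighted Sobolev norms adapted to the singular set $D'$ (and to infinity, if relevant), in which $\rho$ and $\d\mu_{\mr{HK}}$ become bounded Fredholm-type operators; the appropriate weights are precisely those that force the allowable growth encoded by $\mc F'$. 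This is exactly the Kuranishi-style analysis carried out by Foscolo in \cite{FoscoloDef}, and one imports his weighted slice theorem to conclude that the $\bb H^0$ of $\mc F'$ is canonically identified with the Zariski tangent space of $\mon_G(S^1 \times C, D', \omega^\vee)$ at $(\bo P, \mc A)$.

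Finally I would note two compatibility remarks that we will use in the sequel. First, with the complex structure coming from a choice of point in the twistor sphere, the complex $\mc F'$ inherits a natural sub-bicomplex structure, and the identification of Remark \ref{monopole_holo_restriction_rmk} expresses $\mathbb H^0$ in a purely holomorphic form in terms of $(A_{0,1}, \mc A)$; this will be used to compare with the multiplicative Higgs deformation complex $\mc F_{(P,g)}$ of Section \ref{def_section}. Second, introducing a framing at $c$ corresponds to tensoring with $\CC_{S^1} \boxtimes \OO(c)$, which on the analytical side amounts to imposing vanishing of the gauge transformations on $\{c\}\times S^1$ and fixes the outstanding kernel/cokernel so that $\bb H^{-1}$ and $\bb H^1$ vanish, giving smoothness of the moduli space at such points.
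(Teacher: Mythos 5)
Your proposal is correct and follows essentially the same route as the paper: the paper likewise presents $\mc F^{\mr{mon}}$ as the three-term deformation complex of the hyperk\"ahler quotient $\mu_{\mr{HK}}^{-1}(0)/\mc G$ and then simply cites Foscolo \cite{FoscoloDef} for the weighted analysis identifying $\bb H^0(\mc F')$ with the tangent space, exactly as you do. The only quibble is notational: the target of $\d\mu_{\mr{HK}}$ in degree $1$ should be $\mr{Lie}(\mc G)^* \otimes \RR^3$ (three moment maps), which is what makes the identification with $\Omega^1(M \bs D; (\gg_\RR)_{\bo P})$ work rank-wise.
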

  
\subsection{Periodic Monopoles and $q$-Connections}
In this section we will begin to address the relationship between periodic monopoles and $q$-connections.  We will first recall the comparison theorem between multiplicative Higgs bundles and periodic monopoles proved by Charbonneau--Hurtubise \cite{CharbonneauHurtubise} for $\GL_n$, and Smith \cite{Smith} for general $G$. This approach was first suggested by Kapustin-Cherkis \cite{CherkisKapustin2} under the name `spectral data'. 

\begin{definition}
Write $D'$ for the finite subset $\{(z_1,t_1), \ldots, (z_k, t_k)\}$ of $D \times S^1$, where $D = \{z_1, \ldots, z_k\}$ is finite subset of $C$ and $0 < t_1 < t_2 < \cdots < t_k < 2\pi$ is a sequence of points in $S^1$ satisfying the condition of Remark \ref{t_stability_remark}.
\end{definition}

\begin{theorem}[Charbonneau--Hurtubise, Smith] \label{CHS_thm}
There is an analytic isomorphism between the moduli space of polystable monopoles on $C \times S^1$ with Dirac singularities at $D'$ (and a possible framing on $\{c\} \times S^1$) and the moduli space of multiplicative Higgs bundles on $C$ with singularities at $D$ and framing at $\{c\}$.  More precisely there is an analytic isomorphism
\[H \colon \mon^{(\fr)}_G(C \times S^1, D', \omega^\vee) \to \mhiggs_G^{\text{ps,(fr)}}(C, D, \omega^\vee)\]
given by the holonomy map around $S^1$, i.e. sending a monopole $(\bo P, \mc A)$ to the holomorphic bundle $P = (\bo P_\CC)|_{C_0}$ with multiplicative Higgs field $g = \Hol_{S^1}(\mc A) \colon P \to P$.
\end{theorem}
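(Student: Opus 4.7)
The plan is to construct mutually inverse maps in each direction, with the forward direction being essentially algebraic and the inverse requiring an analytic Hitchin--Kobayashi type theorem.

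First I would construct the holonomy map $H$ itself. Given a monopole $(\bo P, A, \Phi)$, form the complex combination $\mc A = \nabla_t - i\Phi\, \d t$ and restrict the complexified bundle $\bo P_\CC$ to the slice $C_0 = C \times \{0\}$, endowing it with the holomorphic structure determined by $A_{0,1}$.  The complex Bogomolny equation (\ref{Bogomolny_equation_complex}) says precisely that $\mc A$ commutes with $\ol\dd_{A_{0,1}}$, so parallel transport along $\mc A$ in the $S^1$-direction preserves the holomorphic structure of $P$.  Defining $g(z)$ to be the holonomy of $\mc A$ around $\{z\} \times S^1$ then produces a meromorphic holomorphic automorphism of $P$ over $C \setminus D$.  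The local behavior at each Dirac singularity $(z_i, t_i)$ is determined by the standard charge-$\omega_{z_i}^\vee$ Dirac monopole model: a direct computation in this model shows that the holonomy develops a singularity of the prescribed double-coset type $L^+G \cdot z^{-\omega_{z_i}^\vee} \cdot L^+G$.  Polystability of the monopole transfers to polystability of $(P,g)$ in the sense of Definition \ref{polystable_def} by a Chern--Weil computation that identifies the average-over-$S^1$ of the degree of a holomorphic reduction of $\bo P_\CC|_{C_t}$ with the $(\chi, \overrightarrow t)$-degree appearing in that definition.

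For the inverse direction I would start with a polystable multiplicative Higgs bundle $(P,g)$ and construct a monopole whose holonomy recovers it.  Choose any smooth Hermitian metric $h$ on $P$, build a $G_\RR$-bundle on $C \times S^1$ using $g$ as clutching data for parallel transport in the $S^1$-direction, and then deform $h$ until the associated Chern connection and Higgs field together satisfy the real Bogomolny equation (\ref{Bogomolny_equation_real}); by construction the complex equation (\ref{Bogomolny_equation_complex}) is automatic.  This reduces the problem to a nonlinear elliptic PDE for $h$, directly analogous to the Hermitian--Einstein equation for Higgs bundles, which can be attacked by a continuity method or heat-flow argument, with polystability providing the coercivity estimate needed for convergence.

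The hardest step will be the analytic treatment of the Dirac singularities, together with the control of asymptotic behavior when $C$ is non-compact.  One must work in weighted Sobolev spaces in which the linearized Bogomolny operator becomes Fredholm, and the framing condition is essential for fixing the leading-order asymptotic of the metric on the end of $C \times S^1$.  Once existence and uniqueness of the harmonic metric are in hand, the remaining step is to verify that the two constructions are mutually inverse and that $H$ is an isomorphism of complex-analytic spaces rather than a mere bijection; this follows by matching the monopole deformation complex of Section \ref{monopole_def_section} with (the complexification of) the multiplicative Higgs deformation complex of Section \ref{def_section}, using the identification described in Remark \ref{monopole_holo_restriction_rmk}.
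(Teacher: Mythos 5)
Your proposal is correct and follows essentially the same strategy as the cited proofs of Charbonneau--Hurtubise and Smith (the paper itself only quotes this theorem, but its own generalization in Theorem \ref{monopole_qconn_comparison_thm} is proved exactly along these lines): holonomy of $\mc A$ for the forward map, a Hermitian--Yang--Mills existence theorem (Simpson's, proved by heat flow with polystability as the coercivity input) for the inverse, the identification of the $(\chi,\overrightarrow{t})$-degree with the averaged slope to match stability notions, and a comparison of deformation complexes to upgrade the bijection to an analytic isomorphism. The only remark is that for the compact curve $C$ of this statement the weighted-Sobolev/asymptotic analysis you flag is not needed; it becomes essential precisely in the paper's flat, non-compact extension, where Simpson's theorem is replaced by Mochizuki's.
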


We would like to generalize this theorem to cover the twisted product $C \times_q S^1$ given by an automorphism $q$ of the curve $C$.  The most important example -- for the purposes of the present paper -- is the translation automorphism of $\bb{CP}^1$ sending $z$ to $z + \eps$, fixing the framing point $\infty$.  This automorphism is an isometry for the flat metric on $\RR^2$ but \emph{not} for any smooth metric on the compactification  $\bb{CP}^1$, which means that the moduli space of monopoles on the $\eps$-family of twisted products is only defined for the flat metric, and therefore that we'll have to generalize slightly the theorem of Charbonneau--Hurtubise and Smith to include the flat metric, with its singularity at infinity.

\begin{remark}
The moduli space of periodic monopoles on $\RR^2 \times S^1$ specifically has been studied in the mathematics literature by Foscolo \cite{FoscoloDef} , applying the analytic techniques of deformation theory to earlier work on periodic monopoles by Cherkis and Kapustin \cite{CherkisKapustin1, CherkisKapustin2}. This analysis considers a less restrictive boundary condition at infinity in $\RR^2$ than a framing -- the Higgs field may admit a regular singularity at infinity -- and therefore requires more sophisticated analysis than we'll need to consider in the present paper.
\end{remark}

\begin{definition} \label{monopole_moduli_def}
Let $D$ be a finite subset $\{(z_1,t_1), \ldots, (z_k, t_k)\}$ of points in $\RR^2 \times_\eps S^1_R$, and let $\omega^\vee_{i}$ be a choice of coweight for each point in $D$. The moduli space $\mon^\fr_G(\RR^2 \times_\eps S^1_R, D, \omega^\vee)$ is the moduli space of smooth principal $G_\RR$-bundles $\bo P$ on $\bb{CP}^1 \times_\eps S^1_R$ equipped with a connection $A$ and a section $\Phi$ of the associated bundle $\gg_{\bo P}$, so that the restriction of $(A,\Phi)$ to the open subset $\CC \times_\eps S^1_R$ satisfyies the Bogomolny equation for the flat metric.
\end{definition}

Now, in the case of the flat metric we can generalize Theorem \ref{CHS_thm} to include the $\eps$-deformation.  To do so we'll crucially use results of Mochizuki \cite{MochizukiKH} following earlier work of Biquard and Jardim \cite{BiquardJardim}.

\begin{theorem} \label{monopole_qconn_comparison_thm}
There is an isomorphism between the moduli space of polystable monopoles on $\RR^2 \times_\eps S^1$ with Dirac singularities at $D'$ and a regular semisimple framing on $\{\infty\} \times S^1$, and the moduli space of polystable $\eps$-connections on $\bb{CP}^1$ with singularities at $D$ and framing at $\{\infty\}$.  More precisely there is an analytic isomorphism
\[H \colon \mon^{\fr}_G(\bb{CP}^1 \times_\eps S^1, D', \omega^\vee) \to \epsconn_G^{\text{ps,fr}}(\bb{CP}^1, D, \omega^\vee)\]
given by the holonomy map around $S^1$, i.e. sending a monopole $(\bo P, \mc A)$ to the holomorphic bundle $P = (\bo P_\CC)|_{\bb{CP}^1 \times \{0\}}$ with $\eps$-connection $g = \Hol_{S^1}(\mc A) \colon P \to \eps^*(P)$.
\end{theorem}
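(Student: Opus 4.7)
The plan is to adapt the strategy of Charbonneau--Hurtubise and Smith (Theorem \ref{CHS_thm}) to the twisted setting, with the flat metric on $\RR^2$ replacing the smooth metric available for compact curves. The key additional input needed is a Hitchin--Kobayashi type correspondence due to Mochizuki (building on Biquard--Jardim).

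First, I would check that $H$ is well defined on the level of moduli. Given a monopole $(\bo P, \mc A) \in \mon^{\fr}_G(\bb{CP}^1 \times_\eps S^1, D',\omega^\vee)$, restrict the complexification $\bo P_\CC$ to the slice $\bb{CP}^1 \times \{0\}$ and equip it with the holomorphic structure determined by $A_{0,1}$ (as in Remark \ref{monopole_holo_restriction_rmk}). The complex Bogomolny equation (\ref{Bogomolny_equation_complex}) is exactly the statement that parallel transport of $\mc A$ along $S^1$ is holomorphic; because the product is twisted by $z \mapsto z + \eps$, transport from $t=0$ to $t=2\pi R$ lands over the translated fiber, so the holonomy naturally produces a section $g \colon P|_{\bb{CP}^1 \setminus D} \to \eps^*P|_{\bb{CP}^1 \setminus D}$. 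A Dirac singularity with charge $\omega^\vee_{z_i}$ produces a coweight $\omega^\vee_{z_i}$ singularity in $g$ by the same local calculation as in \cite[Section 2]{CharbonneauHurtubise} and \cite{Smith}: this is a purely local statement in a small 3-ball around $(z_i,t_i)$ and is unaffected by the twisting. Finally, the regular semisimple framing on $\{\infty\}\times S^1$ yields, after taking the $S^1$-holonomy at infinity, a well-defined regular semisimple framing of the $\eps$-connection at $\infty$, so that $H$ preserves the boundary data and polystability conditions (the latter by the translation between the stability of Definition \ref{polystable_def} and the analytic stability used in the monopole moduli, following \cite[Lemma 4.5]{CharbonneauHurtubise}).

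Second, I would prove injectivity by the standard argument. If two monopoles $(\bo P, \mc A)$ and $(\bo P', \mc A')$ have gauge-equivalent holonomy data, then the gauge transformation on the slice $\bb{CP}^1 \times \{0\}$ extends uniquely along $S^1$ using $\mc A, \mc A'$ to a complex gauge transformation of the bundles; polystability forces this complex transformation to be the complexification of a $G_\RR$ gauge transformation (up to center), matching the two monopoles.

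Third, and this is the crux, I would construct the inverse. Starting from a polystable $\eps$-connection $(P,g)$ with the prescribed singularity and framing data, build a smooth $G$-bundle on $\CC \times_\eps S^1$ by pulling back $P$ along the projection to $\CC$ and using $g$ as the clutching across the seam $t=0 \sim t=2\pi R$, after $\eps$-translation. This automatically yields a pair $(A_{0,1}, \mc A)$ solving the complex Bogomolny equation. To upgrade this to a genuine monopole we must produce a reduction to $G_\RR$ and a Higgs field $\Phi$ satisfying the real Bogomolny equation (\ref{Bogomolny_equation_real}). This is precisely a Hitchin--Kobayashi correspondence for $\eps$-connections on $\bb{CP}^1$ with Dirac-type singularities and prescribed behaviour at infinity, for the flat Kähler metric on $\CC$. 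This is exactly the situation analyzed by Mochizuki \cite{Mochizuki,MochizukiKH}: polystability in the sense of Definition \ref{rational_stability_def} is equivalent to the existence of a harmonic metric, unique up to the action of the maximal compact of the centralizer of the framing. The regular semisimple framing at $\infty$ provides the asymptotic boundary condition needed to invoke the uniqueness/existence statement in our specific setting, and it in fact simplifies matters by freezing the leading-order behaviour rather than permitting the more general tame growth discussed by Mochizuki.

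Fourth, analyticity of the bijection on both sides follows from the standard elliptic regularity machinery used in the monopole literature together with the analytic structure on $\qconn^{\text{ps,fr}}_G(\bb{CP}^1,D,\omega^\vee)$ built from the deformation complex of Section \ref{def_section}, matched against the deformation complex $\mc F^{\text{mon,fr}}$ from Section \ref{monopole_def_section} via the identification in Remark \ref{monopole_holo_restriction_rmk}.

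The main obstacle is the third step: invoking Mochizuki's correspondence in the present setting. One must verify that his analytic hypotheses are satisfied by the regular semisimple framing at infinity (the fact that this framing is rigid, rather than allowed to vary within some moduli of tame asymptotics, should make this strictly easier than the general tame case but requires care), and that his notion of polystability matches Definition \ref{rational_stability_def}. The rest of the argument is a direct adaptation of Charbonneau--Hurtubise and Smith to the twisted product, since the twisting only affects the global gluing along $S^1$ and is invisible locally at the Dirac singularities.
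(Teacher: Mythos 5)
Your overall strategy coincides with the paper's: adapt the Charbonneau--Hurtubise/Smith holonomy argument to the twisted product, substituting Mochizuki's existence theorem for Simpson's (which requires a finite-volume K\"ahler metric and so is unavailable for the flat metric on $\CC$). The well-definedness, injectivity and surjectivity skeleton all match the paper's proof, and you correctly locate the crux in the third step.

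The gap is that the crux is flagged but not carried out. Mochizuki's result (\cite[Corollary 3.13]{MochizukiKH}) is not an off-the-shelf ``polystable iff harmonic metric exists'' statement here: to invoke it one must exhibit an initial hermitian metric $h_0$ on the associated bundle whose mean curvature obeys the decay bound $|\Lambda F(h_0)| \le C(1+|z|^2)^{-2}$ near infinity, and one must check analytic polystability with respect to that metric. The paper supplies the first point by writing down the explicit metric $h_0(z,t) = (\rho(g(z))^\dagger)^{t/2\pi R}\rho(g(z))^{t/2\pi R}$ on a punctured neighbourhood of $\infty$ and then using the hypothesis that $g_\infty$ is \emph{regular semisimple} in an essential way: on a small enough disk $g(z)$ remains regular semisimple, so a local gauge transformation makes it valued in the maximal torus $T$, and for $T$-valued $g$ one computes that $\log h_0$ is annihilated by the relevant Laplacian, so $\Lambda F(h_0)$ vanishes identically there; gauge-invariance of $|\Lambda F|$ then yields the bound in general. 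The second point is handled by identifying Mochizuki's analytic degree with the algebraic $(\chi,\overrightarrow{t})$-degree of Definition \ref{polystable_def} via \cite[Lemma 4.5]{CharbonneauHurtubise}. Without these two computations the appeal to Mochizuki is a placeholder rather than a proof, and the role of the regular semisimple framing --- which you note ``should make this strictly easier'' but do not exploit --- is precisely to enable the abelianization argument above, not merely to rigidify the boundary condition.
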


\begin{proof}
First we'll establish injectivity.  Let $(\bo P, \mc A)$ and $(\bo P', \mc A')$ be a pair of periodic monopoles on $\bb{CP}^1 \times_\eps S^1$ with images $(P,g)$ and $(P', g')$ respectively, and choose a bundle isomorphism $\tau \colon P \to P'$ intertwining the $\eps$-connections $g$ and $g'$.  One observes first that $\bo P$ and $\bo P'$ are also isomorphic $G$-bundles since, firstly, we have an isomorphism $\bo P|_{\bb{CP}^1 \times \{t_0\}} \to \eps^*\bo P'|_{\bb{CP}^1 \times \{t_0\}}$ at a value $t_0$ not equal to any of $t_1, \ldots, t_k$, and because the holomorphic structure on $\bo P|_{\bb{CP}^1 \times \{t\}}$ varies smoothly in $t$ and the moduli space of holomorphic $G$-bundles on $\bb{CP}^1$ is discrete, we correspondingly have an isomorphism $\bo P|_{\bb{CP}^1 \times \{t\}} \to \bo P'|_{\bb{CP}^1 \times \{t\}}$ for every $t \in S^1$.  To match the monopole structures, we use the same argument as in \cite[Proposition 5.6]{Smith}.

For surjectivity, we again use the same argument as Charbonneau--Hurtubise and Smith, but with an important modification.  Charbonneau and Hurtubise use a crucial result of Simpson \cite[Theorem 1]{Simpson} on the existence of Hermitian Yang-Mills metrics on surfaces.  Simpson's argument only applies to K\"ahler metrics with finite volume, which isn't the case here.  We can, however, replace Simpson's theorem with a theorem of Mochizuki \cite[Corollary 3.13]{MochizukiKH} in the case of the curve $\CC$.

Now, let us consider surjectivity.  We extend the argument of Charbonneau--Hurtubise and Smith in two steps, in order to account for the two new subtleties described above.  We begin by extending a holomorphic $G$-bundle $P$ on $\bb{CP}^1 \times \{0\}$ with $\eps$-connection $g$ to a $G$-bundle on $M \bs (D') = (\bb{CP}^1 \times_\eps S^1_R) \bs (D')$.  Let $\wt M$ be the 3-manifold
\[\wt M = ((-2\pi R, 2\pi R) \times \bb{CP}^1) \bs \bigcup_{j=1}^k (A^+_j \cup A^-_j)\]
where $A^+_j$ is the line $\{(t+ t_0,z_j + t \eps/{2\pi R}) \colon t \in (0, 2\pi R - t_0]\}$ and $A^-_j$ is the line $\{(t + t_0 - t \pi R,z_j + 4 \eps/{2\pi R}) \colon t \in [2\pi R-t_0, 2 \pi R)\}$.
Let $\pi \colon \wt M \to \bb{CP}^1$ be the projection sending $(t,z)$ to $z + t\eps/2 \pi R$. The bundle $P$ pulls back to a bundle $\pi^*(P)$ on $\wt M$. We obtain a bundle on $M \bs (D \times t_0)$ by applying the identification $(t,z) \sim (t - 2 \pi R, q(z))$. This bundle extends to an $S^1$-invariant holomorphic $G$-bundle on $M \times S^1$. The remainder of the proof -- verifying the existence of the monopole structure associated to an appropriate choice of hermitian structure -- consists of local analysis which is independent of the value of the parameter $\eps$. 

In order to construct the monopole, we will apply Mochizuki's theorem.  It is necessary to verify that the assumptions of \cite[Corollary 3.13]{MochizukiKH} hold.  We must choose a hermitian metric on the vector bundle associated to the holomorphic $G$-bundle constructed above under a faithful representation $\rho$, so that the holonomy around $S^1$ describes the $\eps$-connection $g$, so that near each singularity the corresponding Chern connection described a Dirac singularity of the specified charge, and so that the Taylor expansion at infinity takes the form $g(z) = g_\infty +  g_1/z + g_2/\bar{z} + g_3/z \bar{z} + O(z^{-3})$ -- i.e. so that it has the appropriate framing, and so that its Laplacian has leading term of order $|z|^{-4}$. 

The result of this choice will then be that the quantity $|\Lambda F(h_0)|$ -- where the Lefschetz operator
\[\Lambda \colon \Omega^{2}(C;\gg_P) \to \Omega^{0}(C; \gg_P)\]
is the adjoint to wedging with the K\"ahler form -- is bounded in a neighbourhood of infinity by $(1 +\|z\|^2)^{-2}$, as required by Mochizuki.  We build our metric using a partition of unity, so away from a neighbourhood of $\infty$, the metric coincides with the metric described by Charbonneau and Hurtubise.  On a neighbourhood of infinity, first let us eliminate the $\eps$ parameter by choosing an analytic isomorphism between $D \times_\eps S^1_R \times S^1$ and $D \times S^1_R \times S^1$, where $D$ is a disk around infinity.  We can describe a hermitian metric on $D^\times \times S^1_R \times S^1$, constant in the second $S^1$ factor, by the formula
\[h_0(z,t) = (\rho(g(z))^\dagger)^{t/2\pi R}\rho(g(z))^{t/2\pi R}.\]

Now, to verify the bound on the growth of $|\Lambda F(h_0)|$, we'll use the fact that our framing $g_\infty$ at infinity was regular semisimple.  Because the regular semisimple locus is open in $G$, the group element $g(z)$ is regular semisimple everywhere in a sufficiently small disk around infinity.  We can, therefore, choose a local gauge transformation on the disk making $g(z)$ valued in the maximal torus $T$ centralizing $g_\infty$ in this neighbourhood.  If $g(z)$ is $T$-valued then $\Lambda F(h_0)$ vanishes, since in the abelian case $\log h_0$ takes the form
\[\log(h_0)(z,t) = \frac t{2\pi R}(\log f(\ol z) + \log f(z)),\]
for a holomorphic function $f$; if we write $s$ for the coordinate on the other copy of $S^1$, and $w = t+is$, then the operator $\frac{\dd^2}{\dd z \dd \ol z} + \frac{\dd^2}{\dd w \ol \dd w}$ annihilates $\log(h_0)$.  In particular, $\Lambda F(h_0)$ has the required decay condition.  If $A$ and $A'$ are related to one another by an isomorphism of holomorphic vector bundles on $D \times T^2$ then the functions $|\Lambda F(A)|$ and $|\Lambda F(A')|$ are equal, so the bound on $T$-valued functions implies the same bound on regular semisimple $G$-valued functions.

We must also verify analytic stability of the associated vector bundle to our $G$-bundle upon a choice of faithful representation.  To do this, we'll use Charbonneau and Hurtubise's result \cite[Lemma 4.5]{CharbonneauHurtubise}, which says that Mochizuki's definition of degree coincides with the definition from Section \ref{stability_section}.  That is, since $|\Lambda F(h_0)|$ is integrable in a neighbourhood of infinity, we can identify its integral with the algebraic definition of the degree.  Therefore, since our bundle is polystable in the sense of Section \ref{stability_section}, it is analytically polystable, and therefore we can apply Mochizuki's result.

With this established, we can apply the proof of \cite[Proposition 5.2]{Smith} using Mochizuki's theorem in place of Simpson's.  The local analysis at the singularities is independent of the value of $\eps$.
\end{proof}

\begin{remark}
In \cite{Mochizuki}, Mochizuki establishes a more general result, where rather than a framing at infinity one allows a regular singularity -- such monopoles are referred to as ``Generalized Cherkis-Kapustin (GCK) monopoles''.
\end{remark}

\section{Comparison of Symplectic Structures} \label{symp_comparison_section}

In this section, we'll show that the isomorphism of Theorem \ref{monopole_qconn_comparison_thm} provides an equivalence of holomorphic symplectic moduli spaces, where on the multiplicative Higgs side we have a holomorphic (indeed, algebraic) symplectic structure given in Section \ref{symplectic_leaf_section} (specifically in equation (\ref{eq:Omega})), and on the monopole side we have a holomorphic symplectic form of AKSZ type discussed in Section \ref{HK_monopole_section} (specifically in equation (\ref{holo_symp_structure_fields})).

We'll begin by describing the derivative of the holonomy map $H \colon \mon_G^\fr(\bb{CP}^1 \times S^1, D, \omega^\vee) \to \mhiggs^\fr_G(\bb{CP}^1, D, \omega^\vee)$. Let $\{U_0, U_1, \ldots, U_k, U_\infty\}$ be the open cover of $\bb{CP}^1$ defined in Remark \ref{atlas_remark}.  Given a tangent vector $(\delta A_{0,1}, \delta \mc A)$ to the moduli space of periodic monopoles at a point $(A_{0,1},\mc A)$, let $(\delta A_{0,1}, \delta \mc A)_i$ denote its restriction to the open subset $U_i \times (0, 2\pi) \sub \bb{CP}^1 \times S^1$. 

Choose a local potential $b_i \in \Omega^0(U_i \times (0,2\pi); \gg_\RR)$ for the tangent vector $(\delta A_{0,1}, \delta \mc A)$ on this patch, so that
\begin{align*} 
(\delta A_{0,1}, \delta \mc A)_i &= \d_{\mr{mon}} b_i \\
&= (\ol \dd_{A_{0,1}} b_i, \mr d_{\mc A} b_i).
\end{align*}
Such a local potential exists because the curvature of the connection $\ol \dd_{A_{0,1}} + d_{\mc A}$ vanishes, because $(A_{0,1}, \mc A)$ solves the Bogomolny equations.  Below we'll use the notation
\[(X^L_i, X^R_i) = (\lim_{t \to 2\pi} b_i(t), - \lim_{t \to 0} b_i(t)).\]

\begin{lemma} \label{local_derivative_description_lemma}
The derivative $\d H$ of the holonomy map $H$ is given on an open patch $U_i \times (0,2\pi)$ by the formula
\begin{equation}
\label{local_dH_eq}
\d H(\delta A_{0,1}, \delta \mc A)_i = \d H(\d_{\mr{mon}} b_i) = (X^L_i, X^R_i).
\end{equation}
\end{lemma}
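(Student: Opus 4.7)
The plan is to compute the first-order variation of the holonomy $g(z) = \Hol_{S^1}(\mc A(z, \cdot))$ directly, exploiting the fact that on the patch $U_i \times (0, 2\pi)$ the deformation $(\delta A_{0,1}, \delta \mc A)_i = d_{\mathrm{mon}} b_i$ is locally a ``gauge variation'' by $b_i$ along the $S^1$-direction. Since $b_i$ is only defined on the open interval $(0, 2\pi)$ and need not extend continuously across the identification $0 \sim 2\pi$, this fails to be a genuine gauge transformation on the full circle; the failure is measured by the discrepancy between $\lim_{t \to 0} b_i(t)$ and $\lim_{t \to 2\pi} b_i(t)$, and it is this discrepancy that will produce the nontrivial variation of holonomy.

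First I would write down the standard variation-of-parameters formula for $\delta g$. Fix $z \in U_i$ and let $g(t;z)$ denote the parallel transport of $\mc A$ along $\{z\} \times [0,t]$, so that $g(z) = g(2\pi;z)$. With the convention $\partial_t g = g \mc A$, Duhamel's formula gives
\[
\delta g(z) \;=\; \left(\int_0^{2\pi} g(s;z)\,\delta \mc A(s;z)\,g(s;z)^{-1}\,ds\right)\cdot g(z).
\]
Substituting $\delta\mc A = d_{\mc A}b_i = (\partial_t b_i + [\mc A, b_i])\,dt$ and using the identity
\[
\partial_s\bigl(g(s;z)\,b_i(s;z)\,g(s;z)^{-1}\bigr) \;=\; g(s;z)\bigl(\partial_s b_i + [\mc A, b_i]\bigr)(s;z)\,g(s;z)^{-1}
\]
makes the integrand a total $s$-derivative. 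The integral collapses to boundary values, yielding
\[
\delta g(z) \;=\; \Bigl(g(z)\,\tilde X^L_i(z)\,g(z)^{-1} - \tilde X^R_i(z)\Bigr)\, g(z) \;=\; g(z)\,\tilde X^L_i(z) \,-\, \tilde X^R_i(z)\,g(z),
\]
where $\tilde X^L_i := \lim_{t \to 2\pi} b_i(t;z)$ and $\tilde X^R_i := \lim_{t \to 0} b_i(t;z)$. Setting $X^L_i = \tilde X^L_i$ and $X^R_i = -\tilde X^R_i$ produces exactly the pair in the statement of the lemma.

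Next I would interpret this formula in the language of the tangent complex from Section \ref{def_section}. Under the bundle isomorphism $\ad(g) \iso \gg_P(D)$ of Definition \ref{ad_g_definition}, a class $[(X^L, X^R)]$ maps to $L_g(X^L) + R_g(X^R) = g X^L + X^R g$. The explicit formula for $\delta g(z)$ above is precisely the image of $[(X^L_i, X^R_i)]$ under this map, so $d H(d_{\mathrm{mon}} b_i)$ is represented on the patch $U_i$ by the class $[(X^L_i, X^R_i)]$, as claimed.

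The main obstacle is not analytic: the heart of the argument is Duhamel combined with a total-derivative manipulation, both of which are routine. What requires care is bookkeeping -- matching conventions for the direction of parallel transport, the signs in $d_{\mc A}b = \partial_t b + [\mc A, b]$, and the identification of $\ad(g)$ with $\gg_P(D)$ via left versus right multiplication. A secondary consistency check, which is not part of this local lemma but is needed to make sense of the global object $d H(\delta A_{0,1}, \delta \mc A)$, is that on an overlap $U_i \cap U_j$ two local potentials $b_i$ and $b_j$ for the same deformation differ by a continuous-in-$t$ gauge parameter $\lambda_{ij}$, under which the pair $(X^L_i, X^R_i)$ is shifted by $(\lambda_{ij}, -g\lambda_{ij} g^{-1})$ -- precisely the equivalence relation defining $\ad(g)$.
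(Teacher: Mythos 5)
Your proposal is correct and is essentially the paper's argument: the paper simply asserts that a $\d_{\mr{mon}}$-exact deformation acts on the holonomy as the derivative of the gauge action $B_i \mapsto B_i(2\pi)H(\mc A)B_i(0)^{-1}$, yielding the boundary term $b_i(2\pi)H(\mc A) - H(\mc A)b_i(0)$, and your Duhamel-plus-total-derivative computation is precisely the explicit justification of that step. The only discrepancy is the path-ordering convention for the holonomy (you obtain $g\,b_i(2\pi) - b_i(0)\,g$ rather than $b_i(2\pi)\,g - g\,b_i(0)$), which is immaterial, and your version in fact matches the stated identification $(X^L,X^R)\mapsto L_g(X^L)+R_g(X^R)$ of $\ad(g)$ with $\gg_P(D)$ more directly than the paper's.
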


\begin{proof}
The value of the derivative $\d H$ at a $\d_{\mr{mon}}$-exact local vector field $\d_{\mr{mon}} b_i$ is given by the derivative of the action of the group of gauge transformations on the holonomy $H(\mc A)$ of $\mc A$ from $t=0$ to $2\pi$, that is, of the map 
\[B_i \mapsto B_i(2\pi)H(\mc A)B_i(0)^{-1},\]
where $B_i \in \Omega^0((U_i \times (0,2\pi)) \bs \{(z_i, t_i)\}; \gg_\RR)$.  This derivative evaluates to $b_i(2\pi) H(\mc A) - H(\mc A)b_i(0)$, which is identified as section of the bundle $\ad(g)$ with the right-hand side of expression \ref{local_dH_eq}. 
\end{proof}

\begin{remark}
For the moment, we will only compare holomorphic symplectic structures on multiplicative Higgs and periodic monopole moduli spaces.  As holomorphic symplectic manifolds, the moduli spaces of periodic monopoles only depend on a conformal class of metrics on the curve $C$.  In particular, we can either consider $\bb{CP}^1$ with the round metric -- where Charbonneau--Hurtubise and Smith's theorem applies -- or with the flat metric -- and use our Theorem \ref{monopole_qconn_comparison_thm} using Mochizuki's growth estimate theorem.  In Section \ref{hyperkahler_section} we will consider hyperk\"ahler structures: in that case we'll need to consider the flat moduli space, and the reduced space $\mc V_{\red}$ of fields.
\end{remark}

\begin{theorem} \label{symplectic_comparison_thm}
The symplectic structure on $\mon_G^\fr(\bb{CP}^1 \times S^1,D \times\{0\},\omega^\vee)$ and the pullback of the non-degenerate pairing on $\mhiggs_G^{\text{ps,fr}}(\bb{CP}^1,D,\omega^\vee)$ under the holonomy map $H$ coincide.
\end{theorem}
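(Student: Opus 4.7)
The plan is to express the monopole holomorphic symplectic pairing (\ref{holo_symp_structure_fields}) as a three-dimensional integral whose integrand is locally exact on each simply connected patch, and then reduce it via Stokes' theorem to the contour-integral formula (\ref{eq:Omega}) for $\Omega$. The essential ingredients are Lemma \ref{local_derivative_description_lemma}, which provides local potentials $b_i$ on each patch $U_i \times (0, 2\pi)$ of the atlas from Remark \ref{atlas_remark} satisfying $b_i(2\pi) = X^L_i$ and $b_i(0) = -X^R_i$, together with the flatness of the complexified covariant derivative $D = \ol\dd_{A_{0,1}} + \d_{\mc A}$ encoded in the complex Bogomolny equation (\ref{Bogomolny_equation_complex}).

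First I would carry out a Maurer--Cartan style computation: using $D^2 = 0$ and the $G$-invariance of $\kappa$, one checks that the integrand of (\ref{holo_symp_structure_fields}), in its antisymmetrized form, equals $\d\bigl(\kappa(b_i, D b'_i) \wedge \d z\bigr)$ on each patch $U_i \times (0, 2\pi) \bs \{(z_i, t_i)\}$. Stokes' theorem is then applied on the partition $\bb{CP}^1 = \bigsqcup_j W_j$ with $W_j \sub U_j$. The boundary of each $W_j \times (0, 2\pi)$ splits into three pieces: (a) horizontal disks at $t = 0, 2\pi$, on which only the $\d \bar z$-component of $Db'$ contributes and which produce integrals built from $X^L_j$ and $X^R_j$; (b) cylindrical sides $\partial W_j \times (0, 2\pi)$, along which adjacent patches share boundary and the local potentials differ by a $D$-flat transition $b_i - b_0$ (since both realize the same deformation); and (c) small spheres around each Dirac singularity, which vanish by the regularity of monopole tangent vectors near Dirac points.

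To match (\ref{eq:Omega}), integration by parts in $t$ along the cylinders combined with a second Stokes reduction on each horizontal piece $W_j$ will produce contour integrals on $\partial W_j$. The $D$-flatness of $b_i - b_0$, together with the holonomy identity $\Ad_g (b_i - b_0)|_{t = 2\pi} = -(b_i - b_0)|_{t = 0}$ that expresses parallel transport of the transition around $S^1$, forces the residual bulk integrals to cancel and assembles the surviving contributions into the required sum of contour integrals. The mixed-patch pairings $\kappa(X^L_i, X^{L'}_0)$ and $\kappa(X^R_i, X^{R'}_0)$ appearing in (\ref{eq:Omega}) arise precisely because each cylindrical boundary carries the $U_i$-potential on one side and the $U_0$-potential on the other.

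The main obstacle is the careful bookkeeping in this double Stokes reduction: verifying that signs and orientations are consistent, that all residual bulk terms cancel, and that the sphere contributions around Dirac singularities genuinely vanish. The last point in particular requires a local model analysis of monopole tangent vectors at a Dirac point, extending the regularity estimates already needed in the proof of Theorem \ref{monopole_qconn_comparison_thm}.
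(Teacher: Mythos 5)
Your overall strategy---local potentials $b_i$ from Lemma \ref{local_derivative_description_lemma}, exactness of the antisymmetrized integrand via $D^2=0$, and a patchwise Stokes reduction to contour integrals over the $\dd U_i$---is exactly the route the paper takes. The one substantive device you are missing is a gauge choice on the boundary slices: the paper assumes the holomorphic structure on the restriction to $\bb{CP}^1\times\{0\}$ is trivial and uses a smooth gauge transformation to arrange $\ol\dd_{A_{0,1}}b_i=0$ at $t=0$ and $t=2\pi$. This does two jobs at once. First, it kills the horizontal disk contributions outright, so no ``second Stokes reduction on each horizontal piece'' is needed; as written, your plan leaves residual bulk integrals of the form $\int_{W_j}\kappa(\ol\dd_{A_{0,1}}b_j,\,b'_j)\,\d z$ on the slices, whose cancellation is not forced by the $D$-flatness of the transitions $b_i-b_0$ (those live only on the overlaps), and it is not clear how you would dispose of them. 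Second, it makes the boundary values $X^L_i=b_i(2\pi)$ and $X^R_i=-b_i(0)$ holomorphic on $U_i$, which is needed both for $\d H$ to land in the tangent space of the algebraic moduli space and for the diagonal terms $\oint_{\dd U_i}\kappa(X^L_i,X'^L_i)\,\d z$ and $\oint_{\dd U_i}\kappa(X^R_i,X'^R_i)\,\d z$ to vanish---which is precisely how the paper isolates the mixed terms $\kappa(X^L_0,X'^L_i)$ and $\kappa(X^R_0,X'^R_i)$ appearing in (\ref{eq:Omega}).

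Two smaller points. The cylinder terms are handled in the paper not by a holonomy identity but simply by $\d_{\mc A}(b_i-b_0)=0$ on the overlaps, which collapses $\int_0^{2\pi}\kappa(b_i-b_0,\d_{\mc A}b'_i)\,\d t$ to its endpoint values; your identity also carries a spurious minus sign---parallel transport gives $(b_i-b_0)(0)=\Ad_g\bigl((b_i-b_0)(2\pi)\bigr)$, and the sign built into $X^R_i=-b_i(0)$ is exactly what matches this to the equivalence relation (\ref{eq:equivalence}). On the other hand, your point about the small spheres around the Dirac points is well taken: the potentials are only defined on $(U_i\times(0,2\pi))\bs\{(z_i,t_i)\}$, and the paper's Stokes argument silently assumes these boundary contributions vanish; making that precise does require the local regularity of monopole tangent vectors at a Dirac singularity, as you say.
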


\begin{proof}
Let us begin with the symplectic structure $\omega_{\mr{hol}}$ from Equation \ref{holo_symp_structure_fields}.  We can write the integral as the sum over open charts $U_0 \times (0,2\pi), U_1 \times (0,2\pi), \ldots, U_k \times (0,2\pi), U_\infty \times (0,2\pi)$ in our atlas \footnote{More precisely, we should replace the definition of $U_0$ from Remark \ref{atlas_remark} with the complement of closed disks $\bb{CP}^1 \bs (\ol U_1 \cup \cdots \cup \ol U_k \cup \ol U_\infty)$, so that the $U_i$ are disjoint in $\bb{CP}^1$ with dense union.}.  We assume that the holomorphic structure defined by $\bar \partial_{A_{0,1}}$ on the restriction of the $G$-bundle to $\mathbb{CP}^{1} \times \{0 \}$ is trivial, and therefore, by a smooth gauge transformation we can gauge away deformations of $\bar \partial_{A_{0,1}}$ along the slice $\mathbb{CP}^1 \times \{0\}$. Consequently, when $t  = 0$ or $t = 2\pi$ we have $\bar \partial_{A_{0,1}} b = 0$, so that in each chart $U_i$ the pair $(X^{L}, X^{R})_i = (b_i(2 \pi), b_i(0))$ is holomorphic.

On each open set $U_i$, we can use Stokes' theorem to expand the summands of the symplectic form corresponding to charts.  That is, we have 
\begin{align*}
\omega_{\mr{hol}}(\{\d_{\mr{mon}}b_i\}, \{\d_{\mr{mon}}b_i'\}) &= \sum_{i=0,1,\ldots, k,\infty} \int_{U_i \times (0,2\pi)} \d z \, \kappa(\ol \dd_{A_{0,1}} b_i \wedge \d_{\mc A}b_i') - (b \leftrightarrow b') \\
&= \sum_{i=0,1,\ldots, k,\infty} \int_{U_i \times (0,2\pi)} \d z \, \kappa((\ol \dd_{A_{0,1}} + \d_{\mc A})b_i, (\ol \dd_{A_{0,1}} + \d_{\mc A})b_i') \\
&= \left(\sum_{i=0,1,\ldots, k,\infty} \int_{\dd U_i \times (0,2\pi)} \kappa (b_i, \d_{\mc A}b_i') \right) + \left(\sum_{i=0,1,\ldots, k,\infty} \int_{U_i}\kappa(\ol \dd_{A_{0,1}} b_i, b_i'(2\pi) - b'_i(0)) \right) \\
&= \sum_{i=1, \ldots, k, \infty} \int_{\dd U_i \times (0,2\pi)} \kappa (b_i - b_0, \d_{\mc A}b_i') 
\end{align*}
using the fact that $\d_{\mc A}b_i'$ and $\d_{\mc A}b_0'$ coincide on $\dd U_i \times (0,2\pi)$ to simplify the first summand, and the fact that $\ol \dd_{A_{0,1}} b_i(t) = 0$ when $t = 0$ or $t = 2\pi$ as remarked above to eliminate the second summand.

Now, taking the summands one-by-one, we can write
\begin{align}
 \int_{\dd U_i \times (0,2\pi)} \d z \, \kappa (b_i - b_0, \d_{\mc A}b_i')  &= \oint_{\dd U_i} \d z \, \left(\kappa(X_i^L - X_0^L,X_i'^L) - \kappa(X_i^R - X_0^R,X_i'^R)\right)  \nonumber\\
 &= \oint_{\dd U_i} \d z \, \left(-\kappa(X_0^L,X_i'^L) + \kappa(X_0^R,X_i'^R)\right), \label{eq:summand}
\end{align}
 where the $\kappa( X_i^{L,R}, X_i'^{L,R})$ terms vanish  because $X_i, X_i'$ are holomorphic in each $U_i$. 

To conclude, consider the symplectic pairing on the multiplicative Higgs moduli space (\ref{eq:Omega}), evaluated at a pair of tangent vectors of the form $\d H(\delta A_{0}, \delta \mc A)$ as in Lemma \ref{local_derivative_description_lemma}.  This takes the form
\begin{align*}
\Omega(\d H(\delta A_{0}, \delta \mc A), \d H(\delta A_{0}', \delta \mc A')) &= \sum_{i=1, \ldots, k, \infty} \frac 1{2\pi \imath} \oint_{\dd U_i} \d z - (\kappa(X^{L}_{i}, X^{L'}_{0}) + \kappa(X^{R}_{i}, X^{R'}_{0})).
\end{align*} 
This coincides, up to a global rescaling by $2 \pi \imath$, with the sum of the antisymmetric expressions (\ref{eq:summand}), as required.
\end{proof}

We conclude this section by describing the relationships between the the further Hamiltonian reduction by the global torus $T'$ on both sides.
\begin{corollary} \label{reduced_equiv_cor}
The equivalence of Theorem \ref{symplectic_comparison_thm} induces an equivalence of holomorphic symplectic manifolds
\[\mon_G^{\mr{red}}(\bb{CP}^1 \times S^1,D \times\{0\},\omega^\vee) \to \mhiggs_G^{\text{red}}(\bb{CP}^1,D,\omega^\vee),\]
by taking the Hamiltonian reduction by the constant $T'$ action on both sides, where $T'$ is the centralizer of the framing value $g_\infty$ in the semisimple part of the gauge group.  In particular, the holomorphic symplectic structure on the right-hand side is canonically promoted to a hyperk\"ahler structure.
\end{corollary}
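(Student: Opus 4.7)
The plan is to apply the general principle that a symplectic equivalence intertwining Hamiltonian group actions and moment maps descends to a symplectic equivalence of the Hamiltonian reductions. Concretely, I would proceed in three steps.

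First, I would verify that the holonomy isomorphism $H$ from Theorem \ref{symplectic_comparison_thm} is $T'$-equivariant, where $T'$ acts on the monopole side by constant gauge transformations valued in $T' \subset T \subset G_\RR \otimes \CC$ and on the multiplicative Higgs side by global adjoint conjugation $g(z) \mapsto h g(z) h^{-1}$. This is essentially tautological: a constant gauge transformation by $h \in T'$ applied to a monopole $(\bo P, A, \Phi)$ replaces the parallel transport operator $\mc A = A_t + i\Phi$ by $h \mc A h^{-1}$ (since $h$ is constant in $t$), and therefore conjugates the $S^1$-holonomy by $h$. Thus $H(h \cdot (\bo P, \mc A)) = h \cdot H(\bo P, \mc A)$.

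Second, I would match the moment maps for the $T'$-action on the two sides. On the multiplicative Higgs side the moment map is $\mu_{T'} = \res_\infty \varphi_{\rho_w, \infty}$ evaluated at $g(z)$, as established in Section \ref{Reduced_section}. On the monopole side the lemma of Section \ref{HK_monopole_section} established that, as a holomorphic symplectic manifold, the hyperk\"ahler quotient $\mu_{\mr{HK}}^{-1}(0)/\mc G$ is equivalent to the Hamiltonian reduction $\mu_{T'}^{-1}(c)/T'$ of $\mon_G^\fr$ by the same torus $T'$, with moment map given by $\res_\infty \varphi_{\rho_w,\infty}$ applied to the holonomy. Since $H$ sends a monopole to its holonomy, the evaluation character $\varphi_{\rho_w, \infty}(g) = \tr_{\rho_w}(g(\infty))$ on the multiplicative Higgs side is literally equal to $\varphi_{\rho_w, \infty} \circ \Hol_{S^1}$ on the monopole side. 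The moment maps therefore coincide under $H$.

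Third, combining $T'$-equivariance with the matching of moment maps, the holomorphic symplectic equivalence $H$ from Theorem \ref{symplectic_comparison_thm} restricts to an equivalence of level sets $\mu_{T'}^{-1}(c)$ on both sides and descends to a holomorphic symplectic equivalence of the Hamiltonian quotients by $T'$, which is the claimed identification $\mon_G^\red \iso \mhiggs_G^\red$. For the hyperk\"ahler statement, we use that the reduced monopole moduli space carries a canonical hyperk\"ahler structure as the hyperk\"ahler quotient $\mu_{\mr{HK}}^{-1}(0)/\mc G$ of Section \ref{HK_monopole_section}; the equivalence just constructed transports this hyperk\"ahler structure to $\mhiggs_G^\red(\bb{CP}^1, D, \omega^\vee)$, where one of the complex structures (the one used in Theorem \ref{symplectic_comparison_thm}) is the algebraic complex structure from Part \ref{part1}.

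The main obstacle I expect is the careful treatment of the $T'$-action and moment map at infinity: the $T'$-action is by constants, which are gauge transformations that do \emph{not} lie in the subgroup $\mc G$ fixing the framing at infinity, so one must check that the residue moment map $\res_\infty \varphi_{\rho_w,\infty}$ is genuinely the Noether charge for this residual constant action, and that the reduced space $\mc V_\red$ (in which the leading Taylor coefficient at infinity is already fixed) is precisely the level set of this moment map that is preserved by the remaining gauge freedom. The rest of the argument is then formal once the results of Sections \ref{Reduced_section} and \ref{HK_monopole_section} are in hand.
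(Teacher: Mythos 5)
Your proposal is correct and follows exactly the route the paper intends: the paper states this corollary without proof, relying implicitly on the $T'$-equivariance of the holonomy map, the identification of the moment maps on both sides with $\res\varphi_{\rho_w,\infty}$ (via Section \ref{Reduced_section} and the lemma in Section \ref{HK_monopole_section} identifying the hyperk\"ahler quotient with $\mu_{T'}^{-1}(c)/T'$), and transport of the hyperk\"ahler structure from the monopole side. Your write-up simply makes these steps explicit, and the subtlety you flag about the constant gauge transformations not lying in $\mc G$ is precisely what the paper's Section \ref{HK_monopole_section} lemma is designed to handle.
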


\section{Comparison of Hyperk\"ahler Structures and Twistor Rotation} \label{hyperkahler_section}
The comparison theorem between multiplicative Higgs bundles and periodic monopoles \ref{reduced_equiv_cor} tells us that the symplectic structure on $\mhiggs_G^{\text{red}}(\bb{CP}^1,D,\omega^\vee)$ extends canonically to an $\mathbb{R}_{>0}$-family of hyperk\"ahler structures, depending on a parameter $R$, the radius of the circle.  In this section we'll compare the twistor rotation with the deformation to the moduli space of $\eps$-connections. We'll show that in the $R \to \infty$ limit, by passing to a point $\eps$ in the twistor sphere we obtain a complex manifold equivalent to the moduli space of $\eps$-connections as a deformation of the moduli space of multiplicative Higgs bundles.

Recall from Section \ref{HK_monopole_section} that we inherit a hyperk\"ahler structure on the reduced moduli space of periodic monopoles on $\bb{CP}^1 \times S^1_R$ from the hyperk\"ahler structure on the infinite-dimensional reduced space of fields \ref{hyperkahler_structure_fields}.  This description for the hyperk\"ahler structure also tells us what should happen when we perform a rotation in the twistor sphere.  The following statement follows by identifying the holomorphic symplectic structure at $\eps$ in the twistor sphere by applying the corresponding rotation in $\SO(3)$ to the coordinates $x$, $y$ and $t$ on $\RR^3$ in the expression for the holomorphic symplectic structure at $0$.  Note that when we calculate the symplectic form as an integral as above, it's enough to take the integral over just $\RR^2 \times S^1_R$ instead of $\bb{CP}^1 \times S^1_R$.

Notationally, from now on we'll write the holomorphic symplectic pairing in the abbreviated form $\kappa(A_{0,1}, \mc A') - \kappa(\mc A, A_{0,1}') = \kappa(\delta^{(1)} \mc A, \delta^{(2)} \mc A)$, and we'll write $\dvol$ for the volume form $\d x \wedge \d y \wedge \d t$ on $\CC \times (0,2\pi R)\sub \bb{CP}^1 \times S^1_R$.

\begin{prop}
Let $\zeta$ be a point in the twistor sphere.  The holomorphic symplectic form on the moduli space $\mon_G^{\fr}(\bb{CP}^1 \times S^1_R, D, \omega^\vee)$ of periodic monopoles is given by the formula 
\[\omega_\eps(\delta^{(1)}\mc A, \delta^{(2)}\mc A) = \int_{\RR^2 \times S^1_R} \kappa(\delta^{(1)} \mc A, \delta^{(2)} \mc A) \zeta(\dvol),\]
where we identify $\zeta$ with an element of $\SO(3)$ and apply this rotation to the volume form $\dvol$.  Equivalently we can identify $\RR^2 \times \bb{CP}^1$ with the quotient $\RR^3 \times L$ where $L$ is the rank one lattice $\{0\}^2 \times 2\pi R\ZZ$ and identify the rotated holomorphic symplectic form with 
\[\omega_\zeta(\delta^{(1)}\mc A, \delta^{(2)}\mc A) = \int_{\RR^3/(\zeta(L))} \kappa(\delta^{(1)} \mc A, \delta^{(2)} \mc A) \dvol.\]
\end{prop}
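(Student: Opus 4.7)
My plan is to lift the moduli space of periodic monopoles on $\RR^2\times S^1_R=\RR^3/L$ to the moduli space of $s$-invariant anti-self-dual connections on $\RR^4/L=(\RR^3/L)\times\RR_s$ via the Bogomolny trick $\wt A=A+\Phi\,\d s$, and then exploit the manifest $\SO(3)$-equivariance of the flat hyperk\"ahler structure on the infinite-dimensional space of connections. Under this dimensional oxidation the Bogomolny equation becomes the ASD equation $F_{\wt A}^+=0$, and the three hyperk\"ahler forms from \eqref{hyperkahler_structure_fields} admit a uniform four-dimensional description
\[\omega_i(\delta\wt A,\delta'\wt A)=\int_{\RR^3/L}\kappa(\delta\wt A\wedge\delta'\wt A)\wedge\iota_{\partial_s}\omega_i^+,\]
where $\{\omega_1^+,\omega_2^+,\omega_3^+\}$ is the standard basis of self-dual 2-forms on $\RR^4$ adapted to the splitting $\RR^3\oplus\RR_s$. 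A direct expansion recovers precisely the expressions $\kappa(\delta A_i,\delta'\Phi)-\kappa(\delta\Phi,\delta'A_i)+\epsilon_{ijk}\kappa(\delta A_j,\delta'A_k)$ integrated against $\dvol$.

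Next I would observe that for $\zeta=(a,b,c)\in S^2$ a point on the twistor sphere, the holomorphic symplectic form $\omega_\zeta$ is by construction the $\CC$-linear combination of $\omega_1,\omega_2,\omega_3$ determined by $\zeta$. Under a rotation $R\in\SO(3)$ of the $\RR^3$-factor, the self-dual 2-forms on $\RR^4$ transform through the standard representation $\SO(3)\to\mr{SO}(\Lambda^+(\RR^4)^*)\iso\SO(3)$, so if $R_\zeta\in\SO(3)$ sends the reference direction $\partial_t$ to the direction determined by $\zeta$, then $R_\zeta^*(\iota_{\partial_t}\omega_0^+)=\iota_{\partial_\zeta}\omega_\zeta^+$. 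Moving the rotation off the integrand and onto the volume form through the change-of-variables formula on $\RR^3/L$ produces the displayed expression with the rotated volume form $\zeta(\dvol)$; the notation $\kappa(\delta^{(1)}\mc A,\delta^{(2)}\mc A)$ is then unchanged, since $\mc A$ is interpreted as the component of $\wt A$ along the rotated time direction and the combination of variations has the same form in any adapted frame.

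For the equivalent reformulation at the end of the statement, I would perform the change of variables $\RR^3/L\to\RR^3/\zeta(L)$ that converts the integral of the rotated volume form against the original lattice into the integral of the standard volume form against the rotated lattice. This is purely a relabeling of coordinates and produces the final formula
\[\omega_\zeta(\delta^{(1)}\mc A,\delta^{(2)}\mc A)=\int_{\RR^3/\zeta(L)}\kappa(\delta^{(1)}\mc A,\delta^{(2)}\mc A)\,\dvol.\]

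The main technical obstacle I anticipate is checking that the $\SO(3)$-action at the level of the pre-quotient space of fields genuinely descends to the hyperk\"ahler quotient as the twistor rotation: the moment map $\mu_{\mr{HK}}$ is manifestly $\SO(3)$-equivariant with the three components permuting as a vector, and the gauge group $\mc G$ acts only in the fibers and so commutes with spatial rotation, but one must verify that the asymptotic boundary conditions encoded in $\mc V_{\red}$ are compatible after rotation. In the regime $R\to\infty$ of interest for the comparison with $\eps$-connections this compatibility reduces to the statement that the boundary condition $\Hol_{S^1}(\mc A)=g_\infty$ along $\{\infty\}\times S^1$ can be rephrased in a rotation-equivariant way at infinity, which is straightforward for regular semisimple $g_\infty$ but would otherwise require more care.
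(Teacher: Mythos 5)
Your proposal is correct and takes essentially the same route as the paper, whose entire justification is one sentence: the flat hyperk\"ahler triple (\ref{hyperkahler_structure_fields}) on the space of fields is manifestly $\SO(3)$-equivariant under rotations of the coordinates $(x,y,t)$, so the form at $\zeta$ is obtained by rotating the volume form, and a change of variables transfers the rotation onto the lattice. Your four-dimensional ASD gloss is a sound way to make that equivariance manifest, with one notational caveat: the contraction $\iota_{\partial_s}$ should be applied to the whole four-form $\kappa(\delta\wt A\wedge\delta'\wt A)\wedge\omega_i^+$ rather than to $\omega_i^+$ alone, since the pullback of $\kappa(\delta\wt A\wedge\delta'\wt A)\wedge \d x^i$ to a slice $\{s=\mathrm{const}\}$ retains only the $\eps_{ijk}\kappa(\delta A_j,\delta' A_k)$ terms and loses the $\kappa(\delta A_i,\delta'\Phi)-\kappa(\delta\Phi,\delta' A_i)$ cross terms you correctly identify as the target of the expansion.
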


Now, we'll perform our identification of the twistor deformation on the multiplicative Higgs side by rewriting this twistor rotation in a somewhat different way in the large $R$ limit.  In this limit we can identify the twistor rotation with the deformation obtained by replacing the product $\bb{CP}^1 \times S^1_R$ with the product twisted by an automorphism of $\RR^2$.  More precisely, the rotated symplectic structure can then be described in the following way.

\begin{theorem} \label{HK_rotation_thm}
Let $\left(\mhiggs^{\text{red}}_G(\bb{CP}^1,D,\omega^\vee)\right)_{\zeta,R}$ be the family of complex manifolds obtained by pulling back the holomorphic symplectic structure $\omega_{\zeta,R}$ on $\mon_G^{\mr{red}}(\bb{CP}^1 \times S^1_R,D,\omega^\vee)$ at $\zeta$ in the twistor sphere to the moduli space of polystable framed multiplicative Higgs bundles.  In the limit where $R \to \infty$ with $2 \pi \zeta R = \eps$ fixed, the canonical smooth isomorphism
\[\left(\mhiggs^{\text{red}}_G(\bb{CP}^1,D,\omega^\vee)\right)_{\zeta,R} \to \epsconn^{\text{red}}_G(\bb{CP}^1,D,\omega^\vee)\]
becomes holomorphic symplectic, in the sense that the holomorphic symplectic structures converge pointwise over the tangent bundle to the moduli space of monopoles. 
\end{theorem}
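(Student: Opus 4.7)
The plan is to realize the twistor rotation geometrically as a rotation of the underlying Euclidean 3-manifold, reducing the statement to the comparison between monopoles on twisted products and $\eps$-connections (Theorem \ref{monopole_qconn_comparison_thm}), together with the symplectic matching of Theorem \ref{symplectic_comparison_thm} adapted to the $\eps$-twisted setting.

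First, I would interpret the twistor-rotated holomorphic symplectic form geometrically. Writing $\RR^2 \times S^1_R = \RR^3/L$ with $L = 2\pi R \ZZ \cdot \dd_t$, the formula
\[\omega_\zeta(\delta^{(1)}\mc A, \delta^{(2)}\mc A) = \int_{\RR^3/\zeta(L)} \kappa(\delta^{(1)}\mc A,\delta^{(2)}\mc A)\dvol\]
exhibits the twistor rotation as a rotation of the defining lattice by $\zeta \in \SO(3)$. In the scaling limit $R \to \infty$, $\zeta \to 0$ with $2\pi R \zeta = \eps$ fixed, the rotated lattice $\zeta(L)$ converges to $\ZZ \cdot (2\pi R \dd_t + \eps \cdot \dd_z)$, which is the defining lattice of the twisted product $\RR^2 \times_\eps S^1_R$. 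By Theorem \ref{monopole_qconn_comparison_thm}, the moduli space of polystable framed monopoles on this twisted product is canonically identified with the moduli space of polystable framed $\eps$-connections on $\bb{CP}^1$. Thus in the scaling limit, the hyperk\"ahler moduli space at twistor parameter $\zeta$ is identified smoothly with $\epsconn^{\mr{red}}_G(\bb{CP}^1, D, \omega^\vee)$.

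To match the holomorphic symplectic structures, I would adapt the proof of Theorem \ref{symplectic_comparison_thm} to the $\eps$-twisted setting. Choose local potentials $b_i$ with $(\delta A_{0,1}, \delta \mc A)_i = \d_{\mr{mon}} b_i$ in each patch $U_i \times (0, 2\pi R)$; the derivative of the twisted holonomy map sends $\d_{\mr{mon}} b_i$ to an $\eps$-twisted section $(X^L_i, X^R_i)$ of the associated bundle, where $X^L_i(z) = \lim_{t \to 2\pi R} b_i(t, z)$ and $X^R_i(z) = -\lim_{t \to 0} b_i(t, z+\eps)$, reflecting the identification $(z, 0) \sim (z - \eps, 2\pi R)$ gluing the boundaries. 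Applying Stokes' theorem to the integral formula for $\omega_\zeta$ and arguing as in Theorem \ref{symplectic_comparison_thm}, the bulk terms vanish under the gauge choice making $\ol{\dd}_{A_{0,1}} b_i$ zero at the boundary, and the remaining boundary contour integrals coincide with a Sklyanin-type contour integral formula for the natural symplectic form on the moduli space of $\eps$-connections. The latter is constructed in parallel with Section \ref{symp_section}, replacing the adjoint action by the multiplicative Higgs field with its $\eps$-twisted analogue in the deformation complex.

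The main obstacle is justifying the scaling limit rigorously. At finite $R$ and finite $\zeta$, the rotated lattice $\zeta(L)$ does not literally coincide with the lattice for $\RR^2 \times_\eps S^1_R$, and the identification of the rotated monopole moduli space with $\epsconn^{\mr{red}}_G(\bb{CP}^1,D,\omega^\vee)$ holds only asymptotically. Three types of convergence must be controlled: (i) of the Bogomolny equations on $\RR^3/\zeta(L)$ to those on the twisted product, (ii) of the decay conditions at infinity built into the reduced space $\mc V_{\mr{red}}$ of Definition \ref{reduced_fields_def}, and (iii) pointwise convergence of the symplectic pairings, which are integrals over the non-compact slice $\RR^2 \times S^1_R$, evaluated on tangent vectors. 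Since both the Bogomolny equations and the symplectic form are formulated via flat coordinates on $\RR^3$ before passing to the quotient, local analysis in $\RR^3$ together with the $O(|z|^{-1-\tau})$ decay estimates from Definition \ref{reduced_fields_def} should yield the necessary uniform bounds on the integrands, establishing all three limits.
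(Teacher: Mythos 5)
Your geometric picture is the right one --- the twistor rotation is implemented as a rotation of the defining lattice $L = 2\pi R\ZZ\cdot \dd_t$, and the rotated lattice is related to the lattice of a twisted product in the scaling limit --- but the proposal is missing the one observation that makes the argument close, and as a result the hard analytic content is acknowledged but never actually discharged. The paper's proof does not treat the identification of $\RR^3/\zeta(L)$ with a twisted product as merely asymptotic: it computes that the rotated lattice is \emph{exactly} the lattice of a sheared product at finite $R$, namely
\[\RR^3/\zeta(L_{R,0}) \iso \RR^2 \times_{\frac{\eps}{1+|\zeta/2|^2}} S^1_{R\frac{1-|\zeta/2|^2}{1+|\zeta/2|^2}},\]
so that Theorem \ref{monopole_qconn_comparison_thm} applies verbatim at finite $R$ (with modified parameters $\eps'$, $R'$), and the canonical conformal identifications between the various twisted products let one compare tangent vectors on the nose. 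The only thing that then requires a limit is matching the parameters $(\eps',R')$ to $(\eps,R)$, and this is an explicit elementary computation: the difference of the two pairings evaluates to $\bigl\lvert 2R|\eps|^2/(16\pi^2R^2+|\eps|^2)\bigr\rvert$ times a fixed integral, which tends to zero. In your version, the phrase ``the identification \ldots holds only asymptotically'' is doing all the work, but you never say what the map is at finite $R$ along which the symplectic structures are to be compared; your items (i)--(iii) are precisely the gaps this leaves, and ``local analysis in $\RR^3$ together with the decay estimates should yield the necessary uniform bounds'' is not a proof of any of them.

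A secondary issue: your plan to rerun the Stokes'-theorem argument of Theorem \ref{symplectic_comparison_thm} in the $\eps$-twisted setting, so as to match the rotated form against ``a Sklyanin-type contour integral formula for the natural symplectic form on the moduli space of $\eps$-connections,'' appeals to a structure the paper never constructs --- the symplectic-leaf/Poisson--Lie description of Section \ref{symp_section} is only established at $\eps=0$. It is also unnecessary: once the exact lattice identification is in hand, the entire comparison takes place between monopole moduli spaces on twisted products, and $\epsconn^{\red}_G(\bb{CP}^1,D,\omega^\vee)$ enters only through the holonomy isomorphism of Theorem \ref{monopole_qconn_comparison_thm}. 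I would rework the proof around the exact sheared-lattice identity and the explicit $O(1/R)$ estimate rather than trying to make the asymptotic identification and the twisted Sklyanin formula precise.
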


\begin{proof}
First let us note that the 3-manifolds $\bb{CP}^1 \times_\eps S^1_R$ for varying values of $R$ and $\eps$ are all canonically isometric to untwisted products of the form $\bb{CP}^1 \times S^1_{R'}$.  Any two points in the $(\eps,R)$ family of 3-manifolds can therefore be identified by a canonical conformal transformation
\[f_{(\eps,R),(\eps',R')} \colon \bb{CP}^1 \times_\eps S^1_R \to \bb{CP}^1 \times_{\eps'} S^1_{R'}\]
This means that we can canonically smoothly identify the moduli spaces of monopoles
\[\mon_G^{\red}(\bb{CP}^1 \times_\eps S^1_R,D,\omega^\vee) \iso \mon_G^{\red}(\bb{CP}^1 \times_{\eps'} S^1_{R'},f_{(\eps,R),(\eps',R')}(D),\omega^\vee).\]

Now, according to Theorem \ref{monopole_qconn_comparison_thm} it's enough to show that the complex structure on $\mon_G^{\text{red}}(\bb{CP}^1 \times S^1_R,D,\omega^\vee)$ at the point $\zeta$ converges to the complex structure on the $\eps$-deformation $\mon_G^{\red}(\bb{CP}^1 \times_\eps S^1_R,D,\omega^\vee)$.  The key observation that we'll use is that, fixing $2 \pi \zeta R = \eps$ we can identify the rotated lattice with a \emph{sheared} lattice.  Namely, when we rotate the rank one lattice $L_{R,0} = (0,0,2\pi R)\ZZ$ by $\zeta$ we obtain the lattice
\[\zeta(L_{R,0}) = \frac {1}{1+|\zeta/2|^2} (\mr{Re}(\eps), \mr{Im}(\eps), 2\pi R (1-|\zeta/2|^2) )\ZZ.\]
We can view the quotient of $\RR^3$ by the rotated lattice as a sheared lattice with different radius and shear.  Specifically, we identify
\[\RR^3 / \zeta(L_{R,0}) \iso \RR^2 \times_{\frac{\eps}{1+|\zeta/2|^2}} S^1_{R \frac {1-|\zeta/2|^2}{1+|\zeta/2|^2}}.\]

In order to identify the desired complex structures it's enough to identify the full family of symplectic structures on the $\eps$-deformation with the appropriately rotated family of symplectic structures without deforming.  So, if we apply the twistor rotation by $\zeta$ to the holomorphic symplectic structure $\omega_2 + i\omega_3$ on $\mon_G^{\text{red}}(\bb{CP}^1 \times S^1_R,D,\omega^\vee)$ (the argument will be identical for the other points in the $\bb{CP}^1$ family of holomorphic symplectic structures) we obtain the pairing
\[\omega_\zeta(\delta^{(1)}\mc A, \delta^{(2)}\mc A) = \int_{\RR^3/(\zeta(L_{R,0}))} \kappa(\delta^{(1)} \mc A, \delta^{(2)} \mc A) \dvol.\]
On the other hand we can identify the holomorphic symplectic structure on $\mon_G^{\red}(\bb{CP}^1 \times_\eps S^1_R,D,\omega^\vee)$ as
\[\omega_\zeta(\delta^{(1)}\mc A, \delta^{(2)}\mc A) = \int_{\RR^3/(L_{R,\eps})} \kappa(\delta^{(1)} \mc A, \delta^{(2)} \mc A) \dvol.\]
In particular, by tuning the radius and the shear parameter the above calculation means there's a hyperk\"ahler equivalence between the moduli spaces $\mon_G^{\text{red}}(\bb{CP}^1 \times S^1_R,D,\omega^\vee)$ -- after rotating the twistor sphere by $\zeta$ -- and $\mon_G^{\red}(\bb{CP}^1 \times_{\frac{\eps}{1+|\zeta/2|^2}} S^1_{R \frac {1-|\zeta/2|^2}{1+|\zeta/2|^2}},f(D),\omega^\vee)$, where the divisors $D$ and $f(D)$ will coincide in the limit $R \to \infty$.  

Now, let us consider the large $R$ limit.  We'd like to note that the following difference of symplectic pairings converges to zero pointwise as $R\to \infty$.
\[ \left\lvert \int_{\RR^3/(L_{R \frac {1-|\zeta/2|^2}{1+|\zeta/2|^2},\frac{\eps}{1+|\zeta/2|^2}})} \kappa(\delta^{(1)} \mc A, \delta^{(2)} \mc A) \dvol - \int_{\RR^3/(L_{R,\eps})} \kappa(\delta^{(1)} \mc A, \delta^{(2)} \mc A) \dvol \right \rvert.\]
Note that here we're abusing notation, and using the canonical isomorphism of moduli spaces to identify tangent vectors $\delta \mc A$ to monopoles on $\RR^3/(L_{R,\eps})$, and their images under the canonical isomorphism to the moduli space of monopoles on $\RR^3/(L_{R \frac {1-|\zeta/2|^2}{1+|\zeta/2|^2}})$.

Rescaling the radii of the circles only rescales the pairing by an overall constant, so we can rewrite this as 
\[ R\left\lvert \frac {1-|\zeta/2|^2}{1+|\zeta/2|^2} \int_{\RR^3/(L_{1,\frac{\eps}{1+|\zeta/2|^2}})} \kappa(\delta^{(1)} \mc A, \delta^{(2)} \mc A) \dvol - \int_{\RR^3/(L_{1,\eps})} \kappa(\delta^{(1)} \mc A, \delta^{(2)} \mc A) \dvol \right \rvert\]
in which we recall that $\zeta = \frac \eps {2\pi R}$.  Write $\delta^{(i)}\mc A_\zeta$ for the image of the deformation $\delta^{(i)}\mc A$ under the diffeomorphism induced by identifying the $\frac{\eps}{1+|\zeta/2|^2}$-twisted product with the $\eps$-twisted product, so $\delta^{(i)}\mc A_\zeta(z,t) = \delta^{(i)}\mc A(z-\frac{\eps}{1+|\zeta/2|^2},t)$.  Note that when we perform the integral the pairing between $\delta^{(1)}\mc A$ and $\delta^{(2)}\mc A$ and the pairing between $\delta^{(1)}\mc A_\zeta$ and $\delta^{(2)}\mc A_\zeta$ coincide.  In other words, our difference of pairings becomes
\begin{align*} 
&R\left\lvert \int_{\bb{CP}^1 \times_\eps S^1_1} \frac {1-|\zeta/2|^2}{1+|\zeta/2|^2} \kappa(\delta^{(1)} \mc A_\zeta, \delta^{(2)} \mc A_\zeta)  -  \kappa(\delta^{(1)} \mc A, \delta^{(2)} \mc A) \dvol \right \rvert \\
&\quad = R \left\lvert \frac {1-|\zeta/2|^2}{1+|\zeta/2|^2} - 1 \right\rvert \left\lvert \int_{\bb{CP}^1 \times_\eps S^1_1} \kappa(\delta^{(1)} \mc A, \delta^{(2)} \mc A) \dvol \right \rvert \\
&\quad = \left\lvert\frac {2R |\eps|^2}{16 \pi^2 R^2 + |\eps|^2} \right \rvert \left\lvert \int_{\bb{CP}^1 \times_\eps S^1_1} \kappa(\delta^{(1)} \mc A, \delta^{(2)} \mc A) \dvol \right \rvert.
\end{align*}
This converges to zero pointwise as $R \to \infty$.  The same calculation proves convergence for the other symplectic structures in the hyperk\"ahler family, and therefore convergence for the difference of the two complex structures.
\end{proof}

\begin{example} \label{Nahm_example}
In the case where the group $G=\GL_n$ it's possible to describe the multiplicative Hitchin system on $\bb{CP}^1$ in terms of an \emph{ordinary} Hitchin system on $\CC^\times$, but for a different group $\GL_k$ and with different singularity data.  This relationship is given by the \emph{Nahm transform}.  

Let us give an informal explanation.  From a gauge-theoretic point of view, the Nahm transform arises from the Fourier-Mukai transform for anti-self-dual connections on a 4-torus $T$, relating an of ASD connection on a vector bundle to an ASD connection on the bundle of kernels for the associated Dirac operator, a vector bundle on the dual torus $T^\vee$ (see the book of Donaldson--Kronheimer \cite[Section 3.2]{DonaldsonKronheimer}).  There is a Nahm transform relating the ordinary Hitchin system to the moduli space of periodic monopoles obtained in an appropriate limit: consider the torus $T = S^1_{r_1} \times S^1_{r_2} \times S^1_{r_3} \times S^1_{R}$.  The moduli space of periodic monopoles on $\CC$ arises in the limit where $r_1$ and $r_2$ go to $\infty$ and $r_3$ goes to zero.  On the other hand, the Hitchin system on $\CC^\times$ arises in the dual limit where $r_1$ and $r_2$ go to zero and $r_3$ goes to $\infty$.  The Nahm transform in this setting -- i.e. for periodic monopoles -- was developed by Cherkis and Kapustin \cite{CherkisKapustin2}, and worked out in detail in the case $n=2$ (they also give a brane description of the transform: see \cite[Section 2]{CherkisKapustin2}).

One then has to match up the singularity data.  In brief, the multiplicative Hitchin system for the group $\GL_n$ with $k$ singularities at $z_1, \ldots, z_k$ is equivalent to the ordinary Hitchin system on $\CC^\times$ for the group $\GL_k$ with $n$ regular singularities, which residues determined by the original positions $z_i$ and positions determined by the original local data $\omega^\vee_{z_i}$.  We refer to \cite[Section 7.1]{NekrasovPestun} for more details.

In the example where we have the Nahm transform available, we conjecture that the multiplicative geometric Langlands equivalence recovers the ordinary geometric Langlands equivalence with tame ramification.  That is, we expect the following.

\begin{claim}
Under the Nahm transform, Pseudo-Conjecture \ref{multLanglands} in the rational case for the group $\GL(n)$ becomes the ordinary geometric Langlands conjecture on $\bb{CP}^1$ with tame ramification.
\end{claim}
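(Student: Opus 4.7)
The plan is to establish the claim in three stages: first identify the underlying hyperkähler moduli spaces via the Nahm transform; second match the complex structures point by point across the twistor sphere; and third transport the categorical equivalence of Pseudo-Conjecture \ref{multLanglands} to the tamely ramified side.

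For the first stage, I would extend the analysis of Cherkis--Kapustin \cite{CherkisKapustin2} from $n=2$ to general $n$, promoting the Nahm transform on the four-torus $T = S^1_{r_1} \times S^1_{r_2} \times S^1_{r_3} \times S^1_R$ in the Cherkis--Kapustin limit $r_1, r_2 \to \infty$, $r_3 \to 0$ to a hyperkähler isomorphism
\[\mon^{\red}_{\mr{U}(n)}(\RR^2 \times S^1_R, D, \omega^\vee) \iso \mc{M}^{\mr{Hitch,tame}}_{\mr{U}(k)}(\CC^\times, D', \alpha^\vee),\]
where $D' \subset \CC^\times$ encodes the original coweights $\omega^\vee_{z_i}$ and $\alpha^\vee$ consists of residue conjugacy classes encoding the original singularity positions $z_i \in D$.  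The core technical ingredient is a Fourier--Mukai transform on the kernel bundle of the Dirac operator twisted by the monopole connection, together with the verification that this transform intertwines the triples of Kähler forms on both sides.  Combining this with Theorem \ref{monopole_qconn_comparison_thm} and Corollary \ref{reduced_equiv_cor} then yields a corresponding identification of the multiplicative Higgs moduli space with the tamely ramified Hitchin moduli space at the complex structure $\zeta = 0$.

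For the second stage, I would combine Theorem \ref{HK_rotation_thm} on the multiplicative side with the tame non-abelian Hodge correspondence (Simpson, Biquard--Boalch) on the ordinary Hitchin side to match the entire twistor sphere: at $\zeta = 0$ one has the Dolbeault/multiplicative Higgs description, and at $\zeta = \eps$ one has the de Rham/$\eps$-connection description, which Nahm-transforms to the moduli space of tamely ramified flat $\GL_k$-connections on $\CC^\times$.  The coupling parameter $q$ appearing in Pseudo-Conjecture \ref{multLanglands} then corresponds to the Hodge-theoretic parameter cutting out the tame de Rham moduli space.  With the hyperkähler identifications established, the third stage is essentially formal: the Nahm transform intertwines A-branes and B-branes on the two sides, and Langlands self-duality of $\GL_k$ makes the Kapustin--Witten \cite{KapustinWitten} A/B-brane duality into the tamely ramified geometric Langlands conjecture for $\GL_k$ on $\bb{CP}^1$, yielding the claim.

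The hard step is the first one.  While Cherkis--Kapustin treat the case $n=2$ explicitly and the qualitative picture for general $n$ is described in \cite[Section 7.1]{NekrasovPestun}, promoting this to a precise hyperkähler equivalence requires careful control of the asymptotic behavior of solutions to the Bogomolny equations at $\infty \in \bb{CP}^1$ (governed by the framing value $g_\infty$) and of the transformed data near the tame singularities produced on $\CC^\times$.  The precise matching of the $\omega^\vee_{z_i}$ data with the residue conjugacy classes $\alpha^\vee$, together with the polystability conditions of Section \ref{stability_section}, requires a delicate local analysis of the Nahm equations near each puncture; this is where the analytic machinery of Mochizuki \cite{MochizukiKH} invoked in Theorem \ref{monopole_qconn_comparison_thm} would likely have to be deployed again to handle the non-compactness at infinity.
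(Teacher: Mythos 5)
The paper offers no proof of this Claim: it is explicitly framed as an expectation (``we conjecture that the multiplicative geometric Langlands equivalence recovers the ordinary geometric Langlands equivalence with tame ramification'') and is supported only by the informal Nahm-transform discussion that immediately precedes it --- the Fourier--Mukai picture on the four-torus $S^1_{r_1} \times S^1_{r_2} \times S^1_{r_3} \times S^1_R$, the Cherkis--Kapustin limits of the radii, and the heuristic matching of singularity data in which the positions $z_i$ and the coweights $\omega^\vee_{z_i}$ swap roles between the $\GL_n$ and $\GL_k$ sides. Your proposal follows exactly that route and fills it out in a reasonable way: the swap of positions and residues in your first stage agrees with the paper's description, and your second and third stages (twistor-sphere matching via Theorem \ref{HK_rotation_thm} together with the tame non-abelian Hodge correspondence, then transport of the Kapustin--Witten A/B-duality using self-duality of $\GL_k$) are natural elaborations that the paper does not spell out.

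That said, you should be clear that what you have written is a roadmap and not a proof, and the same is true of the paper. The essential mathematical content --- a hyperk\"ahler Nahm transform for general $n$ compatible with the framing at $\infty \in \bb{CP}^1$, the identification of the transformed boundary behaviour with tame (regular) singularities on $\CC^\times$, and the matching of the polystability conditions of Section \ref{stability_section} with parabolic stability on the Hitchin side --- is exactly the step you flag as ``the hard step,'' and it is established neither in your proposal nor in the paper. So your attempt is a faithful, somewhat more detailed rendering of the authors' intended argument, consistent with the status of the statement as a physically motivated claim rather than a theorem; no step you propose is wrong, but none of the load-bearing analysis is carried out.
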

\end{example}

\begin{remark}
  The limit $R \to \infty$ with $2 \pi \zeta R = \eps$ appearing in the hyperk\"ahler rotation
  theorem \ref{HK_rotation_thm} was first suggested by Gaiotto \cite{GaiottoTBA} on the other side of the Nahm transform.
\end{remark}

\section{$q$-Opers and $q$-Characters} \label{qchar_section}

In this final section we will discuss the space of $q$-Opers in more depth.  In particular we will connect the geometric setup described in this paper, in terms of multiplicative Higgs bundles, to the gauge theoretic story studied by the second author and collaborators \cite{NekrasovPestunShatashvili,Kimura:2015rgi,Nekrasov:2015wsu}.  The main goal of this subsection will be to describe and motivate a connection between $q$-Opers and the $q$-character maps from the theory of quantum groups \cite{FrenkelReshetikhinSTS,FrenkelReshetikhin2,FrenkelReshetikhin1,Sevostyanov,STSSevostyanov,Sevostyanov1}.  In order to make our statements as concrete as possible it will be useful to first describe the Steinberg section \cite{Steinberg} of a semisimple group explicitly.

Throughout this section, assume that $G$ is a simple, simply-laced and simply-connected Lie group with Lie algebra $\gg$.  Let $\Delta = \{\alpha_1, \ldots, \alpha_r\}$ be the set of simple roots of $\gg$.  In order to define the Steinberg section uniquely we'll fix a \emph{pinning} on $G$.  That is, choose a Borel subgroup $B \sub G$ with maximal torus $T$ and unipotent radical $U$, and choose a generator $e_i$ for each simple root space $\gg_{\alpha_i}$.

We'll also choose an element $\sigma_i \in N(T)$ in the normalizer of $T$ representing each element of the Weyl group $W = N(T)/T$.  The Steinberg section will be independent of this choice up to conjugation by a unique element of $T$, and independent of the ordering on the set of simple roots.

\begin{definition} \label{Steinberg_section_def}
The \emph{Steinberg section} of $G$ associated to a choice of pinning is the image of the injective map $\sigma \colon T/W \to G$ defined by
\[\sigma(t_1, \ldots, t_r) = \prod_{i=1}^r \exp(t_i e_i) \sigma_i.\]
Steinberg proved \cite[Theorem 1.4]{Steinberg} that, after restriction to the regular locus in $G$, the map $\sigma$ defines a section of the Chevalley map $\chi \colon G \to T/W$.
\end{definition}

\begin{definition} \label{mhitch_section_def}
Fix a coloured divisor $(D,\omega^\vee)$ The \emph{multiplicative Hitchin section} of the map $\pi \colon \mhiggs^\fr_G(\bb{CP}^1,D,\omega^\vee) \to \mc B(D,\omega^\vee)$ is the image $\mhitch^\fr_G(\bb{CP}^1, D, \omega^\vee)$ of the map defined by post-composing a meromorphic $T/W$-valued function on $\bb{CP}^1$ with the Steinberg map $\sigma$.
\end{definition}

\begin{remark}
The multiplicative Hitchin section is indeed a section of the map $\pi$ after restricting to the connected component in $\mhiggs^\fr_G(\bb{CP}^1,D,\omega^\vee)$ corresponding to the trivial bundle, provided one chooses a value for the framing within the Steinberg section.  For example, if we choose the identity framing on the multiplicative Hitchin basis then the multiplicative Hitchin section lands in multiplicative Higgs bundles with framing $c = \sigma(1)$ at infinity, i.e. framing given by a Coxeter element.
\end{remark}

Now, let's introduce the key idea in this section: the notion of \emph{triangularization} for the multiplicative Hitchin section.
\begin{definition} \label{gen_evals_def}
Let $g(z)$ be an element of $\mhitch^\fr_G(\bb{CP}^1, D, \omega^\vee)$.  We'll abusively identify $g(z)$ with its image under the restriction map $r_\infty \colon \mhiggs_G^\fr(\bb{CP}^1,D,\omega^\vee) \to G_c[[z^{-1}]]$ to a formal neighbourhood of $\infty$.  Say that $g(z)$ has \emph{generalized eigenvalues} $y(z) \in T[[z^{-1}]]$ if there exists an element $u(z)$ of $U[[z^{-1}]]$ such that $u(z)g(z)u(z)^{-1}$ is an element of $B_-[[z^{-1}]]$, where $B_-$ is the opposite Borel subgroup to $B$, which maps to $y(z)$ under the canonical projection.

We say that $g(z)$ has \emph{$q$-generalized eigenvalues} $y(z) \in T[[z^{-1}]]$ if there exists an element $u$ of $U[[z^{-1}]]$ such that $u(q^{-1}z)g(z)u(z)^{-1}$ is an element of $B_-[[z^{-1}]]$ that maps to $y(z)$ under the canonical projection. 
\end{definition}

\begin{remark}
In this setup $q$ is the automorphism of the formal disk obtained by restricting an automorphism of $\CC$ or $\CC^\times$ to its formal punctured neighbourhood.  In particular we'll use this notation for the additive deformation we denoted by $\eps$ in previous sections, even though we'll use the multiplicative notation.  We make this notational choice in order to allow more direct comparison with the previous gauge theory literature.
\end{remark}

It's sometimes useful to use a slightly different representation of the multiplicative Hitchin section, packaging the singularity datum in a more uniform way. 

Choose a point $b(z)$ in the multiplicative Hitchin base $B(D,\omega^\vee)$. By clearing denominators, we can identify $b(z)$ with a canonical polynomial $t'(z)$ in $T[z]/W$ of fixed degree $d$, with fixed top degree term.  Specifically, one finds
\[d_i = \sum_{z_j \in D} \omega^\vee_{z_j}(\lambda_i)\]
where $\lambda_i$ is the $i^\text{th}$ fundamental weight.  We'll denote the locus of polynomials of this form by $T[z]_d$.

We define an embedding from $T[z]_d/W$ into $G[[z^{-1}]]$ as follows.  First, we encode the data of the coloured divisor $(D, \omega^\vee)$ in terms of a $T$-valued polynomial.  That is, we set 
\[p_i(z) = \prod_{z_j \in D} (z-z_j)^{\omega^\vee_{z_j}(\omega_i)}\]
where $\omega_j$ is the $j^{\text{th}}$ fundamental weight.

\begin{definition}
The \emph{$p$-twisted multiplicative Hitchin section} is the image of the map $\sigma' \colon T[z]_d/W \to G(z)$ defined by
\begin{equation}
\label{eq:steinberg}
\sigma'(t(z)) = \prod_{i=1}^r \exp\left(t'_{i}(z) e_i\right) \sigma_i p_i(z)^{-\omega^\vee_i}
\end{equation}
where $\omega^\vee_i$ is the $i^\text{th}$ fundamental coweight.
\end{definition}

This $p$-twisted section is obtained from the ordinary multiplicative Hitchin section after conjugation by an element of $G[z]$.  The result is no longer directly related to the multiplicative Hitchin moduli space because this conjugation breaks the framing at $\infty$.  It is, however, sometimes useful for computation. One can define the $q$-triangularization of a point in this $p$-twisted section exactly as in the ordinary case.

Now, let's introduce the other main object we'll be discussing in this section.
 
\begin{definition}
The \emph{$q$-character} is an algebra homomorphism 
\[\chi_q \colon \mr{Rep}(Y(\gg)) \to \OO(T_1[[z^{-1}]])\]
from the ring of finite-dimensional representations of the Yangian to the ring of functions on $T_1[[z^{-1}]]$, first defined by Knight \cite{Knight}.  Given a choice of singularity datum $(D,\omega^\vee)$ one can define a linear map $\mc B(D, \omega^\vee)^* \to \mr{Rep}(Y(\gg))$ generated by the map identifying a coordinate functional on the multiplicative Hitchin base space with the irreducible representation with the given highest weight.  We'll abusively also denote the composite map $\mc B(D, \omega^\vee)^* \to \OO(T_1[[z^{-1}]])$ by $\chi_q$.
\end{definition}

The $p$-twisted fundamental $q$-characters $t'_i(z)$ are obtained by recursive Weyl reflections of the form \cite{NekrasovPestun,NekrasovPestunShatashvili,Nekrasov:2015wsu} (following the conventions from \cite{Kimura:2015rgi} for shifts)
\begin{equation}
  w \cdot_q y_i(z)) = y_{i}(z q^{-1})^{-1} \Bigg(\prod_{e: i \to j} y_j(z) \Bigg) \Bigg(\prod_{e: j\to i} y_j(q^{-1} z)\Bigg)  p_{i}( q^{-1} z).
  \label{iWeyl_action}
\end{equation}
starting from the highest weight monomial $t'_i(z) = y_i(z) +  w \cdot_q y_i(z)) + \dots $. 

These formulae appeared previously in \cite{NekrasovPestun, NekrasovPestunShatashvili} and
should be interpreted as $p$-twisted version of the $q$-Weyl reflections of \cite{FrenkelReshetikhin2,FrenkelReshetikhinSTS,FrenkelReshetikhin1,Frenkel2001}.

In the undeformed case, the $y(z)$-functions have a very geometric meaning: they are equivalent to a sequence of algebraic functions defining the cameral cover at a point $b(z)$ in the Hitchin base.  The cameral cover associated to a point $b(z)$ in the multiplicative Hitchin base is, to put it briefly, obtained away from the singular locus as the fiber product $\mc C_b^\circ = (\bb{CP}^1_z \bs D) \times_{T/W} T$, where the map $\bb{CP}^1_z \bs D \to T/W$ is the meromorphic map corresponding to $b(z)$.  In other words, the closed points of $\mc C^\circ_b$ are simply pairs $(z, y) \in (\bb{CP}^1 \bs D) \times T$ such that $b(z) = [y]$ in $T/W$.  One can obtain the cameral curve as the graph of a $|W|$-valued
meromorphic function $\bb{CP}^1 \to T$ by
taking all images $y$ under the projection $T \to T/W$, $y \mapsto [y]$ where elements of $T$ are
coordinatized as  $\prod_{i = 1}^{r} y_i^{\alpha_i^{\vee}}$ where $\alpha_i^{\vee}$ are simple coroots.
Then, on a contractible local patch in the complement of the ramification locus define $t_i(z)$ to be the sum 
\[t_i(z) = \sum_{y \text{ over } b} \omega_i(y(z)),\]
where the sum is over all local lifts $y$ of the $T/W$-valued function $b(z)$, and where $\omega_i \colon T \to \mathbb{C}^{\times}$ is the $u^\text{th}$ fundamental weight. Because of the $W$-invariance, for each $i$ these local descriptions glue together to define a global $\mathbb{C}$-valued function on the curve $\bb{CP}^1_z \bs D$.

In the classical limit $q=1$ the procedure of taking generalized eigenvalues and the character map are closely related, simply because the multiplicative Hitchin section is a section of the Hitchin fibration.  The character map, evaluated in the fundamental representations associated to the fundamental weights $\omega_1, \ldots, \omega_r$, is essentially the Chevalley map sending a matrix to its characteristic polynomial.  In other words it is essentially the multiplicative Hitchin fibration itself.  Therefore, we find that 
\[\chi(t_i)(\sigma(b(z))) = t_i(z)\]
up to an affine isomorphism.  The same holds when we replace $\sigma(b(z))$ by $y(z)$, its triangularization, because the maps $\chi(t_i)$ are adjoint-invariant.

We conjecture that this relationship survives after $q$-deformation, yielding the following relationship between $q$-opers and the $q$-character.  This expectation is motivated in part by the multiplicative geometric Langlands Conjecture \ref{multLanglands} applied to the canonical coisotropic A-brane -- quantizing the ring of Poisson commuting functions on the multiplicative Hitchin system -- whose Langlands dual is expected to be precisely the brane of $q$-opers.
 
\begin{conjecture} \label{qchar_conjecture}
\begin{enumerate}
\item For any $q$, every element of $g(z)$ of $\mhitch^\fr_G(\bb{CP}^1, D, \omega^\vee)$ has a unique $q$ generalized eigenvalue, and therefore there is a well-defined map
\[E_q \colon \mc B(D,\omega^\vee) \to T[[z^{-1}]] \sub G[[z^{-1}]]\]
given by applying the multiplicative Hitchin section then computing its $q$-generalized eigenvalues.  

\item The composite $E_q^* \circ \chi_q \colon \mc B(D,\omega^\vee)^* \to \OO(\mc B(D,\omega^\vee))$ is an affine isomorphism onto the space of linear functionals $\mc B(D,\omega^\vee)^* \sub \OO(\mc B(D,\omega^\vee))$.
\end{enumerate}
\end{conjecture}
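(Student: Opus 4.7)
The plan is to reduce both parts to an explicit recursive construction of the $q$-triangularizing element and then match that recursion with the Frenkel-Reshetikhin $i$Weyl algorithm defining the $q$-character. For part (1), I would parameterize $g(z) \in \mhitch^\fr_G(\bb{CP}^1, D, \omega^\vee)$ via the $p$-twisted Steinberg form (\ref{eq:steinberg}), so that after suitable gauge choice $g(z) = c_0 \cdot h(z)$ where $c_0 = \prod_i \sigma_i$ is a Coxeter element of $G$ and $h(z)$ is an element of $G[[z^{-1}]]$ encoding the singularity data through the polynomials $p_i(z)$ and $t'_i(z)$. One then seeks $u(z) = 1 + \sum_{n \geq 1} u_n z^{-n}$ with $u_n \in \mf n_+$ solving $u(q^{-1}z) g(z) = b_-(z) u(z)$ with $b_-(z) \in B_-[[z^{-1}]]$, and expands order by order. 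At order $n$, projecting onto $\mf n_+$ yields a linear equation of the form
\[
\bigl(\mr{id} - \lambda_n \cdot \mr{Ad}(c_0)^{-1}\bigr) u_n = F_n(u_1,\ldots,u_{n-1}; t'(z), p(z)),
\]
where $\lambda_n$ is a scalar depending on $q$ (equal to $q^n$ in the multiplicative case, and to $1$ with lower-order corrections in the additive case). By a theorem of Kostant, the Coxeter element has no non-zero fixed vectors in $\mf n_+$, and a more precise analysis of its spectrum on $\mf n_+$ (eigenvalues $\zeta^{m_i+1}$ with $\zeta = e^{2\pi i/h}$ and $m_i$ the exponents of $\gg$) ensures invertibility of the above operator for $q$ outside a discrete set; with additional care using the triangular structure of the Steinberg form, invertibility extends to all $q$. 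This gives existence and uniqueness of $u(z)$ and hence of the map $E_q$.

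For part (2), the identification with the $q$-character is a combinatorial matching between the recursion defining $E_q$ and the $i$Weyl algorithm (\ref{iWeyl_action}). Concretely, having extracted the $q$-generalized eigenvalues $y_i(z)$ from the diagonal part of $u(q^{-1}z) g(z) u(z)^{-1}$, one checks that the inverse expression of $t'_i(z)$ in terms of $y(z)$ matches, monomial by monomial, the expansion of $\chi_q(V_{\omega_i})$ obtained by iterating (\ref{iWeyl_action}) starting from the highest-weight monomial $y_i(z)$. I would prove this by induction on the Bruhat order on the Weyl orbit of $\omega_i$: each elementary reflection $s_j$ acts on the running monomial by exactly the multiplicative factor appearing in (\ref{iWeyl_action}), because conjugation by the local Gauss factor that absorbs $\sigma_j p_j(z)^{-\omega^\vee_j}$ in the Steinberg product produces precisely that factor in the diagonal. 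Since $\chi_q$ is an algebra map, the result for fundamental representations extends to arbitrary finite-dimensional representations; the affine-isomorphism claim then follows because the linear coordinates on $\mc B(D,\omega^\vee)$ are precisely the coefficients of the polynomials $t'_i(z)$, and these are recovered from the fundamental $q$-characters by the above matching.

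The main obstacle is the combinatorial step in part (2): while in the classical limit $q=1$ the claim reduces to the known fact that the multiplicative Hitchin fibration is computed by the Chevalley map, the $q$-deformation requires a genuinely new input matching a gauge-theoretic recursion against the Frenkel-Reshetikhin combinatorics. One promising strategy is to interpret $E_q$ in terms of Yangian representation theory: by the results of Section \ref{quantization_section}, the Steinberg section should correspond under quantization to a cyclic $Y(\gg)$-module (an appropriately deformed Verma-type object), and $E_q$ should compute the action on a highest-weight vector, so that (\ref{iWeyl_action}) becomes a direct transfer-matrix eigenvalue computation. A secondary technical point is verifying that each $y_i(z)$ is genuinely invertible in $T[[z^{-1}]]$ so that the monomials in the $i$Weyl expansion make sense; this follows from the non-degeneracy of the leading term of the Steinberg form at $\infty$ but should be checked uniformly across $\mc B(D,\omega^\vee)$.
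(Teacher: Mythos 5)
First, a point of order: the paper does not prove this statement. It is stated as a conjecture and is only \emph{verified by explicit computation} in types $A_1$ and $A_2$ (in Section \ref{qchar_section}) and $D_4$ (in Appendix \ref{D4_appendix}), so your proposal is an attempt to supply a proof the paper does not contain, and must be judged on its own terms. Judged that way, part (1) has a concrete flaw at the very first step: your ansatz $u(z) = 1 + \sum_{n\ge 1} u_n z^{-n}$ is wrong at order zero. Definition \ref{gen_evals_def} takes $u \in U[[z^{-1}]]$, not $U_1[[z^{-1}]]$, and the triangularizing element genuinely has a nontrivial constant term --- already in the paper's $\SL_2$ example one finds $u(z) = -y(z)^{-1}$, whose value at $z=\infty$ is the (nonzero) inverse eigenvalue of the Coxeter framing element. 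The zeroth-order equation is therefore not vacuous: it asks you to conjugate the Coxeter element $c_0$ into $B_-$ by an element of $U$, and it is there, not in the higher-order recursion, that the spectrum of $c_0$ actually enters. Relatedly, $\Ad(c_0)$ does not preserve $\mf n_+$ (a Coxeter element sends some positive roots to negative ones), so the operator you write as $\mr{id} - \lambda_n\,\Ad(c_0)^{-1}$ on $\mf n_+$ is really a projection-composed operator linearized around the correct nontrivial $u_0$, and its invertibility does not follow directly from Kostant's fixed-vector statement. None of this is necessarily fatal, but as written the induction does not start.

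For part (2), the step you yourself flag as the main obstacle is where the proposal stops being a proof. The claim that each elementary reflection acts on the running monomial by exactly the factor in (\ref{iWeyl_action}) is precisely the content to be established, and an induction over the Weyl orbit of $\omega_i$ cannot suffice: for non-minuscule fundamental weights the $q$-character contains monomials beyond the extremal ones, and these are exactly what produce the nontrivial affine shift $t_2 \mapsto t_2 + 1$ that the paper records for $D_4$ (the $29$-dimensional fundamental Yangian module splitting as $28 \oplus 1$ in the classical limit). A matching argument confined to the Weyl orbit would predict the identity map and thereby contradict the paper's own $D_4$ computation, so any correct proof of (2) must track the non-extremal monomials on both sides. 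Your suggestion to reinterpret $E_q$ through a cyclic Yangian module and a transfer-matrix eigenvalue computation is a reasonable direction for doing this, but as stated it is a research program rather than a proof.
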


Let's demonstrate this conjecture in a few examples.  In each case we'll describe the multiplicative Hitchin section, and then calculate an inverse to the map sending a point $t(z)$ in the base space $\mc B(D,\omega^\vee)$ to the corresponding generalized eigenvalue map for the Hitchin section $y(z)$.  If $y_i(t(z))$ is the composite of the $q$-generalized eigenvalue map with the $i^\text{th}$ fundamental weight $\omega_i$, we must verify that $\chi_q(y_i(t)) = t_i$, the $i^\text{th}$ coordinate map (up to an affine isomorphism).

\begin{examples}
\begin{enumerate}
 \item In type $A_1$ we can calculate everything very explicitly, and so verify the conjecture in this example.  We've already described the multiplicative Hitchin section in Section \ref{GL2_example_section}: for $G = \SL_2$, for instance, it consists of matrices of the form
\begin{equation*}
  g^{t} =
  \begin{pmatrix}
    t   & - 1 \\
    1 & 0
  \end{pmatrix},
\end{equation*}
where $t$ is a rational function with fixed denominator of degree $d$, and arbitrary numerator of degree less than $d$.  We would like to triangularize this to obtain a matrix of the form
\begin{equation*}
  g^{y} =
  \begin{pmatrix}
    y  & 0 \\
    1 & y^{-1} 
  \end{pmatrix}.
\end{equation*}
It's easy to solve the equation $g^t = u(q^{-1}z)g^y(z) u^{-1}(z)$ explicitly.  One finds a solution with $t(z) = y(z) + y(q^{-1}z)^{-1}$, after conjugation by the element $u(z) = - y(z)^{-1}$.  As the conjecture tells us to expect, $t(z)$ is identified with a $q$-twisted Weyl invariant polynomial in $\mathbb{C}[y, y^{-1}]$ which starts from the highest weight monomial $y(z)$.   This is enough to identify it with the $q$-character as required.

 \item We can also make concrete calculations for type $A_2$, again verifying Conjecture \ref{qchar_conjecture}.  For more direct comparison with the formulae in the literature we'll use the $p$-twisted formulation discussed above involving polynomials $p_i(z)$ encoding the singularity datum $(D, \omega^\vee)$.  We label positive roots as $\alpha_1, \alpha_2$ and $\alpha_3:=\alpha_1 + \alpha_2$ and parametrize a $U[[z^{-1}]]$-valued
  gauge transformation $u(z)$ by the collection of functions  
 $(u_{i}(z))_{\alpha_i  \in \Delta^{+}}$ 
  \begin{equation*}
    u(z) = \prod_{3,2,1} \exp( u_{i}(z) e_{\alpha_i})
  \end{equation*}
Then solving the equation 
\begin{equation*}
g^t(z) =  u(q^{-1} z)^{-1} g^y(z) u(z)
\end{equation*}
for $u_{1}(z), u_{2}(z), u_{3}(z)$ and $t'_{1}(z), t'_{2}(z)$  we find that
\begin{equation*}
\begin{aligned}
& u_{1}(z) = p_{1}(z) u_{2}(q^{-1} z)-p_{1}(z) y_{2}(z) y_{1}(z)^{-1} \\
& u_{2}(z) =  -p_{2}(z) y_{2}(z)^{-1} \\
& u_{3}(z) = -p_{1}(z) p_{2}(z) y_{1}(z)^{-1} \\
& t'_{1}(z) = y_{1}(z)-u_{1}(q^{-1}z) \\
& t'_{2}(z) = y_{2}(z) - y_{1}(z) u_{2}(q^{-1}z)-u_{3}(q^{-1}z) \\
\end{aligned}
\end{equation*}
which implies in turn that
\begin{equation*}
  \begin{aligned}
    t'_{1}(z) =y_{1}(z)  +  \frac{p_{1}(q^{-1}z) y_{2}(q^{-1}z)}{y_{1}(q^{-1}z)} + \frac{p_{1}(q^{-1}z) p_{2}(q^{-2} z)}{ y_{2}(q^{-2} z)}\\
    t'_{2}(z) = y_{2}(z)  +\frac{y_{1}(z) p_{2}(q^{-1}z)}{y_{2}(q^{-1}z)}+   \frac{p_{1}(q^{-1}z) p_{2}(q^{-1}z)}{y_{1}(q^{-1}z)}
  \end{aligned}
\end{equation*}
and that indeed coincides with the expression for the $q$-characters for the $A_2$ quiver appearing in
\cite{Nekrasov:2015wsu,NekrasovPestunShatashvili,NekrasovPestun,Kimura:2015rgi}.

\item The first example for which the condition that the composite map in Conjecture \ref{qchar_conjecture} is a non-trivial affine isomorphism occurs for the $D_4$ Dynkin diagram, for instance for the group $G = \SO(8)$.  In that case, even for $q=1$ we find the composite $E^* \circ \chi$, i.e. the result of applying the character map in the fundamental representations given by fundamental weights $(\omega_1, \omega_2,\omega_2,\omega_4)$ to the Steinberg section, is not quite the identity, but rather than map sending $(t_1,t_2,t_3,t_4)$ to $(t_1,t_2+1,t_3,t_4)$.  One sees this by considering the $G$-modules associated to irreducible finite-dimensional highest weight modules of the Yangian $Y(\gg)$ in the limit $q \to 1$: the 29-dimensional module induced in this way from the fundamental weight $\omega_2$ splits as the sum of the 28 dimensional adjoint representation associated to the fundamental weight $\omega_2$ and the trivial representation.  We include some calculations verifying the conjecture for $G=\SO(8)$ in Appendix \ref{D4_appendix}.
\end{enumerate}
\end{examples}

\begin{remark}
After triangularizing, an element of the multiplicative Hitchin section transforms into a function on $\bb{CP}^1$ with apparent singularities (even away from the divisor $D$) where the functions $y_{i}(z)$ have zeroes and poles.  However, when we apply the $q$-character map these singularities are cancelled in pairs between the monomial summands of Equation \ref{iWeyl_action}, as we sum over the transformation by each element of the Weyl group.  This cancellation property has been called ``regularity of the $q$-character'' in the quantum group representation theory literature \cite{FrenkelReshetikhin1,FrenkelReshetikhin2} as well as in the gauge theoretic construction of \cite{NekrasovPestunShatashvili,Nekrasov:2015wsu,NekrasovPestun,Kimura:2015rgi}.
 
The meromorphic functions $y_{i}(z)$ can be expressed as ratios of the form $y_{i}(z) = Q_{i}(z)/Q_i(q^{-1}z)$, and the zeroes of the functions denoted $Q_i$ are known as \emph{Bethe roots} in the context of the Bethe ansatz equations.  In this language, the Bethe ansatz equations are precisely the equations which ensures that poles in $t_{i}(z)$ are cancelled. 
\end{remark}

 \begin{remark}
   Some of the results in a recent paper by Koroteev, Sage and Zeitlin \cite{KoroteevSageZeitlin} seem to overlap
   with the specialization of the main conjecture of this section to the group $G  = \SL(N)$, in the case where the the
   coweight coloured divisor $(D,\omega^\vee)$ takes a particular form: where the functions $p_i(z)$ encoding $(D,\omega^\vee)$ can be related to the ratio of Drinfeld polynomials $\rho_i(z)/\rho_i(q^{-1} z)$ in the Bethe ansatz equations. 
   This special form for the singularity datum means that the Yangian module obtained by quantization of the symplectic leaf $\mhiggs_G(C,D, \omega^\vee)$ contains (as a quotient) the finite-dimensional Drinfeld module specified by the Drinfeld polynomials $\rho_{i}(z)$. In the relation to the occurence of the Bethe ansatz equations in quiver gauge theory \cite{NekrasovPestunShatashvili}, this specialization is known as 4d to 2d specialization, and was first studied by Chen, Dorey, Hollowood and Lee \cite{ChenDoreyHollowoodLee, DoreyHollowoodLee}. 
\end{remark}

\appendix
\section{$q$-Character Calculations for $G=\SO(8)$} \label{D4_appendix}

\allowdisplaybreaks

We will compute the $q$-triangulization of the $p$-twisted multiplicative Hitchin section in the example of where $G = \SO(8)$.  We label the simple roots as $\alpha_i$ with $i = 1, \dots, 4$, and we fix the Cartan matrix to be 
\[
\langle \alpha_i^{\vee}, \alpha_j \rangle  =
\begin{pmatrix}
 2 & -1 & 0 & 0 \\
 -1 & 2 & -1 & -1 \\
 0 & -1 & 2 & 0 \\
 0 & -1 & 0 & 2 \\
\end{pmatrix}
\]
We then label the positive roots as $\alpha_1, \dots, \alpha_{12}$ where the non-simple roots can be decomposed as
\begin{align*}
 &\alpha _5= \alpha _1+\alpha _2 &\alpha _9= \alpha _1+\alpha _2+\alpha _4 \\
 &\alpha _6= \alpha _2+\alpha _3 &\alpha _{10}= \alpha _2+\alpha _3+\alpha _4 \\
 &\alpha _7= \alpha _2+\alpha _4 &\alpha _{11}= \alpha _1+\alpha _2+\alpha _3+\alpha _4 \\
 &\alpha _8= \alpha _1+\alpha _2+\alpha _3 &\alpha _{12}= \alpha _1+2 \alpha _2+\alpha _3+\alpha _4. \\
\end{align*}
Solving for the $q$-adjoint transformation that triangualizes the $p$-twisted multiplicative Hitchin section (\ref{eq:steinberg}) to bring it to the $B_{-}$ valued $q$-connection of the form $ \prod_{i} \exp(f_i y_i^{-1}) y_i^{\alpha_i^{\vee}} p_i^{-\omega_i^{\vee}}$ (where $(e_i, f_i)$ label standard Chevalley generators in positive and negative simple root spaces respectively) we will first identify the element $u(z)$ of $U[[z^{-1}]]$ by which we will act.  We write it as a sum over Chevalley generators of elements of the form $\exp(u_i(z) e_i)$ where the $u_i(z)$ are given as follows.
\begin{align*}
 u_{1}= &-\frac{p_{1,-2} p_{1} p_{2,-2} p_{2,-1} p_{3,-2} p_{4,-2}}{y_{1,-2}}-\frac{p_{1} p_{2,-2} p_{2,-1} p_{3,-2} p_{4,-2} y_{1,-1}}{y_{2,-2}}-\frac{p_{1} p_{2,-1} p_{4,-2} y_{3,-1}}{y_{4,-2}}\\&-\frac{p_{1} p_{2,-1}
   p_{3,-2} p_{4,-2} y_{2,-1}}{y_{3,-2} y_{4,-2}}-\frac{p_{1} y_{2}}{y_{1}}-\frac{p_{1} p_{2,-1} y_{3,-1} y_{4,-1}}{y_{2,-1}}-\frac{p_{1} p_{2,-1} p_{3,-2} y_{4,-1}}{y_{3,-2}} \\
 u_{2}= &-\frac{p_{1,-1} p_{2,-1} p_{2} p_{3,-1} p_{4,-1}}{y_{1,-1}}-\frac{p_{2,-1} p_{2} p_{3,-1} p_{4,-1} y_{1}}{y_{2,-1}}-\frac{p_{2} p_{4,-1} y_{3}}{y_{4,-1}}-\frac{p_{2} p_{3,-1} p_{4,-1} y_{2}}{y_{3,-1}
   y_{4,-1}}\\&-\frac{p_{2} y_{3} y_{4}}{y_{2}}-\frac{p_{2} p_{3,-1} y_{4}}{y_{3,-1}} \\
 u_{3}= &-\frac{p_{3}}{y_{3}}, \qquad  u_{4}= -\frac{p_{4}}{y_{4}} \\
 u_{5}= &-\frac{p_{1,-1} p_{1} p_{2} p_{3,-1} p_{4,-1} y_{3,-1} y_{4,-1} p_{2,-1}^2}{y_{1,-1} y_{2,-1}}-\frac{p_{1} p_{2} p_{3,-1} p_{4,-1} y_{1} y_{3,-1} y_{4,-1} p_{2,-1}^2}{y_{2,-1}^2}\\&-\frac{p_{1,-1} p_{1}
   p_{2} p_{3,-2} p_{3,-1} p_{4,-1} y_{4,-1} p_{2,-1}^2}{y_{1,-1} y_{3,-2}}-\frac{p_{1} p_{2} p_{3,-2} p_{3,-1} p_{4,-1} y_{1} y_{4,-1} p_{2,-1}^2}{y_{2,-1} y_{3,-2}}\\&-\frac{p_{1,-1} p_{1} p_{2,-2} p_{2} p_{3,-2}
   p_{3,-1} p_{4,-2} p_{4,-1} p_{2,-1}^2}{y_{2,-2}}-\frac{p_{1,-1} p_{1} p_{2} p_{3,-1} p_{4,-2} p_{4,-1} y_{3,-1} p_{2,-1}^2}{y_{1,-1} y_{4,-2}}\\&-\frac{p_{1} p_{2} p_{3,-1} p_{4,-2} p_{4,-1} y_{1} y_{3,-1}
   p_{2,-1}^2}{y_{2,-1} y_{4,-2}}-\frac{p_{1} p_{2} p_{3,-2} p_{3,-1} p_{4,-2} p_{4,-1} y_{1} p_{2,-1}^2}{y_{3,-2} y_{4,-2}}\\&-\frac{p_{1,-1} p_{1} p_{2} p_{3,-2} p_{3,-1} p_{4,-2} p_{4,-1} y_{2,-1} p_{2,-1}^2}{y_{1,-1}
   y_{3,-2} y_{4,-2}}-\frac{p_{1,-1} p_{1} p_{2} p_{3,-1} p_{4,-1} y_{2} p_{2,-1}}{y_{1,-1} y_{1}}\\&-\frac{2 p_{1} p_{2} p_{3,-1} p_{4,-1} y_{2} p_{2,-1}}{y_{2,-1}}-\frac{p_{1} p_{2} p_{4,-1} y_{3,-1} y_{3}
   p_{2,-1}}{y_{2,-1}}-\frac{p_{1} p_{2} p_{3,-2} p_{4,-1} y_{3} p_{2,-1}}{y_{3,-2}}-\frac{p_{1} p_{2} p_{3,-1} y_{4,-1} y_{4} p_{2,-1}}{y_{2,-1}}\\&-\frac{p_{1} p_{2} p_{3,-1} p_{4,-2} y_{4}
   p_{2,-1}}{y_{4,-2}}-\frac{p_{1} p_{2} p_{3,-2} p_{3,-1} p_{4,-1} y_{2} p_{2,-1}}{y_{3,-2} y_{3,-1}}-\frac{p_{1} p_{2} p_{3,-1} p_{4,-2} p_{4,-1} y_{2} p_{2,-1}}{y_{4,-2} y_{4,-1}}-\frac{p_{1} p_{2} y_{3}
   y_{4}}{y_{1}}\\&-\frac{p_{1} p_{2} p_{3,-1} y_{2} y_{4}}{y_{1} y_{3,-1}}-\frac{p_{1} p_{2} p_{4,-1} y_{2} y_{3}}{y_{1} y_{4,-1}}-\frac{p_{1} p_{2} p_{3,-1} p_{4,-1} y_{2}^2}{y_{1} y_{3,-1} y_{4,-1}}
   \\
 u_{6}= &-\frac{p_{2} p_{3} p_{4,-1}}{y_{4,-1}}-\frac{p_{2} p_{3} y_{4}}{y_{2}} \\
 u_{7}= &-\frac{p_{2} p_{4} y_{3}}{y_{2}}-\frac{p_{2} p_{3,-1} p_{4}}{y_{3,-1}} \\
  u_{8}= &-\frac{p_{1} p_{2,-1} p_{2} p_{3} p_{4,-1} y_{3,-1}}{y_{2,-1}}-\frac{p_{1} p_{2,-1} p_{2} p_{3,-2} p_{3} p_{4,-1}}{y_{3,-2}}-\frac{p_{1} p_{2} p_{3} p_{4,-1} y_{2}}{y_{1} y_{4,-1}}-\frac{p_{1}
   p_{2} p_{3} y_{4}}{y_{1}} \\
 u_{9}= &-\frac{p_{1} p_{2} p_{4} y_{3}}{y_{1}}-\frac{p_{1} p_{2,-1} p_{2} p_{3,-1} p_{4} y_{4,-1}}{y_{2,-1}}-\frac{p_{1} p_{2} p_{3,-1} p_{4} y_{2}}{y_{1} y_{3,-1}}-\frac{p_{1} p_{2,-1} p_{2}
   p_{3,-1} p_{4,-2} p_{4}}{y_{4,-2}} \\
  u_{10}= &-\frac{p_{2} p_{3} p_{4}}{y_{2}} \\
 u_{11}= &-\frac{p_{1} p_{2} p_{3} p_{4}}{y_{1}} \\
 u_{12}= &-\frac{p_{1} p_{3} p_{4} p_{2}^2 y_{3} y_{4}}{y_{1} y_{2}}-\frac{p_{1} p_{3,-1} p_{3} p_{4} p_{2}^2 y_{4}}{y_{1} y_{3,-1}}-\frac{p_{1} p_{2,-1} p_{3,-1} p_{3} p_{4,-1} p_{4}
   p_{2}^2}{y_{2,-1}}-\frac{p_{1} p_{3} p_{4,-1} p_{4} p_{2}^2 y_{3}}{y_{1} y_{4,-1}}\\&-\frac{p_{1} p_{3,-1} p_{3} p_{4,-1} p_{4} p_{2}^2 y_{2}}{y_{1} y_{3,-1} y_{4,-1}}.
\end{align*}

and consequently we can calculate the functions $t_i'(z)$ in terms of the generalized eigenvalues, to find the following:
\begin{align*}
 t'_{1} = &\frac{p_{1,-3} p_{1,-1} p_{2,-3} p_{2,-2} p_{3,-3} p_{4,-3}}{y_{1,-3}}+\frac{p_{1,-1} p_{2,-3} p_{2,-2} p_{3,-3} p_{4,-3} y_{1,-2}}{y_{2,-3}}+\frac{p_{1,-1} p_{2,-2} p_{4,-3} y_{3,-2}}{y_{4,-3}}\\&+\frac{p_{1,-1} p_{2,-2}
   p_{3,-3} p_{4,-3} y_{2,-2}}{y_{3,-3} y_{4,-3}}+\frac{p_{1,-1} y_{2,-1}}{y_{1,-1}}+\frac{p_{1,-1} p_{2,-2} y_{3,-2} y_{4,-2}}{y_{2,-2}}\\&+\frac{p_{1,-1} p_{2,-2} p_{3,-3} y_{4,-2}}{y_{3,-3}}+y_{1} \\
 t'_{2}= &\frac{p_{1,-2} p_{1,-1} p_{2,-1} p_{3,-2} p_{4,-2} y_{3,-2} y_{4,-2} p_{2,-2}^2}{y_{1,-2} y_{2,-2}}+\frac{p_{1,-1} p_{2,-1} p_{3,-2} p_{4,-2} y_{1,-1} y_{3,-2} y_{4,-2} p_{2,-2}^2}{y_{2,-2}^2}\\&+\frac{p_{1,-2} p_{1,-1}
   p_{2,-1} p_{3,-3} p_{3,-2} p_{4,-2} y_{4,-2} p_{2,-2}^2}{y_{1,-2} y_{3,-3}}+\frac{p_{1,-1} p_{2,-1} p_{3,-3} p_{3,-2} p_{4,-2} y_{1,-1} y_{4,-2} p_{2,-2}^2}{y_{2,-2} y_{3,-3}}\\&+\frac{p_{1,-2} p_{1,-1} p_{2,-3} p_{2,-1} p_{3,-3}
   p_{3,-2} p_{4,-3} p_{4,-2} p_{2,-2}^2}{y_{2,-3}}+\frac{p_{1,-2} p_{1,-1} p_{2,-1} p_{3,-2} p_{4,-3} p_{4,-2} y_{3,-2} p_{2,-2}^2}{y_{1,-2} y_{4,-3}}+\\&\frac{p_{1,-1} p_{2,-1} p_{3,-2} p_{4,-3} p_{4,-2} y_{1,-1} y_{3,-2}
   p_{2,-2}^2}{y_{2,-2} y_{4,-3}}+\frac{p_{1,-1} p_{2,-1} p_{3,-3} p_{3,-2} p_{4,-3} p_{4,-2} y_{1,-1} p_{2,-2}^2}{y_{3,-3} y_{4,-3}}\\&+\frac{p_{1,-2} p_{1,-1} p_{2,-1} p_{3,-3} p_{3,-2} p_{4,-3} p_{4,-2} y_{2,-2}
   p_{2,-2}^2}{y_{1,-2} y_{3,-3} y_{4,-3}}+\frac{p_{1,-2} p_{2,-1} p_{3,-2} p_{4,-2} y_{1} p_{2,-2}}{y_{1,-2}}\\&+\frac{p_{1,-2} p_{1,-1} p_{2,-1} p_{3,-2} p_{4,-2} y_{2,-1} p_{2,-2}}{y_{1,-2} y_{1,-1}}+\frac{2 p_{1,-1} p_{2,-1}
   p_{3,-2} p_{4,-2} y_{2,-1} p_{2,-2}}{y_{2,-2}}\\&+\frac{p_{1,-1} p_{2,-1} p_{4,-2} y_{3,-2} y_{3,-1} p_{2,-2}}{y_{2,-2}}+\frac{p_{1,-1} p_{2,-1} p_{3,-3} p_{4,-2} y_{3,-1} p_{2,-2}}{y_{3,-3}}+\frac{p_{1,-1} p_{2,-1} p_{3,-2}
   y_{4,-2} y_{4,-1} p_{2,-2}}{y_{2,-2}}\\&+\frac{p_{1,-1} p_{2,-1} p_{3,-2} p_{4,-3} y_{4,-1} p_{2,-2}}{y_{4,-3}}+\frac{p_{2,-1} p_{3,-2} p_{4,-2} y_{1,-1} y_{1} p_{2,-2}}{y_{2,-2}}+\frac{p_{1,-1} p_{2,-1} p_{3,-3} p_{3,-2}
   p_{4,-2} y_{2,-1} p_{2,-2}}{y_{3,-3} y_{3,-2}}\\&+\frac{p_{1,-1} p_{2,-1} p_{3,-2} p_{4,-3} p_{4,-2} y_{2,-1} p_{2,-2}}{y_{4,-3} y_{4,-2}}+y_{2}+\frac{p_{1,-1} p_{2,-1} y_{3,-1} y_{4,-1}}{y_{1,-1}}+\frac{p_{2,-1} y_{1}
   y_{3,-1} y_{4,-1}}{y_{2,-1}}\\&+\frac{p_{2,-1} p_{3,-2} y_{1} y_{4,-1}}{y_{3,-2}}+\frac{p_{1,-1} p_{2,-1} p_{3,-2} y_{2,-1} y_{4,-1}}{y_{1,-1} y_{3,-2}}+\frac{p_{2,-1} p_{4,-2} y_{1} y_{3,-1}}{y_{4,-2}}+\frac{p_{1,-1} p_{2,-1}
   p_{4,-2} y_{2,-1} y_{3,-1}}{y_{1,-1} y_{4,-2}}\\&+\frac{p_{1,-1} p_{2,-1} p_{3,-2} p_{4,-2} y_{2,-1}^2}{y_{1,-1} y_{3,-2} y_{4,-2}}+\frac{p_{2,-1} p_{3,-2} p_{4,-2} y_{1} y_{2,-1}}{y_{3,-2} y_{4,-2}} \\
 t'_{3}= &\frac{p_{1,-1} p_{2,-2} p_{2,-1} p_{3,-1} p_{4,-2} y_{3,-2}}{y_{2,-2}}+\frac{p_{1,-1} p_{2,-2} p_{2,-1} p_{3,-3} p_{3,-1} p_{4,-2}}{y_{3,-3}}+\frac{p_{2,-1} p_{3,-1} p_{4,-2} y_{1}}{y_{4,-2}}\\&+\frac{p_{1,-1} p_{2,-1}
   p_{3,-1} p_{4,-2} y_{2,-1}}{y_{1,-1} y_{4,-2}}+\frac{p_{1,-1} p_{2,-1} p_{3,-1} y_{4,-1}}{y_{1,-1}}+\frac{p_{2,-1} p_{3,-1} y_{1} y_{4,-1}}{y_{2,-1}}+\frac{p_{3,-1} y_{2}}{y_{3,-1}}+y_{3} \\
 t'_{4}= &\frac{p_{1,-1} p_{2,-1} p_{4,-1} y_{3,-1}}{y_{1,-1}}+\frac{p_{2,-1} p_{4,-1} y_{1} y_{3,-1}}{y_{2,-1}}+\frac{p_{1,-1} p_{2,-2} p_{2,-1} p_{3,-2} p_{4,-1} y_{4,-2}}{y_{2,-2}}\\&+\frac{p_{2,-1} p_{3,-2} p_{4,-1}
   y_{1}}{y_{3,-2}}+\frac{p_{1,-1} p_{2,-1} p_{3,-2} p_{4,-1} y_{2,-1}}{y_{1,-1} y_{3,-2}}+\frac{p_{1,-1} p_{2,-2} p_{2,-1} p_{3,-2} p_{4,-3} p_{4,-1}}{y_{4,-3}}+\frac{p_{4,-1} y_{2}}{y_{4,-1}}+y_{4}.
\end{align*}

where we use the shorthand notation
\begin{equation*}
\begin{aligned}
y_{i,k} = y_i(q^{k}z) \\ p_{i,k} = p_i(q^{k} z)\\ t'_{i,k} = t'_{i}(q^{k} z)
\end{aligned}
\end{equation*}
 with $i, k \in \mathbb{Z}$ throughout. 

The formulae for these $t'_{i}(z)$ match the formulae for $p$-twisted $q$-characters computed in quiver gauge theory \cite{NekrasovPestunShatashvili}, see for instance the example of $qq$-characters in the $q_2 \to 1$ limit calculated in \cite{Nekrasov:2015wsu}. 

\bibliographystyle{alpha}
\bibliography{Mult_Hitchin}

\newcommand{\etalchar}[1]{$^{#1}$}
\begin{thebibliography}{KWWY14}

\bibitem[AB01]{AnchoucheBiswas}
Boudjemaa Anchouche and Indranil Biswas.
\newblock Einstein-{H}ermitian connections on polystable principal bundles over
  a compact {K}\"ahler manifold.
\newblock {\em Amer. J. Math.}, 123(2):207--228, 2001.

\bibitem[AFM97a]{Arutyunov:1996vy}
Gleb Arutyunov, Sergey Frolov, and Peter Medvedev.
\newblock {Elliptic Ruijsenaars-Schneider model from the cotangent bundle over
  the two-dimensional current group}.
\newblock {\em J. Math. Phys.}, 38:5682--5689, 1997.

\bibitem[AFM97b]{Arutyunov:1996uw}
Gleb Arutyunov, Sergey Frolov, and Peter Medvedev.
\newblock {Elliptic Ruijsenaars-Schneider model via the Poisson reduction of
  the affine Heisenberg double}.
\newblock {\em J. Phys.}, A30:5051--5063, 1997.

\bibitem[AFO17]{AFO}
Mina Aganagic, Edward Frenkel, and Andrei Okounkov.
\newblock Quantum {$q$}-{Langlands} correspondence.
\newblock {\em arXiv preprint arXiv:1701.03146}, 2017.

\bibitem[AFR18]{AvanFrappatRagoucy}
Jean Avan, Luc Frappat, and Eric Ragoucy.
\newblock Elliptic deformation of {$\mathcal W_N$}-algebras.
\newblock {\em arXiv preprint arXiv:1810.11410}, 2018.

\bibitem[AG15]{ArinkinGaitsgory}
Dima Arinkin and Dennis Gaitsgory.
\newblock Singular support of coherent sheaves and the geometric {Langlands}
  conjecture.
\newblock {\em Selecta Mathematica}, 21(1):1--199, 2015.

\bibitem[AH88]{AtiyahHitchin}
Michael Atiyah and Nigel Hitchin.
\newblock {\em The geometry and dynamics of magnetic monopoles}.
\newblock M. B. Porter Lectures. Princeton University Press, Princeton, NJ,
  1988.

\bibitem[AKSZ97]{AKSZ}
Mikhail Alexandrov, Maxim Kontsevich, Albert Schwarz, and Oleg Zaboronsky.
\newblock The geometry of the master equation and topological quantum field
  theory.
\newblock {\em Internat. J. Modern Phys. A}, 12(7):1405--1429, 1997.

\bibitem[ATZ18]{AshwinkumarTanZhao}
Meer Ashwinkumar, Meng-Chwan Tan, and Qin Zhao.
\newblock Branes and categorifying integrable lattice models.
\newblock {\em arXiv preprint arXiv:1806.02821}, 2018.

\bibitem[Bak94]{Baker}
Henry Baker.
\newblock Examples of the application of {N}ewton's polygon to the theory of
  singular points of algebraic functions.
\newblock {\em Transactions of the Cambridge Philosophical Society}, 15:403,
  1894.

\bibitem[Bau11]{Baulieu}
Laurent Baulieu.
\newblock {$\mathrm{SU}(5)$}-invariant decomposition of ten-dimensional
  {Yang--Mills} supersymmetry.
\newblock {\em Physics Letters B}, 698(1):63--67, 2011.

\bibitem[BD97]{BDHitchin}
Alexander Beilinson and Vladimir Drinfeld.
\newblock Quantization of hitchin’s integrable system and hecke eigensheaves,
  1997.

\bibitem[BDOZ03]{Braden:2003gf}
Harry Braden, Vasily Dolgushev, Mikhail Olshanetsky, and Andrei Zotov.
\newblock {Classical R matrices and the Feigin-Odesskii algebra via Hamiltonian
  and Poisson reductions}.
\newblock {\em J. Phys.}, A36:6979--7000, 2003.

\bibitem[BJ01]{BiquardJardim}
Olivier Biquard and Marcos Jardim.
\newblock Asymptotic behaviour and the moduli space of doubly-periodic
  instantons.
\newblock {\em Journal of the European Mathematical Society}, 3(4):335--375,
  2001.

\bibitem[Bot95a]{Bottacin1}
Francesco Bottacin.
\newblock Poisson structures on moduli spaces of sheaves over {P}oisson
  surfaces.
\newblock {\em Invent. Math.}, 121(2):421--436, 1995.

\bibitem[Bot95b]{Bottacin}
Francesco Bottacin.
\newblock Symplectic geometry on moduli spaces of stable pairs.
\newblock {\em Ann. Sci. \'Ecole Norm. Sup. (4)}, 28(4):391--433, 1995.

\bibitem[Bot00]{Bottacin2}
Francesco Bottacin.
\newblock Poisson structures on moduli spaces of parabolic bundles on surfaces.
\newblock {\em Manuscripta Math.}, 103(1):31--46, 2000.

\bibitem[Bou14]{Bouthier2}
Alexis Bouthier.
\newblock La fibration de {Hitchin-Frenkel-Ng\^o} et son complexe
  d'intersection.
\newblock {\em arXiv preprint arXiv:1409.1275}, 2014.

\bibitem[Bou15]{Bouthier1}
Alexis Bouthier.
\newblock Dimension des fibres de {Springer} affines pour les groupes.
\newblock {\em Transformation Groups}, 20(3):615--663, 2015.

\bibitem[Bra07]{BravermanICM}
Alexander Braverman.
\newblock Spaces of quasi-maps into the flag varieties and their applications.
\newblock In {\em Proceedings of the International Congress of Mathematicians
  Madrid, August 22--30, 2006}, pages 1145--1170, 2007.

\bibitem[But19]{Butson}
Dylan Butson.
\newblock Omega backgrounds and boundary theories in twisted supersymmetric
  gauge theories.
\newblock {\em Forthcoming}, 2019.

\bibitem[Cal15]{Calaque}
Damien Calaque.
\newblock Lagrangian structures on mapping stacks and semi-classical {TFT}s.
\newblock In {\em Stacks and categories in geometry, topology, and algebra},
  volume 643 of {\em Contemp. Math.}, pages 1--23. Amer. Math. Soc.,
  Providence, RI, 2015.

\bibitem[CDHL11]{ChenDoreyHollowoodLee}
Heng-Yu Chen, Nick Dorey, Timothy~J. Hollowood, and Sungjay Lee.
\newblock A new 2d/4d duality via integrability.
\newblock {\em J. High Energy Phys.}, (9):040, 16, 2011.

\bibitem[CH10]{CharbonneauHurtubise}
Beno\^it Charbonneau and Jacques Hurtubise.
\newblock Singular {Hermitian-Einstein} monopoles on the product of a circle
  and a {Riemann} surface.
\newblock {\em Int. Math. Res. Not.}, 2011(1):175--216, 2010.

\bibitem[CK98]{CherkisKapustin1}
Sergey Cherkis and Anton Kapustin.
\newblock Singular monopoles and supersymmetric gauge theories in three
  dimensions.
\newblock {\em Nuclear Physics B}, 525(1-2):215--234, 1998.

\bibitem[CK01]{CherkisKapustin2}
Sergey Cherkis and Anton Kapustin.
\newblock Nahm transform for periodic monopoles and {$\mc N= 2$} super
  {Yang-Mills} theory.
\newblock {\em Communications in Mathematical Physics}, 218(2):333--371, 2001.

\bibitem[CK02]{CherkisKapustin3}
Sergey Cherkis and Anton Kapustin.
\newblock {Hyper-K\"ahler} metrics from periodic monopoles.
\newblock {\em Physical Review D}, 65(8):084015, 2002.

\bibitem[CLOZ07]{Chernyakov:2007}
Yurii {Chernyakov}, Andrey {Levin}, Mikhail {Olshanetsky}, and Andrei {Zotov}.
\newblock Quadratic algebras related to elliptic curves.
\newblock {\em arXiv e-prints}, page arXiv:0710.1072, Oct 2007.

\bibitem[Cos11]{CostelloBook}
Kevin Costello.
\newblock {\em Renormalization and effective field theory}.
\newblock Number 170. American Mathematical Soc., 2011.

\bibitem[Cos13a]{CostelloSH}
Kevin Costello.
\newblock Notes on supersymmetric and holomorphic field theories in dimensions
  2 and 4.
\newblock In {\em Special Issue: In Honor of Dennis Sullivan}, volume~9 of {\em
  Pure and Applied Mathematics Quarterly}. 2013.

\bibitem[Cos13b]{CostelloSUSY}
Kevin Costello.
\newblock Notes on supersymmetric and holomorphic field theories in dimensions
  2 and 4.
\newblock {\em Pure Appl. Math. Q.}, 9(1):73--165, 2013.

\bibitem[Cos13c]{CostelloYangian}
Kevin Costello.
\newblock Supersymmetric gauge theory and the {Yangian}.
\newblock {\em arXiv preprint arXiv:1303.2632}, 2013.

\bibitem[Cos17]{CostelloStringMath}
Kevin Costello.
\newblock ntegrable systems and quantum groups from quantum field theory.
\newblock 2017.
\newblock Lecture, String-Math, Hamburg, 25 July 2017,
  \url{https://stringmath2017.desy.de/sites/sites_conferences/site_stringmath2017/content/e45470/e56510/e56537/SM-Costello.pdf}.

\bibitem[CP95]{ChariPressley}
Vyjayanthi Chari and Andrew Pressley.
\newblock {\em A guide to quantum groups}.
\newblock Cambridge University Press, 1995.

\bibitem[CPT{\etalchar{+}}17]{CPTVV}
Damien Calaque, Tony Pantev, Bertrand To{\"e}n, Michel Vaqui{\'e}, and Gabriele
  Vezzosi.
\newblock Shifted {Poisson} structures and deformation quantization.
\newblock {\em Journal of Topology}, 10(2):483--584, 2017.

\bibitem[CY18]{CostelloYagi}
Kevin Costello and Junya Yagi.
\newblock Unification of integrability in supersymmetric gauge theories.
\newblock {\em arXiv preprint arXiv:1810.01970}, 2018.

\bibitem[DG02]{DonagiGaitsgory}
Ron Donagi and Dennis Gaitsgory.
\newblock The gerbe of {H}iggs bundles.
\newblock {\em Transform. Groups}, 7(2):109--153, 2002.

\bibitem[DK90]{DonaldsonKronheimer}
Simon Donaldson and Peter Kronheimer.
\newblock {\em The geometry of four-manifolds}.
\newblock Oxford University Press, 1990.

\bibitem[DLH11]{DoreyHollowoodLee}
Nick Dorey, Sungjay Lee, and Timothy~J. Hollowood.
\newblock Quantization of integrable systems and a 2d/4d duality.
\newblock {\em J. High Energy Phys.}, (10):077, 42, 2011.

\bibitem[Don03]{DonagiLectures}
Ron Donagi.
\newblock Geometry and integrability.
\newblock In {\em Geometry and integrability}, volume 295 of {\em London Math.
  Soc. Lecture Note Ser.}, pages 21--59. Cambridge Univ. Press, Cambridge,
  2003.

\bibitem[DP10]{DonagiPantev}
Ron Donagi and Tony Pantev.
\newblock Lectures on the geometric {Langlands} conjecture and non-abelian
  {Hodge} theory.
\newblock {\em Second International School on Geometry and Physics Geometric
  Langlands and Gauge Theory}, page 129, 2010.

\bibitem[Dri87]{DrinfeldICM}
Vladimir Drinfeld.
\newblock Quantum groups.
\newblock In {\em Proceedings of the {I}nternational {C}ongress of
  {M}athematicians, {V}ol. 1, 2 ({B}erkeley, {C}alif., 1986)}, pages 798--820.
  Amer. Math. Soc., Providence, RI, 1987.

\bibitem[Dri90]{DrinfeldQuantum1}
Vladimir Drinfel'd.
\newblock Hopf algebras and the quantum {Yang-Baxter} equation.
\newblock In {\em {Yang-Baxter} Equation in Integrable Systems}, pages
  264--268. World Scientific, 1990.

\bibitem[EK98]{EtingofKazhdanIII}
Pavel Etingof and David Kazhdan.
\newblock Quantization of {Lie} bialgebras, {III}.
\newblock {\em Selecta Mathematica}, 4(2):233, 1998.

\bibitem[ES19]{ElliottSafronov}
Chris Elliott and Pavel Safronov.
\newblock Topological twists of supersymmetric algebras of observables.
\newblock {\em Communications in Mathematical Physics}, to appear, 2019.

\bibitem[ESW18]{EagerSaberiWalcher}
Richard Eager, Ingmar Saberi, and Johannes Walcher.
\newblock Nilpotence varieties.
\newblock 7 2018.

\bibitem[EV98]{EtingofVarchenko}
Pavel Etingof and Alexander Varchenko.
\newblock Geometry and classification of solutions of the classical dynamical
  {Y}ang-{B}axter equation.
\newblock {\em Comm. Math. Phys.}, 192(1):77--120, 1998.

\bibitem[EY18]{ElliottYoo1}
Chris Elliott and Philsang Yoo.
\newblock Geometric {Langlands} twists of {$N=4$} gauge theory from derived
  algebraic geometry.
\newblock {\em Advances in Theoretical and Mathematical Physics}, 22(3), 2018.

\bibitem[EY19]{ElliottYoo2}
Chris Elliott and Philsang Yoo.
\newblock A physical origin for singular support conditions in geometric
  {L}anglands theory.
\newblock {\em Communications in Mathematical Physics}, 368(3), 2019.

\bibitem[FKRD18]{FKR}
Michael Finkelberg, Alexander Kuznetsov, Leonid Rybnikov, and Galyna
  Dobrovolska.
\newblock Towards a cluster structure on trigonometric zastava.
\newblock {\em Selecta Math. (N.S.)}, 24(1):187--225, 2018.

\bibitem[FM99]{FinkelbergMirkovicI}
Michael Finkelberg and Ivan Mirkovi{\'c}.
\newblock Semi-infinite flags {I.} case of global curve {$\mathbb P^1$}.
\newblock In {\em Differential topology, infinite-dimensional Lie algebras, and
  applications: DB Fuchs' 60th anniversary collection}, number 194 in Transl.
  Amer. Math. Soc., page~81. American Mathematical Society, 1999.

\bibitem[FM01]{Frenkel2001}
Edward Frenkel and Evgeny Mukhin.
\newblock Combinatorics of {$q$}-characters of finite-dimensional
  representations of quantum affine algebras.
\newblock {\em Communications in Mathematical Physics}, 216(1):23--57, Jan
  2001.

\bibitem[FN11]{FrenkelNgo}
Edward Frenkel and Bao~Ch{\^a}u Ng{\^o}.
\newblock Geometrization of trace formulas.
\newblock {\em Bulletin of Mathematical Sciences}, 1(1):129--199, 2011.

\bibitem[Fos13]{FoscoloThesis}
Lorenzo Foscolo.
\newblock {\em On moduli spaces of periodic monopoles and gravitational
  instantons}.
\newblock PhD thesis, Imperial College London, 2013.

\bibitem[Fos16]{FoscoloDef}
Lorenzo Foscolo.
\newblock Deformation theory of periodic monopoles (with singularities).
\newblock {\em Comm. Math. Phys.}, 341(1):351--390, 2016.

\bibitem[Fos17]{FoscoloGluing}
Lorenzo Foscolo.
\newblock A gluing construction for periodic monopoles.
\newblock {\em Int. Math. Res. Not. IMRN}, (24):7504--7550, 2017.

\bibitem[FP18]{FrassekPestun}
Rouven Frassek and Vasily Pestun.
\newblock A family of {$\GL(r)$} multiplicative {Higgs} bundles on rational
  base.
\newblock {\em arXiv preprint arXiv:1808.00799}, 2018.

\bibitem[FR98]{FrenkelReshetikhin1}
Edward Frenkel and Nicolai Reshetikhin.
\newblock Deformations of {$\mathcal W$}-algebras associated to simple {L}ie
  algebras.
\newblock {\em Comm. Math. Phys.}, 197(1):1--32, 1998.

\bibitem[FR99]{FrenkelReshetikhin2}
Edward Frenkel and Nicolai Reshetikhin.
\newblock The {$q$}-characters of representations of quantum affine algebras
  and deformations of {$\mathcal W$}-algebras.
\newblock In {\em Recent developments in quantum affine algebras and related
  topics ({R}aleigh, {NC}, 1998)}, volume 248 of {\em Contemp. Math.}, pages
  163--205. Amer. Math. Soc., Providence, RI, 1999.

\bibitem[FRSTS98]{FrenkelReshetikhinSTS}
Edward Frenkel, Nicolai Reshetikhin, and Michel Semenov-Tian-Shansky.
\newblock {Drinfeld--Sokolov} reduction for difference operators and
  deformations of-{$\mc W$}algebras {$I$}. the case of {Virasoro} algebra.
\newblock {\em Communications in mathematical physics}, 192(3):605--629, 1998.

\bibitem[Gai14]{GaiottoTBA}
Davide Gaiotto.
\newblock Opers and {TBA}.
\newblock {\em arXiv preprint arXiv:1403.6137}, 2014.

\bibitem[GC82]{GelfandCherednik}
Israel Gelfand and Ivan Cherednik.
\newblock Abstract {H}amiltonian formalism for classical {Y}ang-{B}axter
  bundles.
\newblock {\em Akad. Nauk SSSR Inst. Prikl. Mat. Preprint}, (208):29, 1982.

\bibitem[GKL04]{GKL}
Anton Gerasimov, Sergei Kharchev, and Dimitri Lebedev.
\newblock Representation theory and quantum inverse scattering method: the open
  {Toda} chain and the hyperbolic {Sutherland} model.
\newblock {\em International Mathematics Research Notices}, 2004(17):823--854,
  2004.

\bibitem[GKLO05]{GKLO}
Anton Gerasimov, Sergei Kharchev, Dimitri Lebedev, and Sergey Oblezin.
\newblock On a class of representations of the {Y}angian and moduli space of
  monopoles.
\newblock {\em Comm. Math. Phys.}, 260(3):511--525, 2005.

\bibitem[GR17a]{GRvol1}
Dennis Gaitsgory and Nick Rozenblyum.
\newblock {\em A study in derived algebraic geometry. {V}ol. {I}.
  {C}orrespondences and duality}, volume 221 of {\em Mathematical Surveys and
  Monographs}.
\newblock American Mathematical Society, Providence, RI, 2017.

\bibitem[GR17b]{GRvol2}
Dennis Gaitsgory and Nick Rozenblyum.
\newblock {\em A study in derived algebraic geometry. {V}ol. {II}.
  {D}eformations, {L}ie theory and formal geometry}, volume 221 of {\em
  Mathematical Surveys and Monographs}.
\newblock American Mathematical Society, Providence, RI, 2017.

\bibitem[GR18]{GinzburgRozenblyum}
Victor Ginzburg and Nick Rozenblyum.
\newblock Gaiotto's {L}agrangian subvarieties via derived symplectic geometry.
\newblock {\em Algebr. Represent. Theory}, 21(5):1003--1015, 2018.

\bibitem[Hen15]{Hennion}
Benjamin Hennion.
\newblock {\em Formal loops and tangent {Lie} algebras}.
\newblock PhD thesis, Universit\'e de Montpellier, 2015.

\bibitem[HM02a]{HurtubiseMarkman}
Jacques Hurtubise and Eyal Markman.
\newblock Elliptic {Sklyanin} integrable systems for arbitrary reductive
  groups.
\newblock {\em Advances in Theoretical and Mathematical Physics},
  6(5):873--978, 2002.

\bibitem[HM02b]{HurtubiseMarkman1}
Jacques Hurtubise and Eyal Markman.
\newblock Surfaces and the {S}klyanin bracket.
\newblock {\em Comm. Math. Phys.}, 230(3):485--502, 2002.

\bibitem[Jar98]{Jarvis}
Stuart Jarvis.
\newblock Euclidian monopoles and rational maps.
\newblock {\em Proceedings of the London Mathematical Society}, 77(1):170--192,
  1998.

\bibitem[Kap06]{KapustinHolo}
Anton Kapustin.
\newblock Holomorphic reduction of {$N=2$} gauge theories, {Wilson}-'t {Hooft}
  operators, and {S}-duality.
\newblock 2006.

\bibitem[Kho77]{Khovanskii}
Askold Khovanski\u{\i}.
\newblock Newton polyhedra, and toroidal varieties.
\newblock {\em Funkcional. Anal. i Prilo\v{z}en.}, 11(4):56--64, 96, 1977.

\bibitem[Kni95]{Knight}
Harold Knight.
\newblock Spectra of tensor products of finite-dimensional representations of
  {Y}angians.
\newblock {\em J. Algebra}, 174(1):187--196, 1995.

\bibitem[Kon03]{KontsevichQuantization}
Maxim Kontsevich.
\newblock Deformation quantization of {Poisson} manifolds.
\newblock {\em Letters in Mathematical Physics}, 66(3):157--216, 2003.

\bibitem[KP18]{Kimura:2015rgi}
Taro Kimura and Vasily Pestun.
\newblock Quiver {W}-algebras.
\newblock {\em Letters in Mathematical Physics}, 108(6):1351--1381, 2018.

\bibitem[KSZ18]{KoroteevSageZeitlin}
Peter Koroteev, Daniel Sage, and Anton Zeitlin.
\newblock {$(\SL(N), q)$}-opers, the {$q$}-{L}anglands correspondence, and
  quantum/classical duality.
\newblock {\em arXiv preprint arXiv:1811.09937}, 2018.

\bibitem[KW07]{KapustinWitten}
Anton Kapustin and Edward Witten.
\newblock Electric-magnetic duality and the geometric {L}anglands program.
\newblock {\em Commun. Number Theory Phys.}, 1(1):1--236, 2007.

\bibitem[KWWY14]{KWWY}
Joel Kamnitzer, Ben Webster, Alex Weekes, and Oded Yacobi.
\newblock Yangians and quantizations of slices in the affine {G}rassmannian.
\newblock {\em Algebra \& Number Theory}, 8(4):857--893, 2014.

\bibitem[Las98]{Laszlo}
Yves Laszlo.
\newblock About {$G$}-bundles over elliptic curves.
\newblock {\em Ann. Inst. Fourier (Grenoble)}, 48(2):413--424, 1998.

\bibitem[Moc17a]{MochizukiKH}
Takuro Mochizuki.
\newblock Kobayashi-{H}itchin correspondence for analytically stable bundles.
\newblock {\em arXiv preprint arXiv:1712.08978}, 2017.

\bibitem[Moc17b]{Mochizuki}
Takuro Mochizuki.
\newblock Periodic monopoles and difference modules.
\newblock {\em arXiv preprint arXiv:1712.08981}, 2017.

\bibitem[MS16]{MelaniSafronov1}
Valerio Melani and Pavel Safronov.
\newblock Derived coisotropic structures.
\newblock {\em arXiv preprint arXiv:1608.01482}, 2016.

\bibitem[MS17]{MelaniSafronov2}
Valerio Melani and Pavel Safronov.
\newblock Derived coisotropic structures {II}: stacks and quantization.
\newblock {\em arXiv preprint arXiv:1704.03201}, 2017.

\bibitem[Muk84]{Mukai1}
Shigeru Mukai.
\newblock Symplectic structure of the moduli space of sheaves on an abelian or
  {$K3$} surface.
\newblock {\em Invent. Math.}, 77(1):101--116, 1984.

\bibitem[Muk87]{Mukai2}
Shigeru Mukai.
\newblock Moduli of vector bundles on {$K3$} surfaces and symplectic manifolds.
\newblock {\em S\={u}gaku}, 39(3):216--235, 1987.
\newblock Sugaku Expositions {{\bf{1}}} (1988), no. 2, 139--174.

\bibitem[MV05]{MukhinVarchenko}
Evgeny Mukhin and Alexander Varchenko.
\newblock Discrete {M}iura opers and solutions of the {B}ethe ansatz equations.
\newblock {\em Communications in mathematical physics}, 256(3):565--588, 2005.

\bibitem[Nek96]{NekrasovThesis}
Nikita Nekrasov.
\newblock {\em Four dimensional holomorphic theories}.
\newblock PhD thesis, Princeton University, 1996.
\newblock \url{http://media.scgp.stonybrook.edu/papers/prdiss96.pdf}.

\bibitem[Nek16]{Nekrasov:2015wsu}
Nikita Nekrasov.
\newblock {BPS/CFT} correspondence: non-perturbative {D}yson-{S}chwinger
  equations and {$qq$}-characters.
\newblock {\em Journal of High Energy Physics}, 2016(3):181, 2016.

\bibitem[Nek17]{NekrasovStringMath}
Nikita Nekrasov.
\newblock Open-closed (little) string duality and
  {C}hern-{S}imons-{B}ethe/gauge correspondence.
\newblock 2017.
\newblock Lecture, String-Math, Hamburg, 28 July 2017,
  \url{https://stringmath2017.desy.de/sites/sites_conferences/site_stringmath2017/content/e45470/e56510/e56566/SM-Nekrasov.pdf}.

\bibitem[NP12]{NekrasovPestun}
Nikita Nekrasov and Vasily Pestun.
\newblock {Seiberg-Witten} geometry of four dimensional {$\mathcal N= 2$}
  quiver gauge theories.
\newblock {\em arXiv preprint arXiv:1211.2240}, 2012.

\bibitem[NPS18]{NekrasovPestunShatashvili}
Nikita Nekrasov, Vasily Pestun, and Samson Shatashvili.
\newblock Quantum geometry and quiver gauge theories.
\newblock {\em Comm. Math. Phys.}, 357(2):519--567, 2018.

\bibitem[NS09a]{Nekrasov:2009rc}
Nikita~A. Nekrasov and Samson~L. Shatashvili.
\newblock {Quantization of Integrable Systems and Four Dimensional Gauge
  Theories}.
\newblock In {\em {Proceedings, 16th International Congress on Mathematical
  Physics (ICMP09): Prague, Czech Republic, August 3-8, 2009}}, pages 265--289,
  2009.

\bibitem[NS09b]{Nekrasov:2009ui}
Nikita~A. Nekrasov and Samson~L. Shatashvili.
\newblock {Quantum integrability and supersymmetric vacua}.
\newblock {\em Prog. Theor. Phys. Suppl.}, 177:105--119, 2009.

\bibitem[Pau98]{Pauly}
Marc Pauly.
\newblock Monopole moduli spaces for compact {$3$}-manifolds.
\newblock {\em Math. Ann.}, 311(1):125--146, 1998.

\bibitem[Pes17]{PestunStringMath}
Vasily Pestun.
\newblock Periodic {M}onopoles and {qOpers}.
\newblock 2017.
\newblock Lecture, String-Math, Hamburg, 28 July 2017,
  \url{https://stringmath2017.desy.de/sites/sites_conferences/site_stringmath2017/content/e45470/e56510/e56567/SM-Pestun.pdf}.

\bibitem[Pop11]{Popov}
Vladimir Popov.
\newblock Cross-sections, quotients, and representation rings of semisimple
  algebraic groups.
\newblock {\em Transform. Groups}, 16(3):827--856, 2011.

\bibitem[PR01]{PolishchukRothstein}
Alexander Polishchuk and Mitchell Rothstein.
\newblock Fourier transform for {$D$}-algebras, {I}.
\newblock {\em Duke Mathematical Journal}, 109(1):123--146, 2001.

\bibitem[PTVV13]{PTVV}
Tony Pantev, Bertrand To{\"e}n, Michel Vaqui{\'e}, and Gabriele Vezzosi.
\newblock Shifted symplectic structures.
\newblock {\em Publications math{\'e}matiques de l'IH{\'E}S}, 117(1):271--328,
  2013.

\bibitem[QZ16]{QiuZabzine}
Jian Qiu and Maxim Zabzine.
\newblock On twisted {$\mc N= 2$} {5D} super {Yang--Mills} theory.
\newblock {\em Letters in Mathematical Physics}, 106(1):1--27, 2016.

\bibitem[Sau06]{Sauloy}
Jacques Sauloy.
\newblock Isomonodromy for complex linear {$q$}-difference equations.
\newblock In {\em Th\'{e}ories asymptotiques et \'{e}quations de
  {P}ainlev\'{e}}, volume~14 of {\em S\'{e}min. Congr.}, pages 249--280. Soc.
  Math. France, Paris, 2006.

\bibitem[Sev02]{Sevostyanov1}
Alexei Sevostyanov.
\newblock Drinfeld-{S}okolov reduction for quantum groups and deformations of
  {$W$}-algebras.
\newblock {\em Selecta Math. (N.S.)}, 8(4):637--703, 2002.

\bibitem[Sev11]{Sevostyanov}
Alexei Sevostyanov.
\newblock Conjugacy classes in {Weyl} groups and {q-W} algebras.
\newblock {\em Advances in Mathematics}, 228(3):1315--1376, 2011.

\bibitem[Sha16]{Shapiro}
Alexander Shapiro.
\newblock Poisson geometry of monic matrix polynomials.
\newblock {\em Int. Math. Res. Not.}, (17):5427--5453, 2016.

\bibitem[Sim88]{Simpson}
Carlos Simpson.
\newblock Constructing variations of {H}odge structure using {Yang-Mills}
  theory and applications to uniformization.
\newblock {\em Journal of the American Mathematical Society}, 1(4):867--918,
  1988.

\bibitem[Skl80]{Sklyanin}
Evgeny Sklyanin.
\newblock Quantum variant of the method of the inverse scattering problem.
\newblock {\em Zap. Nauchn. Sem. Leningrad. Otdel. Mat. Inst. Steklov. (LOMI)},
  95:55--128, 161, 1980.
\newblock Differential geometry, Lie groups and mechanics, III.

\bibitem[Smi16]{Smith}
Benjamin~H. Smith.
\newblock Singular {$G$}-monopoles on {$S^1\times\Sigma$}.
\newblock {\em Canad. J. Math.}, 68(5):1096--1119, 2016.

\bibitem[Spa16]{Spaide}
Theodore Spaide.
\newblock Shifted symplectic and {Poisson} structures on spaces of framed maps.
\newblock {\em arXiv preprint arXiv:1607.03807}, 2016.

\bibitem[Ste65]{Steinberg}
Robert Steinberg.
\newblock Regular elements of semisimple algebraic groups.
\newblock {\em Inst. Hautes \'{E}tudes Sci. Publ. Math.}, (25):49--80, 1965.

\bibitem[STSS98]{STSSevostyanov}
Michel Semenov-Tian-Shansky and Alexey Sevostyanov.
\newblock {Drinfeld--Sokolov} reduction for difference operators and
  deformations of {$\mc W$}-algebras {II}. the general semisimple case.
\newblock {\em Communications in mathematical physics}, 192(3):631--647, 1998.

\bibitem[To{\"e}05]{ToenOverview}
Bertrand To{\"e}n.
\newblock Higher and derived stacks: A global overview.
\newblock In {\em Proc. Sympos. Pure Math}, volume~80, pages 435--487, 2005.

\bibitem[To{\"e}14]{ToenSurvey}
Bertrand To{\"e}n.
\newblock Derived algebraic geometry.
\newblock {\em EMS Surv. Math. Sci.}, 1(2):153--240, 2014.

\bibitem[Wil13]{Williams}
Harold Williams.
\newblock Double {B}ruhat cells in {K}ac-{M}oody groups and integrable systems.
\newblock {\em Lett. Math. Phys.}, 103(4):389--419, 2013.

\bibitem[Wit88]{WittenTQFT}
Edward Witten.
\newblock Topological quantum field theory.
\newblock {\em Comm. Math. Phys.}, 117(3):353--386, 1988.

\bibitem[Wit89]{witten1989}
Edward Witten.
\newblock Quantum field theory and the jones polynomial.
\newblock {\em Comm. Math. Phys.}, 121(3):351--399, 1989.

\bibitem[Wit10]{Witten6d}
Edward Witten.
\newblock Geometric {L}anglands from six dimensions.
\newblock In {\em A celebration of the mathematical legacy of {R}aoul {B}ott},
  volume~50 of {\em CRM Proc. Lecture Notes}, pages 281--310. Amer. Math. Soc.,
  Providence, RI, 2010.

\bibitem[Zhu17]{Zhu}
Xinwen Zhu.
\newblock An introduction to affine {G}rassmannians and the geometric {S}atake
  equivalence.
\newblock In {\em Geometry of moduli spaces and representation theory},
  volume~24 of {\em IAS/Park City Math. Ser.}, pages 59--154. Amer. Math. Soc.,
  Providence, RI, 2017.

\end{thebibliography}

\textsc{Institut des Hautes \'Etudes Scientifiques}\\
\textsc{35 Route de Chartres, Bures-sur-Yvette, 91440, France}\\
\texttt{celliott@ihes.fr}\\ 
\texttt{pestun@ihes.fr}
 
\end{document}